\theoremstyle{plain}
\newtheorem{theorem}{Theorem}
\newtheorem*{theorem*}{Theorem}
\newtheorem{lemma}{Lemma}
\newtheorem{corollary}{Corollary}
\newtheorem{proposition}{Proposition}
\theoremstyle{definition}
\newtheorem{definition}{Definition}
\newtheorem{example}{Example}
\theoremstyle{remark}
\newtheorem{remark}{Remark}
\font\myfont=cmr10 at 14pt
\newcommand{\e}{{\text{\myfont e}}}
\newcommand{\MP}{\ensuremath{{\mathcal P}}}
\newcommand{\MZ}{\ensuremath{{\mathcal Z}}}
\newcommand{\ZZ}{\ensuremath{{\mathbb Z}}}
\newcommand{\CC}{\ensuremath{{\mathbb C}}}
\newcommand{\CW}{\ensuremath{{\widehat{\mathbb C}}}}
\newcommand{\RR}{\ensuremath{{\mathbb R}}}
\newcommand{\NN}{\ensuremath{{\mathbb N}}}
\newcommand{\HH}{\ensuremath{{\mathbb H}}}
\newcommand{\E}{\ensuremath{{\mathscr E}}}
\newcommand{\R}{\ensuremath{{\mathcal R}}}
\newcommand{\ES}{\ensuremath{{E}}}
\newcommand{\CS}{\ensuremath{{\mathcal C}}}
\newcommand{\abs}[1]{\left\lvert#1\right\rvert}
\renewcommand{\Re}[1]{{\mathfrak{Re}\left(#1\right)}}
\renewcommand{\Im}[1]{{\mathfrak{Im}\left(#1\right)}}
\renewcommand{\bar}[1]{{\overline{#1}}}
\newcommand{\del}[2]{\frac{\partial #1}{\partial #2}}
\newcommand{\argp}[1]{\ensuremath{\arg\left(#1\right)}}
\begin{document}

\title[Geometry, flows \& visualization of singular complex analytic vector fields]
{On the geometry, flows and visualization 
of singular complex analytic vector fields on Riemann surfaces
}

\author[A.~Alvarez--Parrilla, J.~Muci\~no--Raymundo et al]{Alvaro Alvarez--Parrilla}
\address{Grupo Alximia SA de CV, 
M\'exico}
\email{alvaro.uabc@gmail.com}
\thanks{This work partially funded by UABC grant 0196.}

\author[]{Jes\'us Muci\~no--Raymundo}
\address{Centro de Ciencias Matem\'aticas, Universidad Nacional Aut\'onoma de M\'exico, M\'exico}
\email{muciray@matmor.unam.mx}

\author[]{Selene Solorza--Calder\'on}
\address{Facultad de Ciencias, Universidad Aut\'onoma de Baja California, 
M\'exico}
\email{selene.solorza@uabc.edu.mx}

\author[]{Carlos Yee--Romero}
\address{Facultad de Ciencias, Universidad Aut\'onoma de Baja California, 
M\'exico}
\email{carlos.yee@uabc.edu.mx}


\begin{abstract}
Motivated by the wild behavior of isolated essential singularities 
in complex analysis, we study 
singular complex analytic vector fields $X$ on arbitrary 
Riemann surfaces $M$. 
By vector field singularities we understand zeros, 
poles, isolated essential singularities and accumulation
points of the above kind.

\noindent 
In this framework, 
a singular analytic vector field $X$ has canonically associated;
a 1--form, 
a quadratic differential, 
a flat metric (with a geodesic foliation),
a global distinguished parameter or $\CC$--flow box $\Psi_X$, 
a Newton map $\Phi_X$, and  
a Riemann surface $\R_X$ arising from the maximal $\CC$--flow of $X$. 

\noindent 
We show that every singular complex analytic vector 
field $X$ on a Riemann surface is in fact both a 
global pullback of the 
constant vector field under $\Psi_X$
and of the radial vector field  on the sphere under $\Phi_X$.

\noindent 
As a result of independent interest, we show that 
the maximal analytic continuation 
of the a local $\CC$--flow of $X$ is univalued on the 
Riemann surface $\R_{X} \subset M \times \CC_t$, 
where $\R_X$ is the 
graph of $\Psi_{X}$.

\noindent 
Furthermore we explore the geometry of singular complex 
analytic vector fields and present a geometrical method 
that enables us to obtain the solution, without numerical integration, 
to the differential equation that provides the $\CC$--flow of the vector field. 

\noindent 
We discuss the theory behind the method, its implementation, 
comparison with some integration--based techniques, 
as well as examples of the visualization of complex vector 
fields on the plane, sphere and torus. 

\noindent 
Applications to visualization of complex valued functions is 
discussed including some advantages between other methods.
\end{abstract}

\keywords{Complex analytic vector fields, 
Riemann surfaces, 
global flow,
vector field visualization,
complex valued function visualization, 
essential singularities, 
Weierstrass $\wp$--function.
}

\subjclass[2010]{34M02, 32S65,  30F15, 34K28}

\maketitle

\tableofcontents

\section{Statement of the results}\label{intro}
Vector fields related to complex analytic functions are very interesting and useful mathematical objects, 
both from the point of view of pure mathematics as from that of applications. 
They arise in multiple contexts: many physical phenomena can be modelled by vector fields 
(electric fields, magnetic fields, velocity fields, to name a few); and there are many interesting 
applications concerning the geometry and dynamics associated to them 
(\cite{AP-2}, \cite{AP-MR}, \cite{Benzinger}, \cite{DedieuShub}, \cite{HirschSmale}, 
\cite{HR}, \cite{MR}, \cite{MR-VV}, \cite{NewtonLofaro}, \cite{PalmoreBurnsBenzinger}, 
\cite{Smale1}). 
Moreover, the visualization of vector fields, besides being beautiful in itself, can be of great help 
towards the understanding of certain theoretical concepts. In particular, it can be used for the 
visualization of complex functions, which in of itself is a non--trivial problem (\cite{NewtonLofaro}, 
\cite{Needham}, \cite{Braden}, \cite{Braden2}, \cite{Gluchoff}, \cite{PoelkePolthier}, \cite{Kawski}, 
\cite{Frederick}, \cite{Lundmark}).

The main objects of study of this work are 
\emph{singular complex analytic vector fields} 

\centerline{$X(z)=f_{\tt j}(z)\del{}{z}$ \ \ on Riemann surfaces $M$,}

\smallskip
\noindent where ${\tt j}$ refers to the local charts of $M$, 
connected but non necessarily compact.
The singular set $Sing(X)$ can admit zeros, poles, 
essential singularities and 
accumulation points of the above kind of points
(this is the meaning of the adjetive ``singular'').
Very roughly speaking, 
by the flow of $X$ we understand the (local)
$\CC$--flow, 
and 
since $\RR \subset \CC$,
by trajectories of $X$ we understand the trajectories that arise from the (local) $\RR$--flow of $\Re{X}$.
More precisely, 
the differential equation
\begin{equation}\label{ecdifflow}
\left\{
\begin{aligned}
\dot{z}(\tau)&=f_{\tt j}(z(\tau))\\
z(0)&=z_{0}
\end{aligned}
\right.
\ \ \
\text{for} 
\ \ \ 
z(\tau):(\tau_{min} , \tau_{max}) \subset \RR
\longrightarrow M ,
\end{equation}
gives rise to the local real flow $z(\tau)$ of the singular complex analytic vector field 
$X(z)=f_{\tt j}(z)\del{}{z}$.
The real trajectories in \eqref{ecdifflow}
are simply called \emph{trajectories of $X$}.

\medskip 
One can ask the following naive question:

\smallskip
\begin{center}
\emph{What is a singular complex analytic vector field $X$ on a 
Riemann surface $M$ 
\\
and how explicitly can we describe it?}
\end{center}
\smallskip

\noindent
An answer to this question is explored 
in \cite{MR-VV}, \cite{MR}, \cite{AP-MR}, \cite{AP-MR-2}, \cite{AP-MR-3}, and 
references therein.
In these works, the authors introduce as 
a main tool the following dictionary/correspondence between
different singular complex analytic objects, 
providing a rich geometric structure.

\smallskip 

\noindent
\textbf{Singular complex analytic dictionary.}

\noindent
\emph{On any Riemann surface $M$ there exist a one--to--one correspondence between:}
{\it
\begin{enumerate}[label=\arabic*)]
\item Singular complex analytic vector fields $X$.

\item Singular complex analytic differential 1--forms $\omega_{X}$, 
related to $X$ via 
$\omega_{X} (X) \equiv 1$.

\item Singular complex analytic orientable 
quadratic differential forms $\omega_{X} \otimes\omega_{X}$.

\item Singular (real) analytic flat structures $g_{X}$ 
associated to the quadratic differentials  $\omega_{X}\otimes \omega_{X}$,
with suitable singularities, provided with a real geodesic vector field $\Re{X}$.

\item Singular complex analytic 
(possibly multivalued) maps, \emph{distinguished parameters},
$$
\Psi_{X} (z)= \int^z_{z_0} \omega_X : M\longrightarrow \CW_t,
$$
for $z_{0}\in M$ not a zero of $X$, isolated essential singularity of $X$ or accumulation point of the 
above\footnote{
By a careful analysis and suitable choices, the domain of $\Psi_X$ can be considered to be the whole of $M$, 
likewise for the the choice of $z_0$. This is a delicate matter, see Remark \ref{comentariosCorrrespondencia}.3.
}.

\item Singular complex analytic 
(possibly multivalued) \emph{Newton maps}
$$
\Phi_{X}(z)=\exp\left[-\int^z_{z_0} \omega_X\right] : M\longrightarrow \CW_w,
$$
for $z_{0}\in M$ not a zero of $X$, isolated essential singularity of $X$ or accumulation point of the 
above\footnote{
Similarly, the domain of $\Phi_X$ and the choice of $z_0$ can be extended to be the whole of $M$.
}.

\item The pairs $\big(\R_{X},\pi^{*}_{X,2}(\del{}{t})\big)$ 
consisting of ramified Riemann surfaces 
$\R_{X} \subset M \times \CW_t$, 
associated to the maps $\Psi_{X}$, and the vector fields $\pi^{*}_{X,2}(\del{}{t})$ under the projection 
$\pi_{X,2}: \R_{X} \longrightarrow \CW_{t}$.

\item 
The pairs $(\Omega_X, \mathcal{F})$
consisting of maximal domains 
$\Omega_{X} \subset M \times \CW$ 
of the complex flows of $X$ and holomorphic foliations 
$\mathcal{F}$ whose leaves
are copies of the Riemann surface $\R_X$.
\end{enumerate}
}

Let us write diagrammatically 
the correspondence as 

\begin{center}
\begin{picture}(200,138)(0,-40)
\put(60,84){$X(z)=f_{\tt j}(z)\frac{\partial}{\partial z} $}
\put(4,53){$\omega_X (z)= \frac{dz}{f_{\tt j}(z)}$}
\put(120,53){$\Psi_X (z)= \int\limits^z \omega_X$}
\put(50,-35){$\big((M\backslash\mathcal{A} ,g_X ), \Re X\big)$}

\put(58,80){\vector(-1,-1){15}}
\put(44,66){\vector(1,1){15}}

\put(152,66){\vector(-1,1){15}}
\put(138,80){\vector(1,-1){15}}

\put(58,-24){\vector(-1,1){15}}
\put(44,-10){\vector(1,-1){15}}

\put(151,-10){\vector(-1,-1){15}}
\put(138,-23){\vector(1,1){15}}

\put(5,0){$\omega_X \otimes \omega_X (z)$}
 \put(128,0){$(\R_X, \pi^{*}_{X,2} (\frac{\partial}{\partial t}))$}

\put(36,37){\vector(0,-1){20}}
\put(36,22){\vector(0,1){20}}

\put(155,37){\vector(0,-1){20}}
\put(155,22){\vector(0,1){20}}

\put(-104,15){\vbox{\begin{equation}\label{diagramacorresp}\end{equation}}}
\end{picture}
\end{center}

\noindent
here the subindex $X$ means the dependence on the original vector field, 
in all that follows we omit it when it is unnecessary.

The detailed statement and proof of (1)--(5) and (7) of the above dictionary can be found 
as lemma 2.6 of \cite{AP-MR}. A preliminary study of (8) is found as lemma 2.3 of \cite{AP-MR-2}.

\noindent
The unification of (1)--(5) arrises from the idea of (local) distinguished parameters near regular points,
see for instance 
\cite{Jenkins} \S 3.1 and
\cite{Strebel} pp. 20--21. 
However in \cite{AP-MR}, \cite{AP-MR-2}, \cite{AP-MR-3} 
and this work, {\it we exploit the global nature of the maps $\Psi_{X}(z)$ and $\Phi_{X}(z)$} 
in the 1--dimensional case. 
In \cite{Bustundy-Giraldo-MR}, the global nature of $\Psi$ in the $n$--dimensional case is also explored.

\smallskip
\noindent
In the present work, we explore and exploit items (6) and (8) of the dictionary.

\smallskip

For item (6) of the dictionary, following the ideas of S.\,Smale {\it et al.} \cite{HirschSmale}, \cite{Smale1}, 
on \emph{Newton vector fields}, in \S\ref{metodo} we obtain two results:

\noindent
A visualization scheme for 
vector fields $X$.

\begin{theorem}[Visualization of singular complex analytic vector fields]
\label{TeoremaVisual}
Let $X(z)=f(z)\del{}{z}$ be a singular complex analytic vector field on a Riemann surface $M$, 
and let $z(\tau)$ denote any trajectory of $X$ on $M\backslash Sing(X)$.
Then there exist two (probably multivalued) functions $\rho,\theta:M\backslash Sing(X)\longrightarrow \RR$ 
such that
\begin{enumerate}[label=\arabic*)]
\item The $\rho$ is constant along $z(\tau)$, {\it i.e.} 
\ $\rho(z(\tau))= \rho(z(0))$.

\item The $\theta$ defines the natural time parametrization, {\it i.e.} 
\ $\theta(z(\tau))=\tau + \theta(z(0))$.
\end{enumerate}
\end{theorem}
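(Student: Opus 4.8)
The plan is to read off $\rho$ and $\theta$ directly from the distinguished parameter $\Psi_X$ of the dictionary, exploiting that $\Psi_X$ conjugates $X$ to the constant field $\del{}{t}$. On $M\backslash Sing(X)$ the form $\omega_X = dz/f$ is holomorphic and nowhere vanishing, so every local branch of $\Psi_X(z)=\int_{z_0}^z\omega_X$ is a local biholomorphism satisfying $\Psi_X'(z)=1/f(z)$ in each chart. I would then simply split $\Psi_X$ into its real and imaginary parts and verify that these are the two sought functions.

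First I would fix a trajectory $z(\tau)$ of $X$, i.e. a solution of $\dot z(\tau) = f(z(\tau))$ as in \eqref{ecdifflow}, and select the branch of $\Psi_X$ obtained by analytic continuation along $z(\cdot)$ from $z(0)$. The chain rule then yields the one-line computation
\[
\frac{d}{d\tau}\,\Psi_X(z(\tau)) = \Psi_X'(z(\tau))\,\dot z(\tau) = \frac{1}{f(z(\tau))}\,f(z(\tau)) = 1,
\]
so that, integrating over the real parameter $\tau$, $\Psi_X(z(\tau)) = \Psi_X(z(0)) + \tau$. Since $\tau$ is real, passing to real and imaginary parts separates the two effects: $\Im{\Psi_X(z(\tau))}$ is unchanged while $\Re{\Psi_X(z(\tau))}$ advances by exactly $\tau$. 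I would therefore set $\rho := \Im{\Psi_X}$ and $\theta := \Re{\Psi_X}$, giving $\rho(z(\tau))=\rho(z(0))$ and $\theta(z(\tau)) = \tau + \theta(z(0))$, which are precisely conclusions 1) and 2). Through the Newton map $\Phi_X = \exp[-\Psi_X]$ one equivalently has $\theta = -\log\abs{\Phi_X}$ and $\rho = -\argp{\Phi_X}$, so trajectories of $X$ are carried to the radial rays $\{\arg w = \text{const}\}$ of the radial field on $\CW_w$; this is the origin of the polar names $\rho$ (angular, conserved) and $\theta$ (radial, time).

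The main obstacle is not this computation but the multivaluedness flagged in the statement: the periods of $\omega_X$, together with residues at poles of $X$, make $\Psi_X$, and hence $\rho,\theta$, genuinely multivalued on $M\backslash Sing(X)$. To render the branch choice along each trajectory canonical, and to see that the two identities are globally coherent, I would lift the picture to the Riemann surface $\R_X \subset M\times\CW_t$, the graph of $\Psi_X$, on which $\Psi_X$ is single-valued and on which $X$ pulls back to $\pi^{*}_{X,2}(\del{}{t})$ by item (7) of the dictionary. There the flow is univalued, so $\rho$ and $\theta$ become honest coordinates along each leaf, and pushing forward to $M$ recovers the asserted (possibly multivalued) functions. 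I would close by observing that the argument is purely local in $\tau$, hence indifferent to whether a trajectory runs for finite or infinite real time, and that it applies verbatim to the plane, sphere and torus examples treated later.
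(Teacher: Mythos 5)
Your proof is correct, but it runs along a genuinely different route than the paper's. The paper works multiplicatively through the Newton map $\Phi_X$: its key step is the ``fundamental observation'' (Lemma \ref{lemasoluciones}), proved by implicit differentiation, that a curve satisfies $\Phi_X(z(\tau))=\Phi_X(z_0)\e^{-\tau}$ if and only if it is a trajectory of the Newton field $-\frac{\Phi_X}{\Phi_X'}\del{}{z}$; it then brings in the perpendicular field $X^{\perp}$ with covering map $\widetilde{\Phi}_X=\Phi_X^{-i}$ to identify $\rho=\argp{\Phi_X}$ and $\theta=-\log\abs{\Phi_X}$ (Proposition \ref{solNewton}), and only afterwards translates these into $-\Im{\Psi_X}$ and $\Re{\Psi_X}$ via Diagram \eqref{diagrama-basico}. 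You instead work additively with the distinguished parameter: the one-line chain-rule computation $\frac{d}{d\tau}\Psi_X(z(\tau))=1$, which is exactly the global flow-box property of item (5) of the dictionary, gives $\Psi_X(z(\tau))=\Psi_X(z(0))+\tau$ directly, and $\rho,\theta$ fall out as the imaginary and real parts. The two computations are of course equivalent --- yours is the logarithm of the paper's, up to the harmless sign convention $\rho=\Im{\Psi_X}$ versus the paper's $\rho=-\Im{\Psi_X}$ --- but they buy different things. Your argument is shorter, and you treat the multivaluedness more explicitly than the paper does, by fixing branches through analytic continuation along the trajectory and then globalizing on $\R_X$. The paper's route, by contrast, establishes an \emph{if and only if}: level curves of $\argp{\Phi_X}$ are exactly (unions of) trajectories, i.e. trajectories are the $\Phi_X$--preimages of the radial rays of $-w\del{}{w}$. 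That converse direction is not needed for the statement as given, which only asserts constancy and linearity along a given trajectory, but it is what the plotting algorithm of \S\ref{implementation} actually uses (one draws a level curve and must know it \emph{is} the sought trajectory), and it is the reason the paper routes the proof through the Newton-vector-field picture rather than through the flow box.
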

\noindent
Secondly as a counterpart, 
for singular complex analytic functions 
$\Psi_X$ and $\Phi_X$. 

\begin{theorem}[Visualization of singular complex analytic functions]
\label{visualizationPsiPhi}
\hfill
\begin{enumerate}[label=\arabic*)]
\item Let $\Psi:M\longrightarrow\CW$ be a singular complex analytic function.
Then the phase portraits of

$X(z)=\frac{1}{\Psi^{\prime}(z)}\del{}{z}$ \
provides the level curves of 
$-\Im{\Psi}$,

$X^{\perp}(z) \doteq \frac{i}{\Psi^{\prime}(z)}\del{}{z}$ \ 
provides the level curves of $\Re{\Psi}$.

\item
Let $\Phi:M\longrightarrow\CW$ be a singular complex analytic function.
Then the phase portraits of 

$X(z)=-\frac{\Phi(z)}{\Phi^{\prime}(z)}\del{}{z}$ \
provides the level curves of  $\argp{\Phi}$, 

$X^{\perp}(z) \doteq -i\frac{\Phi(z)}{\Phi^{\prime}(z)}\del{}{z}$ \ 
provides the level curves of $-\log{\abs{\Phi}}$.
\end{enumerate}
\end{theorem}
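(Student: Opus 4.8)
\emph{Proof proposal.} The plan is to observe that each of the four vector fields in the statement is exactly one for which the distinguished parameter (item (5) of the dictionary) is an explicit elementary expression in $\Psi$ or $\Phi$, and then to read off the trajectories from the model flow of the constant field $\del{}{t}$, i.e.\ to specialize Theorem \ref{TeoremaVisual}.

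First I would settle item (1). For $X(z)=\frac{1}{\Psi^{\prime}(z)}\del{}{z}$ the associated $1$--form is $\omega_X=\Psi^{\prime}(z)\,dz=d\Psi$, so its distinguished parameter is $\Psi_X(z)=\int^z\omega_X=\Psi(z)$ up to an additive constant. Differentiating along a trajectory $z(\tau)$ of $X$ yields $\frac{d}{d\tau}\Psi(z(\tau))=\Psi^{\prime}(z)\,\dot z(\tau)=1$, hence $\Psi(z(\tau))=\Psi(z(0))+\tau$. Taking real and imaginary parts then identifies the two functions of Theorem \ref{TeoremaVisual} as $\theta=\Re{\Psi}$ and $\rho=\Im{\Psi}$: the quantity $\Im{\Psi}$ is constant along $z(\tau)$ while $\Re{\Psi}$ advances at unit speed. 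Since the level curves of $\Im{\Psi}$ and of $-\Im{\Psi}$ coincide as sets, the trajectories of $X$ trace the level curves of $-\Im{\Psi}$. For $X^{\perp}=iX$ the same computation gives $\Psi_{X^{\perp}}=-i\Psi$, so $\Im{\Psi_{X^{\perp}}}=-\Re{\Psi}$ is constant along its trajectories; equivalently, multiplying by $i$ rotates $\Re X$ by a right angle and, by the Cauchy--Riemann equations, the family orthogonal to $\{\Im{\Psi}=\mathrm{const}\}$ is $\{\Re{\Psi}=\mathrm{const}\}$.

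Next I would reduce item (2) to item (1) via the Newton relation $\Phi_X=\exp[-\Psi_X]$ of item (6). Setting $\Psi\doteq-\log\Phi$ gives $\Psi^{\prime}=-\Phi^{\prime}/\Phi$, so $\frac{1}{\Psi^{\prime}}=-\frac{\Phi}{\Phi^{\prime}}$ and $\frac{i}{\Psi^{\prime}}=-i\frac{\Phi}{\Phi^{\prime}}$; thus the two fields of item (2) are precisely the two fields of item (1) for this $\Psi$. It then remains to translate the conclusions: from $\Psi=-\log\Phi$ one has $-\Im{\Psi}=\argp{\Phi}$ and $\Re{\Psi}=-\log\abs{\Phi}$, turning the level curves of $-\Im{\Psi}$ and of $\Re{\Psi}$ into those of $\argp{\Phi}$ and of $-\log\abs{\Phi}$ respectively.

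The hard part, I expect, is not these computations but the fact that $\Psi_X$ --- and a fortiori $\log\Phi$, $\argp{\Phi}$ --- is in general multivalued on $M\setminus Sing(X)$ because of the periods (monodromy) of $\int\omega_X$, and is singular on $Sing(X)$. I would therefore argue that, although $\Im{\Psi}$, $\argp{\Phi}$ and $\log\abs{\Phi}$ are only well defined up to this ambiguity, the corresponding \emph{families of level curves} are globally well defined. The point is that the monodromy of $\Psi_X$ acts on the model parameter by translations $t\mapsto t+c$, which shift $\Im{\Psi}$ by the constant $\Im{c}$ and hence permute the horizontal lines among themselves, preserving the foliation; correspondingly the monodromy of $\log\Phi$ alters it only by constants $2\pi i k$ together with real shifts, fixing the foliations $\{\argp{\Phi}=\mathrm{const}\}$ and $\{\log\abs{\Phi}=\mathrm{const}\}$. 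Working on $M\setminus Sing(X)$ and applying Theorem \ref{TeoremaVisual} chart by chart, these observations glue the local pictures into the asserted global statement.
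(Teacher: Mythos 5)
Your proposal is correct and takes essentially the same route as the paper: the paper presents Theorem \ref{visualizationPsiPhi} as an immediate counterpart of Theorem \ref{TeoremaVisual}, whose identity \eqref{dosmaneras} ($\rho=\argp{\Phi_X}=-\Im{\Psi_X}$, $\theta=-\log\abs{\Phi_X}=\Re{\Psi_X}$) is exactly what you re-derive by recognizing that for the four fields in question the distinguished parameter and Newton map are $\Psi_X=\Psi$ and $\Phi_X=\Phi$ respectively. The only cosmetic difference is the direction of reduction --- the paper works through the Newton map and the fundamental observation (Lemma \ref{lemasoluciones}, Proposition \ref{solNewton}) and then translates to $\Psi$ via Diagram \eqref{diagrama-basico}, whereas you compute directly with $\Psi$ and reduce item (2) to item (1) by $\Psi=-\log\Phi$ --- and your closing discussion of monodromy is a welcome elaboration of the paper's terse ``(probably multivalued)'' caveat.
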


In order to prove
item (8) of the dictionary, the local $\CC$--flow of $X$ a
singular complex analytic vector field is 
holomorphic at its zeros. However,
note that maximal domain $\Omega_{X}$ of the complex flows of 
$X$ are non--trivial at 
the poles, essential isolated singularities or accumulation
points of the above.
In fact one may ask the question:
\smallskip 

\begin{center}
\emph{
Considering the maximal analytic continuation of the local flows,
\\
what kind 
of structure will the maximal analytic continuation have?}
\end{center}

\smallskip
\noindent
Denoting $M^*$ as $M$ minus poles, essential singularities
and accumulation points of the above kind, 
a detailed analysis of $\big(\R_{X},\pi^{*}_{X,2}(\del{}{t})\big)$ 
in \S\ref{subsec:Flows} shows that in fact: 

\begin{theorem}[Maximal domain for the flow]
\label{FlujoMaximal}
Let $X$ be a singular complex analytic vector field on a 
Riemann surface $M$, and let $z_0 \in M \backslash Sing(X)$
be an initial condition. 
\begin{enumerate}[label=\arabic*)]
\item 
The maximal analytic continuation 
of the local flow 

\centerline{$\varphi_{\tt j} (z_0,t) 
: \{ z_0 \} \times ( \CC_t, 0)  \longrightarrow  M^*$}

\noindent 
is univalued on the 
Riemann surface $\R_{X} \subset M \times \CC_t$, which is the 
graph of 
$$
\Psi_{X} (z) = \int_{z_0}^z \omega_X : M^* \longrightarrow \CC_t .
$$ 

\item 
The Riemann surface $\R_X$ is a leaf of the  foliation 
$\mathcal{F}$ defined by the
complex analytic vector field
$$ 
f_{\tt j} (z) \del{}{z} + \del{}{t} \ \ \
 \hbox{ on }  M^* \times \CC_{t}
$$ 
and the changes of the initial conditions $z_0$ 
determine $t$--translations of $\R_X$.
\end{enumerate}
\end{theorem}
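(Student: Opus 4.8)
The plan is to exploit the fact that the distinguished parameter $\Psi_X$ linearizes the $\CC$--flow into a pure translation, so that the flow and $\Psi_X$ become mutually inverse, and then to read off the univaluedness of item 1 directly from the graph structure of $\R_X$ furnished by item (7) of the dictionary. First I would verify the linearization: writing $z(t)=\varphi_{\tt j}(z_0,t)$ for the solution of \eqref{ecdifflow} with complex time $t$, the chain rule together with $\omega_X(X)\equiv 1$ and $\Psi_X'(z)=1/f_{\tt j}(z)$ gives $\frac{d}{dt}\Psi_X(z(t))=\Psi_X'(z(t))\,\dot z(t)=1$. Since the integral defining $\Psi_X$ is based at $z_0$, we have $\Psi_X(z_0)=0$, and integrating yields $\Psi_X(\varphi_{\tt j}(z_0,t))=t$ throughout the domain of the local flow.

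For item 1, this identity exhibits $t\mapsto\varphi_{\tt j}(z_0,t)$ as a local inverse of $\Psi_X$: the graph $\{(\varphi_{\tt j}(z_0,t),t)\}$ of the flow is contained in the graph of $\Psi_X$, namely $\R_X$. I would then argue that analytic continuation of the flow in the time variable $t$ corresponds, under this identification of graphs, to analytic continuation of $\Psi_X^{-1}$, whose natural single--valued domain is exactly $\R_X$ with $t=\pi_{X,2}$ as coordinate. Concretely, a point of $\R_X$ is a pair $(z,t)$ with $t$ a branch value of $\Psi_X$ at $z$, and the continued flow is simply $\pi_{X,1}\colon\R_X\to M^*$, $(z,t)\mapsto z$. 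Because $\R_X$ is, by construction, a genuine single--valued Riemann surface and $\pi_{X,1}$ is holomorphic, the continued flow is univalued on $\R_X$; the multivaluedness of the naive map $t\mapsto\varphi_{\tt j}(z_0,t)$, which stems from the nonzero periods $\int_\gamma\omega_X$, is precisely what gets unfolded in passing from $\CC_t$ to $\R_X$ through $\pi_{X,2}$. Since $M^*$ omits the poles, essential singularities and their accumulation points, the form $\omega_X=dz/f_{\tt j}$ is holomorphic on $M^*$ away from the zeros of $X$, so $\Psi_X$ continues freely there and the continuation reaches every point of $\R_X$; conversely, $\pi_{X,1}$ is a local biholomorphism off the zero set, so no extension beyond $\R_X$ is possible inside $M^*$, giving maximality.

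For item 2, consider the vector field $f_{\tt j}(z)\del{}{z}+\del{}{t}$ on $M^*\times\CC_t$; its integral curves satisfy $\dot z=f_{\tt j}(z)$ and $\dot t=1$, hence are of the form $\big(\varphi_{\tt j}(z_0,s),\,s+c\big)$ and constitute the leaves of the induced holomorphic foliation $\mathcal F$. By the linearization above, the leaf through $(z_0,0)$ is $\{(\varphi_{\tt j}(z_0,s),s)\}=\{(z,\Psi_X(z))\}=\R_X$, so $\R_X$ is a leaf. Finally, replacing the base point $z_0$ by $z_1$ shifts the distinguished parameter by the additive constant $c=\int_{z_0}^{z_1}\omega_X$, so the graph translates as $(z,\Psi_X(z))\mapsto(z,\Psi_X(z)-c)$, which is the asserted $t$--translation of $\R_X$.

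The hard part will be the univaluedness--and--maximality assertion of item 1: one must check that the Riemann surface of the analytic continuation of the local flow coincides exactly with $\R_X$, being neither too small (the continuation genuinely sweeps out all of $\R_X$) nor too large (it never escapes $\R_X$ within $M^*$). The delicate points are the behavior near the zeros of $X$, where $\omega_X$ acquires poles and $\Psi_X$ develops logarithmic or pole--type singularities, so that trajectories approach these equilibria only as $t\to\infty$ and the zeros appear as ideal boundary of $\R_X$ rather than honest points of the flow; and the bookkeeping of the period set $\{\int_\gamma\omega_X\}$, which dictates how many $t$--values lie over each $z$. Controlling these uniformly, and confirming that $\pi_{X,2}\colon\R_X\to\CC_t$ really is the maximal single--valued time domain, is where the substantive work lies; the remainder is the translation--invariance computation of item 2.
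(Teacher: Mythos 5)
Your proposal is correct and takes essentially the same route as the paper: its proof likewise rests on the identity $\varphi_{\tt j}(z_0,t)=\Psi_{\tt j}^{-1}(t)$, identifies the maximal analytic continuation with $\pi_{X,1}\colon\R_X\to M^*$, exhibits $\R_X$ as a leaf of the foliation induced by $f_{\tt j}(z)\del{}{z}+\del{}{t}$ on $M^*\times\CC_t$, and obtains the remaining leaves as $t$--translations of $\R_X$ coming from changes of initial condition. In fact your write-up is more detailed than the paper's rather terse argument (the chain-rule linearization $\Psi_X(\varphi_{\tt j}(z_0,t))=t$, the graph identification, the role of the periods $\int_\gamma\omega_X$, and the zeros appearing only as ideal boundary), so the issues you flag as ``the hard part'' are not treated more fully in the paper either.
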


\noindent
The study of maximal domains of the flow from the viewpoint of 
complex differential equations is a deep current subject, 
see \cite{Loray}, \cite{Guillot1}, \cite{Guillot3} and references therein.

\smallskip 

Sections \ref{antecedentes}, \ref{pullbackHoloVF}, \ref{camposnewtonianos}, \ref{fund-obs}, 
and \ref{subsec:Flows}
are of theoretical flavor and 
familiarity with Riemann surface theory is recomended. 
Sections \ref{implementation}, \ref{parrallelization}, \ref{meromorphic}, \ref{essential} and \ref{comparison}
are of numerical character, 
which might be of interest for numerical experimentation
or software development.
Section \ref{generalizaciones} provides a panoramic view of possible extensions
to other frameworks. 
Section \ref{visualizingcomplexfunctions} deals with functions and only requires 
elementary Complex Analysis. 

We thank Coppelia Cerda Far\'ias for her help with the images.

\section{Overview and discussion}\label{heuristic}
{\bf Some advantages of singular complex analytic vector 
fields $X$ over the real analytic case on surfaces.}
On $M \backslash Sing(X)$,
$X$ determines a real vector field, $\Re{X}$,  
and a local $\RR^2$--action (both are real analytic).
Furthermore, the singular complex analytic vector fields
$X$ enjoy some very special properties respect to the 
real analytic vector fields and actions, 
on real analytic surfaces.

\smallskip
\emph{Existence of global rectifying maps (flow box and Newton maps).}
Recall the classical 
result, which goes back to Riemann, which states
that ``every compact Riemann surface $M$ can be described as a 
ramified covering 
on the sphere $\CW$, where the placings and orders of 
the ramification points and their
values determine $M$'', see \cite{Mumford} Lecture I, for this synthesis. 
Assertions (5)--(6) of the dictionary provide a generalization to singular complex analytic vector fields:
the following commutative diagram of pairs, 
Riemann surfaces and vector fields, holds true

\begin{center}
\begin{picture}(210,55)
\put(-110,20){\vbox{\begin{equation}\label{diagrama-basico}\end{equation}}}

\put(17,8){$ \Big(\CW_{t},\del{}{t}\Big) $}
\put(62,12){\vector(1,0){68}}
\put(73,0){$\exp(-t)$}

\put(133,8){$ \Big(\CW_{w},-w\del{}{w}\Big) $.}
\put(122,34){$\Phi_{X}$}
\put(113,42){\vector(1,-1){20}}

\put(76,45){$(M,X)$}
\put(47,34){$\Psi_{X}$}
\put(74,42){\vector(-1,-1){20}}

\end{picture}
\end{center}

\noindent
Note that $\del{}{t}$ or $-w\del{}{w}$ are the 
simplest complex analytic vector fields on
the Riemann sphere $\CW$, 
see Example \ref{holoVFonSphere} in \S\ref{GeoDynPullback}.

\noindent
In the language of differential equations:

\smallskip
{\it
\noindent
$\bullet\ X$ admits a global flow--box
\begin{equation}\label{cajaflujoglobal}
X(z)=\Psi_{X}^{*}\Big(\del{}{t} \Big)(z)=\frac{1}{\Psi_X^{\prime}(z)}\del{}{z}.
\end{equation}

\noindent
$\bullet\ X$ is the  Newton vector field 
of $\Phi_{X}$, 
\begin{equation}\label{flujoNewton}
X(z)=\Phi_{X}^{*} \Big( -w\del{}{w} \Big)(z)=-\frac{\Phi_X(z)}{\Phi^{\prime}_X(z)}\del{}{z}.
\end{equation} 
}
\smallskip

In general, equation \eqref{cajaflujoglobal} does not hold  for real 
analytic vector fields on any real analytic  surface, 
see \cite{Palis} ch. 3, \S 1.
As a corollary, no limit cycles appear for complex analytic vector fields, see 
\cite{Lukashevich}, \cite{Benzinger}, and \cite{Sabatini}, for other proofs.
As for equation \eqref{flujoNewton}, recall the ideas of S.\,Smale {\it et al.} \cite{HirschSmale}, \cite{Smale1}: 
the Newton vector field of $\Phi_X$ has attractors (sinks) at the simple roots of $X$, 
thus enabling the search for the zeros of $\Phi_X$ using its Newton vector field and their sinks.

\smallskip
\emph{Finite dimensional families of singular complex analytic 
vector fields.}
Finite dimensional families are natural 
in the complex analytic category, 
in contrast with infinite 
dimensional families in the smooth category.
As examples, recall the polynomial families
studied in \cite{Branner-Dias} and \cite{Frias-Mucino}.

\noindent 
In \cite{AP-MR}, \cite{AP-MR-2} and \cite{AP-MR-3}, the authors 
study the geometry and 
dynamics of singular complex analytic vector fields in the 
vicinity of essential singularities. 
In particular, they 
focus the dictionary on \emph{meromorphic structurally finite 1--order $d$ vector fields with $r$ poles 
and $s$ zeros on $\CC$}. 
These are finite dimensional
families consisting of vector fields on the Riemann sphere with a singular set composed of a 
finite number $s\geq 0$ of zeros on $\CC$, $r\geq0$ of poles on $\CC$ and an isolated 
essential singularity (of finite 1--order $d\geq1$) at $\infty\in\CW$, namely
\begin{multline}\label{familiaEsrd}
\E(s,r,d)=\Big\{ X(z)=\frac{Q(z)}{P(z)}\ \e^{E(z)}\del{}{z} \ \Big\vert \\
Q, P, E\in\CC[z],  \ \deg{Q}=s,\ \deg{P}=r, \ \deg{E}=d \Big\},
\end{multline}
where $d\in\NN$, $s, r\in\NN\cup\{0\}$ and $r+d+s\geq 1$. 

\noindent
In particular, 
when $X\in\E(0,r,d)$, they extend the dictionary (1)--(8) to:

\smallskip

\noindent 
\hskip0.2cm 
9. Classes of $(r,d)$--configuration trees $[\Lambda_{X}]$, see theorem 6.1 of \cite{AP-MR-2}.

\noindent 
\hskip0.2cm 
10. Functions $\Psi$ that can be expressed as quotients of linearly independent solutions of a 
certain Shr\"odinger type differential equation (work in progress). 

\smallskip
\emph{Automorphisms groups of singular complex analytic 
vector fields.}
Furthermore, in \cite{AP-MR-3}, they show that subspace consisting of those $X\in\E(s,r,d)$ 
with trivial isotropy group has a holomorphic trivial principal $Aut(\CC)$--bundle structure.

\smallskip
\emph{Incompleteneess of the flow and its geometric structure.}
In \S\ref{subsec:Flows}, we show that 
the maximal analytic continuation 
of the a local flow of $X$ is univalued on the 
Riemann surface $\R_{X} \subset M \times \CC_t$, 
where $\R_X$ is the 
graph of $\Psi_{X}$.
Furthermore, the maximal domain $\Omega_X$ 
of the complex flow is foliated by copies of $\R_X$ that differ by a $t$--translation.

\medskip
\noindent
{\bf Why are new visualizations methods required near essential singularities?} 
In order to gain insight into the behaviour of singular complex analytic vector fields, the correct 
visualization of vector fields in the neighborhood of points of the singular set $Sing(X)$ is required. 
The visualization of singular complex analytic vector fields at zeros and poles is well understood, 
see Proposition \ref{prop:normalforms} and Figure \ref{forma-normal} in \S\ref{normalforms}. 
However essential singularities present a challenge, see for instance Figures \ref{campoExp}
, \ref{CampoExpZ3}, \ref{campoTanz} and \ref{coshPlus2}.

Complex analytic functions $f(z)$ \emph{behave wildly}
in the neighborhood of an essential singularity. 
This is the meaning of Picard's theorem, 
in particular a function takes on all complex 
values, except possibly one, in 
any neighborhood of an essential singularity.
Obviously, essential singularities of 
complex analytic vector fields $f(z)\del{}{z}$ present the analogous behavior. 
As a consequence,
not much has been done with respect to the visualization 
of the class of vector fields with essential singularities: to our knowledge the only reported works related 
to the visualization of vector fields in neighborhoods of essential singularities is that of \cite{HR} and 
\cite{NewtonLofaro}: in both cases they use \emph{integration--based} visualization schemes.

The main issue with the use of \emph{integration--based} visualization methods and/or algorithms when visualizing vector fields near an essential singularity, is that these algorithms are based on recursive procedures. 
Hence, when the function characterizing the vector fields are evaluated near an essential singularity, they assume arbitrarily small and large values. This in turn causes the numerical errors to quickly become unmanageable, even when considering self--adjusting algorithms.

\medskip
\noindent
{\bf A brief survey of the visualization method for vector fields.}
Considering Diagram \eqref{diagrama-basico}, 
one has the option of studying the (possibly multivalued) singular complex analytic maps $\Psi_{X}$, 
or $\Phi_{X}$; because of correspondence (1)--(7) both are equivalent to the study of $X$. 
For instance, the right hand side 
of Diagram \eqref{diagrama-basico},  $\Phi_{X}$, easily provides a technique which enables us to 
completely solve, by geometrical methods, the differential equation 
\eqref{ecdifflow}, 
{\it i.e.} in particular visualize the trajectories of the singular analytic vector field $X$.

This technique, using $\Phi_X$, 
was originally presented by H.\,E.\,Benzinger, S.\,A.\, Burns and J.\,I.\,Palmore  \cite{Benzinger}, 
\cite{BurnsPalmore}, \cite{PalmoreBurnsBenzinger} in order to visualize \emph{rational} 
vector fields on $\CC$. In this work we show that the technique can be 
extended to work on singular complex analytic vector fields, even those 
that have essential singularities or accumulation points of poles and zeros.

\noindent
We do this by 
\begin{enumerate}[label=\roman*)]
\item \emph{extending the visualization method}, originally presented by H.\,E.\,Benzinger, S.\,A.\,Burns 
and J.\,I.\,Palmore for \emph{rational} vector fields in $\CC$, \emph{to work on all Newton vector fields on an 
arbitrary Riemann surface}, and
\item since \emph{all singular complex analytic vector fields are in fact Newton vector fields}, 
this provides a framework in which we can actually obtain a solution of \eqref{ecdifflow} for 
\emph{all singular complex analytic vector fields} and hence can be visualized.
\end{enumerate}

The conceptual idea behind the proposed geometrical method for 
visualizing singular complex analytic vector fields, 
is the construction of a 
\emph{pair of real valued functions that are constant and linear along the trajectories of the vector field} 
(also known as first integrals or integrals of motion), see Theorem \ref{TeoremaVisual}.

\smallskip
\noindent
As it turns out, the method that we generalize has 
some other very interesting and noteworthy advantages over the usual 
vector field visualization techniques (see \cite{LarameeEtal},
 \cite{PostEtal}, \cite{SalzbrunnEtal} for a classification 
scheme of vector field visualization techniques). 
Amongst them, we state the following properties.

\begin{enumerate}[label=\Alph*)]
	\item The method allows for the global visualization of 
	vector fields on arbitrary Riemann surfaces.
	\item It allows for the efficient visualization of the streamlines, even for specific initial conditions.
	\item It can provide information relative to (the parametrization of) the flows.
	\item It does not propagate numerical errors.
	\item It allows the correct visualization of vector fields even in regions where the usual \emph{integration--based} algorithms fail.
	\item The computer resources needed for the visualization are much less than those needed by other \emph{integration--based} visualization techniques.
	\item The algorithm can be easily parallelized.
	\item Moreover it can be easily extended to work on a larger class of vector fields.
\end{enumerate}

It should be noted from the outset that the method in question exploits a well known characteristic of Newton 
vector fields, namely that their streamlines can be easily recognized by a geometrical argument 
(see Lemma \ref{lemasoluciones}). 
Yet, it is interesting to note that apparently this method is unknown (or at least not actively used), even for 
those who study Newton vector fields: for instance in \cite{HelminckEtAl}, \cite{TwiltEtal}. 
In particular,  though they show that the Newton flow associated to the Weierstrass $\wp$--functions can be 
characterized/classified (up to conjugacy) into three types of behaviour,
and that they actually show phase portraits of the Newton flow associated 
to Weierstrass $\wp$--function and 
to Jacobi's $sn$--function, \emph{they still use a traditional integration--based algorithm 
(4--th order Runge--Kutta) for the visualization of the vector field}.

\medskip
\noindent
{\bf On the visualization of complex functions.}
In \S\ref{visualizingcomplexfunctions}, 
as an application of the techniques and methods developed in the previous sections, we explore the problem 
of \emph{visualization of singular complex analytic functions}. 

\noindent
We start with a quick review of some classical and or traditional methods unrelated to vector fields; 
particularly \emph{images of regions}, \emph{tilings a la Klein}, the \emph{analytic landscape} 
and \emph{domain coloring}. 

\noindent
We then procede to explore two methods based on the visualization of the phase portrait of certain singular 
complex analytic vector fields. 

\noindent
The advantages and disadvantages of the different methods are presented and discussed.
In particular it should be noted that Theorem \ref{visualizationPsiPhi} provides 
\begin{enumerate}[label=\alph*)]
\item a natural tool for the visualization of both $\Psi_X$ and $\Phi_X$,
\item a natural counterpart to Theorem \ref{TeoremaVisual}.
\end{enumerate}

\section{Analytic and geometric aspects of singular 
complex analytic vector fields }
\label{antecedentes}

We define the basic objects of study, 
namely singular complex analytic vector fields, 
then present a quick overview of the basic correspondence. 
The material is 
presented in full detail in 
\cite{MR}, \cite{MR-VV}, \cite{AP-MR} and \cite{AP-MR-2}. 
We describe a summary here for clarity and completeness 
in the exposition.

\subsection{Notation and conventions.}\label{subsec:notacion}

\quad\\
$\mathcal{ M }$ is an oriented smooth ({\it i.e.} $ C^{\infty}$)
two--manifold.

\noindent $J$ is a complex structure on $\mathcal{M}$ (a smooth
isomorphism of $T \mathcal{ M } $ such that $J^{2} = -1$).

\noindent
$M= (\mathcal{ M },J)$ is a Riemann surface.

\noindent
$\CW= \CC\cup\{\infty\}$ is the Riemann sphere.

\noindent
$(\CC,0)$ denotes the usual domain for germs.

\noindent
$i=\sqrt{-1}$. 

\noindent
We will be interested in complex--valued vector fields 
on a Riemann surface that are \emph{analytic} in the following sense.
\noindent
Let $\{  \phi_{\tt j} : V_{\tt j} \subset M \to
\CC \ \vert \ {\tt j} \in {\tt J}\}$,
be a holomorphic atlas for $M$.

\begin{definition}\label{singulardefinition}
By a \emph{singular complex analytic vector field}
$$
X=\Big\{ f_{\tt j} (z) \del{}{z} \ \Big\vert \ z \in \phi_{\tt j} (V_{\tt j}) \Big\}
$$
on $M$, we understand
a (non--vanishing) holomorphic vector field $X$ on $M \backslash Sing(X)$, where 
$Sing(X)$ is the \emph{singular set} of $X$, which consists of:
\begin{itemize}[label=$\bullet$]
\item zeros, denoted by $\MZ$, 
\item poles, denoted by $\MP$, 
\item isolated essential singularities denoted by $\ES$, and
\item accumulation points in $M$ of zeros, poles and isolated essential singularities of $X$, denoted by $\CS$. 
\end{itemize}
So $Sing(X)=(\ES\cup \MP\cup \MZ\cup \CS)$ is the closure in $M$ of the set $(\ES\cup \MP\cup \MZ)$.
\end{definition}

\noindent
We will denote by \\
$M^{\diamond}=M\backslash\overline{\ES}$.\\
$M^{0}=M^{\diamond}\backslash\overline{(\MP\cup \MZ)}=M\backslash Sing(X)$.\\
$M^{'}=M^{\diamond}\backslash\overline{\MZ}= M \backslash\overline{(\ES\cup \MZ)}$.\\
$M^{*}=M^{\diamond}\backslash\overline{\MP}=M\backslash\overline{(\ES\cup \MP)}$.\\

\medskip 

We wish to note that our definition of singular complex analytic vector fields  
includes several of the classical families depending on what 
the singular set $Sing(X)$ is. For instance: 
\begin{itemize}[label=$\bullet$]
\item 
If $\ES=\MP=\emptyset$, then $X$ is a \emph{holomorphic vector field on $M$}. 
Note that in this case $Sing(X)=\MZ$ has no accumulation points in $M$ (unless of course $X$ is the identically zero vector field). 

\item
\emph{Entire vector fields} are precisely the holomorphic vector fields on $\CC$, 
or equivalently singular complex analytic vector fields on $\CW$ with 
$\MP \cup \ES $ equal to $\{\infty \}$ or $\emptyset $.

\item 
If $\ES=\emptyset$ and $Sing(X)$ has no accumulation points in $M$, then $X$ is a \emph{meromorphic vector field on $M$}. 
Thus \emph{rational vector fields} are precisely the \emph{meromorphic vector fields on $\CW$}.

\item If $\ES$ is non--empty and there are no accumulation points of $\MZ$ in $M$, then $e\in \ES$ will consist of an essential singularity of $f_{\tt j}$ that has $0$ as a lacunary value, that is there is a neighborhood $V$ of $e$ where $f_{\tt j}(z)\neq0$, for all $z\in \phi_{\tt j}(V_{\tt j}\cap V)$.
\end{itemize}

\noindent
For other relevant cases one may consider $\E(s,r,d)$, 
meromorphic structurally finite 1--order $d$ vector fields with 
$r$ poles and $s$ zeros on $\CC$
recall \eqref{familiaEsrd}, see \cite{AP-MR}, \cite{AP-MR-2} and \cite{AP-MR-3}.
For geometric structures associated to vector fields and its applications 
see \cite{Guillot3}.

Since a vector field provides a geometric structure for $M$, see 
\S\ref{subsec:equivalencias}, in several places we use
the notation $(M, X)$ as a pair, Riemann surface and vector field. 
Moreover, complex structures on $M^{0}$ having conformal punctures on $\overline{(\ES\cup \MP\cup \MZ)}$ extend in a unique way to complex structures on all of $M$; we do not distinguish between the punctured Riemann surface $(M^{0},J)$ and the extended $(M,J)$.

Moreover, since we will always be dealing with Riemann surfaces we will drop the \emph{``complex''} adjetive (unless we wish to emphasize it), and whenever a \emph{singular complex analytic differential form, singular quadratic differential or singular function} is mentioned, the meaning of \emph{singular} should be that of Definition \ref{singulardefinition}.

\subsubsection{Equivalence between singular complex analytic vector fields and real smooth vector fields, trajectories}\label{realcomplexflows}

\quad\\
On $M$, more precisely on $M^*$, there is a one to one correspondence 
between real smooth vector fields satisfying the Cauchy--Riemann 
equations and $(1,0)$--sections of the holomorphic tangent bundle locally given by
\begin{align*}
F \longrightarrow & X=\frac{1}{2}(F  - i J F)\\
F=X+\bar{X} \longleftarrow &  X .
\end{align*}
In explicit local coordinates $(V_{\tt j} , \phi_{\tt j} )$ of $M$ this is
\begin{equation*}
X=f_{\tt j} (z ) \del{}{z} =\big( u_{\tt j} (x,y)+i\,v_{\tt j} (x,y) \big) \del{}{z}
\end{equation*}
so the \emph{real part of $X$} is
\begin{equation*}
\Re{X}:=F=u_{\tt j} (x,y)\frac{\partial}{\partial x} + v_{\tt j} (x,y)\frac{\partial}{\partial y}.
\end{equation*}
The trajectories of $X$ as in \eqref{ecdifflow} and the 
trajectories of $\Re{X}$ coincide.
In passing, we note that the \emph{imaginary part of $X$} is given by
\begin{equation*}
\Im{X}:=-v_{\tt j} (x,y)\frac{\partial}{\partial x} + u_{\tt j} (x,y) \frac{\partial}{\partial y},
\end{equation*}
and is nothing else than $J F$. \\
In particular since $f_{\tt j}$ represents a holomorphic function on 
$M^{*}$, then $u_{\tt j}$ and $v_{\tt j}$ satisfy the 
Cauchy--Riemann equations.

\subsection{Equivalences between singular vector fields, singular differential forms, 
singular orientable quadratic differentials and singular flat structures}\label{subsec:equivalencias}
\subsubsection{Equivalence with differential forms}\label{subsec:difforms}

To obtain the correspondence with differential forms, consider the singular
analytic vector field $X=\{ f_{\tt j} (z) \del{}{z} \}$ restricted to $M^0$. Since $\CC$ is an algebraic field, it
follows by duality, that the \emph{singular complex analytic} 1--form
$$
\omega_{X} =\Bigl\{  
\frac{dz}{f_{\tt j} (z)} 
\ \Big\vert \ 
z \in \phi_{\tt j} (V_{\tt j} ) \Bigr\}
$$
is such that $\omega_{X}(X)\equiv 1$.
In fact, $\omega_{X}$ is canonically well defined on all $M$; having zeros, poles and essential singularities at 
the points where $X$ has poles, zeros and essential singularities, respectively.

\noindent
The \emph{complex time necessary to travel from 
$z_{0}$ to $z$
in $M^0$ under the complex flow of $X$} 
is given by:
\begin{equation}\label{complexTime}
\Psi_{X,{\tt j}} (z)=\int_{z_{0}}^{z} \omega_{X} : 
V_{\tt j} \subset M' \longrightarrow \CC.
\end{equation}
A priori this depends on the homotopy class of the path from $z_{0}$ to $z$ in $M'$.
One also notices that $\Psi_{\tt j}(z)=\Psi_{\tt k}(z)+a_{\tt j k}$ on $V_{\tt j}\cap V_{\tt k}$, for some 
$a_{\tt j k}\in\CC$.
Hence, by direct analytic continuation we have the (possibly multivalued) \emph{global} singular analytic 
additively automorphic function
\begin{equation}\label{PsiGlobal}
\Psi_X (z)=\int_{z_{0}}^{z} \omega_{X} : M^{0} \longrightarrow \CC.
\end{equation}
See definitions 2.4 and 2.5 of \cite{AP-MR}.

\noindent
Locally, if $\alpha(\tau),\ \beta(s)\ : (-\varepsilon,\varepsilon)\subset\RR \rightarrow \phi_{\tt j} (V_{\tt j} )$ are 
trajectories of $F$ and $J F$ respectively, with $\alpha(0)=\beta(0)=z_{0}$, then
\begin{equation*}
\int_{z_{0}}^{\alpha(\tau)} \omega_{X} = \tau \quad\text{and}\quad \int_{z_{0}}^{\beta(s)} \omega_{X} = i s.
\end{equation*}
In words $F$ and $J F$ describe the real and imaginary time necessary to travel from $z_{0}$ to $z$.

\noindent
Moreover if $z_{1}$ and $z_{2}$ belong to the same real trajectory of $\Re{X}$ then
\begin{equation}\label{tiemporeal}
g_{X}\text{--}length(\overline{z_{1}z_{2}})
=\int\limits_{\overline{z_{1}z_{2}}} \omega_{X} 
=\begin{cases} 
\text{real time to travel from } z_{1} \text{ to } z_{2} \\
\text{under the local real flow of }X,
\end{cases}
\end{equation}
where $\overline{z_{1}z_{2}}$ means the 
geodesic segment in 
$(M^{0},g_{X})$, that will be
defined in \ref{subsec:flatStruct}, where it is understood
that the $g_X\text{--}length$ can assume negative values.

\subsubsection{Equivalence with orientable quadratic differentials}\label{subsec:quaddiff}

A \emph{singular complex analytic quadratic differential} 
$\mathcal{Q}$ on $M$ is by definition o\-rien\-ta\-ble if it is 
globally given as $\omega\otimes\omega$ for some singular complex 
analytic differential 1--form $\omega$ on $M$. 
F.\,Klein \cite{Klein} was the first to implicitly use these objects 
to study complex integrals, J.\,A.\,Jenkins \cite{Jenkins}, and 
K.\,Strebel \cite{Strebel} provide presentations of the subject, 
also recently J.\,C.\,Langer \cite{Langer} provides computer visualizations of quadratic differentials.

\noindent
Given $\mathcal{Q}=\omega_{X}\otimes\omega_{X}$,  we get a canonical holomorphic atlas
$\{ (V_{\tt j},\Psi_{\tt j}) \}$ for $M^{0}$ as above.
Noticing that the changes of coordinates $\Psi_{\tt j}\circ\Psi_{\tt k}^{-1}$ are maps of the form 
$\{z_{\tt k}\mapsto z_{\tt j}=z_{\tt k}+a_{\tt j k}\ | \  a_{\tt j k}\in\CC\}$, it follows that the real horizontal 
foliation on $\CC$ defines a \emph{horizontal foliation} $\mathcal{F_{Q}}$ on $M^{0}$. 
Furthermore, $\mathcal{F_{Q}}$ is defined by a real non--vanishing vector field on $M^{0}$ 
if and only if $\mathcal{Q}$ is orientable. Clearly the horizontal foliation corresponds to the 
trajectories of $F$ and there is a corresponding \emph{vertical foliation} corresponding to $J F$.

\subsubsection{Construction of a flat structure from $X$.}\label{subsec:flatStruct}

Now define the real analytic Riemannian metric
\begin{equation*}
g_{X} = \Bigg\{  \frac{1}{(u_{\tt j} (x,y))^2+(v_{\tt j} (x,y))^2}
\begin{pmatrix}
1 & 0\\
0 & 1
\end{pmatrix} \  \Big\vert  \  (x + iy ) \in \phi_{\tt j} (V_{\tt j}) \Bigg\}
\end{equation*}
on $M^0$, respect to suitable $(V_{\tt j}, \phi_{\tt j})$. $F$ and $J F$ 
define an orthonormal frame for $g_{X}$ on all $M^0$. 
By the Cauchy--Riemann equations $F$ and $J F$ commute, and the 
curvature of $g_{X}$ is zero.  
Equivalently, the functions $\Psi_{X,{\tt j}}:(V_{\tt j} , g_{X})\rightarrow (\CC,\delta)$ 
are isometries, where $\delta$ is the usual flat metric on $\CC$, 
and the trajectories of $F$ and $J F$ are unitary geodesics 
in the smooth flat Riemann surface $(M^0 , g_{X})$.

\begin{remark}
In the language of quadratic differentials, $\Psi_{X,{\tt j}}$, as in \eqref{complexTime}, is 
called a \emph{distinguished parameter near a regular point} 
for the orientable quadratic differential
$\mathcal{Q} = \{ dz^{2}  / (f_{\tt j} (z))^{2} \}$ see \cite{Strebel} p.\,20.  
Thus in the language of differential equations, we can say 
that $\Psi_{X,{\tt j}}$ is a local holomorphic \emph{flow box} for the vector field $X$, that is
\begin{equation}\label{flowBox}
\Psi_{X,{\tt j}} (z)_{*}\Big(f_{\tt j} (z)\del{}{z}\Big)=\del{}{t},
\end{equation}
where again $t\in\CC$ is complex time.

\noindent
Of course, the global singular analytic additively automorphic function $\Psi_X$, see \eqref{PsiGlobal}, 
also satisfies \eqref{flowBox}: thus $\Psi_X$ is a \emph{global} flow box. 
This is further explored in \S\ref{pullbackHoloVF}, particularly in \S\ref{pullbacks}.
\end{remark}

\subsection{The Riemann surface $\R_X$}

The graph of $\Psi_{X}$
\begin{equation}
\R_{X}= \{(z,t) \ \vert \  t=\Psi_{X}(z) \} \subset M\times\CW_{t}
\end{equation}
\noindent 
is a Riemann surface. 
The flat metric
$\big(\R_{X},\pi_{X,2}^{*}(\del{}{t})\big)$ is induced by 
$\big(\CW,\del{}{t}\big)$ 
via the projection of $\pi_{X,2}$, and coincides with $g_{X}=\Psi_{X}^{*}(\delta)$ since $\pi_{X,1}$ is an isometry, 
as in the following diagram:
\begin{center}
\begin{picture}(180,80)(0,10)

\put(-125,40){\vbox{\begin{equation}\label{diagramaRX}\end{equation}}}

\put(12,75){$\big(M,X\big) $}

\put(115,75){$\big(\R_X,\pi^*_{X,2}(\del{}{t})\big)$}

\put(108,78){\vector(-1,0){60}}
\put(65,85){$\pi_{X,1}$}

\put(133,65){\vector(0,-1){30}}
\put(138,47){$ \pi_{X,2} $}

\put(38,65){\vector(2,-1){73}}
\put(55,39){$ \Psi_X $}

\put(115,20){$\big(\CW_t,\del{}{t}\big) $}

\end{picture}
\end{center}

\begin{remark}
1. It should be noted that $\pi_{X,1}:\big(\R_{X},\pi^*_{X,2}(\del{}{t})\big)\longrightarrow (M,X)$ is a biholomorphism if and only if $\Psi_{X}$ is single valued.

\noindent
2. In Diagram \eqref{diagramaRX} we abuse notation slightly by saying that the domain of $\Psi_{X}$ is $M$. 
This is a delicate issue, see 
Remark \ref{comentariosCorrrespondencia}.3 following Proposition \ref{basic-correspondence}.

\noindent 
3. In what follows, unless explicitly stated, we shall use the abbreviated form $\R_{X}$ instead of the more 
cumbersome $\big(\R_{X},\pi^*_{X,2}(\del{}{t})\big)$.
\end{remark}

\begin{example}[Holomorphic vector fields on the Riemann sphere]\label{holoVFonSphere}
The holomorphic vector fields on $\CW_t$ form a three dimensional
complex vector space
$$
\Big\{ Y(t)=(at^2 + bt + c) \del{}{t} 
\ \vert \ (a,b,c) \in  \CC^3 \Big\} ,
$$
which is isomorphic to the Lie algebra of the group of biholomorphisms 
$PSL(2, \CC)$ of the Riemann sphere.
These are the only complete vector fields on the 
Riemann sphere, see \S\ref{subsec:Flows}. 
In $\CW_t$, a non zero holomorphic vector field $Y(t)$ can have: two simple zeros or
one double zero.
Up to automorphisms $Aut(\CW) \cong PSL(2, \CC) $,
we get two qualitatively different families of (non--identically zero)
holomorphic vector fields on $\CW_t$:

\begin{enumerate}[label=\arabic*.,leftmargin=*]

\item
The constant vector fields

\centerline{
$\lambda\del{}{t}$, \ \ \ $\lambda \in \CC^*$,}

\noindent
correspond to the family having a double zero. The vector field
$\Re{\lambda\del{}{t}}$ has a dipole at infinity, see \S\ref{normalforms}.

\noindent
$(\CC, g_X)$ is isometric to the euclidean plane foliated by (geodesic)
trajectories of $\Re{\lambda}\del{}{x}+\Im{\lambda}\del{}{y}$. 
Notice that for any $\lambda\neq0$;
$(\CW, \lambda\del{}{t})$ is global holomorphically
equivalent to $(\CW, \del{}{t})$ or $(\CW, t^2 \del{}{t})$. 
$(\CC, \del{}{t})$ is isometric to the euclidean plane foliated by (geodesic)
trajectories of $\del{}{x}$.
See Figure \ref{holoVfSphere}.

\item
The linear vector fields

\centerline{
$\frac{t}{\lambda } \del{}{t}$, \ \ \
$\lambda \in \CC^*$, }

\noindent 
which correspond to the family having two simple zeros. 
Contrary to the previous family, 
$\frac{t}{\lambda }\del{}{t}$ is global holomorphically
equivalent 
to $\frac{t}{\nu} \del{}{t}$ if and only if $\lambda = \pm \nu$.
The vector fields
$\Re{ \frac{t}{\lambda } \del{}{t}} $ 
have; two centers if $\Re{\lambda } = 0$;  one source, one sink otherwise, see \S\ref{normalforms}.
\\
In particular, the pullback of $ Y= - t \del{}{t}$ will produce a Newton vector field on $M$ 
(see \S\ref{camposnewtonianos} for the definition), and the Riemannian manifold
$(\CC^*, g_Y)$ is isometric to the euclidean cylinder $\CC / 2\pi i \ZZ$
foliated by (geodesic) trajectories of $\del{}{x}$.
See Figure \ref{holoVfSphere}.
\end{enumerate}
\begin{figure*}[htbp]
\centering
\includegraphics[width=0.9\textwidth]{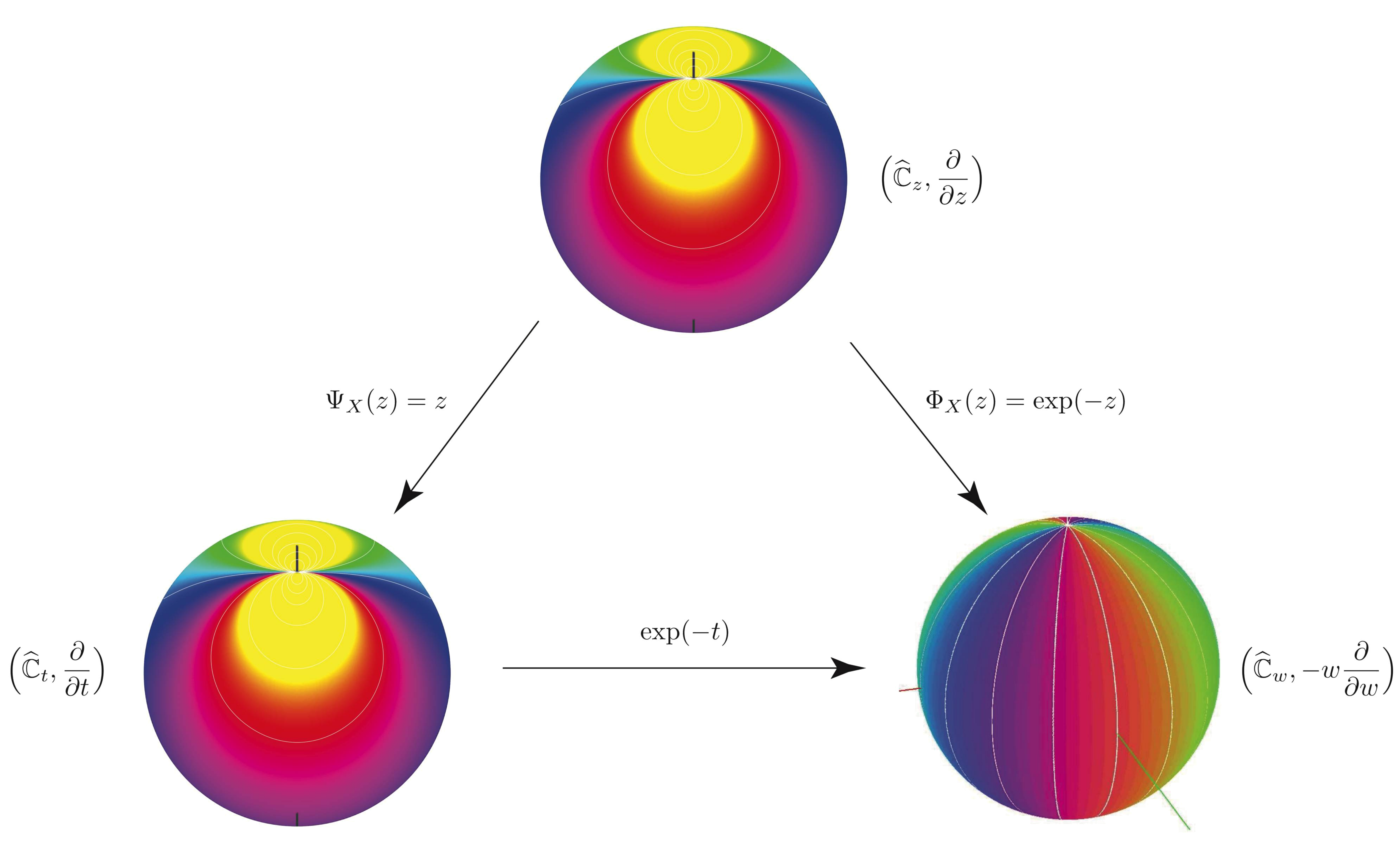}
\caption{Diagram \eqref{diagrama-basico} for $X(z)=\del{}{z}$. 
The holomorphic vector fields $\del{}{z}$ and $-w\del{}{w}$ on 
the Riemann sphere appear in a very natural 
context.}
\label{holoVfSphere}
\end{figure*}
\end{example}

\noindent
From these examples the case of pullbacks of $\del{}{t}$ 
and $t\del{}{t}$ (or $\pm w \del{}{w}$) 
should be relevant, as we will conclude in \S\ref{holofamilycorresp}.

\begin{example}[Vector fields having maximal domain of their flows 
different from $\CC_t$]\label{ejemplo-con-psi-racional}
Let 

\centerline{$\big(\CW,X(z)=\frac{1}{R^\prime(z)}\del{}{z}\big)$}

\noindent 
be a rational vector field, for $R(z)$ a rational function of degree at least two.
$X$ has at least one pole and note that 
\emph{the holomorphic differential equations theory
can not be applied}. 
However, in accordance with Diagram \eqref{diagramaRX},

\centerline{$\Psi_X (z) =R(z) : \CW_z \longrightarrow \CC_t $}

\noindent 
is single valued. 
Thus, $\pi_{X,1}$ provides a single valued global 
flow of $X$, with the property that 
\begin{equation}\label{tiempo-algo}
R(z_2) - R(z_1)
=\int\limits_{z_{1}}^{z_2} \omega_{X} 
=\begin{cases} 
\text{complex time to travel from } z_{1} \text{ to } z_{2} \\
\text{under the flow of }X,
\end{cases}
\end{equation}
for $z_1, z_2 \in \CW$. In particular for $z_1$ a cero and 
$z_2 \in \CW \backslash Sing(X)$  the complex time $\infty$
makes sense. Moreover, 

\centerline{
$
\R_X = \Omega_X = \{ (z, R(z)) \ \vert \ z \in \CW \}. 
$}

\noindent 
For further discussion see \S\ref{subsec:Flows}.
\end{example}

\subsection{The singular complex analytic dictionary}

\begin{definition}\label{automorfa-aditiva}(\cite{BerensteinGray}, p.\,579)
Let $\Psi: M \to \CC$ be a singular complex analytic possibly multivalued function 
with a non--dense countable singular set $\overline{\mathcal{A}}$ such that 
the restriction of $\Psi$ to $M\backslash\overline{\mathcal{A}}$ is holomorphic. 
$\Psi$ is called \emph{additively automorphic} if given two branches 
one has $\Psi_{\alpha }(z)=\Psi_{\beta}(z) + a_{\alpha\beta}$, for some constants $a_{\alpha \beta}\in\CC$.
\end{definition}
\noindent
Note that $\Psi$ is single valued if and only if $a_{\alpha \beta}=0$ 
for all $\alpha, \beta $. However, the 1--form
$d \Psi$ is always single valued on $M$, when 
Definition \ref{automorfa-aditiva} holds true. 
For instance $\log(z)$ and $\log( P(z))$, for $P(z)$ a polynomial, 
are additively automorphic, however $P(z)\log(z)$ is not. 

\noindent
In summary one has the following result.

\begin{proposition}[Singular complex analytic dictionary] 
\label{basic-correspondence}
On any Riemann surface $M$ there is a canonical one--to--one correspondence between:
\begin{enumerate}[label=\arabic*)]
\item Singular complex analytic vector fields $X(z)=f(z)\del{}{z}$.
\item Singular complex analytic differential forms $\omega_{X}$, related to $X$ via $\omega_{X}=\frac{dz}{f(z)}$.
\item Singular complex analytic 
orientable quadratic differential forms given by $\omega_{X} \otimes\omega_{X}$.
\item Singular (real) analytic flat structures $g_{X}$, satisfying $g_{X}(X)\equiv 1$,
with suitable singularities on a non--dense countable set $\mathcal{A} \subset M$,
trivial holonomy in $M\backslash \mathcal{A}$
and a (real) geodesible unitary 
vector field $W_X$ whose singularities are exactly $\mathcal{A}$.
\item Singular complex analytic 
(possibly multivalued) maps, \emph{distinguished parameters},
$$
\Psi_{X} (z)= \int^z_{z_{0}} \omega_X : M^\diamond \to \CW
$$
where $z_0 \in M'$ and $z \in M^{\diamond}$.
\item Singular complex analytic 
(possibly multivalued) \emph{Newton maps}
$$
\Phi_{X}(z)=\exp\left[-\int^z_{z_{0}} \omega_X\right] : M^\diamond \to \CW
$$
where $z_0 \in M'$ and $z \in M^{\diamond}$.
\item The pairs $\big(\R_{X},\pi^{*}_{X,2}(\del{}{t})\big)$ consisting of branched Riemann surfaces $\R_{X}$, 
associated to the maps $\Psi_{X}$, and the vector fields $\pi^{*}_{X,2}(\del{}{t})$ under the projection 
$\pi_{X,2}: \R_{X} \longrightarrow \CW_{t}$.
\item 
The pairs $(\Omega_X, \mathcal{F})$
consisting of maximal domains 
$\Omega_{X} \subset M \times \CW$ 
of the complex flows of $X$ and holomorphic foliations 
$\mathcal{F}$ whose leaves
are copies of the Riemann surfaces $\R_X$.
\end{enumerate}
\end{proposition}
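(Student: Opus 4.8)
The plan is to realize the eight--fold correspondence as a collection of explicit, mutually inverse canonical constructions organized around the single form $\omega_X$, and then to verify that each construction respects the stratified singular set $Sing(X)=\overline{\ES\cup\MP\cup\MZ}$. I would begin with $(1)\leftrightarrow(2)$, which is pure pointwise duality: on $M^0$ both the tangent and cotangent spaces are one--dimensional $\CC$--vector spaces, so the normalization $\omega_X(X)\equiv1$ determines $\omega_X=dz/f_{\tt j}(z)$ uniquely, and conversely $X$ is the dual frame of $\omega_X$. Because the transition functions of $M$ are holomorphic this assignment glues to a global one, and the local order of vanishing is negated, so zeros of $X$ become poles of $\omega_X$ and conversely, while essential singularities and their accumulation points are preserved; this is exactly the singular bookkeeping demanded by Definition~\ref{singulardefinition}.

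For $(2)\leftrightarrow(3)$ the forward map is $\omega_X\mapsto\omega_X\otimes\omega_X$, and the only nontrivial direction is extracting a global square root. By the notion of orientability recalled in \S\ref{subsec:quaddiff}, an orientable quadratic differential is by hypothesis globally of the form $\omega\otimes\omega$, so a square root exists and is unique up to a single global sign; that sign is fixed by the orientation, equivalently by the direction of the geodesic field $\Re{X}$, which is precisely the extra datum that promotes the $\pm$--ambiguous root to a genuine bijection.

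The links $(2)\leftrightarrow(4)$, $(2)\leftrightarrow(5)$ and $(5)\leftrightarrow(6)$ I would dispatch through the explicit formulas of \S\ref{subsec:flatStruct}. Setting $g_X=\omega_X\otimes\bar{\omega_X}=\abs{f_{\tt j}}^{-2}\abs{dz}^2$ produces the flat structure; its vanishing curvature and trivial holonomy follow because the local distinguished parameters $\Psi_{X,{\tt j}}$ are isometries onto $(\CC,\delta)$ whose transition maps are the Euclidean translations $z\mapsto z+a_{\tt jk}$, while $g_X(X)\equiv1$ is the normalization $\omega_X(X)\equiv1$; conversely such a flat structure carries a developing map whose differential returns $\omega_X$. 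Integration gives $\Psi_X=\int_{z_0}^z\omega_X$ with inverse $\Psi_X\mapsto d\Psi_X$, and exponentiation gives $\Phi_X=\exp(-\Psi_X)$ with inverse $\Phi_X\mapsto-d\Phi_X/\Phi_X$. The one delicate point is multivaluedness: the kernel of the one--to--one claim is that, although $\Psi_X$ and $\Phi_X$ may be additively, respectively multiplicatively, automorphic, the form $\omega_X=d\Psi_X=-d\Phi_X/\Phi_X$ is always single--valued on $M$ by Definition~\ref{automorfa-aditiva}, so passing to $\omega_X$ collapses the branch ambiguity and renders the assignments inverse up to the harmless choice of base point $z_0$ and additive constant.

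Finally, $(5)\leftrightarrow(7)$ is the graph construction $\R_X=\{(z,t)\mid t=\Psi_X(z)\}$ of Diagram~\eqref{diagramaRX}, in which the multivaluedness of $\Psi_X$ unfolds into the distinct sheets of a single--valued Riemann surface and $\Psi_X$ is recovered as $\pi_{X,2}\circ\pi_{X,1}^{-1}$; and $(7)\leftrightarrow(8)$ passes from the single leaf $\R_X$ to the maximal flow domain $\Omega_X$ foliated by its $t$--translates, a step I would defer to the detailed analysis of \S\ref{subsec:Flows} and Theorem~\ref{FlujoMaximal}. I expect this last step, together with the global bookkeeping across $Sing(X)$, to be the main obstacle: one must show that the local complex flow continues analytically to a univalued flow on all of $\R_X$, that distinct initial conditions produce exactly the $t$--translates foliating $\Omega_X$, and that every construction above extends consistently across the accumulation stratum $\CS$, where the metric and the maps $\Psi_X,\Phi_X$ degenerate in an essentially uncontrolled manner.
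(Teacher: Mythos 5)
Your proposal is correct and follows essentially the same route as the paper's own sketch: explicit, mutually inverse constructions pivoting on $\omega_X$ (pointwise duality for (1)--(2), squaring plus the orientation datum for (3), the flat metric with translation transition maps for (4), integration and exponentiation for (5)--(6), the graph construction of Diagram \eqref{diagramaRX} for (7)), with the equivalence (7)$\leftrightarrow$(8) deferred, exactly as the paper does, to Theorem \ref{FlujoMaximal} in \S\ref{subsec:Flows}. The only cosmetic difference is in the converse direction (4)$\Rightarrow$(1): the paper uses the trivial holonomy to recognize $W_X$ and its counterclockwise $\pi/2$--rotation as the real and imaginary parts of a holomorphic vector field and then extends across $\mathcal{A}$ via the normal forms of Proposition \ref{prop:normalforms} under the ``suitable singularities'' hypothesis, whereas you phrase the same step through the developing map of the flat structure and honestly flag the extension across $\CS$ as the remaining obstacle --- the point the paper disposes of by definition and by citation.
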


\begin{proof}[Sketch of proof]
The equivalence between (1), (2) and (3) 
is well known and extensively used; 
it is only necessary
to verify that the local complex analytic tensors
transform in the required way, 
see \S\ref{subsec:difforms} 
and \S\ref{subsec:quaddiff}.

That (4) follows from 
(3) uses
the flat metric associated to $\omega_X \otimes \omega_X$, see \S\ref{subsec:flatStruct}.

For the converse assertion, we 
start with a flat structure $g$ 
on $M \backslash \mathcal{A}$.
Since the riemannian holonomy of $g$,
$\pi_1 (M \backslash \mathcal{A})  \longrightarrow O(2)$, 
is the identity, we 
recognize  $W_X$ and its counterclockwise $\pi/2$ rotated vector field, 
say
$\e^{i\pi/ 2} W_X$,  as the real and imaginary parts 
of a holomorphic vector field $X$ on $(M \backslash \mathcal{A}, g)$. 
The extension of $X$ to $\mathcal{A}$ depends 
on the nature of the singularities of $X$. 
The 
suitable singularities hypothesis in (5), means that the 
extension exists. 
Obviously, 
poles and zeros of $X$ at $\mathcal{A}$ are suitable singularities and 
can be recognized
by their normal forms in punctured neighborhoods, 
see Proposition \ref{prop:normalforms}. 
For further details see \cite{AP-MR} lemma 2.6 and
theorem D.

The equivalence between (5) and (6) and their relationship is further explored in \S\ref{holofamilycorresp}.
The correspondence between (5) and (7) 
follows from Diagram \eqref{diagramaRX}. 
Equivalence between (7) and (1) is postponed to  \S\ref{subsec:Flows}, see Corollary \ref{DeRaX}.
The same is true for the equivalence between (7) and (8): this is the content of Theorem \ref{FlujoMaximal} 
in \S\ref{subsec:Flows}.
\end{proof}

\begin{remark}\label{comentariosCorrrespondencia}
Some comments are in order:

\noindent
1. $\Psi_{X}$ and $\Phi_{X}$ as in (5) and (6) of Proposition \ref{basic-correspondence} are
well defined and holomorphic maps for $z$ at
the poles of $\omega_X$.

\noindent 
2. The local map $\Psi_{\tt j} = \int  d \zeta / f_{\tt j}(\zeta)$ 
in \eqref{complexTime} is called 
\emph{distinguished parameter} by K.\,Strebel \cite{Strebel}
p.\,20
and also by L.\,V.\,Ahlfors \cite{Ahlfors-Conformal}, we will
continue using this name for the \emph{global} map $\Psi_{X}$ described in (5) of 
Lemma \ref{basic-correspondence}. 

\noindent
3. The choice of initial and end points $z_{0}, z$ for the integral defining $\Psi_X$ and $\Phi_X$ 
can be relaxed to include the essential singularities by integrating along asymptotic paths 
associated to asymptotic values of 
$\Psi_{X}$ at the essential singularities $E\subset M$, see remark 1.1 of \cite{AP-MR-2}.

\end{remark}

\section{Pullback of singular complex analytic vector fields}\label{pullbackHoloVF}
We start by recalling the classical local notion of \emph{holomorphically equivalent} or 
\emph{conformally conjugated} vector fields,
see \cite{BrickmanThomas}, \cite{IlyashenkoYakovenko} p.\,9 for the usual 
concepts.
Moreover, the following remains valid
for  regular points and singularities in the sense of Definition \ref{singulardefinition}  
(namely zeros, poles, isolated essential singularities of $X$  
and accumulation points of the above at the origin).

\begin{definition}
\label{holomorphically equivalent} Let $X(z)=f(z)\del{}{z}$ and
$Y(z)=g(z)\del{}{z}$ be two germs of singular complex analytic vector fields
on $(\CC_z,0)$ and let 

\centerline{ 
$\varphi_{f}(z,t)$, 
$\varphi_{g}(z,t):\big(\CC^2_{z \, t},(z_0, 0)  \big)
\longrightarrow (\CC_{z},0)$,}

\noindent 
for a point $z_0$ where $f$ and $g$ are holomorphic, be their local holomorphic flows.
\begin{enumerate}[label=\arabic*.,leftmargin=*]
\item $X$ and $Y$ are \emph{topologically equivalent} if there exists an orientation preserving 
homeomorphism $\Upsilon:(\CC,0)\longrightarrow (\CC,0)$ which takes  trajectories of $\Re{X}$  
to trajectories of $\Re{Y}$ preserving their orientation but not necessarily the parametrization.
\item  $X$ and $Y$ are \emph{holomorphically equivalent} if there
exists a biholomorphism  $\Upsilon:(\CC ,0) \to (\CC ,0)$ such that
\begin{equation}\label{confconjDef}
\Upsilon(\varphi_{f}(z,t))=\varphi_{g}(\Upsilon(z),t)
\end{equation}
whenever both sides are well defined, for the maximal analytic continuations.
\end{enumerate}
\end{definition} 
\noindent
Note that, under the assumption that $\Upsilon$ is a biholomorphism, \eqref{confconjDef} 
is equivalent to $X=\Upsilon^* Y$.

\begin{lemma}\label{confconjEquiv}
Two germs of singular complex analytic vector fields $X(z)=f(z)\del{}{z}$ and
$Y(z)=g(z)\del{}{z}$ on $(\CC, 0)$ are 
holomorphically equivalent
if and only if there exists a biholomorphism 
$\Upsilon :(\CC ,0) \to (\CC ,0)$
such that
\begin{equation}\label{confconjDef2}
f(z)=\frac{g(\Upsilon (z))}{ \Upsilon '(z)}, \ \ \ \text{ for all }z\in
(\CC , 0 ), \ \ \ z\neq0.
\end{equation}
\end{lemma}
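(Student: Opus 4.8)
The plan is to prove the equivalence by unwinding the definition of holomorphic equivalence in terms of the flow-conjugacy relation \eqref{confconjDef} and translating it into the functional equation \eqref{confconjDef2}, using the remark already recorded after Definition \ref{holomorphically equivalent} that (under the biholomorphism assumption) \eqref{confconjDef} is equivalent to $X=\Upsilon^* Y$. So the real content is to show that the pullback identity $X=\Upsilon^* Y$ holds if and only if the scalar identity \eqref{confconjDef2} holds.

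For the forward direction, I would start with a biholomorphism $\Upsilon$ realizing the holomorphic equivalence and compute the pullback of $Y$ directly in coordinates. The vector field $Y(z)=g(z)\del{}{z}$ pulls back under $\Upsilon$ by the standard rule for the action of a biholomorphism on a $(1,0)$ tangent field: writing $w=\Upsilon(z)$, one has $\Upsilon^{*}\big(g(w)\del{}{w}\big)=\frac{g(\Upsilon(z))}{\Upsilon'(z)}\del{}{z}$, where the factor $1/\Upsilon'(z)$ is exactly the Jacobian weight coming from $\del{}{w}=\frac{1}{\Upsilon'(z)}\del{}{z}$. Setting this equal to $X(z)=f(z)\del{}{z}$ and comparing the scalar coefficients of $\del{}{z}$ yields precisely \eqref{confconjDef2}. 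For the converse, I would simply read the same computation backwards: if \eqref{confconjDef2} holds then the coefficient functions of $X$ and $\Upsilon^{*}Y$ agree, hence $X=\Upsilon^{*}Y$, and by the remark this is equivalent to the flow-conjugacy \eqref{confconjDef}, i.e.\ holomorphic equivalence. Since $\Upsilon$ is a biholomorphism of $(\CC,0)$, $\Upsilon'(z)\neq 0$ near $0$, so the quotient in \eqref{confconjDef2} is well defined and the transformation is genuinely invertible.

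The one point requiring care, and what I expect to be the main obstacle, is the bookkeeping at the singular points rather than at the regular point $z_0$. The flows $\varphi_f,\varphi_g$ in Definition \ref{holomorphically equivalent} are only defined near a point $z_0$ where $f$ and $g$ are holomorphic and nonzero, whereas \eqref{confconjDef2} is asserted for all $z\in(\CC,0)$ with $z\neq 0$, including near a possible zero, pole, or essential singularity at the origin. The clean way to handle this is to first establish \eqref{confconjDef2} on the open set of regular points where the flows exist, using the coordinate computation above, and then invoke analytic continuation: both sides of \eqref{confconjDef2} are meromorphic (or singular analytic in the sense of Definition \ref{singulardefinition}) on a punctured neighborhood of $0$, they agree on an open subset, hence they agree wherever both are defined. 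This is also why the statement deliberately excludes $z=0$: at $0$ itself one of $f$, $g$, or $\Upsilon'$ may vanish or blow up, so the identity is stated on the punctured germ.

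I would close by noting that the equivalence \eqref{confconjDef2} is symmetric in the expected way: it is the infinitesimal (time-independent) form of the flow conjugacy, which is why it can replace the a priori stronger-looking flow condition \eqref{confconjDef}. The passage from the functional equation back to the full flow conjugacy relies on uniqueness of solutions to the holomorphic ODE \eqref{ecdifflow}: once $X=\Upsilon^{*}Y$, the image under $\Upsilon$ of an integral curve of $X$ is an integral curve of $Y$ with the matching initial condition, and uniqueness forces the parametrized identity \eqref{confconjDef} on the common domain of definition of the maximal analytic continuations.
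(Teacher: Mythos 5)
Your proof is correct, but it is organized differently from the paper's, whose entire argument is one line: differentiate the flow relation \eqref{confconjDef} with respect to $t$ and evaluate at $t=0$, so that $\Upsilon'(z)\,f(z)=g(\Upsilon(z))$ follows from the chain rule and $\partial\varphi_f/\partial t=f\circ\varphi_f$; the converse direction and the behavior at singular points are left implicit there. You instead reduce to the pullback identity $X=\Upsilon^{*}Y$ (citing the remark after Definition \ref{holomorphically equivalent}), compute $\Upsilon^{*}Y$ in coordinates, and then make explicit the two steps the paper suppresses: the passage back from \eqref{confconjDef2} to the parametrized conjugacy \eqref{confconjDef} via uniqueness of solutions of \eqref{ecdifflow}, and the extension of the identity from the open set of regular points to the whole punctured germ by analytic continuation --- a step genuinely needed, since the flows in \eqref{confconjDef} only make sense near regular points while \eqref{confconjDef2} is asserted on all of $(\CC,0)\backslash\{0\}$. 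The one soft spot is your forward direction: the remark you invoke is stated in the paper without proof and is essentially a coordinate-free restatement of the lemma itself, so leaning on it comes close to assuming the conclusion; its honest content is exactly the differentiation-in-$t$ computation above, which you only gesture at when calling \eqref{confconjDef2} the ``infinitesimal form'' of \eqref{confconjDef}. Writing out that one chain-rule line would make your argument self-contained, and would also avoid a clash with the paper's internal ordering, in which the coordinate pullback formula of Lemma \ref{pullbackformula} is derived \emph{from} this lemma rather than used to prove it (harmless in substance, since that Jacobian rule is standard, but worth flagging).
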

\begin{proof}
The proof follows by taking the derivative with respect to $t$ in \eqref{confconjDef}.
\end{proof}

From a global point of view, 
two singular complex analytic vector fields $(M,X)$, $(N,Y)$ on arbitrary
Riemann surfaces are holomorphically equivalent if there exists a
biholomorphic map $\Upsilon :M \to N$ such that
$\Upsilon (\varphi_{X}(z,t)) = \varphi_{Y} (\Upsilon (z),t)$
whenever both sides are well defined.

\subsection{Pullbacks of singular complex analytic vector fields by singular complex analytic maps}
\label{sec:coveringypullback}

The pullback $\Upsilon^*$ is a natural operation when considering vector fields.

\begin{lemma}
\label{pullbackformula} 
1. Given a singular complex analytic vector field
$Y(t)=\Big\{g_{\tt k} (t)\del{}{t}\Big\}$ on $N$ and a non--constant, 
singular complex analytic map
$$
\Upsilon: M \rightarrow N,
$$
the pullback vector field $X=\Upsilon^*Y=\Big\{f_{\tt j} (z)\del{}{z}\Big\}$ is a singular complex analytic vector
field well defined on $M$. In particular
\begin{equation}\label{eqpullback}
f_{\tt j}(z_{\tt j})=\frac{g_{\tt k} (\Upsilon_{\tt j k} (z_{\tt j}))}{\Upsilon'_{\tt j k} (z_{\tt j})},
\end{equation}
where $\Upsilon_{\tt j k} = \phi_{\tt jk} \circ \Upsilon \circ \phi_{\tt j}^{-1}$,
$\Upsilon'_{\tt j k} = \frac{d\Upsilon_{\tt j k} }{dz}$, and
$\{ \phi_{\tt j}:V_{\tt j} \subset M \rightarrow \CC\}$,
$\{ \phi_{\tt k}: U_{\tt k} \subset N \rightarrow \CC \}$
are the charts of $M$ and $N$ respectively.

\noindent
2. Conversely, if $X$, $Y$ are given singular complex analytic vector fields on $M$, $N$ respectively and 
$\Upsilon$ is a (possibly multivalued) singular complex analytic function that satisfies \eqref{eqpullback}, then 
$$X=\Upsilon^{*}Y.$$
\end{lemma}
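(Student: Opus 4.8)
The plan is to prove both directions of Lemma \ref{pullbackformula} by reducing the global statement to the local coordinate computation, so that the only genuine content is the transformation law for vector fields under a holomorphic (or singular analytic) change of variable. I would first treat part 1, establishing the pullback formula \eqref{eqpullback}, and then observe that part 2 is essentially its converse, obtained by reading the same identity in the opposite direction.

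For part 1, the approach is as follows. First I would recall that a vector field is a section of the holomorphic tangent bundle, and that the pullback of the $(1,0)$--section $Y = g_{\tt k}(t)\del{}{t}$ under $\Upsilon$ is defined, away from the singular and critical loci, by the standard chain--rule identity $(d\Upsilon)(X) = Y \circ \Upsilon$. Writing this in the local charts $\phi_{\tt j}$ of $M$ and $\phi_{\tt k}$ of $N$, with $\Upsilon_{\tt j k} = \phi_{\tt k}\circ\Upsilon\circ\phi_{\tt j}^{-1}$, the requirement that $\Upsilon_* X = Y$ along $\Upsilon$ becomes
\begin{equation*}
\Upsilon'_{\tt j k}(z_{\tt j})\, f_{\tt j}(z_{\tt j}) = g_{\tt k}\bigl(\Upsilon_{\tt j k}(z_{\tt j})\bigr),
\end{equation*}
which rearranges directly into \eqref{eqpullback}. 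Next I would verify that the locally defined $f_{\tt j}$ patch together into a globally well defined singular complex analytic vector field on $M$: this uses the cocycle condition for the charts of $M$ and $N$ together with the holomorphy of $\Upsilon$ on $M\backslash Sing(X)$, so that the transition functions act correctly and the $f_{\tt j}$ transform as components of a $(1,0)$--tensor. Finally I would check that the resulting object is singular complex analytic in the sense of Definition \ref{singulardefinition}, i.e. that the only singularities introduced are zeros, poles, essential singularities or accumulation points thereof, by analyzing the zeros of $\Upsilon'_{\tt j k}$ and the singular set of $Y$ pulled back through $\Upsilon$.

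For part 2, since \eqref{eqpullback} is exactly the local identity characterizing $\Upsilon_* X = Y$, any $\Upsilon$ (even multivalued) satisfying it locally realizes $X = \Upsilon^* Y$ by definition of the pullback; here I would note that the possible multivaluedness of $\Upsilon$ is harmless because $d\Upsilon$ and hence the relation \eqref{eqpullback} are branch--independent, in the same spirit as the additively automorphic functions $\Psi_X$ treated after Definition \ref{automorfa-aditiva}.

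The main obstacle I anticipate is not the chain--rule computation itself, which is routine, but rather the careful handling of the singular sets under pullback: one must confirm that $X = \Upsilon^* Y$ extends across the critical points of $\Upsilon$ and across $\Upsilon^{-1}(Sing(Y))$ as a \emph{singular} analytic vector field, rather than acquiring some pathology outside the permitted classes of Definition \ref{singulardefinition}. This is where the hypothesis that $\Upsilon$ is non--constant is used (to avoid identically vanishing denominators), and where the local normal forms of zeros and poles (Proposition \ref{prop:normalforms}) justify that ramification of $\Upsilon$ produces only admissible zeros and poles of $X$.
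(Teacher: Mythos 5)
Your proposal is correct and is essentially the paper's own argument: the paper proves this lemma in one line, citing Lemma \ref{confconjEquiv} (whose content is exactly the local identity $f(z)=g(\Upsilon(z))/\Upsilon'(z)$, obtained by differentiating the flow conjugacy) together with ``an easy computation in local coordinates,'' which is precisely your chain--rule derivation of \eqref{eqpullback} plus the patching check. Your added care about the singular sets and the converse for multivalued $\Upsilon$ fills in details the paper leaves implicit; the only imprecision is your remark that $d\Upsilon$ is branch--independent in general (it need not be --- what matters is that the hypothesis \eqref{eqpullback} asserts the quotient $g\circ\Upsilon/\Upsilon'$ equals the given single--valued $f_{\tt j}$ on every branch), but this does not affect the argument.
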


\begin{proof} Follows from Lemma \ref{confconjEquiv} and an easy computation in local coordinates.
\end{proof}
\noindent
The second statement concerning multivalued functions will be used in our work in \S\ref{Flowbox} 
and \S\ref{Newton}.

\medskip
We make a further convention: since we will be working on the Riemann surface $M$, no mention will be made 
of the local coordinates if these are not needed.

\subsection{Normal forms}\label{normalforms}
Clearly special attention is needed in the neighborhoods of singularities of the vector 
field $X$. 
Recall the description of the associated real flow and 
the normal forms for vector fields, see Figure \ref{forma-normal}. 
Several authors have contributed with proofs, see
J. A. Jenkins \cite{Jenkins} ch.~3, 
J. Gregor \cite{Gregor1}, \cite{Gregor2},
L. V. Ahlfors \cite{Ahlfors-Conformal} p.\,111, 
L. Brickman {\it et al.} \cite{BrickmanThomas}, 
K. Strebel \cite{Strebel} ch.~III,
A. Garijo {\it et al.} \cite{GarijoGasullJarque1}.
Further discussion on the origin of normal forms can be found
in \cite{Guillot2},
\cite{AP-MR} pp. 133, 159, and  references therein.

\begin{proposition}\label{prop:normalforms}
Let $X$ be a meromorphic vector field germ on $(\CC, 0)$
having a pole or zero  at the origin. Up to local biholomorphism 
$X$ is holomorphically equivalent to one of the following normal forms.  

\noindent 1) 
For a pole of order\footnote{
We convene that the order/multiplicity of a pole is to be negative.
} 
$- \kappa \leq -1$ 
$$
\frac{1}{z^\kappa} \del{}{z}.
$$

\noindent 2)
For  simple zero 
$$
\frac{z}{\lambda }  \del{}{z}, \ \ \ 
\lambda= Res(\omega_X, 0).
$$

\noindent 3)
For  zero  of order $s \geq 2$
$$
\frac{z^s}{ 1+ \lambda z^{s-1} }  \del{}{z}, 
\ \ \ \lambda= Res(\omega_X, 0).
$$ 
\hfill $\Box$
\end{proposition}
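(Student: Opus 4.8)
The plan is to reduce the whole classification to a normal-form statement for the dual $1$-form $\omega_X = dz/f(z)$ and then to treat the three cases by increasingly delicate coordinate changes. By Lemma \ref{confconjEquiv}, producing a local biholomorphism $\Upsilon:(\CC,0)\to(\CC,0)$ with $X=\Upsilon^*Y$ is equivalent to the condition $f(z)=g(\Upsilon(z))/\Upsilon'(z)$, and this is exactly $\Upsilon^*\omega_Y=\omega_X$ for the model $Y=g(t)\del{}{t}$. Thus everything reduces to transforming the germ of the meromorphic $1$-form $\omega_X$ into the model $1$-form by a holomorphic change of coordinate $t=\Upsilon(z)$. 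The invariants controlling this are the order of vanishing/pole of $\omega_X$ at $0$ and its residue. First I would record the order correspondence: a pole of $X$ of order $\kappa$ corresponds to a zero of $\omega_X$ of order $\kappa$ (no residue), a simple zero of $X$ to a simple pole of $\omega_X$, and a zero of $X$ of order $s$ to a pole of $\omega_X$ of order $s$; since the residue is invariant under biholomorphisms, the $\lambda$ in the normal forms is forced to equal $Res(\omega_X,0)$.

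For case 1 I would simply integrate. Here $\omega_X = z^\kappa\psi(z)\,dz$ with $\psi(0)\neq 0$, so $(\kappa+1)\Psi_X(z)=(\kappa+1)\int_0^z\omega_X$ is holomorphic and vanishes to order $\kappa+1$. Setting $w=\big((\kappa+1)\Psi_X(z)\big)^{1/(\kappa+1)}$ gives a genuine coordinate ($w(0)=0$, $w'(0)\neq0$) with $\omega_X=w^\kappa\,dw$, whose dual vector field is $w^{-\kappa}\del{}{w}$. For case 2, $\omega_X=(\lambda/z+\text{holomorphic})\,dz$ with $\lambda=Res(\omega_X,0)$; writing $h(z)=\int(\omega_X-\lambda\,dz/z)$ (holomorphic, since the residue is removed) and setting $w=z\exp(h(z)/\lambda)$ yields $\Psi_X=\lambda\log w$, hence $\omega_X=\lambda\,dw/w$, dual to $(w/\lambda)\del{}{w}$. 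Both cases are routine once the model $1$-forms $\omega_Y=t^\kappa\,dt$ and $\omega_Y=\lambda\,dt/t$ are identified.

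Case 3 is where the real work lies. Now $\omega_X$ has a pole of order $s\geq2$, so $\Psi_X(z)=G(z)+\lambda\log z$, where $G$ is meromorphic with a pole of order $s-1$ and $\lambda=Res(\omega_X,0)$; the model integral is $-1/\big((s-1)t^{s-1}\big)+\lambda\log t$. I would seek the coordinate change in the form $w=z\phi(z)$ with $\phi$ a holomorphic unit and impose $\Psi_X(z)=-1/\big((s-1)w^{s-1}\big)+\lambda\log w$. The key simplification is that, writing $\log w=\log z+\log\phi(z)$, the multivalued terms $\lambda\log z$ cancel, leaving the single-valued equation
\[
g(z)+(s-1)\lambda z^{s-1}\log\phi(z)=\frac{1}{\phi(z)^{s-1}},
\]
where $g(z):=-(s-1)z^{s-1}G(z)$ is holomorphic with $g(0)\neq0$; equivalently $\phi^{s-1}=\big(g(z)+(s-1)\lambda z^{s-1}\log\phi\big)^{-1}$.

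I expect the main obstacle to be solving this last fixed-point equation for a holomorphic unit $\phi$. Since the perturbing term $(s-1)\lambda z^{s-1}\log\phi$ vanishes to order $s-1\geq1$, the equation is a higher-order perturbation of $\phi^{s-1}=1/g(z)$, and I would solve it either by the holomorphic implicit function theorem / a contraction argument in a Banach space of holomorphic germs on a small disc with the sup norm, or by determining the Taylor coefficients of $\phi$ recursively and proving convergence via a majorant estimate, after fixing $\phi(0)$ to be a chosen $(s-1)$-th root of $1/g(0)$. Once $\phi$, and hence $w=z\phi(z)$, is obtained, the resulting coordinate change realizes $\Upsilon$, and the dual of the model $1$-form is exactly $t^s/(1+\lambda t^{s-1})\,\del{}{t}$, completing case 3; this also recovers the classical fact that the pair $(s,\lambda)$ is a complete holomorphic invariant.
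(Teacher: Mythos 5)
Your proof is correct, but note that the paper itself offers no proof of Proposition \ref{prop:normalforms}: it is stated with a qed box and attributed to the literature (Jenkins, Gregor, Ahlfors, Brickman--Thomas, Strebel, Garijo--Gasull--Jarque), so there is no internal argument to compare against line by line. Your route is nevertheless exactly the mechanism the paper uses informally elsewhere: dualize to the $1$--form $\omega_X=dz/f(z)$, observe that the order and the residue are the only local invariants, and build the coordinate change by normalizing the distinguished parameter $\Psi_X=\int\omega_X$ --- this is precisely what Examples \ref{ejemploPolos}, \ref{ejemploCerosSimples}, \ref{ejemploCerosMultiplesResiduoNoCero} and Table \ref{relaciones} record for the normal forms themselves, and it is also the standard line in the cited references. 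Cases 1 and 2 are routine as you say; in case 2 you divide by $\lambda$ without comment, which is harmless since the residue of $\omega_X$ at the simple pole produced by a simple zero of $X$ equals $1/f'(0)\ne 0$. The only substantive step is case 3, and your treatment closes correctly: after the key cancellation of the multivalued terms $\lambda\log z$, the equation $\phi^{-(s-1)}-(s-1)\lambda z^{s-1}\log\phi=g(z)$ is solved directly by the holomorphic implicit function theorem, since $F(z,u)=u^{-(s-1)}-(s-1)\lambda z^{s-1}\log u-g(z)$ is holomorphic near $(0,\phi_0)$ once a branch of $\log$ is fixed near $\phi_0\ne 0$, and $\partial F/\partial u(0,\phi_0)=-(s-1)\phi_0^{-s}\ne 0$; no majorant estimates are actually needed. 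Your closing remark that order and residue are invariant under biholomorphic pullback is also what shows the listed forms are pairwise inequivalent, i.e.\ that the list is a genuine classification and not merely a set of representatives.
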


In the case of functions the normal forms are simpler. 

\begin{lemma}\label{LemaFormaNormal}
Let $g$ be a meromorphic function germ on $(\CC, 0)$ having a pole, regular point or zero at the origin.
Up to local biholomorphism 
$g$ is equivalent to the following normal form

\centerline{
$z^k$ \ \ \   for 
$k \in \ZZ$
the multiplicity of $f$  at $0$.}
\hfill
$\Box$
\end{lemma}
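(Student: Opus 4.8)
The plan is to reduce $g$ to the monomial $z^k$ by a single biholomorphic change of the source coordinate, exploiting the fact that a non-vanishing holomorphic germ admits a holomorphic $k$-th root. First I would record the three cases through the local expansion: writing the (signed) multiplicity as $k$, one has $g(z) = z^{k}\, u(z)$ with $u$ holomorphic and $u(0) \neq 0$, where $k \geq 1$ at a zero of order $k$, $k \leq -1$ at a pole of order $-k$, and $k = 0$ at a regular point (a finite nonzero value), in which case $g = u$ is itself a unit.

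For the substantive cases $k \neq 0$ the plan is to extract a holomorphic $k$-th root of the unit $u$. On a sufficiently small disk $D$ about $0$ the function $u$ is holomorphic and non-vanishing, so $u'/u$ is holomorphic on the simply connected set $D$ and admits a primitive; exponentiating gives a single-valued branch of $\log u$, and I would set $u^{1/k} := \exp\bigl((1/k)\log u\bigr)$, which satisfies $(u^{1/k})^{k} = u$ for either sign of $k$. I would then define the new coordinate $\zeta(z) := z\, u(z)^{1/k}$. Since $\zeta(0) = 0$ and $\zeta'(0) = u(0)^{1/k} \neq 0$, the germ $\zeta$ is a biholomorphism of $(\CC, 0)$, and by construction $\zeta(z)^{k} = z^{k} u(z) = g(z)$. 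Writing $\Upsilon$ for the inverse germ $\zeta^{-1}$, this reads $g \circ \Upsilon(\zeta) = \zeta^{k}$, i.e.\ $\Upsilon^{*} g = \zeta^{k}$, which is exactly the asserted normal form; note that it is obtained by a pure source change with no surviving modulus, in contrast to the vector field normal forms of Proposition \ref{prop:normalforms}, where the residue cannot be removed.

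The pole case $k \leq -1$ is already covered by this same formula; alternatively one may apply the zero case to $1/g$, which has a zero of order $-k$, and invert. For the regular case $k = 0$, where $g$ is a non-vanishing finite unit, I would compose with the target M\"obius translation $w \mapsto w - g(0)$, turning the regular point into a zero of $g - g(0)$ and returning to the case $k \geq 1$ (with order $1$ exactly when $g'(0) \neq 0$); the bare symbol $z^{0} \equiv 1$ then simply records that $g$ carries neither a zero nor a pole.

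The hard part—indeed the only nontrivial point—is the extraction of the holomorphic $k$-th root, which rests on the simple connectivity of a small disk about $0$ guaranteeing a single-valued branch of $\log u$; everything else is a direct local computation. I would finally make explicit the notion of equivalence being used (a biholomorphism of the source, supplemented when needed by a M\"obius change of the target $\CW$ moving $g(0)$ to $0$ or $\infty$), so that the three cases of the statement correspond uniformly to the sign of $k$.
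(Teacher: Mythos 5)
Your proof is correct; note, however, that the paper offers no argument at all for Lemma \ref{LemaFormaNormal} --- it is stated as a classical fact and closed immediately with $\Box$ --- so there is no internal proof to compare against. What you give is precisely the standard argument that the paper implicitly invokes: write $g(z)=z^{k}u(z)$ with $u(0)\neq 0$, extract a holomorphic branch $u^{1/k}=\exp\bigl((1/k)\log u\bigr)$ on a small disk (the one genuinely non-trivial step, resting on simple connectivity), and pass to the coordinate $\zeta(z)=z\,u(z)^{1/k}$, which is a biholomorphism since $\zeta'(0)=u(0)^{1/k}\neq 0$ and satisfies $\zeta^{k}=g$; this handles zeros ($k\geq 1$) and poles ($k\leq -1$) uniformly. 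Your discussion of the regular-point case is also a genuine contribution rather than a digression: as you observe, a non-constant unit germ cannot be carried to the constant $z^{0}\equiv 1$ by a source change alone, so the lemma's statement at a regular point only makes sense if one either reads $z^{0}$ as bookkeeping (no zero, no pole) or enlarges the equivalence to admit a target M\"obius translation sending $g(0)$ to $0$; the paper is silent on which equivalence is meant, and your explicit resolution (source biholomorphism, supplemented by a target change when $k=0$) closes that gap. Your closing contrast with Proposition \ref{prop:normalforms} --- that no modulus such as the residue survives for functions --- is accurate and is exactly what the paper means by ``in the case of functions the normal forms are simpler.''
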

\begin{figure*}[htbp]
\centering
\includegraphics[width=0.87\textwidth]{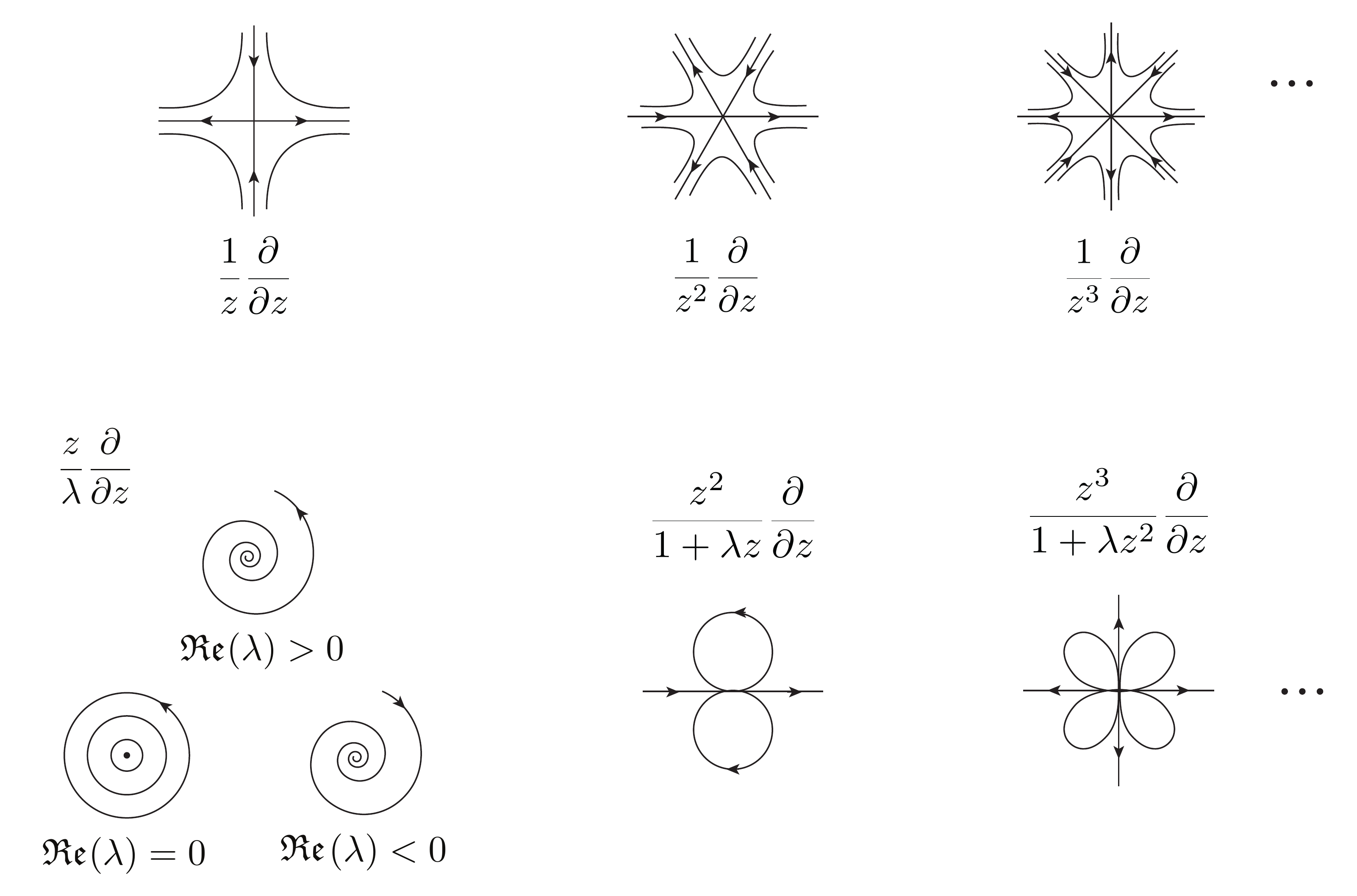}
\caption{Phase portrait and normal forms of $X$ at a pole or zero. 
Top row: for a pole of order $-\kappa\leq -1$, the phase portrait has $2(\kappa+1)$ separatrices 
arriving or leaving the pole. 
Bottom row: simple zeros and zeros of order $s\geq 2$, here $\lambda=Res(\omega_{X},0)$. 
For simple zeros, the phase portrait is the pullback via $\Psi_{X}(z)=\lambda \log z$ of the constant vector 
field $Y(t)=\del{}{t}$. 
For $s\geq2$ the trajectories of $X$ form a flower with exactly $2(s-1)$ petals. 
For further details see Examples \ref{holoVFonSphere}, \ref{ejemploPolos}, \ref{ejemploCerosSimples}, 
and \ref{ejemploCerosMultiplesResiduoNoCero}.
} 
\label{forma-normal}
\end{figure*}

\subsection{Geometry and dynamics of the pullback}\label{GeoDynPullback}
Recall that 
a \emph{covering} $\Psi:V\to W$ is a continuous surjective mapping such that for all $w\in W$, there exists an 
open set $U\ni w$ in $W$ with the characteristic that $\Psi^{-1}(U)$ is a disjoint union of open sets $O\subset V$ 
each of which satisfies that $\Psi:O\to U$ is a homeomorphism.

A \emph{branched or ramified covering} $\Psi:V\to W$ is a covering except at a finite number of 
points of $W$. Said points are known as \emph{branch points or ramification points}.
\begin{remark}[Geometrical interpretation of the pullback]
In the setting of Lemma \ref{pullbackformula}, it is now natural to consider singular complex analytic
maps $\Upsilon:M\longrightarrow N$ as singular complex analytic ramified covering maps, thus providing a 
geometric interpretation of the pullback:
\begin{center}
\emph{The trajectories of $X=\Upsilon^{*}Y$ are the pre--images, via $\Upsilon$ of the trajectories of $Y$.}
\end{center}
\end{remark}

Considering biholomorphisms as the covering maps we obtain:

\begin{example}[A $PSL(2,\CC)$--action on vector fields]\label{FLTvectorfield} 
Let $Y(t)=g(t)\del{}{t}$ be a complex vector field on $\CW_t$ 
and consider the pullback via a biholomorphism, 
$T(z)=\frac{az+b}{cz+d}$ with $ad-bc\neq0$,
of $\CW$.
Then
$$
T^*\big(Y(t)\big)(z)= \frac{(cz+d)^2}{(ad-bc)} g(T(z))\del{}{z} . 
$$ 
A useful particular case is when $T(z)=(1/z)$, so that 
$(T^*Y)(z)= -z^2 g\left(1/ z\right)\del{}{z}$.
\end{example}

Considering finitely ramified coverings, 
we unify several known examples,
poles are the simplest.

\begin{example}[Poles of order $-\kappa\leq-1$]\label{ejemploPolos}
The vector field

\centerline{
$X(z)=\frac{1}{z^{\kappa}}\del{}{z}$ \ \ on $(\CC, 0)$, 
}
\noindent
has a pole of order $-\kappa$ at the origin. 
The natural diagram is 
\begin{center}
\begin{picture}(180,80)(0,10)


\put(-45,70){$\big(\CC_z , X(z)=\frac{1}{z^{\kappa} }\del{}{z} \big) $}

\put(115,70){$\big( \{ \frac{z^{\kappa+1}}{\kappa+1}-t= 0 \},
\pi^*_{X,2}(\del{}{t})\big)$}

\put(108,73){\vector(-1,0){60}}
\put(70,80){$\pi_{X,1}$}

\put(133,65){\vector(0,-1){30}}
\put(138,47){$ \pi_{X,2} $}

\put(40,63){\vector(2,-1){73}}
\put(0,39){$\Psi_{X}(z)=
\frac{z^{\kappa+1}}{\kappa+1}  $}

\put(115,20){$\big(\CW_t,\del{}{t}\big) $ \ .}
\end{picture}
\end{center}

\noindent 
$X(z)$ is the pullback  
via the distinguished polynomial parameter 

\centerline{
$\Psi_{X}(z)=\int^{z} \zeta^{\kappa}\,d\zeta =\frac{z^{\kappa+1}}{\kappa+1}$ 
}
\noindent
of the constant vector field $Y(t)=\del{}{t}$.
The $2(\kappa +1)$ separatrices arrive or leave the pole 
in finite real time. 
See top row of Figure \ref{forma-normal} and Figure \ref{diagrama-triangulo-polo-doble} (a).
\begin{figure*}[htbp]
\centering
\includegraphics[width=\textwidth]{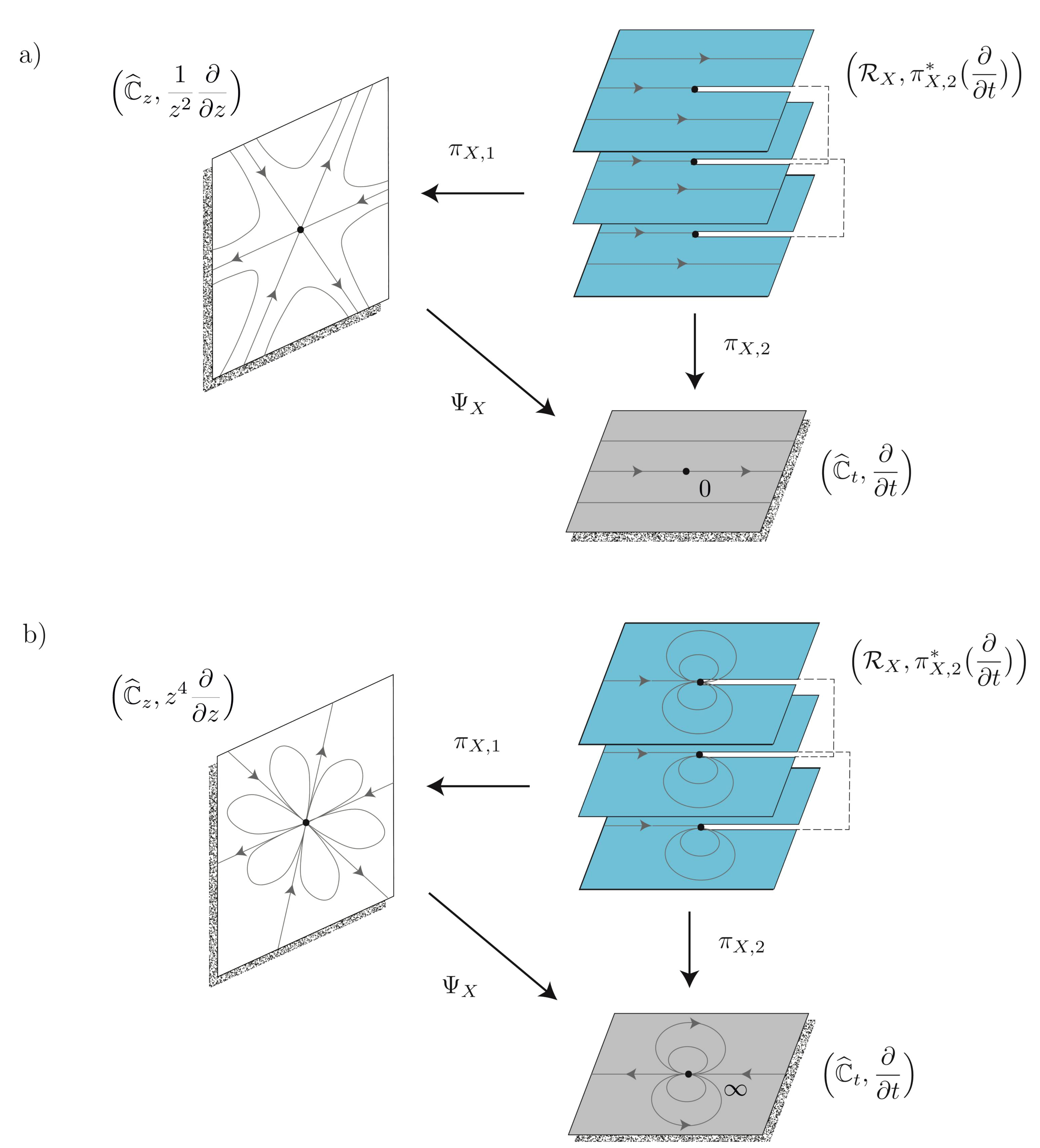}
\caption{The diagrams for the Riemann surfaces $\R_{X}$ corresponding to 
$\frac{1}{z^2}\del{}{z}$ and $z^{4}\del{}{z}$.}
\label{diagrama-triangulo-polo-doble}
\end{figure*}
\end{example}

Allowing $\log (z)$ as a ramified covering we obtain zeros.

\begin{example}[Simple zeros $s=1$]\label{ejemploCerosSimples}
The vector field

\centerline{
$X(z)=\frac{1}{\lambda} z \del{}{z}$ \ \ on $\CC$, $\lambda \in \CC^*$, 
}
\noindent
has a simple zero at the origin. 
The natural diagram is 
\begin{center}
\begin{picture}(180,80)(0,10)

\put(-45,70){$\big(\CC_z , X(z)= \frac{1}{\lambda z} \del{}{z} \big) $}

\put(115,70){$\big( \{ \lambda \log(z)-t= 0 \},
\pi^*_{X,2}(\del{}{t})\big)$}

\put(108,73){\vector(-1,0){60}}
\put(70,80){$\pi_{X,1}$}

\put(133,65){\vector(0,-1){30}}
\put(138,47){$ \pi_{X,2} $}

\put(40,63){\vector(2,-1){73}}
\put(-10,39){$\Psi_{X}(z)= \lambda \log(z) $}

\put(115,20){$\big(\CW_t,\del{}{t}\big) $ \ .}
\end{picture}
\end{center}

\noindent 
$X(z)$ is the pullback  
via the distinguished additively automorphic parameter  

\centerline{
$\Psi_{X}(z)=\int^{z}_1 \frac{\lambda}{\zeta }\,d\zeta =
\lambda \log(z)$ 
}
\noindent
of the constant vector field $Y(t)=\del{}{t}$. 
See bottom left of Figure \ref{forma-normal}.
\end{example}

\begin{example}[Multiple zeros $s\geq 2$, with residue $\lambda\in\CC$]
\label{ejemploCerosMultiplesResiduoNoCero}
The vector field

\centerline{
$X(z)=\frac{z^{s}}{1+\lambda z^{s-1}}\del{}{z}$, \ \
$\lambda\in\CC$, 
}
\noindent
has a zero or order $s\geq 2$ at the origin, and residue $\lambda$. 
The natural diagram is 
\begin{center}
\begin{picture}(180,80)(25,10)

\put(-70,70){$\big(\CC_z , X(z)= \frac{z^{s}}{1+\lambda z^{s-1}}  
\del{}{z} \big) $}

\put(100,70){$\Big( \{ \frac{1}{(1-s) z^{s-1}} + \lambda \log(z)-
\frac{1}{1-s} -t=0\},\pi^*_{X,2}(\del{}{t})\Big)$}

\put(98,73){\vector(-1,0){50}}
\put(70,80){$\pi_{X,1}$}

\put(133,60){\vector(0,-1){28}}
\put(138,47){$ \pi_{X,2} $}

\put(40,63){\vector(2,-1){73}}
\put(-25,44){$\Psi_{X}(z)= $}
\put(-30,29){$\frac{1}{(1-s) z^{s-1}} + \lambda \log(z) -\frac{1}{1-s}$}

\put(115,20){$\big(\CW_t,\del{}{t}\big) $ \ .}
\end{picture}
\end{center}

\noindent 
$X(z)$ is the pullback of 
via the distinguished additively automorphic parameter 

\centerline{
$\Psi_{X}(z)=\int^{z}_1 \big( \frac{1}{\zeta^s} + 
\frac{\lambda}{\zeta} \big)\,d\zeta =
\frac{1}{(1-s) z^{s-1}} + \lambda \log(z) - \frac{1}{1-s}$ 
}
\noindent
of the constant vector field $Y(t)=\del{}{t}$.
See Figure \ref{forma-normal}  and Figure \ref{diagrama-triangulo-polo-doble} (b).
\end{example}

Considering infinitely ramified covering maps we obtain essential singularities.

\begin{example}[Essential singularity]
\label{ejemplo-diagrama-esencial}
The entire vector field $X(z)=\e^{z} \del{}{z}$ has an essential singularity at $\infty\in\CW$ 
and no zeros or poles on $\CC$. 
It is the pullback via the $\infty$ to $1$ ramified covering, $\Psi_{X}(z)=-\e^{-z}$, 
of the constant vector field $\del{}{t}$. 
It is considered to be the simplest example of an essential singularity.
See Figure \ref{diagrama-esencial}.
\begin{figure*}[htbp]
\centering
\includegraphics[width=\textwidth]{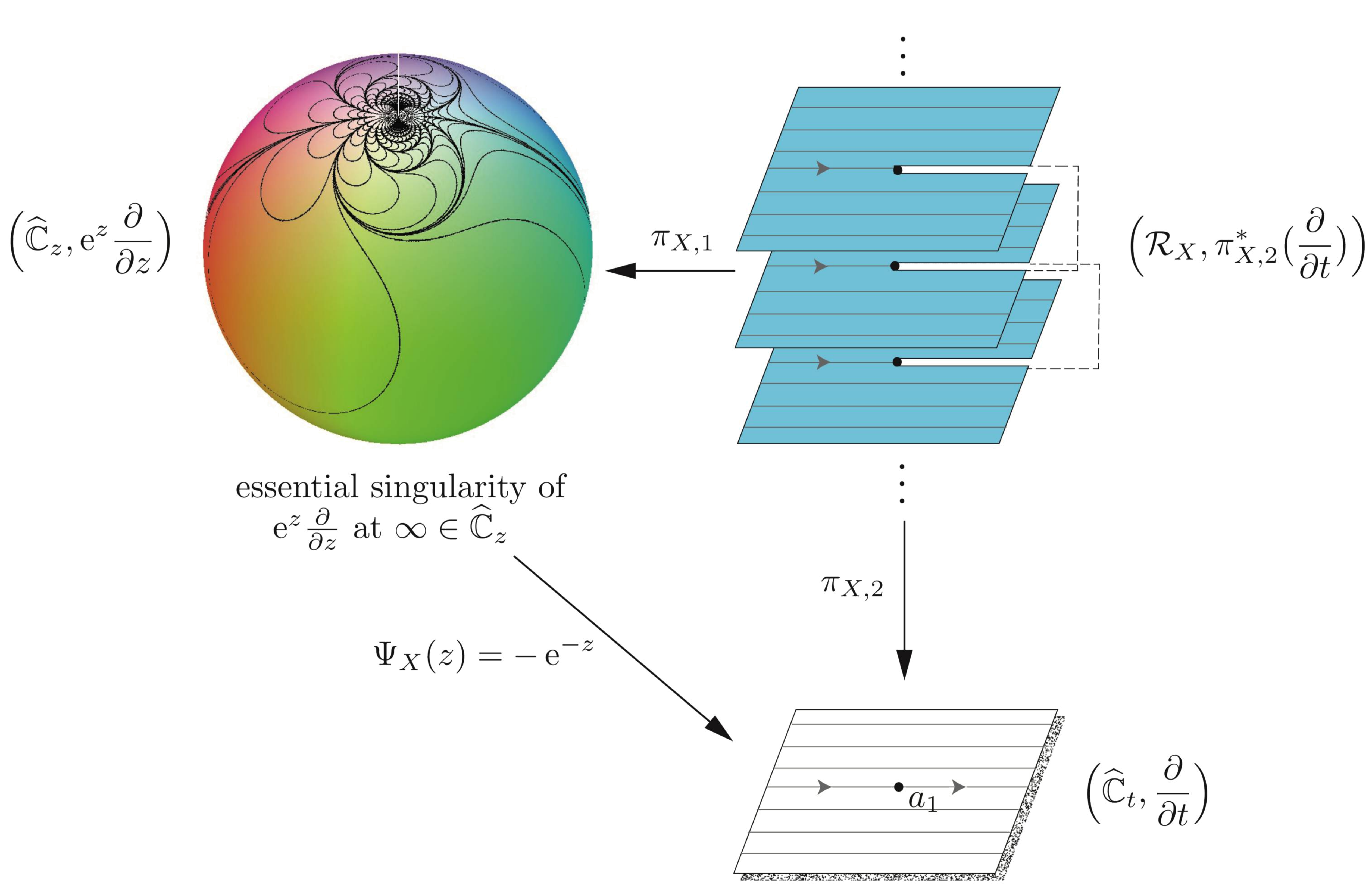}
\caption{The diagram for a Riemann surface $\R_{X}$ corresponding to
$\e^{z}\del{}{z}$.}
\label{diagrama-esencial}
\end{figure*}
\end{example}

\subsection{Every singular complex analytic vector field is the pullback of  $\del{}{t}$ and of $-w\del{}{w}$}
\label{pullbacks}
\subsubsection{Every singular complex analytic vector field admits a global flow box}
\label{Flowbox} 
A rather surprising result is the following, first presented in \cite{AP-MR}. 

\begin{theorem}[Pullbacks of $\del{}{t}$]
\label{VectorFieldsHaveFlowBox}
Every singular complex analytic vector field $X$ on $M$ is the pullback of $\del{}{t}$ on
$\CW_t$ via an additively automorphic singular complex analytic map $\Psi_{X}: M \longrightarrow \CW_t$,  i.e. 
$${\Psi_{X}}_{*}X=\del{}{t}.$$
Moreover
\begin{equation}\label{pullbackdt}
\Psi_{X}(z)=\int_{z_0}^z \omega_{X},
\end{equation}
for $z_0 \in M'$, and is a single--valued singular complex analytic function if and only if
the periods and residues of $\omega_{X}$ are zero,  i.e.
\begin{equation}\label{condicionunivaluada}
\int_{\gamma} \omega_{X}=0  \ \ \ \ \text{ for  every}  \ \  [\gamma] \in
H_1(M', \ZZ).
\end{equation}
\hfill\qed
\end{theorem}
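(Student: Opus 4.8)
The plan is to establish the three claims in turn: that $\Psi_X$ as defined pulls back $\del{}{t}$ to $X$, that the explicit formula \eqref{pullbackdt} holds, and that single--valuedness is equivalent to \eqref{condicionunivaluada}. First I would work locally on $M^0 = M \backslash Sing(X)$, where $X = f_{\tt j}(z)\del{}{z}$ is nonvanishing and holomorphic and $\omega_X = dz/f_{\tt j}(z)$ is a nonsingular holomorphic $1$--form. The map $\Psi_{X,{\tt j}}(z) = \int_{z_0}^z \omega_X$ then satisfies $\Psi_{X,{\tt j}}'(z) = 1/f_{\tt j}(z)$, so $\Psi_{X,{\tt j}}$ is a local biholomorphism wherever $f_{\tt j}$ is finite and nonzero. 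Applying Lemma \ref{pullbackformula} (the pullback formula) with $Y(t) = \del{}{t}$, i.e. $g_{\tt k}(t) \equiv 1$, and $\Upsilon = \Psi_{X,{\tt j}}$, equation \eqref{eqpullback} reads $f_{\tt j}(z) = 1/\Psi_{X,{\tt j}}'(z)$, which is exactly the relation just computed. Hence $X = \Psi_X^*\big(\del{}{t}\big)$ locally, and equivalently ${\Psi_X}_* X = \del{}{t}$; this is precisely the flow--box identity \eqref{flowBox} already recorded in the Remark of \S\ref{subsec:flatStruct}.

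Next I would promote this local statement to a global one. The key point is that on overlaps $V_{\tt j} \cap V_{\tt k}$ the local primitives differ by additive constants, $\Psi_{\tt j}(z) = \Psi_{\tt k}(z) + a_{\tt jk}$ with $a_{\tt jk} \in \CC$, as noted after \eqref{complexTime}; since $\del{}{t}$ is translation--invariant on $\CC_t$, the pullback relation is preserved under these transition maps, so the locally defined pullbacks patch to the single globally defined (possibly multivalued) map $\Psi_X(z) = \int_{z_0}^z \omega_X$ of \eqref{PsiGlobal}. This is an additively automorphic singular complex analytic map in the sense of Definition \ref{automorfa-aditiva}, because any two branches arising from homotopically distinct paths from $z_0$ to $z$ differ by a period or residue of $\omega_X$, which is a constant. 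The second statement of Lemma \ref{pullbackformula}, which explicitly permits multivalued $\Upsilon$, is what lets us assert $X = \Psi_X^* \big(\del{}{t}\big)$ globally despite the multivaluedness.

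Finally, for the single--valuedness criterion I would invoke the monodromy/period analysis. The function $\Psi_X(z) = \int_{z_0}^z \omega_X$ is single--valued on $M'$ if and only if the integral is independent of the homotopy class of the path, which by the standard correspondence between periods of a closed $1$--form and homology classes holds if and only if $\int_\gamma \omega_X = 0$ for every $[\gamma] \in H_1(M', \ZZ)$. Because $\omega_X$ may have poles (at the zeros of $X$) and essential singularities, ``periods'' here must be understood to include the residues $2\pi i \, Res(\omega_X, \cdot)$ contributed by small loops around the simple poles of $\omega_X$; this is why the statement speaks of both periods \emph{and} residues, and why the domain is restricted to $M'$. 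I expect the main obstacle to be precisely this bookkeeping near the singular set: one must verify that the additive--automorphy constants $a_{\alpha\beta}$ are exhausted by the period--and--residue lattice and that $\Psi_X$ extends holomorphically across the poles of $\omega_X$ (the simple zeros of $X$ give logarithmic branching, while the higher--order behavior matches the normal forms of Proposition \ref{prop:normalforms} and Examples \ref{ejemploCerosSimples}--\ref{ejemploCerosMultiplesResiduoNoCero}). Granting that the analytic continuation behaves as catalogued in those examples, the equivalence with \eqref{condicionunivaluada} follows directly from the vanishing of all generating periods.
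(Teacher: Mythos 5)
Your proposal is correct and takes essentially the same approach as the paper: the paper states this theorem without proof (deferring to \cite{AP-MR}), but its proof of the parallel Theorem \ref{everythingisnewton} proceeds exactly as you do --- apply Lemma \ref{pullbackformula} with $g\equiv 1$ to reduce the pullback relation to $f(z)=1/\Psi_X^{\prime}(z)$, integrate to obtain \eqref{pullbackdt}, and characterize single--valuedness by the vanishing of the periods and residues of $\omega_X$. Your globalization step, via the additively automorphic structure of \eqref{PsiGlobal} and the multivalued case in part 2 of Lemma \ref{pullbackformula}, is precisely the framework the paper sets up for this purpose.
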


Note that \eqref{condicionunivaluada} implies that the zeros of $X$, if there are any, are
of order $\geq2$ (\emph{i.e.} the poles of $\omega_{X}$ are non--simple).
It also says that in this case $\omega_{X}$ is an exact differential 1--form.

The above result is of particular interest from the 
point of view of the theory of differential equations. 
Explicitly, the trajectories of $X$ 
are mapped to trajectories of $\del{}{t}$, which are horizontal straight 
lines in $\CC$ (recall that we speak of real trajectories as in \eqref{ecdifflow}).
This is a remarkable property:  the existence and uniqueness of trajectories for singular complex analytic 
vector fields admits a very simple proof, that is also global on Riemann surfaces $M$. 
Recall that for real analytic vector fields, in general
we can only obtain ``long flow boxes'', see \cite{Palis} ch.~3 \S1.
Moreover, 
$X$ does not present limit cycles, see
\cite{Lukashevich}, \cite{Benzinger}, \cite{Sabatini}.
Note that the point  $z_0$ can be a pole of $X$
and the fact remains that ${\Psi_X}_{*}X = \del{}{t}$ provides a flow box around $z_0$, 
as in Example \ref{ejemploPolos}.

\subsubsection{Every singular complex analytic vector field is a 
pullback of $\lambda w\del{}{w}$}
\label{Newton}
Analogously one has the following result, also first presented in \cite{AP-MR}.
\begin{theorem}[Pullbacks of  
$\frac{1}{\lambda} w \del{}{w}$]
\label{everythingisnewton}
Every singular complex analytic vector field $X$ on $M$ is 
the pullback of $\frac{1}{\lambda} w \del{}{w}$ on $\CW_w$, for any $\lambda\in\CC^{*}$, 
via a  (possibly multivalued) singular complex analytic map
$\Phi_{X}(z): M \longrightarrow \CW_w$, i.e. 
$${\Phi_X}_{*}\big(X\big)(w)=\frac{1}{\lambda} w\del{}{w}.$$
Moreover
\begin{equation}\label{pullbacktdt}
\Phi_{X}(z)=\exp \Big( \frac{1}{\lambda} \int_{z_0}^z \omega_{X} \Big),
\end{equation}
for $z_0 \in M'$, and is a single--valued singular complex analytic function if and only if
the periods and residues of $\omega_{X}$ are integer multiples of
some complex number $\Pi \in \CC^*$, i.e.
\begin{equation}
n \Pi = \int_{\gamma}\omega_{X} \ \ \ {\it for \ [\gamma] \in H_1(M',
\ZZ)} , \   \ n \in \ZZ \ .
\end{equation}
\end{theorem}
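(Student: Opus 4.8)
The plan is to derive this result from Theorem \ref{VectorFieldsHaveFlowBox} together with the pullback formula of Lemma \ref{pullbackformula}, observing that $\Phi_X$ factors through the distinguished parameter $\Psi_X$ via the exponential. First I would set $\Psi_X(z) = \int_{z_0}^z \omega_X$, so that $\Psi_X'(z) = 1/f(z)$ on $M'$, and define $\Phi_X(z) = \exp\big(\frac{1}{\lambda}\Psi_X(z)\big)$ as in \eqref{pullbacktdt}. The conceptual content is that $\Phi_X = h \circ \Psi_X$ where $h(t) = \exp(t/\lambda) : \CW_t \to \CW_w$ generalizes the exponential map appearing in Diagram \eqref{diagrama-basico}; indeed a direct application of Lemma \ref{pullbackformula} gives $h^*\big(\frac{1}{\lambda} w \del{}{w}\big) = \frac{(1/\lambda)h(t)}{h'(t)}\del{}{t} = \del{}{t}$, since $h'(t) = \frac{1}{\lambda} h(t)$. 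Thus composing the two pullbacks should recover $X$.

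Rather than invoke functoriality of pullbacks for multivalued maps, I would verify the pullback identity for $\Phi_X$ directly and appeal to part 2 of Lemma \ref{pullbackformula}. Differentiating gives $\Phi_X'(z) = \frac{1}{\lambda}\Psi_X'(z)\Phi_X(z) = \frac{1}{\lambda f(z)}\Phi_X(z)$, so with $g(w) = \frac{1}{\lambda} w$ one computes
\begin{equation*}
\frac{g(\Phi_X(z))}{\Phi_X'(z)} = \frac{(1/\lambda)\Phi_X(z)}{(1/(\lambda f(z)))\Phi_X(z)} = f(z),
\end{equation*}
which is precisely \eqref{eqpullback}. Since this holds for the (possibly multivalued) map $\Phi_X$, Lemma \ref{pullbackformula}.2 yields $X = \Phi_X^*\big(\frac{1}{\lambda} w \del{}{w}\big)$, equivalently ${\Phi_X}_*X = \frac{1}{\lambda} w \del{}{w}$, establishing the first assertion for every $\lambda \in \CC^*$.

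For the single--valuedness criterion I would analyze the monodromy of $\Phi_X$. By Theorem \ref{VectorFieldsHaveFlowBox}, $\Psi_X$ is additively automorphic: continuing along a loop $[\gamma] \in H_1(M', \ZZ)$ changes $\Psi_X$ by the period or residue $\int_\gamma \omega_X$. Exponentiation converts this additive automorphy into multiplicative automorphy, so continuation of $\Phi_X$ around $[\gamma]$ multiplies it by the factor $\exp\big(\frac{1}{\lambda}\int_\gamma \omega_X\big)$. Hence $\Phi_X$ is single--valued if and only if each such factor equals $1$, i.e. $\frac{1}{\lambda}\int_\gamma \omega_X \in 2\pi i \ZZ$ for every $[\gamma]$, i.e. every period and residue of $\omega_X$ is an integer multiple of $\Pi := 2\pi i \lambda \in \CC^*$. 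Conversely, if all periods and residues lie in $\Pi\ZZ$ for some $\Pi \in \CC^*$, choosing $\lambda = \Pi/(2\pi i)$ makes $\Phi_X$ single--valued, which is the stated equivalence.

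The computations here are routine; the one point demanding care is the passage from the additive automorphy of $\Psi_X$ to the multiplicative monodromy of $\Phi_X$, together with tracking that $\Phi_X$ is a map into $\CW_w$, so that the zeros $0$ and $\infty$ of $\frac{1}{\lambda} w \del{}{w}$ are attained exactly where $\exp(\frac{1}{\lambda}\Psi_X)$ degenerates. I expect the main obstacle, as in Theorem \ref{VectorFieldsHaveFlowBox}, to be confirming that $\Phi_X$ extends as a singular complex analytic map across $Sing(X)$ in the sense of Definition \ref{singulardefinition}; this should follow from the normal forms of Proposition \ref{prop:normalforms} and the local diagrams of Examples \ref{ejemploPolos}--\ref{ejemplo-diagrama-esencial}.
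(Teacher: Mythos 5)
Your proposal is correct and follows essentially the same route as the paper: both rest on Lemma \ref{pullbackformula} (you verify \eqref{eqpullback} for the explicit $\Phi_{X}=\exp(\Psi_X/\lambda)$ and invoke part 2, whereas the paper derives the formula from the logarithmic--derivative condition; this is the same computation run in the opposite direction), and both obtain the single--valuedness criterion from the $2\pi i$--periodicity of $\exp$, identifying $\Pi = 2\pi i\lambda$.
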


\begin{proof}
By Lemma \ref{pullbackformula},
$X(z)$ is the pullback of $\frac{1}{\lambda} w\del{}{w}$ iff
$$
f(z)=\frac{1}{\lambda}\frac{\Phi_{X}(z)}{\Phi_{X}'(z)},
$$
for some (possibly multivalued) singular complex analytic  function $\Phi_{X}$ on $M$. Equivalently
$1/(\lambda f(z))$ is the logarithmic derivative of $\Phi_{X}(z)$, hence upon integration and exponentiation one obtains the expression \eqref{pullbacktdt} for $\Phi_{X}$. \\
On the other hand, by virtue of the explicit form of $\Phi_{X}$ and since $\exp$ is $2\pi i$--periodic, the differential form $\omega_X$
has a residue or period that is not an integer multiple of $\Pi=2\pi i \lambda$ if and only if $\Phi_{X}(z)$ is multivalued.
\end{proof}

\subsubsection{Correspondence between $\del{}{t}$ and $-w\del{}{w}$}\label{holofamilycorresp}

As was seen in the previous two sections, one can express any singular complex analytic vector field $X$ on $M$ as the pullback of either 
$\del{}{t}$ or $-w\del{}{w}$ via the (possibly multivalued) functions
$$
\Psi_{X}(z)= \int_{z_{0}}^z \omega_{X}
\qquad\text{  and  }\qquad
\Phi_{X}(z)=\exp \Big( - \int_{z_0}^z \omega_{X} \Big),
$$
respectively.

\begin{corollary}\label{corodiagramabasico}
Let $X$ be a singular complex analytic vector field on $M$. 
It is the pullback of $\del{}{t}$ and it is also the pullback of $- w\del{}{w}$. 
Furthermore $\del{}{t}$ is the pullback via $\exp(-t)$ of $-w\del{}{w}$ and we have the following 
commutative diagram
\begin{center}
\begin{picture}(210,55)
\put(-110,20){\eqref{diagrama-basico}}

\put(17,8){$ \Big(\CW_{t},\del{}{t}\Big) $}
\put(62,12){\vector(1,0){70}}
\put(73,0){$\exp(-t)$}

\put(135,8){$ \Big(\CW_{w},-w\del{}{w}\Big) $,}
\put(122,34){$\Phi_{X}$}
\put(113,42){\vector(1,-1){20}}

\put(76,45){$(M,X)$}
\put(43,34){$\Psi_{X}$}
\put(70,42){\vector(-1,-1){20}}

\end{picture}
\end{center}
\noindent
for the respective (possibly multivalued) singular complex analytic functions.
\hfill \qed
\end{corollary}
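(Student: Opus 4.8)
The plan is to read this corollary as the bundling together of Theorems \ref{VectorFieldsHaveFlowBox} and \ref{everythingisnewton}, glued by the contravariant functoriality of the pullback. First I would invoke Theorem \ref{VectorFieldsHaveFlowBox} verbatim: it already gives ${\Psi_X}_*X = \del{}{t}$, equivalently $X = \Psi_X^*(\del{}{t})$, with $\Psi_X(z) = \int_{z_0}^z \omega_X$. Next I would specialize Theorem \ref{everythingisnewton} to $\lambda = -1$, which yields $X = \Phi_X^*(-w\del{}{w})$ with $\Phi_X(z) = \exp\big(-\int_{z_0}^z \omega_X\big) = \exp(-\Psi_X(z))$. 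These two facts are exactly the two slanted arrows of Diagram \eqref{diagrama-basico}, so after this step only the bottom horizontal arrow and the compatibility of the triangle remain.

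To supply the bottom arrow I would verify that $\del{}{t}$ is the pullback of $-w\del{}{w}$ under $\Upsilon(t) = \exp(-t)$. This is a one-line application of formula \eqref{eqpullback} of Lemma \ref{pullbackformula}: writing the target field as $g(w)\del{}{w}$ with $g(w) = -w$, and noting $\Upsilon'(t) = -\exp(-t)$, the pulled-back coefficient is $g(\Upsilon(t))/\Upsilon'(t) = (-\exp(-t))/(-\exp(-t)) = 1$, whence $\Upsilon^*(-w\del{}{w}) = \del{}{t}$. The triangle is then consistent because, by construction, $\Phi_X = \exp(-t)\circ\Psi_X = \Upsilon\circ\Psi_X$, and pullback is contravariant: $(\Upsilon\circ\Psi_X)^* = \Psi_X^*\circ\Upsilon^*$. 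Chaining the three computations gives $\Phi_X^*(-w\del{}{w}) = \Psi_X^*(\Upsilon^*(-w\del{}{w})) = \Psi_X^*(\del{}{t}) = X$, in agreement with Theorem \ref{everythingisnewton}, so the diagram commutes.

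Since the corollary is a formal consequence of results already in hand, there is no substantive obstacle; the only genuinely delicate point is the bookkeeping of multivaluedness. The map $\Psi_X$ is additively automorphic and $\Phi_X$ correspondingly multiplicatively automorphic, so the identities above must be read branchwise. Here I would lean on the second statement of Lemma \ref{pullbackformula}, which is explicitly formulated for possibly multivalued maps satisfying \eqref{eqpullback}. The branch ambiguities are precisely absorbed by the symmetries of the target fields: a change of branch $t \mapsto t + a_{\alpha\beta}$ leaves $\del{}{t}$ invariant (translation symmetry), while the induced $w \mapsto \exp(-a_{\alpha\beta})\,w$ leaves $-w\del{}{w}$ invariant (scaling symmetry, since $-cw\,\del{}{(cw)} = -w\del{}{w}$). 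Thus each local branch pulls back correctly and the global statement follows, with no computation beyond the single coefficient check above.
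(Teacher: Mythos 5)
Your proposal is correct and follows essentially the same route as the paper, which treats the corollary as an immediate consequence of Theorem \ref{VectorFieldsHaveFlowBox} and Theorem \ref{everythingisnewton} (with $\lambda=-1$) together with the identity $\Phi_X=\exp(-\Psi_X)$, and therefore states it with no written proof. Your explicit verification of the bottom arrow via \eqref{eqpullback}, the use of functoriality of the pullback (which the paper records as the chain--rule corollary in \S\ref{camposnewtonianos}), and the branchwise bookkeeping absorbed by the translation and scaling symmetries of $\del{}{t}$ and $-w\del{}{w}$ simply fill in the details the paper leaves to the reader.
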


Corollary \ref{corodiagramabasico} is exemplified in Figure \ref{DosZeroSimple} for the entire vector field 
$X(z)=-\e^{-z\e^{z}}\del{}{z}$ which has a class 2 essential singularity at $\infty\in\CW$, see \cite{AP-MR}.
Yet it is the pullback of both $\del{}{t}$ and $-w\del{}{w}$.
\begin{figure*}[htbp]
\centering
\includegraphics[width=\textwidth]{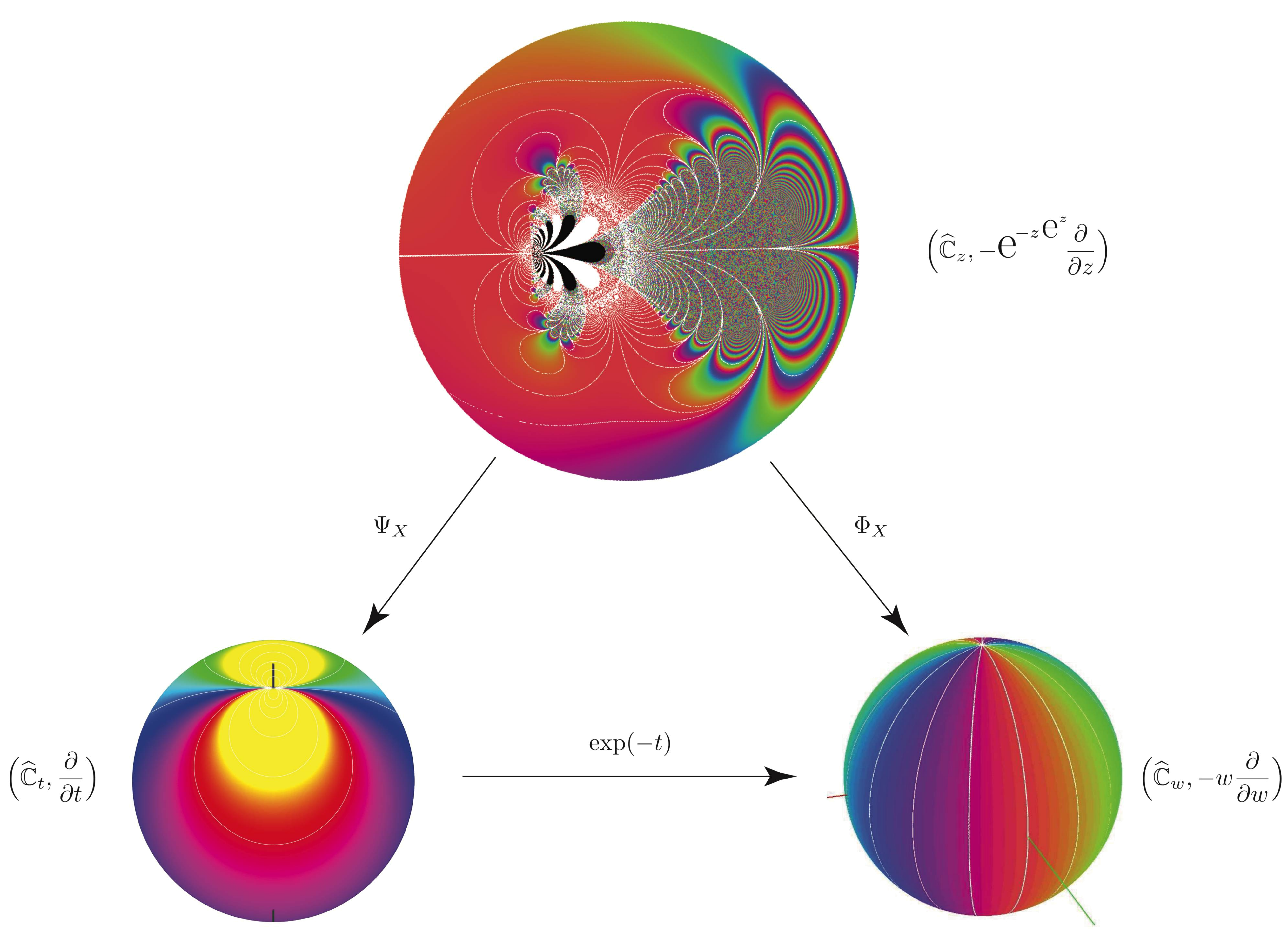}
\caption{Example of Diagram \ref{diagrama-basico} for $X(z)=-\e^{-z\,\e^z} \del{}{z}$. }
\label{DosZeroSimple}
\end{figure*}

\section{Newton vector fields: pullbacks of $-w\del{}{w}$}
\label{camposnewtonianos}
We now recall a special kind of complex analytic vector fields that were first studied in the 80's, 
by M.\,W.\,Hirsch, S.\,Smale and M.\,Shub (\cite{HirschSmale}, \cite{Smale1}, \cite{Shub}, \cite{DedieuShub}). 
The concept of Newton vector field was introduced together with that of  Newton graphs that arise from studying 
Newton's method of root finding for a complex polynomial. 
Taking their definition as a guide we have:
\begin{definition}
A singular complex analytic vector field $X(z)=f_{\tt j}(z)\del{}{z}$ on $M$ is said to be a \emph{Newton vector 
field} if it can be represented as 
$$X(z)=-\frac{\Phi_{\tt j}(z)}{\Phi'_{\tt j}(z)}\del{}{z},$$
for some (possibly multivalued) singular complex analytic function $\{\Phi_{\tt j}\}$ on $M$.
\end{definition}
\noindent
From the definition and the results of \S\ref{Newton} one has as an immediate consequence that 
$\Phi=\Phi_{X}$ and the following.
\begin{corollary}
Every singular complex analytic vector field $X$ on an arbitrary Riemann surface $M$ is a Newton vector field of 
a suitable $\Phi_{X}$.
\end{corollary}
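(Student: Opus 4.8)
The final statement to prove is the corollary asserting that every singular complex analytic vector field $X$ on an arbitrary Riemann surface $M$ is a Newton vector field of a suitable $\Phi_X$.

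The plan is to derive this as an immediate logical consequence of the machinery already assembled, specifically Theorem \ref{everythingisnewton} together with the Definition of a Newton vector field. First I would recall what must be exhibited: by definition, $X$ is a Newton vector field precisely when there exists a (possibly multivalued) singular complex analytic function $\Phi$ on $M$ with
$$
X(z) = -\frac{\Phi(z)}{\Phi'(z)}\del{}{z}.
$$
So the entire task reduces to producing such a $\Phi$.

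Now I would invoke Theorem \ref{everythingisnewton} in the special case $\lambda = -1$. That theorem asserts that $X$ is the pullback of $\frac{1}{\lambda} w \del{}{w}$ via the map $\Phi_X(z) = \exp\left(\frac{1}{\lambda}\int_{z_0}^z \omega_X\right)$; setting $\lambda = -1$ yields the vector field $-w\del{}{w}$ and the map $\Phi_X(z) = \exp\left(-\int_{z_0}^z \omega_X\right)$, exactly the Newton map appearing in the dictionary (item 6 of Proposition \ref{basic-correspondence}). Concretely, the proof of Theorem \ref{everythingisnewton} already establishes via Lemma \ref{pullbackformula} that being the pullback of $\frac{1}{\lambda}w\del{}{w}$ is equivalent to $f(z) = \frac{1}{\lambda}\frac{\Phi_X(z)}{\Phi_X'(z)}$; with $\lambda = -1$ this reads $f(z) = -\frac{\Phi_X(z)}{\Phi_X'(z)}$, which is precisely the defining equation of a Newton vector field with $\Phi = \Phi_X$.

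Thus the only remaining point is the identification $\Phi = \Phi_X$ flagged in the sentence preceding the corollary, and I would simply note that the $\Phi$ whose existence is guaranteed is forced to coincide with $\Phi_X$ from the dictionary, since both solve the same logarithmic-derivative relation $1/(\lambda f) = \Phi'/\Phi$ and hence agree up to the multiplicative constant absorbed in the choice of base point $z_0$. Because the argument is a one-line specialization of an already-proven theorem, I do not anticipate any genuine obstacle; the only subtlety worth a word is that $\Phi_X$ may be multivalued, but this is harmless, as both the Definition of a Newton vector field and Theorem \ref{everythingisnewton} explicitly permit multivalued $\Phi$, and the quotient $\Phi_X/\Phi_X'$ is single valued precisely because, as noted after Definition \ref{automorfa-aditiva}, the differential $d\Psi_X = \omega_X$ is single valued on $M$.
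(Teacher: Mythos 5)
Your proposal is correct and matches the paper's own treatment: the paper states the corollary as an immediate consequence of the definition of Newton vector field together with Theorem \ref{everythingisnewton} (\S\ref{Newton}), identifying $\Phi=\Phi_X$, which is exactly your specialization to $\lambda=-1$. Your added remarks on multivaluedness and the single-valuedness of the logarithmic-derivative relation are consistent with the paper's framework and raise no issues.
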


\begin{example}\label{tanvectorfield}
The complex vector field 
$$X(z)=-\tan(z)\del{}{z}$$
is the pullback via $\Phi_{X}(z)=\sin(z)$ of the complex vector field $Y(t)=-w\del{}{w}$, so in fact it is a Newton 
vector field.
\end{example}

\medskip
On the other hand, recalling the geometrical interpretation of the pullback, and considering 
$\Phi_{\alpha}:M_{z}\longrightarrow\CW_{w},$ a singular complex analytic ramified covering over the sphere, one can 
construct Newton vector fields as the pullback via $\Phi_{\alpha}$ of the radial vector field $-w\del{}{w}$ on the 
Riemann sphere. Note also that the composition of ramified coverings is still a ramified covering, hence:
\begin{corollary}
Let $S_{w}$, $N_{t}$ and $M_{z}$ be Riemann surfaces and 
let $Y(w)=g(w)\del{}{w}$ be a singular complex analytic vector field on $S_{w}$.
Further suppose that
$$\Phi_{1}:M_{z}\longrightarrow N_{t}, \quad \Phi_{2}:N_{t}\longrightarrow S_{w}$$ 
are singular complex analytic ramified coverings, then $X(z)=f(z)\del{}{z}$ is the pullback via 
$\Phi=\Phi_{2}\circ\Phi_{1}$ of $Y(w)$ if and only if
$$f(z)=\frac{g(\Phi(z))}{\Phi'(z)}=\frac{g(\Phi_{2}(\Phi_{1}(z)))}{\Phi_{2}'(\Phi_{1}(z))\ \Phi_{1}'(z)}.$$
\end{corollary}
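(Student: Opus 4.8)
The plan is to prove both directions of the equivalence by a direct local computation, using the chain rule for the derivative of a composition of ramified coverings. First I would recall from Lemma \ref{pullbackformula} that, given a singular complex analytic vector field $Y(w)=g(w)\del{}{w}$ on $S_{w}$ and a singular complex analytic map $\Phi: M_{z}\longrightarrow S_{w}$, the pullback is characterized by the relation
$$
f(z)=\frac{g(\Phi(z))}{\Phi'(z)}.
$$
So the entire statement reduces to verifying that $\Phi=\Phi_{2}\circ\Phi_{1}$ satisfies this pullback formula precisely when the displayed double--quotient identity holds. The key observation is that since $\Phi_{1}:M_{z}\to N_{t}$ and $\Phi_{2}:N_{t}\to S_{w}$ are both ramified coverings, their composition $\Phi=\Phi_{2}\circ\Phi_{1}$ is again a ramified covering (the remark preceding the statement already notes that the composition of ramified coverings is a ramified covering), so Lemma \ref{pullbackformula} genuinely applies to $\Phi$.

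For the forward direction, I would assume $X=\Phi^{*}Y$ and invoke Lemma \ref{pullbackformula} to write $f(z)=g(\Phi(z))/\Phi'(z)$. Then I would apply the chain rule, $\Phi'(z)=\Phi_{2}'(\Phi_{1}(z))\,\Phi_{1}'(z)$, and substitute $\Phi(z)=\Phi_{2}(\Phi_{1}(z))$ in the numerator; this immediately gives
$$
f(z)=\frac{g(\Phi_{2}(\Phi_{1}(z)))}{\Phi_{2}'(\Phi_{1}(z))\ \Phi_{1}'(z)},
$$
which is the asserted double--quotient expression. For the converse, the same computation runs in reverse: if $f$ equals the double quotient, then collapsing the chain--rule factorization in the denominator and recomposing in the numerator recovers $f(z)=g(\Phi(z))/\Phi'(z)$, and the second statement of Lemma \ref{pullbackformula} (the converse direction, valid even for multivalued functions) then yields $X=\Phi^{*}Y$. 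Thus both implications are consequences of a single algebraic identity together with the chain rule.

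There is really no deep obstacle here; the statement is essentially a functoriality property of the pullback, $(\Phi_{2}\circ\Phi_{1})^{*}=\Phi_{1}^{*}\circ\Phi_{2}^{*}$, made explicit in coordinates. The one point requiring genuine care — and where I would locate whatever difficulty exists — is the handling of the \emph{singular} and \emph{multivalued} features. Specifically, I would want to confirm that the chain rule is valid across the ramification points and that the factorization $\Phi'(z)=\Phi_{2}'(\Phi_{1}(z))\,\Phi_{1}'(z)$ makes sense as an identity of (possibly multivalued) singular complex analytic functions, so that the branch choices on $N_{t}$ are compatible under composition. Since Lemma \ref{pullbackformula}.2 is explicitly stated to hold for multivalued $\Upsilon$, this compatibility is already licensed, and the identity holds wherever all three maps are holomorphic, extending by the analytic--continuation/removable--singularity conventions of Definition \ref{singulardefinition} to the full singular setting.
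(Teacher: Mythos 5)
Your proposal is correct and follows exactly the paper's own argument: the paper's proof is the one--line observation that the corollary is a direct consequence of the chain rule together with Lemma \ref{pullbackformula}, which is precisely the computation you carry out (your additional remarks on multivaluedness and ramification points are a fuller spelling--out of the same idea, not a different route).
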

\begin{proof}
The proof is a direct consequence of the chain rule and Lemma \ref{pullbackformula}.
\end{proof}
So by considering $Y(w)=-w\del{}{w}$ we can construct many Newton vector fields.
Some examples follow.

\begin{example}\label{examplecoshPlus2}
The complex vector field 
$$X(z)=-(\cosh(z)+1)\del{}{z}$$
is obtained from $-w\del{}{w}$ via pullback with $\Phi_{X}=\Phi_3\circ\Phi_2\circ\Phi_1$, where 
$\Phi_3(s)=\e^{s}$, $\Phi_2(t)=\frac{t-1}{t+1}$ and $\Phi_1(z)=\e^{z}$. It has zeros at  $z_n=i(2n+1)\pi,\ n\in\ZZ$.
\end{example}

\begin{example}\label{exampleExp}
The complex vector field 
$$X(z)=\e^{z}\del{}{z}$$ 
is obtained from $-t\del{}{t}$ via pullback with $\Phi_{X}=\Phi_2\circ\Phi_1$, where $\Phi_{2}(w)=\e^{w}$ 
and $\Phi_{1}(z)=\e^{-z}$. It has an essential singularity at $\infty$.
\end{example}

\begin{example}\label{exampleExp3}
The complex vector field 
$$X(z)=\frac{\e^{z^{3}}}{3z^{3}-1}\del{}{z}$$ 
is obtained from $-w\del{}{w}$ via pullback with $\Phi_{X}=\Phi_2\circ\Phi_1$, where 
$\Phi_{2}(w)=w\e^{-w^{3}}$ and $\Phi_{1}(z)=\e^{-z}$.
It has an essential singularity at $\infty$ and 3 poles on the finite plane.
\end{example}

\begin{remark}
It should be noted that when $\Psi$ is rational, the $\omega$--limit set of almost any\footnote{
There will be a finite number of trajectories, corresponding to the separatrices of the poles, where the 
$\omega$ limit set will not be a zero.
} trajectory for the flow of

\centerline{
$X(z)= \frac{\Psi(z)}{\Psi'(z)}\del{}{z}$  
}

\noindent
on $M$ corresponds to the zeros of $\Psi$, in agreement with the work of M.\,W.\,Hirsch, 
S.\,Smale and M.\,Shub (\cite{HirschSmale}, \cite{Smale1}, \cite{Shub}, \cite{DedieuShub}).

\noindent
However, when $\Psi$ is not rational, the $\omega$--limit could be an isolated essential singularity 
(Examples  \ref{exampleExp} and \ref{exampleExp3}) or an accumulation point of poles or zeros 
(Examples \ref{tanvectorfield} and \ref{examplecoshPlus2}).
\end{remark}

In Table \ref{relaciones} we present a summary of the different objects and their relations, encountered so far. 
There we can observe that the residue of $\omega_{X}$ plays an important role in the description of the objects. 
This was already observed in \cite{AP-MR}, \S5.7.
\begin{table}
\caption{Relationship between vector fields, 1--forms, 
distinguished parameter and the Newton covering map germs.}
\begin{center}
\begin{tabular}{|c|c|c|c|}
\hline
Complex analytic & Complex analytic & Distinguished & Newton \\
vector field & 1--form & parameter & covering map \\
$X(z)=f(z)\del{}{z}$ & $\omega_{X}=\frac{dz}{f(z)}$ & $\Psi_{X}(z)=\int\limits^{z}\omega_{X}$ & 
$\Phi_{X}(z)=\e^{-\Psi_{X}(z)}$ \\
\hline
\hline
pole of & zero of & zero of & \\
order $-\kappa\leq -1$ & order $\kappa$ & order $\kappa+1$ & \\
$\frac{1}{z^{\kappa}} \del{}{z}$ & $z^{\kappa}\, dz$ & $\frac{1}{\kappa+1} z^{\kappa+1}$ & 
$\e^{-\frac{1}{\kappa+1} z^{\kappa+1}}$  \\[6pt]
\hline
simple zero & simple pole & & \\
$\frac{1}{\lambda}z\del{}{z}$ & $\frac{\lambda}{z}\,dz$ & $\lambda\log(z)$ & $z^{-\lambda}$ \\[6pt]
\hline
multiple zero $s\geq2$ & multiple pole &  & \\
$\frac{z^{s}}{1+\lambda z^{s-1}}\del{}{z}$ & 
$\left(\frac{1}{z^{s}}+\frac{\lambda}{z}\right) dz$ & 
$\Big(\frac{1}{(1-s) z^{s-1}}\hfill$ & $z^{-\lambda}\e^{(s-1)z^{s-1}}$  \\
 & $\lambda=Res(\omega_{X},0)$ & $ \hfill+\ \lambda\log(z)\Big)$ & \\[6pt]
\hline
essential & essential & & \\
singularity at $\infty$ & singularity at $\infty$ & & \\
$\e^{P(z)}\del{}{z}$ & $\e^{-P(z)} dz$ & $\int\limits^{z}\e^{P(\zeta)}d\zeta$ & 
$\e^{-\int\limits^{z}\e^{P(\zeta)}d\zeta}$  \\[6pt]
\hline

\end{tabular}
\end{center}
\label{relaciones}
\end{table}%

\section{Visualization of Newton vector fields}
\label{metodo}
We now move on to describe the actual method that will enable us to solve for the trajectories of (and hence 
visualize) Newton vector fields. As mentioned in the introduction, the idea behind is that there are two 
auxiliary real valued functions: one whose level curves correspond to trajectories (streamlines) of the 
complex vector field (in differential equations this auxiliary function is called a first integral), and the 
other one which is linear along the trajectories, 
hence providing the parametrization of the solution. 

\subsection{The fundamental observation}\label{fund-obs}
Let
$$X(z)=f(z)\del{}{z}=-\frac{\Phi_X (z)}{\Phi^{\prime}_X (z)}\del{}{z}$$ 
be a Newton vector field, recall that the trajectories $z(\tau)$ are the solutions to
\begin{equation}\label{flujoNewt}
\left\{
\begin{aligned}
z'(\tau)&=-\frac{\Phi_X \big(z(\tau)\big)}{\Phi^{\prime}_X \big(z(\tau)\big)},\\
z(0)&=z_{0},
\end{aligned}
\right.
\end{equation}
for $\tau\in\RR$, as in \eqref{ecdifflow}.

\begin{lemma}[Fundamental observation]\label{lemasoluciones}
A trajectory $z(\tau)$ of $X$ satisfies
\begin{equation}\label{keyFact}
\Phi_X \big(z(\tau)\big) = \Phi_X (z_{0})\e^{-\tau}
\end{equation}
if and only if $z(\tau)$ is a trajectory, passing through $z_{0}$, of the Newton vector field corresponding 
to $\Phi_X $.
\end{lemma}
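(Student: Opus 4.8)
The plan is to prove the biconditional by a direct computation, showing that the curve satisfying \eqref{keyFact} is precisely the solution to the initial value problem \eqref{flujoNewt}. The key is that \eqref{keyFact} is an implicit description of the trajectory, and differentiating it recovers the differential equation, while the uniqueness of solutions (guaranteed by the flow-box structure of Theorem \ref{VectorFieldsHaveFlowBox}) gives the converse.

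For the forward direction, suppose $z(\tau)$ satisfies $\Phi_X(z(\tau)) = \Phi_X(z_0)\e^{-\tau}$. First I would differentiate both sides with respect to $\tau$, applying the chain rule on the left:
\begin{equation*}
\Phi_X'\big(z(\tau)\big)\, z'(\tau) = -\Phi_X(z_0)\e^{-\tau} = -\Phi_X\big(z(\tau)\big),
\end{equation*}
where the last equality uses \eqref{keyFact} again. Solving for $z'(\tau)$ yields
\begin{equation*}
z'(\tau) = -\frac{\Phi_X\big(z(\tau)\big)}{\Phi_X'\big(z(\tau)\big)},
\end{equation*}
which is exactly the defining equation \eqref{flujoNewt} of the Newton vector field; moreover at $\tau=0$ the relation reads $\Phi_X(z(0))=\Phi_X(z_0)$, consistent with the trajectory passing through $z_0$. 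Thus $z(\tau)$ is a trajectory of $X$ through $z_0$.

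For the converse, suppose $z(\tau)$ is the trajectory of $X$ with $z(0)=z_0$, so it satisfies \eqref{flujoNewt}. Here I would argue that the function $\tau \mapsto \Phi_X(z(\tau))$ satisfies a simple linear ODE: by the chain rule and \eqref{flujoNewt},
\begin{equation*}
\frac{d}{d\tau}\,\Phi_X\big(z(\tau)\big) = \Phi_X'\big(z(\tau)\big)\,z'(\tau) = -\Phi_X\big(z(\tau)\big).
\end{equation*}
Setting $h(\tau) = \Phi_X(z(\tau))$, this is $h'(\tau) = -h(\tau)$ with $h(0) = \Phi_X(z_0)$, whose unique solution is $h(\tau) = \Phi_X(z_0)\e^{-\tau}$, establishing \eqref{keyFact}.

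The main subtlety to address is well-definedness rather than the calculus: since $\Phi_X$ may be multivalued, one must fix a branch along the trajectory and check that the computation is branch-independent, which holds because $\Phi_X'/\Phi_X$ (the logarithmic derivative appearing in the vector field) is single-valued on $M^0$. I would note that the derivative identity $\Phi_X'(z)\,z'=-\Phi_X(z)$ is insensitive to the choice of branch, so both directions go through on any sheet and the statement is meaningful globally.
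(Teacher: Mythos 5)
Your proof is correct and its core is the same implicit-differentiation computation the paper itself uses. In fact you are more complete than the paper's own proof, which only writes out the direction ``\eqref{keyFact} implies $z(\tau)$ solves \eqref{flujoNewt}'': your converse, obtained by noting that $h(\tau)=\Phi_X(z(\tau))$ satisfies the scalar linear ODE $h'=-h$ with $h(0)=\Phi_X(z_0)$ and invoking its uniqueness of solutions, together with your remark that the relation is branch-independent because the logarithmic derivative $\Phi_X'/\Phi_X$ is single-valued, supplies the other half of the biconditional and the well-definedness point that the paper leaves implicit.
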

\begin{proof}
The proof follows from implicitly differentiating the equation
$$\Phi_X \big(z(\tau)\big) = \Phi_X (z_{0})\e^{-\tau},$$
that is 
$$
\Phi^{\prime}_X \big(z(\tau)\big) z'(\tau) = - \Phi_X (z_{0})\e^{-\tau}
$$
so that 
$$
z'(\tau)=- \frac{\Phi_X \big(z(\tau)\big)}{\Phi^{\prime}_X \big(z(\tau)\big) },
$$
hence $z(\tau)$ is indeed a solution to \eqref{flujoNewt}.
\end{proof}

The fundamental observation can be understood in terms of the pullback as follows: \emph{The trajectory $z(\tau)$ is the flow of the pullback, via $\Phi$, of the field $-w\del{}{w}$ (whose trajectories are straight lines, parametrized by $\e^{-\tau}$, that start at $\infty$ and end at $0$ in $\CW_{t}$).}
See Figure \ref{PullbackCampoNewt}.
\begin{figure}[htbp]
\centering
	\includegraphics[height=0.3\textwidth]{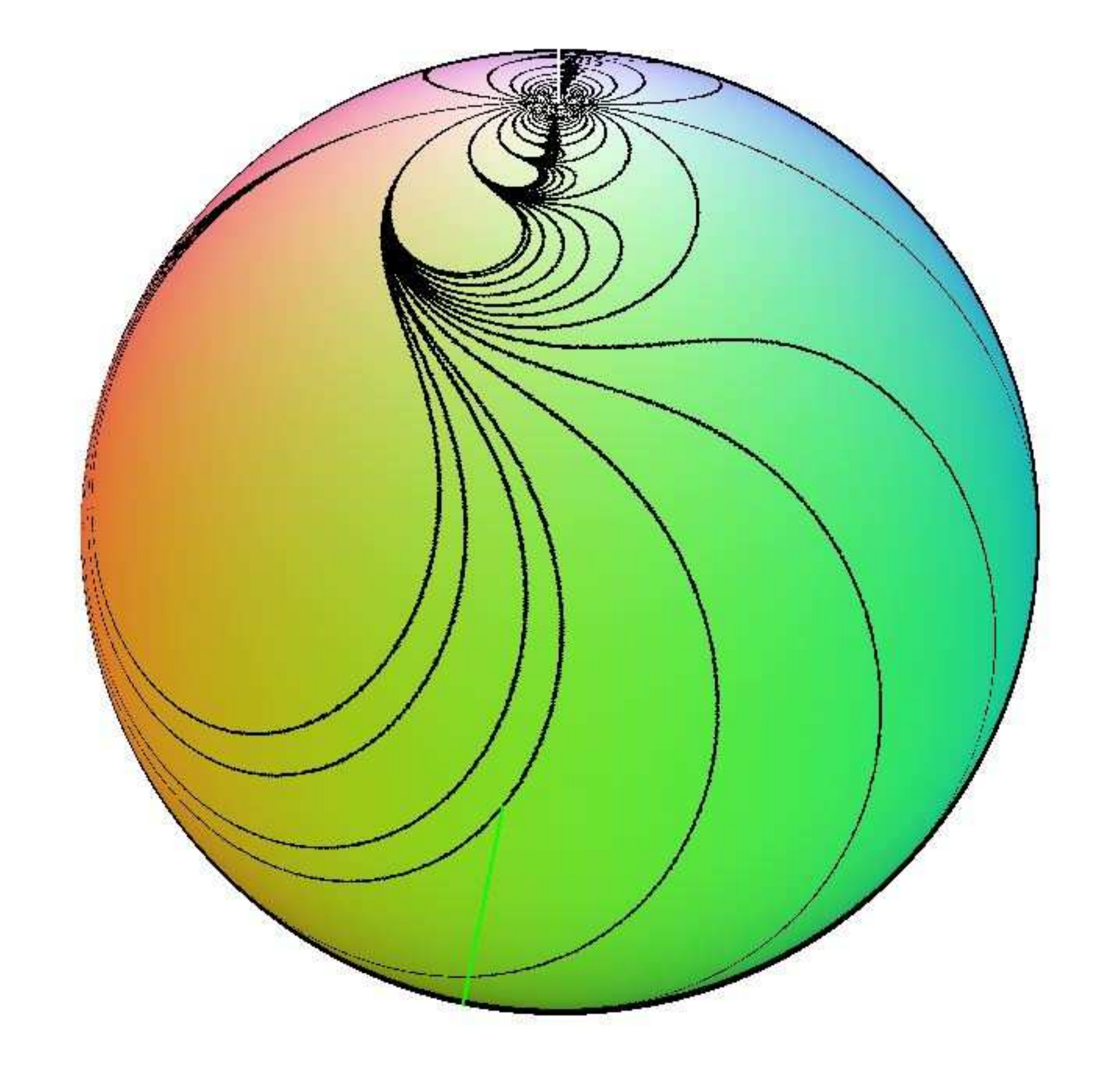}
	\begin{picture}(40,55)
	\put(15,60){$\Phi_X $}
	\put(0,55){\vector(1,0){37}}
	\end{picture}
	\includegraphics[height=0.3\textwidth]{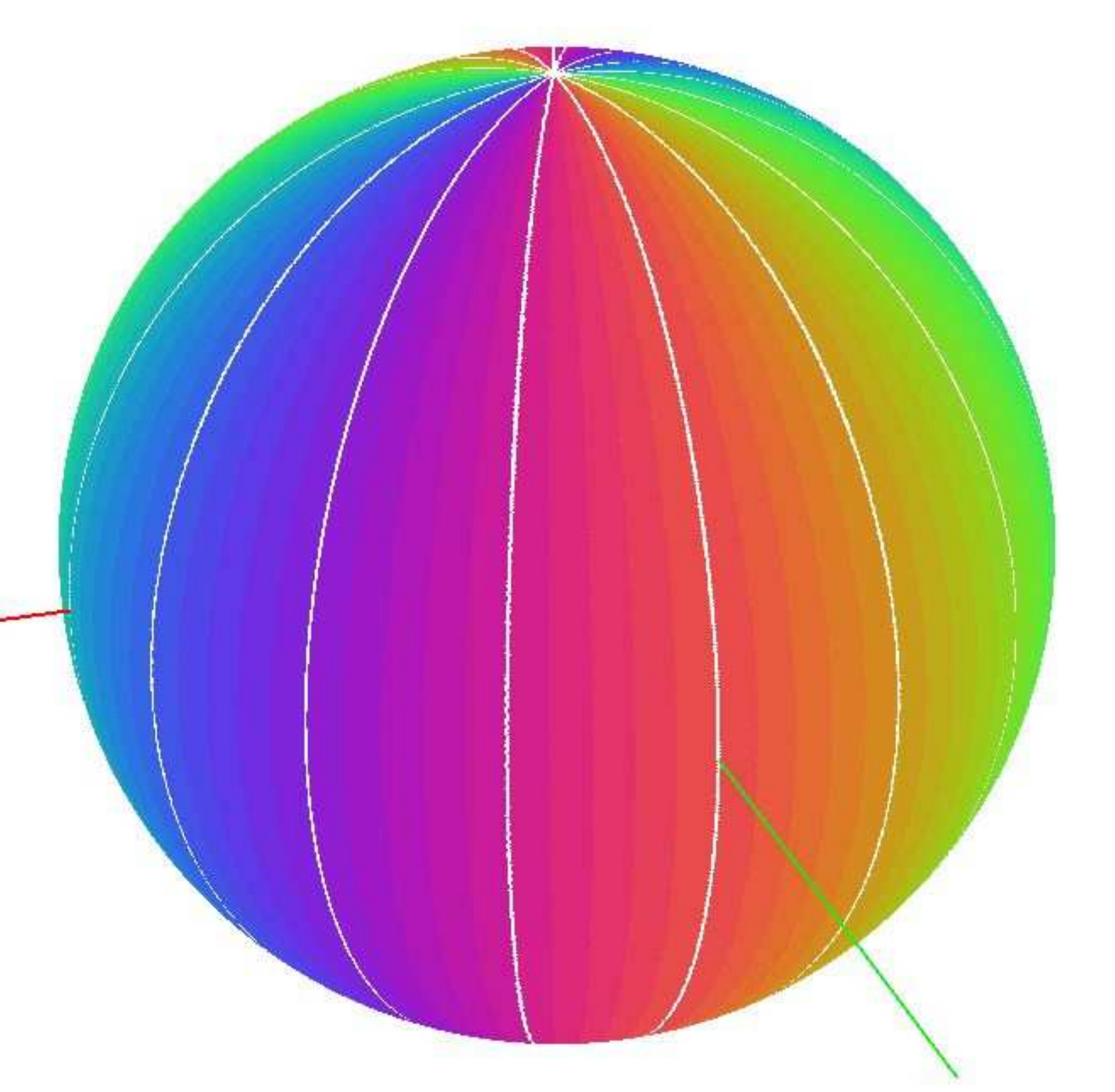}
\caption{The trajectories $z(t)$ correspond to trajectories of $-w\del{}{w}$ under the covering map $\Phi_X $.}
\label{PullbackCampoNewt}
\end{figure}

Consider now the Newton vector field normal to $X$,
$$X^{\perp}(\zeta)=i\ f(\zeta)\del{}{\zeta}=-\frac{{\widetilde{\Phi}_X }(\zeta)}
{{\widetilde{\Phi}}'_X (\zeta)}\del{}{\zeta},$$
with ${\widetilde{\Phi}_X }$ being its corresponding covering map. 
Then one has
$$-i\left(\log\Phi_X \right)'=-i\frac{\Phi'_X }{\Phi_X }=\frac{{\widetilde{\Phi}}'_X }{{\widetilde{\Phi}}_X }
=\left(\log{\widetilde{\Phi}_X }\right)',$$
and it follows that
 ${\widetilde{\Phi}_X }=\Phi^{-i}_X $.
Thus taking $\log (z)$ on both sides we obtain that 
\begin{equation}\label{anteslemma}
\begin{aligned}
\rho(z) \doteq \log\abs{\widetilde{\Phi}_X (z)} &= \argp{ \Phi_X (z) } \\
\theta(z) \doteq \argp{ {\widetilde{\Phi}}_X (z) } &= -\log\abs{\Phi_X (z)}.
\end{aligned}
\end{equation}

\begin{proposition}[Solving $\dot{z}=-\frac{\Phi_X (z)}{\Phi^{\prime}_X (z)}$]
\label{solNewton}
Let $z(\tau)$ be a trajectory of a singular complex analytic field 
$$X(z)=-\frac{\Phi_X (z)}{\Phi^{\prime}_X (z)}\del{}{z},$$
then
\begin{enumerate}[label=\arabic*)]
\item $\argp{ \Phi_X (z(\tau)) } = \argp{ \Phi_X (z_{0})}$ is constant along the 
trajectories $z(\tau)$ of $X$, and
\item $\log\abs{\Phi_X (z(\tau))}=-\tau+\log\abs{\Phi_X (z_{0})}$ is linear along the 
trajectories $z(\tau)$ of $X$,
\end{enumerate}
where $z_{0}=z(0)$.
\end{proposition}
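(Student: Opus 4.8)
The plan is to obtain both statements as immediate consequences of the Fundamental Observation, Lemma \ref{lemasoluciones}. Since $z(\tau)$ is by hypothesis a trajectory of the Newton vector field $X(z)=-\frac{\Phi_X(z)}{\Phi'_X(z)}\del{}{z}$, the ``only if'' direction of that lemma supplies the key identity $\Phi_X(z(\tau))=\Phi_X(z_0)\e^{-\tau}$. Everything then reduces to separating this single complex identity into its modulus and argument parts, so no differential equation needs to be solved again here: the analytic content already lives in Lemma \ref{lemasoluciones}.

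Next I would use that $\tau$ is \emph{real} (recall the trajectories arise from the real flow \eqref{ecdifflow}), so that $\e^{-\tau}$ is a strictly positive real number. For statement (2), taking absolute values in the key identity gives $\abs{\Phi_X(z(\tau))}=\abs{\Phi_X(z_0)}\,\e^{-\tau}$, and applying $\log$ turns the product into a sum, yielding $\log\abs{\Phi_X(z(\tau))}=-\tau+\log\abs{\Phi_X(z_0)}$, which is affine in $\tau$. For statement (1), taking arguments and using that $\argp{\e^{-\tau}}=0$ for a positive real gives $\argp{\Phi_X(z(\tau))}=\argp{\Phi_X(z_0)}$, hence constant. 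I would also point out that these are exactly the functions $\theta=-\log\abs{\Phi_X}$ and $\rho=\argp{\Phi_X}$ appearing in \eqref{anteslemma}, which realizes the pair promised by Theorem \ref{TeoremaVisual}.

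The one delicate point, and the place I expect to spend the most care, is the multivaluedness of $\Phi_X$ together with that of $\arg$ and $\log\abs{\cdot}$. I would handle this by fixing a continuous branch of $\Phi_X$ along the curve $z(\tau)$: such a branch exists because the trajectory lies in $M\backslash Sing(X)$, where $\Phi_X$ is locally single--valued and non--vanishing, so both $\argp{\Phi_X}$ and $\log\abs{\Phi_X}$ can be continued continuously along $z(\tau)$ starting from their chosen values at $z_0$. With this branch fixed, the two displayed identities hold as genuine equalities of real--valued functions of $\tau$, rather than merely modulo $2\pi i\ZZ$. Once this branch bookkeeping is settled, the algebra is immediate, so the only real obstacle is organizing the continuation carefully rather than any substantive computation.
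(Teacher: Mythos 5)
Your proposal is correct and takes essentially the same route as the paper: the paper's (very terse) proof likewise reduces both statements to the identity $\Phi_X(z(\tau))=\Phi_X(z_0)\e^{-\tau}$ supplied by Lemma \ref{lemasoluciones}, combined with \eqref{anteslemma}, and then separates modulus and argument using that $\tau$ is real. Your explicit bookkeeping of a continuous branch of $\Phi_X$ along the trajectory is a detail the paper leaves implicit, but it is a refinement of, not a departure from, the same argument.
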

\begin{proof}
For (1) use \eqref{anteslemma} 
and the fundamental observation (Lemma \ref{lemasoluciones}). 

\noindent
For (2), a similar argument works. 
\end{proof}

As a direct consequence we can now state the main theorem related to the visualization of 
singular complex analytic vector fields.

\theoremstyle{plain}
\newtheorem*{theo1}{Theorem \ref{TeoremaVisual}}

\begin{theo1}[Visualization of singular complex analytic vector fields]
Let $X(z)=f(z)\del{}{z}$ be a singular complex analytic vector field on a Riemann surface $M$, 
and let $z(\tau)$ denote any trajectory of $X$ on $M\backslash Sing(X)$.
Then there exist two (probably multivalued) functions $\rho,\theta:M\backslash Sing(X)\longrightarrow \RR$ 
such that
\begin{enumerate}[label=\arabic*)]
\item The real valued function $\rho$ is constant along $z(\tau)$. 
Hence in order to visualize the trajectories that pass through the point $z_0\in M$, 
one needs only plot the level curve $\rho(z)=\rho(z_0)$.
\item The real valued function $\theta$ defines a natural time parametrization along the trajectory $z(\tau)$. 
In other words, if $z(\tau)$ is the trajectory passing through $z_0$ at $\tau=0$, then the point $z(\tau_1)$ is 
given by the intersection of the curves $\rho(z)=\rho(z_0)$ and $\theta(z)=\theta(z_0) + \tau_1$.
\end{enumerate}
Moreover, $\rho$ and $\theta$ can be expressed in terms of the distinguished parameter $\Psi_X$ and the 
Newton map $\Phi_X$ as:
\begin{equation}\label{dosmaneras}
\begin{array}{c}
\rho(z)=\argp{ \Phi_X (z) }=-\Im{\Psi_{X}(z)},\\
\theta(z)=-\log\abs{\Phi_X (z)}=\Re{\Psi_{X}(z)}.
\end{array}
\end{equation}
\end{theo1}

\begin{proof}
(1) Is a direct consequence of Proposition \ref{solNewton}.a where $\rho(z)=\argp{ \Phi_X(z) }$.

(2) Follows directly from Proposition \ref{solNewton}.b, {\it i.e.} the linear behaviour of 
$\theta(z)=\log\abs{\Phi_X (z)}$ along the trajectory $z(\tau)$: 

\centerline{
$\theta(z(\tau_{1}))=-\log\abs{\Phi_X (z(\tau_{1}))}=-\log\abs{\Phi_X (z_{0})} + \tau_{1},$}

\noindent
and from the description of the trajectory $z(\tau)$:

\centerline{
$\rho(z(\tau_{1}))=\rho(z_{0}).$}

Finally, Corollary \ref{corodiagramabasico}, particularly Diagram \ref{diagrama-basico}, shows that $\rho$ 
and $\theta$ can be expressed in terms of 
$\Psi_X$ or $\Phi_X$ so \eqref{dosmaneras} follows.
\end{proof}

\begin{remark}
1. The auxiliary functions $\rho$ and $\theta$ are known as \emph{constants of motion}, 
\emph{integrals of motion}, or \emph{first integrals}.
On the other hand, recall that a generic real analytic vector field does not have a first integral. 

\noindent
2. The acute reader will note that $\rho$ and $\theta$ determine visualizations of the functions $\Phi_X$ and 
$\Psi_X$ as polar and rectangular representations respectively.
\end{remark}
This last remark provides a counterpart for Theorem \ref{TeoremaVisual}

\theoremstyle{plain}
\newtheorem*{theo2}{Theorem \ref{visualizationPsiPhi}}

\begin{theo2}[Visualization of singular complex analytic functions]
\hfill
\begin{enumerate}[label=\arabic*)]
\item Let $\Psi:M\longrightarrow\CW$ be a singular complex analytic function.
Then the phase portraits of 

$X(z)=\frac{1}{\Psi^{\prime}(z)}\del{}{z}$ \
provides the level curves of 
$-\Im{\Psi}$,

$X^{\perp}(z) \doteq \frac{i}{\Psi^{\prime}(z)}\del{}{z}$ \ 
provides the level curves of $\Re{\Psi}$.

\item
Let $\Phi:M\longrightarrow\CW$ be a singular complex analytic function.
Then the phase portraits of 

$X(z)=-\frac{\Phi(z)}{\Phi^{\prime}(z)}\del{}{z}$ \
provides the level curves of  $\argp{\Phi}$, 

$X^{\perp}(z) \doteq -i\frac{\Phi(z)}{\Phi^{\prime}(z)}\del{}{z}$ \ 
provides the level curves of $-\log{\abs{\Phi}}$.
\end{enumerate}
\hfill \qed
\end{theo2}

\begin{remark}[Solution for the flow of $X$ and no propagation of error along the trajectories of $X$]
The above theorem shows that we are not only visualizing the flow of the singular complex analytic vector field, 
but we are in fact 

\centerline{\emph{completely solving the system of differential 
equations that define the flow \,}}

\centerline{\emph{including parametrization
of the singular complex analytic vector field $X(z)$.}}

\noindent 
Contrast this with the usual 
visualization techniques where information relating to the 
parametrization is not observed.

\noindent
Moreover, \emph{the solutions are exact} up to the numerical round--off errors incurred by the precision of the 
mathematical routines used in the implementation. In other words, there is no error propagated along the 
trajectories of $X$.
\end{remark}

\subsection{The algorithms}\label{implementation}
Given a singular complex analytic vector field
$$X(z)=f(z)\del{}{z}=-\frac{\Phi_X (z)}{\Phi^{\prime}_X (z)}\del{}{z},$$ 
according to Theorem \ref{TeoremaVisual} and \eqref{dosmaneras}, we require the plotting of 
the level curves of 
the real valued function $\rho(z)=\argp{ \Phi_X (z) }=-\Im{\Psi_{X}(z)}$. 

This can be done both on the (complex) plane and more generally on the Riemann surface $M$. 
Moreover, it will be convenient to plot \emph{strip flows}\footnote{A
concept due to \cite{Benzinger}, see \cite{AP-MR} \S11. }, where, given an interval $[a,b)\subset\RR$ we define 
the \emph{strip flow associated to $[a,b)$} as 
\begin{equation}\label{levelband}
B_{[a,b)}=\big\{z\in M\ \vert\ \rho(z)\in [a,b) \big\}\subset M.
\end{equation}
In this way the border is 
the level curves  $\rho(z)=a$ and $\rho(z)=b$, which correspond to 
trajectories of the singular complex analytic 
vector field $X$. 
Note that $B_{[a,b)}$ can be a multiply connected subset of $M$.

In the following algorithm we present the case of $M$ being either the plane $\CC$ or the Riemann sphere 
$\CW$, the case of a general Riemann surface is similar and is further discussed in \S\ref{generalizaciones}.
 
We can use strip flows to visualize the streamlines on the plane or 
Riemann sphere, using the following:

\smallskip 
\noindent{ \bf Visualization algorithm.}

\noindent 
(p refers to the plane, s to the sphere.)

\begin{enumerate}
\item[1)] Partition $\RR$ into intervals $\big\{ [a_{\iota},b_{\iota}) \big\}$ and select a color $C_{\iota}$ for 
each interval $[a_{\iota},b_{\iota})$ of the partition.

\item[2p)] Choose a rectangular region of the plane (say $R=[x_{min},x_{max}]\times[y_{min},y_{max}]$) 
where the visualization is to take place, and a window size of say $N$ by $M$ pixels, then subdivide the 
rectangular region of the plane into $N\times M$ rectangular regions of size 
$\Delta x=(x_{max}-x_{min} )/N$ by $\Delta y=(y_{max}-y_{min})/M$, note that each of these 
rectangular regions corresponds to a pixel on the window.

\item[2s)] A triangulation of the Riemann sphere is constructed using a recursive algorithm that ensures that the 
triangulation is almost uniform (start with a octahedron with vertices on the sphere and recursively add a vertex 
at the center of each triangle and then normalize the vertex to obtain a better triangulation for the sphere, 
consult \cite{BourkeTriangulacionEsfera} for more details).

\item[3p)] For each rectangle of the subdivision of $R$ (a pixel) 
calculate its center $z\in R\subset\CC$.

\item[3s)] For each triangle in the triangulation of the sphere, one finds the barycentre $z^{*}$ and using 
stereographic projection we identify the corresponding $z\in\CC$.

\item[4)] We proceed to calculate $\rho(z)$ (or $\rho(z^*)$).

\item[5)]  Since $\RR$ is partitioned into intervals 
$\rho(z)\in [a_{\iota},b_{\iota})$ for some $\iota$,
we proceed to color the pixels (triangles), on the plane (Riemann sphere), corresponding to  
$z$ (or $z^{*}$) with the color $C_{\iota}$.
\end{enumerate}
\begin{remark}
Note that steps (2p)  and (2s) 
ensure that both the resolution on the rectangular region $R$ of the plane and on  
the Riemann sphere, are uniform.
\end{remark}

\subsubsection{Plotting specific level curves}

Due to the fact that there are an infinite number of trajectories intersecting a given zero, it is very easy to identify 
the zeros of the corresponding Newton vector field. 

\noindent
However the actual position of the poles are not so easily identified: by the nature of the poles 
(recall Proposition \ref{prop:normalforms} and Example \ref{ejemploPolos}), a pole of order 
$-\kappa\leq-1$ has exactly $(2\kappa+2)$ separatrices. 

\noindent
Similarly, it was shown in \cite{AP-MR} definition 4.11, 
that for essential singularities there exists trajectories that are analogs 
of the separatrices of poles: 
the \emph{horizontal asymptotic paths} 
that have as $\alpha$ or $\omega$ 
limit set the essential singularity.

\noindent
With the above in mind and recalling that singular complex analytic vector fields on $\CW$ can not have 
limit cycles, we then can make the following observation.

\begin{remark}[Plotting separatrices and horizontal asymptotic paths]
\label{importanciatrayectoriasola}
In order to correctly visualize the phase portrait of singular complex analytic vector fields, it is convenient to plot 

\noindent $\bullet$
the separatrices of a pole and 

\noindent $\bullet$
the horizontal asymptotic paths for essential singularities,

\noindent {\it i.e.} specific 
trajectories of the field (that is specific level curves of 
the real valued function $\rho$). 

\noindent 
Recall the classical ideas of L. Markus and H. Benzinger 
on the decomposition of the phase portraits using the above
kind of specific trajectories, 
\cite{Markus}, 
\cite{Benzinger}, more recently 
\cite{Branner-Dias}, \cite{AP-MR} \S11.

\end{remark}

Suppose we want to plot a specific trajectory, say one that passes through or has $\alpha$ or $\omega$ limit set 
a point $z_{0}$ on the plane. Then by Theorem \ref{TeoremaVisual} we need to plot the level curve of $\rho$ 
corresponding to the value $\rho_{0}=\rho(z_{0})$. Since the fundamental unit we are using for the visualization 
is the subdivision by rectangles on the rectangle $R\subset\CC$ (or the triangulation of the Riemann sphere 
$\CW$), we need to color those rectangles (or triangles) that intersect the level curve. For this note that
\begin{itemize}[label=$\bullet$]
\item the function $\widehat{\rho}(z,z_{0})=\rho(z)-\rho(z_{0})$ can be zero in the interior of a rectangle (triangle), 
even though at the center (or barycentre) it could be different from zero,
\item we only want to color rectangles (or triangles) that intersect the level curve $\widehat{\rho}(z,z_{0})=0$.
\end{itemize}

\noindent 
To achieve this we have the following:

\smallskip 
\noindent{ \bf Visualization algorithm for specific level curves.}

\begin{enumerate}[label=(\arabic*)]
\item Once again we find the center $z$ of each rectangle (or the barycentre $z^{*}$ of each triangle), 
as well as the 
maximum distance $\delta$ from the center (or barycentre) to each of the vertices. Note that in the case of the 
plane, $\delta=\sqrt{{\Delta x}^{2}+{\Delta y}^{2}}$, since the basic unit is a rectangle.

\item Using the fact that the gradient of a real valued function points in the direction of maximum growth, let 
$\hat{e}=\frac{\nabla \widehat{\rho}(z,z_{0})}{\abs{\nabla \widehat{\rho}(z,z_{0})}}=\frac{\nabla \rho(z)}
{\abs{\nabla \rho(z)}}$ 
be the unit vector that points in the direction of maximum growth of $\widehat{\rho}(\cdot,z_{0})$ at $z$ 
(or $z^{*}$). 
Note that $\widehat{\rho}(\cdot,z_{0})$ is a $C^{\infty}$ function since it is the real part of an analytic function.
In fact, $\hat{e}=\frac{X^{\perp}(z)}{\abs{X^{\perp}(z)}}$  
$\left(\text{or } \hat{e}=\frac{X^{\perp}(z^*)}{\abs{X^{\perp}(^*)}}\right)$.

\item Recalling that the sign of $\widehat{\rho}$ changes if and only if $\widehat{\rho}$ assumes the value zero, 
we consider the product 
$$\widehat{\rho}(z+\delta \hat{e},z_{0})\ \widehat{\rho}(z-\delta \hat{e},z_{0}),$$
and so the level curve $\widehat{\rho}=0$ intersects the rectangle (or triangle) if the above product is less than 
zero, so we color the rectangle (or triangle) associated to $z$ (or $z^{*}$) if this happens.
\end{enumerate}
In this way those rectangles (or triangles) that intersect the level curve $\widehat{\rho}=0$ are colored.

\begin{remark}
Note that the above algorithm:
\begin{enumerate}[label=(\arabic*)]
\item Is optimal with respect to resolution for the case of the plane: that is one obtains the best 
possible \emph{observable} resolution. If one would increase the size of the rectangular mesh 
$\Delta x\times\Delta y$ one would observe pixelation, and if one decreases the size of the 
rectangular mesh $\Delta x\times\Delta y$, then no gain in resolution would be observed, 
since the size of the rectangular mesh would be smaller that the size of each pixel on the screen.

\item Is not optimal for the case of the sphere: since in this case the actual observed resolution will depend on 
the particular parameters (viewpoint, distance of the camera to the sphere, etc.) used in the visualization of the 
sphere as a 3D object on the screen.

\item However, given a specific resolution, the algorithms ensure that no error is made as to which streamlines 
intersect the chosen basic rectangles (or triangles) that specify the resolution, 
hence for the chosen resolution the visualization is the best possible.
\end{enumerate}
\end{remark}

\subsection{Parallelization of the visualization algorithms}\label{parrallelization}
It is clear that this method is a prime candidate for parallelization, due to the fact that the visualization scheme 
for a particular pixel does not depend on the neighboring pixels\footnote{This type of parallelization is known as 
\emph{embarrassingly parallelizable}.}. Hence a simple parallelization scheme where blocks of pixels are 
assigned to distinct processors can be readily implemented. An extensive analysis and implementation of this is 
yet to be done and will be presented elsewhere.

\section{Analytic recognition of the ramified covering $\Phi_{X}$}
\label{meromorphic}
As was shown in the previous section, Newton vector fields benefit from the visualization scheme just presented, 
and \emph{since all singular complex analytic vector fields are Newton vector fields}, this makes the visualization 
scheme presented much more appealing. 

Of course one still has to be able to explicitly calculate the real valued functions $\rho(z)$ and $\theta(z)$ of 
Theorem \ref{TeoremaVisual} in order to make use of the method.

In this aspect, the first author {\it et.al.} shows in \cite{AP-2} that there is a large class of vector fields 
meromorphic on the plane, for which it is possible to explicitly construct the ramified covering $\Phi_{X}$ 
characterizing the vector field as a Newton vector field (and hence $\rho(z)$ and $\theta(z)$ of Theorem 
\ref{TeoremaVisual} can also be calculated explicitly). In the same work they show again by an explicit 
construction, that all doubly periodic (elliptic) vector fields (and hence vector fields on the torus) 
for which it is possible to analytically recognize the ramified covering $\Psi_{X}$ 
(see \S\ref{InvariantVectFields} for an example of an elliptic vector field).

We recall these results in this section.
Let $\mathscr{F}$ be the family of functions that satisfy the requirements of Cauchy's Theorem on Partial 
Fractions (see \cite{AP-2} for further details).
Suppose that $g\in\mathscr{F}$, denote by
$$G_{k}(z)=\sum_{j=1}^{N_{k}}\frac{a_{j k}}{(z-b_{k})^{j}},$$
the principal part of $g(z)$ at the pole $b_{k}$ of order $N_{k}$; and let 
$$P_{k}(z)=\sum_{j=0}^{p}\widetilde{a}_{j k}z^{j},$$
denote the corresponding polynomials (in case $p>-1$). Then in \cite{AP-2} it was proved that:

\begin{theorem}
\label{trascendental}
Let $f(z)\in\mathscr{F}$. Then there is a meromorphic (possibly multivalued) function 
\begin{equation}
\Phi_{X}(z)=\prod_{k=1}^{\infty}\left[(z-b_k)^{-A_{k}}\e^{q_k\left(\frac{1}{z-b_k}\right)} \e^{Q_k (z)}\right],
\end{equation}
where $q_{k}(z)$ and $Q_{k}(z)$ are unique polynomials with $q_{k}(0)=0$ and $Q_{k}(0)=0$,
such that
$$f(z)=-\frac{\Phi_{X}'(z)}{\Phi_{X}(z)}.$$
\hfill \qed
\end{theorem}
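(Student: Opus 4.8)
The plan is to read the asserted identity $f = -\Phi_X'/\Phi_X$ as saying that $-f$ is the logarithmic derivative of $\Phi_X$, so that $\Phi_X$ must be a branch of $\exp\!\left(-\int^z f\right)$; the displayed infinite product is then simply the outcome of integrating the partial fraction expansion of $f$ term by term and exponentiating. First I would invoke Cauchy's theorem on partial fractions for $f\in\mathscr{F}$ (the defining property of the family $\mathscr{F}$, see \cite{AP-2}) to write
$$
f(z)=\sum_{k=1}^{\infty}\big(G_k(z)+P_k(z)\big),
$$
the series converging locally uniformly on $\CC$ away from the poles $b_k$, where $G_k$ is the principal part of $f$ at $b_k$ and the $P_k$ are the polynomial correction terms supplied by the theorem.

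Second, I would integrate each summand. The simple-pole contribution $a_{1k}/(z-b_k)$ integrates to $a_{1k}\log(z-b_k)$, which, after the overall sign and exponentiation, produces the factor $(z-b_k)^{-A_k}$ with $A_k=a_{1k}=\operatorname{Res}(f,b_k)$; this logarithmic term is the sole source of multivaluedness. The remaining part $\sum_{j=2}^{N_k} a_{jk}/(z-b_k)^j$ of $G_k$ integrates to a polynomial in $1/(z-b_k)$ with vanishing constant term, whose negative is the claimed $q_k$; and $P_k$ integrates to a polynomial $Q_k$, the constant of integration being pinned down by the normalization $Q_k(0)=0$. Exponentiating the resulting antiderivative factor by factor yields precisely the product in the statement. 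Uniqueness of $q_k$ and $Q_k$ then follows from the uniqueness of the principal part of $f$ at $b_k$ and of the correction polynomial $P_k$, together with the normalizations $q_k(0)=Q_k(0)=0$.

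The main obstacle is convergence: one must show that the exponent $-\int^z f$ is a well-defined (possibly multivalued) holomorphic function off the poles, so that its exponential is a genuine meromorphic function, and that the passage to the logarithmic derivative is legitimate term by term. I would fix a base point $z_0$ and a simply connected subdomain $U\subset\CC$ avoiding the $b_k$, and integrate the series along paths in $U$: termwise integration $-\int_{z_0}^{z}f=-\sum_k\int_{z_0}^{z}(G_k+P_k)$ is justified because uniform convergence of the integrands on a compact path permits interchange of sum and integral, so the partial sums of the exponent converge locally uniformly on $U$ (with branches of the logarithms chosen on $U$). Here the polynomials $Q_k$, being antiderivatives of the Cauchy correction polynomials $P_k$, play exactly the role of Weierstrass convergence-generating factors. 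Exponentiating a locally uniformly convergent exponent gives a locally uniformly convergent product, hence a function holomorphic and nonvanishing on $U$ with the prescribed zeros and poles at the $b_k$; as $U$ varies, the periods $2\pi i A_k$ around the poles govern the multivaluedness.

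Finally, with convergence established, logarithmic differentiation may be performed term by term, each factor returning its summand of $-\sum_k(G_k+P_k)=-f$, so that $\Phi_X'/\Phi_X=-f$, i.e. $f=-\Phi_X'/\Phi_X$, as required. I expect the delicate point to be precisely the justification of interchanging differentiation (and integration) with the infinite sum, which rests entirely on the locally uniform convergence secured in the previous step.
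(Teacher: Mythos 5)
Your proposal is correct and takes essentially the approach the paper itself indicates: the paper proves the rational case (Theorem \ref{BenzingerRational}) by partial--fraction decomposition, explicit term-by-term integration and exponentiation, and then remarks that the general Theorem \ref{trascendental} follows by using the Mittag--Leffler (Cauchy partial fractions) expansion in place of the finite partial--fraction decomposition --- which is precisely your argument, including the identification $A_k=\operatorname{Res}(f,b_k)$ and of $q_k$, $Q_k$ as normalized antiderivatives of the higher--order principal parts and correction polynomials. Your explicit treatment of locally uniform convergence (justifying termwise integration, exponentiation of the exponent, and termwise logarithmic differentiation) supplies details that the paper defers to \cite{AP-2}.
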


As a quick, and illustrative, example of the explicit construction of the $\Phi_{X}$ defining the Newton vector field, we consider the case of rational functions: let
\begin{equation}\label{defcamp}
f(z)=-\frac{p(z)}{q(z)}
\end{equation}
with $q,p\in\CC[z]$ without common factors, and $p$ monic. In particular consider 
\begin{equation}\label{defpq}
p(z)=\prod_{j=1}^{J}(z-z_{j})^{m_{j}}, \quad
q(z)=b\prod_{k=1}^{K}(z-s_{k})^{n_{k}},
\end{equation}
where $m_{j},n_{k}\in\ZZ$ and $b\in\CC$ are constants.

We then obtain theorem 2.3 of \cite{Benzinger} as a corollary of our Theorem \ref{trascendental} (for the particular case of rational functions):
\begin{theorem}[H.\,E.\,Benzinger \cite{Benzinger}]\label{BenzingerRational}
$f(z)$ is a rational function as in \eqref{defcamp} that satisfies
$$f(z)=-\frac{\Phi_{X}(z)}{\Phi_{X}'(z)}$$
if and only if there exist unique polynomials $P_{j}$, with $P_{j}(0)=0$, 
and unique constants $A_{j}\in\CC$, $j=1,\dots,J$, such that 
\begin{equation}
\Phi_{X}(z)=C\e^{P_{0}(z)}\prod_{j=1}^{J}(z-z_{j})^{A_{j}}\e^{P_{j}(\frac{1}{z-z_{j}})},
\end{equation}
where $C\in\CC$ is an arbitrary constant.
\end{theorem}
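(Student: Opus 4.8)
The plan is to reduce the Newton relation to a first-order logarithmic equation that can be integrated in closed form by partial fractions; this is exactly Theorem~\ref{trascendental} specialized to a rational input, and the rational case is clean enough to carry out by hand. First I would rewrite the defining condition: since $f(z)=-p(z)/q(z)$ with $p,q$ coprime and $p$ monic (as in \eqref{defcamp}--\eqref{defpq}), the equation $f(z)=-\Phi_X(z)/\Phi_X'(z)$ is equivalent, on the complement of the zeros of $p$ and of $\Phi_X$, to
\begin{equation*}
\frac{\Phi_X'(z)}{\Phi_X(z)}=\big(\log\Phi_X\big)'(z)=-\frac{1}{f(z)}=\frac{q(z)}{p(z)}.
\end{equation*}
Hence $f$ is a Newton field with covering map $\Phi_X$ precisely when $\Phi_X$ is a branch of $\exp\!\big(\int q/p\big)$; equivalently, one applies Theorem~\ref{trascendental} to the rational function $-q/p\in\mathscr{F}$, whose poles are exactly the zeros $z_j$ of $p$ with multiplicities $m_j$.

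Next I would integrate $q/p$ explicitly. By coprimality, $q/p$ has a unique partial-fraction decomposition
\begin{equation*}
\frac{q(z)}{p(z)}=R(z)+\sum_{j=1}^{J}\sum_{i=1}^{m_j}\frac{c_{ji}}{(z-z_j)^{i}},
\qquad R\in\CC[z],
\end{equation*}
with $R\equiv 0$ exactly when $\deg q<\deg p$. Integrating term by term, the polynomial part integrates to a polynomial that, after fixing the integration constant, I normalize to $P_0$ with $P_0(0)=0$; the simple-pole terms $c_{j1}/(z-z_j)$ integrate to $c_{j1}\log(z-z_j)$; and each tail $\sum_{i\ge 2}c_{ji}/(z-z_j)^{i}$ integrates to a polynomial in $1/(z-z_j)$ with no constant term, which I call $P_j\big(1/(z-z_j)\big)$ with $P_j(0)=0$. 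Exponentiating and absorbing the constant of integration into a free factor $C$ yields
\begin{equation*}
\Phi_X(z)=C\,\e^{P_0(z)}\prod_{j=1}^{J}(z-z_j)^{A_j}\,\e^{P_j(1/(z-z_j))},
\qquad A_j:=c_{j1}=\operatorname{Res}_{z_j}\!\big(q/p\big),
\end{equation*}
which is the asserted form. The converse is the routine check that logarithmic differentiation of any such $\Phi_X$ returns $\Phi_X'/\Phi_X=q/p$, hence $f=-\Phi_X/\Phi_X'$ equals the prescribed rational function.

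For uniqueness I would invoke uniqueness of the partial-fraction decomposition: the polynomial part $R$, the residues $c_{j1}=A_j$, and the principal tails are all determined by $q/p$, so the normalized $P_0$, the constants $A_j$, and the polynomials $P_j$ (all with the stated vanishing at the relevant origin) are unique, while the multiplicative constant $C$ must remain arbitrary because the defining relation depends on $\Phi_X$ only through $\Phi_X'/\Phi_X$ and is therefore invariant under $\Phi_X\mapsto C\,\Phi_X$. The one point I expect to require care is the bookkeeping that pins each exponent $A_j$ to the residue and simultaneously enforces $P_0(0)=P_j(0)=0$; conceptually this is where the specialization of Theorem~\ref{trascendental} to finitely many poles simplifies matters, since for a rational input no convergence-inducing factors $\e^{Q_k}$ are needed and they collapse into the single entire factor $\e^{P_0}$.
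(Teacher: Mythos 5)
Your proof is correct and takes essentially the same route as the paper's sketch: both reduce $f=-\Phi_X/\Phi_X'$ to the logarithmic-derivative equation $\Phi_X'/\Phi_X = q/p$, decompose $q/p$ into a polynomial part plus partial fractions (the paper phrases this as Euclidean division $q/p=d+r/p$ followed by partial fractions of $r/p$, which is the same decomposition you use), integrate term by term, and exponentiate, with the exponents $A_j$ arising as the residues. Your explicit treatment of uniqueness via uniqueness of the partial-fraction decomposition, and of the arbitrariness of $C$ from the invariance of $\Phi_X'/\Phi_X$ under $\Phi_X\mapsto C\,\Phi_X$, supplies details the paper's sketch leaves to the reader.
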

\noindent
Since an alternative (direct) proof is instructive and short we provide a sketch of proof.
\begin{proof}[Sketch of proof]

\noindent
($\Rightarrow$):
Consider $-\frac{1}{f}=\frac{q}{p}$, with $p$ and $q$ polynomials as described above.
By Euclid's division algorithm we have that 
$\frac{q}{p}=d+\frac{r}{p}$,
with $d,r\in\CC[z]$ of degree less than that of $p$.
Next consider the partial fraction decomposition of $\frac{r}{p}$:
\begin{equation}
\frac{r(z)}{p(z)}=\sum_{j=1}^{J}\left(\frac{A_{j 1}}{z-z_{j}} + \sum_{k=2}^{m_{j}}\frac{A_{j k}}{(z-z_{j})^k}\right),
\end{equation}
and then integrate explicitly so that finally by exponentiation and renaming $A_{j}=A_{j 1}$ we have
\begin{equation}
\Phi_{X}(z)=C\e^{P_{0}(z)}\prod_{j=1}^{J}(z-z_{j})^{A_{j}}\e^{P_{j}(\frac{1}{z-z_{j}})}.
\end{equation}

\noindent
($\Leftarrow$):
This is an elementary calculation left to the reader.
\end{proof}

We have then an explicit characterization, and more importantly, a method of calculating the $\Phi_{X}(z)$, in the 
case that the complex analytic vector field is defined by a rational 
function $f(z)$. For the more general case 
when $f(z)$ is a meromorphic function, one uses the 
Mittag--Leffler expansion instead of the partial fraction 
decomposition in the above sketch of proof. 
Note that if a residue of $dz/f(z)$ is not an integer then $\Phi_{X}(z)$ is 
in fact a multivalued function.

As examples of these explicit calculations consider the following.
\begin{example}\label{exampratfunc}
1. Let $f(z)=\frac{z(2z-i)^{2}}{(2z+i)^{2}}$ then we find that
$$\frac{q(z)}{p(z)}=\frac{(2z+i)^{2}}{z(2z-i)^{2}},$$ 
so that $d(z)=0$ and by partial fraction decomposition and explicit integration
$$\Phi_{X}(z)=\frac{\e^{\frac{4i}{2z-i}}}{z}.$$
This shows that the complex vector field 
$$X(z)=\frac{z(2z-i)^{2}}{(2z+i)^{2}}\del{}{z}$$ 
is a pullback of $-w\del{}{w}$ via $\Phi_{X}$, hence it is a Newton vector field.

\noindent
2. Of course we can also consider the opposite case: 
suppose we know that  
$$\Phi_{X}(z)=\e^{1/z}(z-1),$$
then we can find the rational vector field
$$X(z)=-\frac{z^{2}(z-1)}{z^{2}-z+1}\del{}{z}.$$
\end{example}

\begin{example}
The vector field
$$X(z)=-\frac{1}{\sqrt{2}+1}\frac{z(z-1)}{z-\frac{\sqrt{2}}{\sqrt{2}+1}}\del{}{z},$$
is a Newton vector field that comes from pullback of $-w\del{}{w}$ via 
$$\Phi_{X}(z)=z^{\sqrt{2}}(z-1).$$
\end{example}

The elliptic case is handled via the following theorem, where $\sigma$ and $\zeta$ are 
the Weierstrass sigma 
and zeta functions respectively  (see again \cite{AP-2} for further details)
\begin{theorem}\label{ellipticNewtonian}
Let $f(z)$ be an elliptic function with fundamental periods $2w_{1}$ and $2w_{3}$, let $b_{1},...,b_{r}$ be the 
poles of f(z) in the fundamental period parallelogram. Suppose $b_{k}$ is of order $\beta_{k}$, with principal part
$$G_{k} = \frac{A_{1 k}}{z - b_{k}} + \cdots + \frac{A_{\beta_{k} k}}{(z - b_{k})^{\beta_{k}}}\quad (k = 1, . . . , r).$$ 
Then
$f(z) = - \frac{\Phi_{X}'(z)}{\Phi_{X}(z)},$
and in fact there exist constants $C'$ and $C$ such that
$$\Phi_{X}(z) = C'\e^{-C z}\prod_{k=1}^{r}\left\{\sigma(z - b_{k})^{-A_{1 k}} \exp\left[\sum^{\beta_{k}}_{j=2} 
(-1)^{j} \frac{A_{j k}}{(j-1)!} \zeta^{(j-2)}(z - b_{k})\right]\right\}.$$
\hfill \qed
\end{theorem}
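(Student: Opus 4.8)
The plan is to verify the asserted formula directly, by forming the logarithmic derivative of the proposed $\Phi_X$ and matching it against $-f$ using only the standard analytic properties of the Weierstrass functions $\sigma,\zeta,\wp$. Recall that $\zeta=\sigma'/\sigma$ and $\zeta'=-\wp$, so that $\zeta^{(n)}=-\wp^{(n-1)}$ for $n\geq 1$. Taking the logarithm of the product and differentiating termwise, I would obtain
\begin{equation*}
\frac{\Phi_X'(z)}{\Phi_X(z)} = -C - \sum_{k=1}^r A_{1k}\,\zeta(z-b_k) + \sum_{k=1}^r \sum_{j=2}^{\beta_k} (-1)^j \frac{A_{jk}}{(j-1)!}\,\zeta^{(j-1)}(z-b_k),
\end{equation*}
where the $\sigma$--factors produce the $\zeta$ terms and the exponential factors, after one differentiation of $\zeta^{(j-2)}$, produce the higher derivatives $\zeta^{(j-1)}$.

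The first structural step is to show that this expression is \emph{elliptic}. The terms $\zeta^{(j-1)}(z-b_k)=-\wp^{(j-2)}(z-b_k)$ with $j\geq 2$ are derivatives of $\wp$ and are therefore automatically doubly periodic. The terms $\zeta(z-b_k)$, by contrast, are only quasi--periodic: under a period translation $z\mapsto z+2w_i$ each picks up the constant $2\eta_i$ with $\eta_i=\zeta(w_i)$. Hence the block $-\sum_k A_{1k}\zeta(z-b_k)$ acquires $-2\eta_i\sum_k A_{1k}$, which vanishes \emph{precisely} because $\sum_k A_{1k}=0$. This residue identity is not an extra assumption but is forced by the ellipticity of $f$ (the sum of residues of any elliptic function over a fundamental parallelogram is zero), and it is exactly what renders $\Phi_X'/\Phi_X$ doubly periodic. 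This is where the hypothesis that $f$ is elliptic, rather than merely meromorphic as in Theorem \ref{trascendental}, enters decisively, and it is what licenses replacing the elementary building blocks $(z-b_k)$ and $1/(z-b_k)$ of the rational case by $\sigma$ and $\zeta$.

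Next I would compare principal parts. Setting $u=z-b_k$ and using the Laurent expansions $\zeta(u)=u^{-1}+O(u^3)$ and $\wp^{(j-2)}(u)=(-1)^j(j-1)!\,u^{-j}+\cdots$, a short computation with the signs $(-1)^j$ and the factorials $(j-1)!$ shows that the $k$--th block of $\Phi_X'/\Phi_X$ has principal part $-\sum_{j=1}^{\beta_k}A_{jk}\,u^{-j}=-G_k$. Thus $\Phi_X'/\Phi_X$ and $-f$ are two elliptic functions with identical principal parts at every $b_k$ in the fundamental parallelogram and no poles elsewhere; their difference is a holomorphic elliptic function, hence constant. I would then absorb this constant into $C$ through the factor $\e^{-Cz}$, obtaining $\Phi_X'/\Phi_X=-f$, i.e. $f=-\Phi_X'/\Phi_X$. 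The multiplicative constant $C'$ is the free constant of integration, and the possibly non--integer exponents $-A_{1k}$ record that $\Phi_X$ is multivalued in general.

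The main obstacle is the conceptual point of the second paragraph rather than the arithmetic: recognizing that ellipticity of the logarithmic derivative is contingent on $\sum_k A_{1k}=0$, and tracing precisely how this single scalar condition makes the quasi--periods $2\eta_i$ cancel. The remaining difficulty is purely computational, namely keeping the signs, factorials, and derivative orders of $\wp$ consistent so that each coefficient lands exactly on $-A_{jk}$; I expect this to be routine once the Laurent expansion $\frac{d^{\,m}}{du^{m}}u^{-2}=(-1)^m(m+1)!\,u^{-(m+2)}$ is recorded.
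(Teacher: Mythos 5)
Your verification is correct: the logarithmic differentiation (using $\sigma'/\sigma=\zeta$ and $\zeta'=-\wp$) is right, the ellipticity of $\Phi_X'/\Phi_X$ does hinge precisely on the residue identity $\sum_k A_{1k}=0$ forced by ellipticity of $f$ together with the quasi--periodicity $\zeta(z+2w_i)=\zeta(z)+2\eta_i$, the principal--part bookkeeping via $\wp^{(j-2)}(u)=(-1)^j(j-1)!\,u^{-j}+\cdots$ lands each block on $-G_k$, and the Liouville step (a pole--free elliptic function is constant) plus absorption of the residual constant into $C$ closes the argument.

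Note, however, that your route differs in direction from the one the paper takes. The paper in fact gives no proof of this theorem at all: it is quoted with a terminal qed--symbol from the earlier work \cite{AP-2}, and the strategy the paper indicates (in the sketch of proof of Theorem \ref{BenzingerRational} and in the remark that the general meromorphic case replaces partial fractions by the Mittag--Leffler expansion) is \emph{constructive}. There one starts from $f$, writes down its Hermite/Mittag--Leffler decomposition as an elliptic function --- a constant plus a combination of $\zeta(z-b_k)$ and derivatives $\wp^{(j-2)}(z-b_k)$, whose existence again uses $\sum_k A_{1k}=0$ --- then integrates term by term (producing $\log\sigma$ and lower derivatives of $\zeta$) and exponentiates, which \emph{derives} the displayed formula for $\Phi_X$. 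Your argument runs in the opposite direction: you take the formula as given and verify it. The two are essentially equivalent in mathematical content, since the uniqueness step you invoke (two elliptic functions with identical principal parts in a fundamental parallelogram differ by a constant) is exactly what proves the classical decomposition theorem that the constructive route rests on. What each buys: the constructive proof explains where the formula comes from and extends naturally to the algorithmic use the paper makes of it, while yours is shorter and fully self--contained once the formula is in hand, and it isolates cleanly the one place where ellipticity (as opposed to mere meromorphy, as in Theorem \ref{trascendental}) enters.
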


\begin{example}
The elliptic vector field 
$$X(z)=-\frac{\wp(z)}{\wp'(z)}\del{}{z},$$
is of course a Newton vector field with $\Phi_{X}(z)=\wp(z)$.
\end{example}

\section{Examples of the visualization of singular complex analytic vector fields on $\CW$}\label{essential}

Let $X(z)=f(z)\del{}{z}$ be a singular complex analytic vector field on the Riemann sphere $\CW$. If $f(z)$ has 
only poles or zeros in $\CC$ ($f$ is meromorphic on $\CC$), then three cases arise:
\begin{enumerate}[label=(\arabic*)]
\item $f(z)$ has a pole or a regular point at $\infty\in\CW$, in which case $f(z)$ is a rational function.
\item $f(z)$ has an essential singularity at $\infty\in\CW$.
\item $\infty\in\CW$ is an accumulation point of zeros or poles of $f(z)$.
\end{enumerate}
In all three cases one needs to find the ramified covering $\Phi_{X}(z)$ explicitly and proceed to calculate the 
real valued function $\rho(z)=\argp{ \Phi_{X}(z) }$ in order to plot its level curves.

\subsection{Rational vector fields on $\CW$}
The first cases are handled as in \S\ref{meromorphic}, that is Theorem \ref{BenzingerRational} provides us with 
the explicit ramified coverings that allows us to visualize the corresponding vector field. 

\subsubsection{The case of $X(z)=\frac{z(2z-i)^{2}}{(2z+i)^{2}} \del{}{z}$}
Considering Example \ref{exampratfunc}.1, the phase portrait of the rational vector field  

\centerline{$X(z)=\frac{z(2z-i)^{2}}{(2z+i)^{2}}  \del{}{z}$}
 
\noindent
can be visualized in Figure \ref{ratfunc}, both in the plane $\CC$ and on the Riemann sphere $\CW$. 
In this case we have a simple zero at the origin, a double zero at $\frac{i}{2}$, a double pole at $-\frac{i}{2}$, and 
$\infty\in\CW$ is a first order zero. 

\noindent
An advantage of plotting strip flows is that it also provides us with information regarding the parametrization of 
the solutions, hence for instance one can see that the trajectories that approach the zero at $\frac{i}{2}$ are slow 
compared to the trajectories that approach the pole at $-\frac{i}{2}$.
\begin{figure*}[htbp]
\centering
\includegraphics[height=0.48\textwidth]{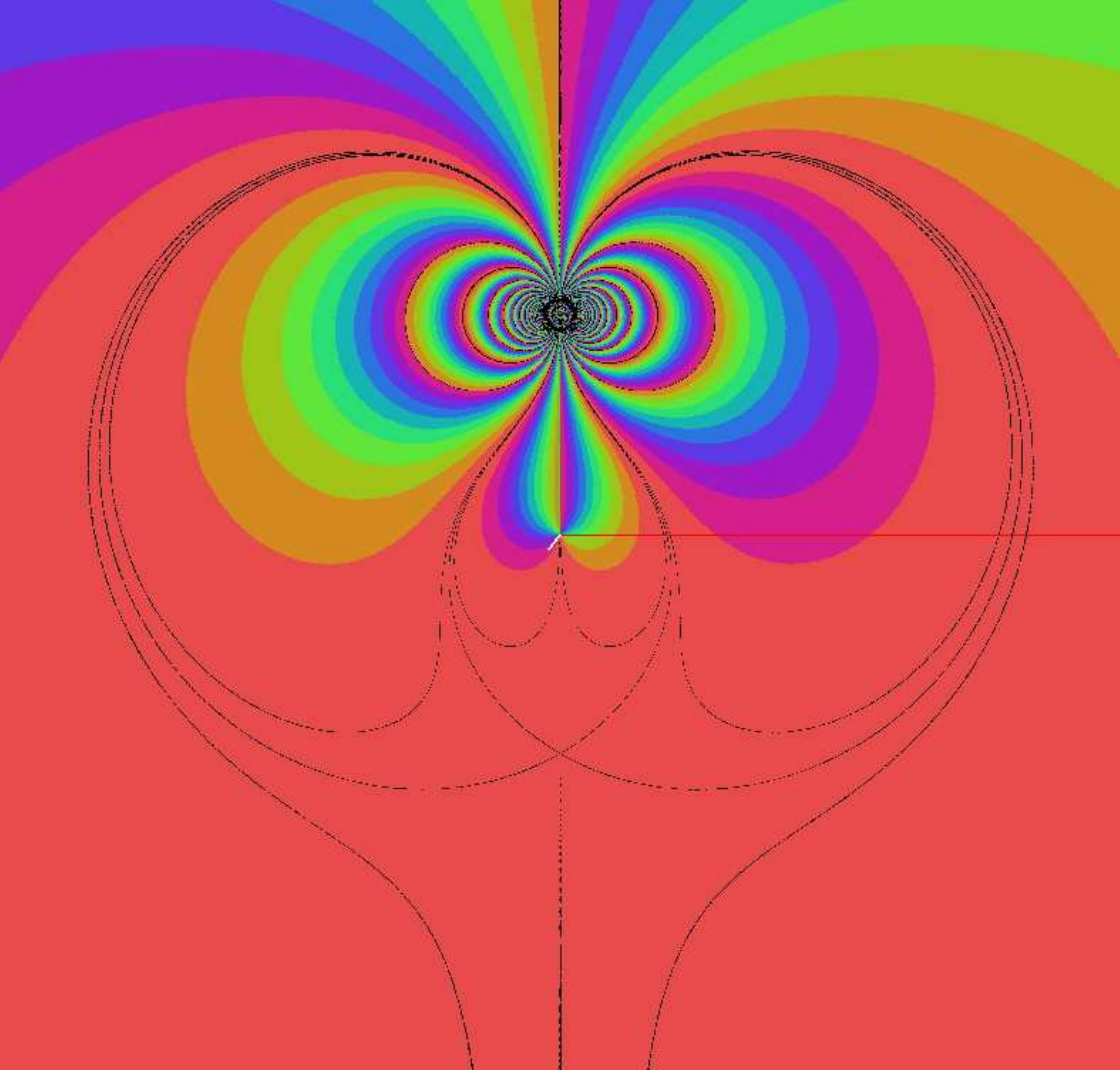}
\includegraphics[height=0.48\textwidth]{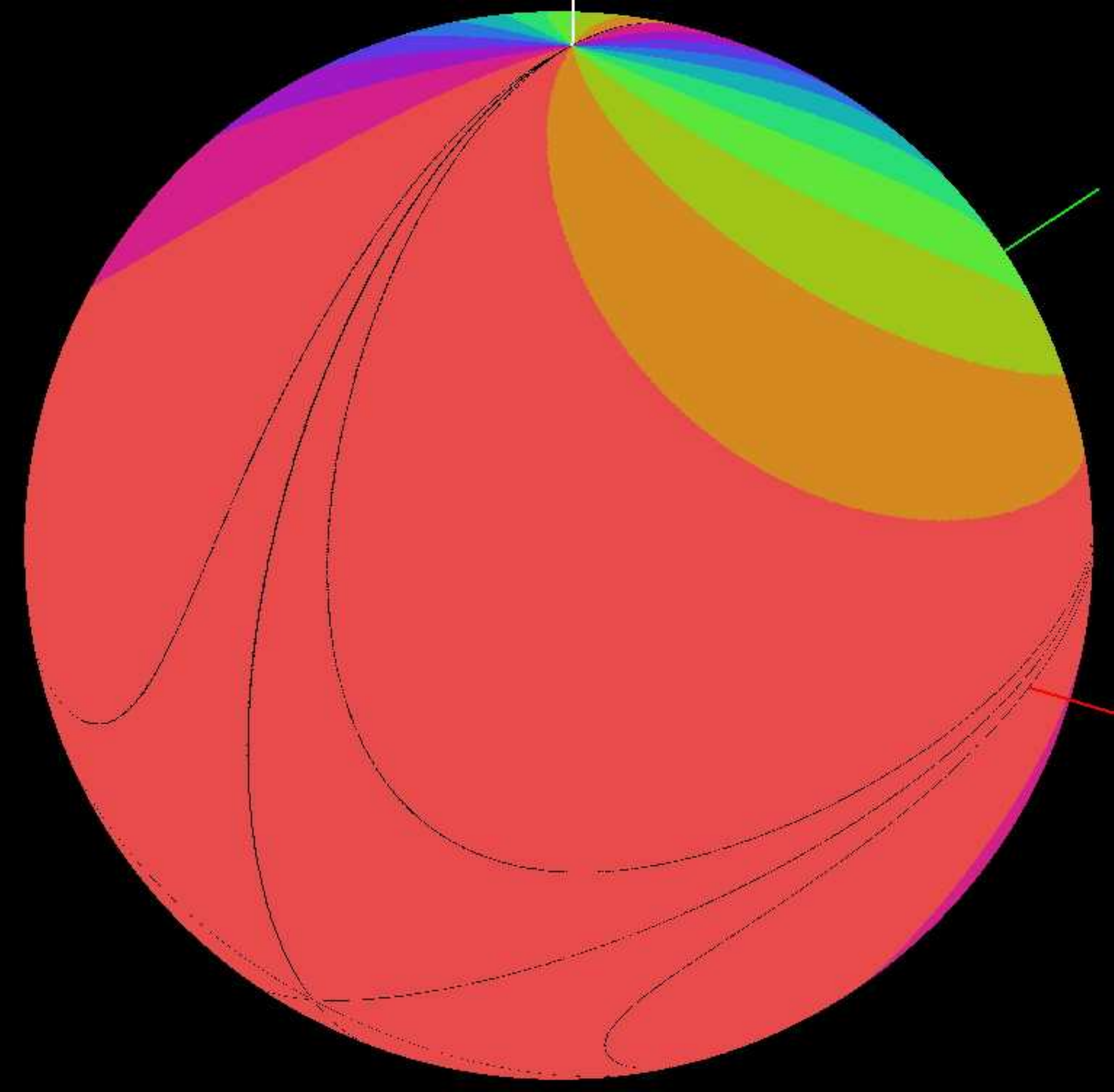}
\caption{Visualization of  the field $X(z)=\frac{z(2z-i)^{2}}{(2z+i)^{2}} \del{}{z}$.
(a) Shows a vicinity of the origin and one can observe a simple zero at the origin, a double zero at $\frac{i}{2}$, 
and a double pole at $-\frac{i}{2}$. (b) Shows the vector field on the Riemann sphere where one can see a 
simple zero at $\infty\in\CW$ and an order 2 pole at  $-\frac{i}{2}$.}
\label{ratfunc}
\end{figure*}

\subsubsection{The case of $X(z)=-\frac{z^{2}(z-1)}{z^{2}-z+1}\del{}{z}$}
This corresponds to Example \ref{exampratfunc}.2, that is the rational vector field 

\centerline{$X(z)=-\frac{z^{2}(z-1)}{z^{2}-z+1}\del{}{z}$.}
 
\noindent
In Figure \ref{campoPolos} we present the visualization of the phase portrait. As can be observed the borders of 
the strip flows correspond to streamlines of the field. We are also plotting some explicit trajectories that pass 
through the poles (of order $-1$ at the roots of $z^{2}-z+1$) and zeros (of order 2 at 0 and of order 1 at 1 
and $\infty$) of the vector field. 
\begin{figure*}[htbp]
\centering
\includegraphics[height=0.43\textwidth]{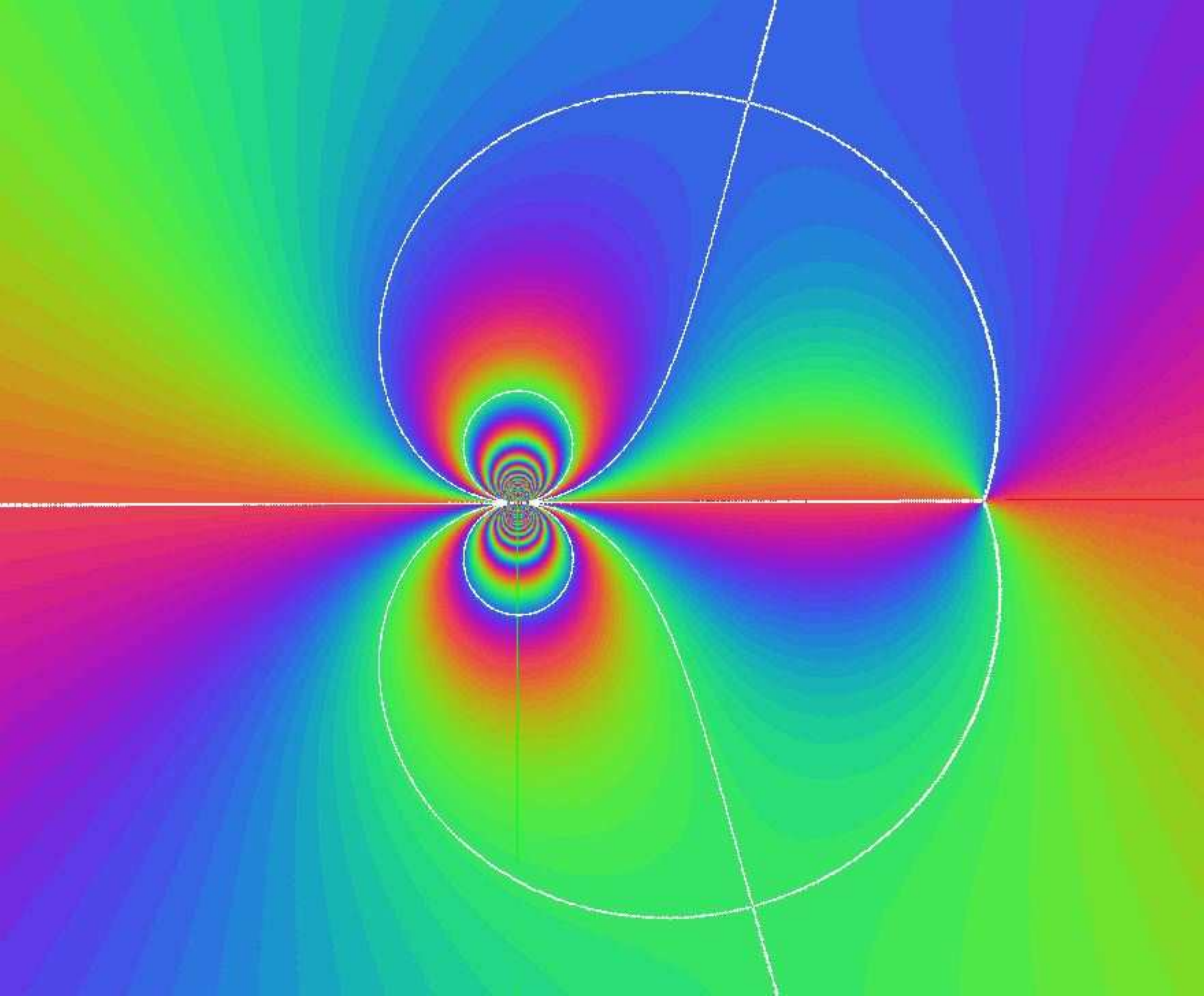}
\includegraphics[height=0.43\textwidth]{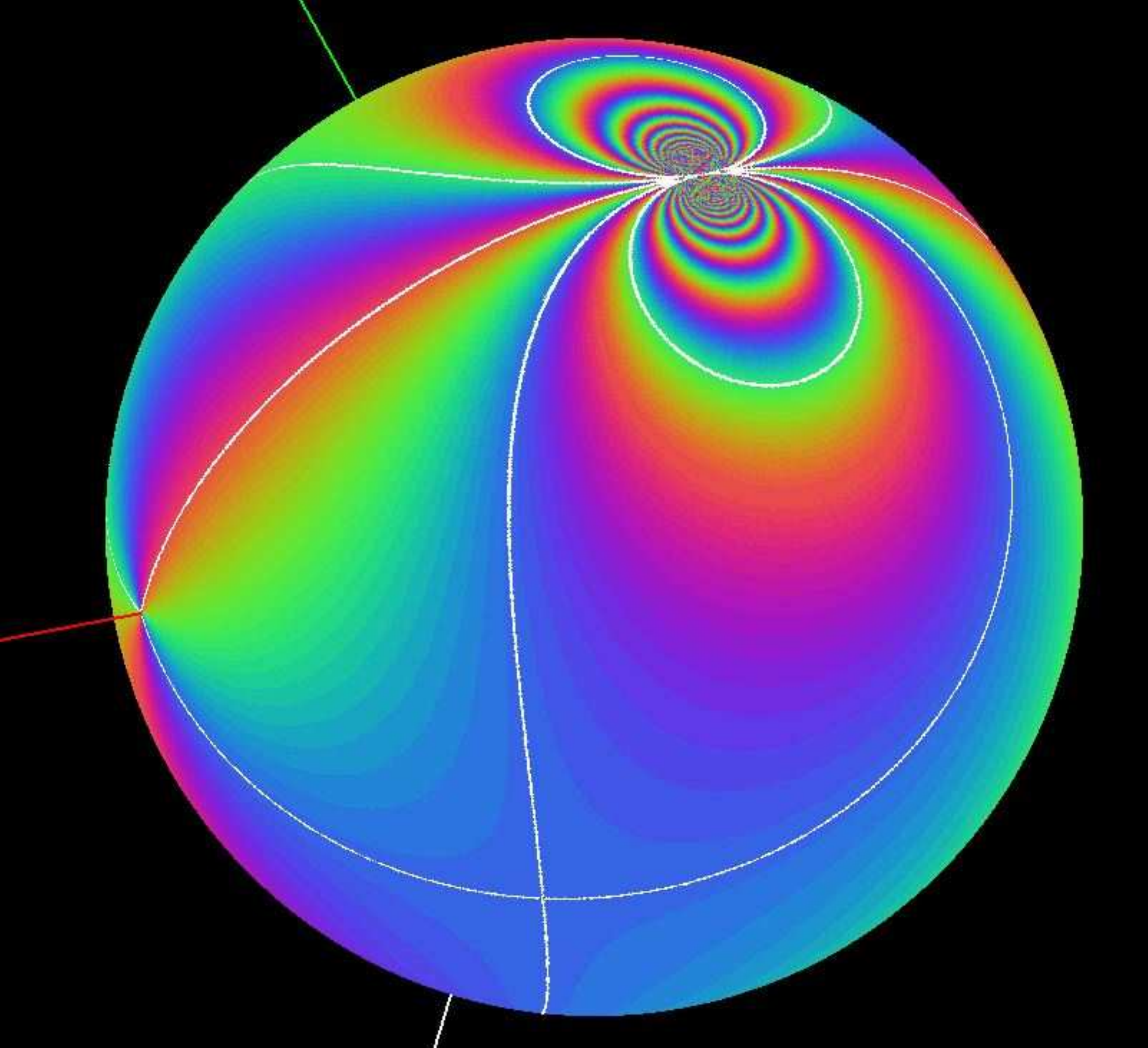}
\caption{Visualization of the field $X(z)=-\frac{z^{2}(z-1)}{z^{2}-z+1}\del{}{z}$.
The borders of the strip flows correspond to streamlines of the field.
We have also plotted the separatrices associated to the poles of the field.}
\label{campoPolos}
\end{figure*}

\subsection{Singular complex analytic vector fields with an isolated essential singularity at $\infty\in\CW$}
As examples of analytic vector fields with an isolated essential singularity at $\infty\in\CW$ we present the cases 
of $X(z)=\e^{z}\del{}{z}$ in Figure \ref{campoExp}, and of $X(z)=\frac{\e^{z^3}}{3z^3-1}\del{}{z}$ in 
Figure \ref{CampoExpZ3}. They correspond to Examples \ref{exampleExp} and \ref{exampleExp3} 
so the covering maps are $\Phi_{X}(z)=\e^{\e^{-z}}$ and $\Phi_{X}(z)=\e^{-z\e^{-z^{3}}}$ respectively.

Worth noticing is that visualization of the phase portrait of a vector field near an essential singularity is rather 
difficult with the usual methods. This is due mainly to the fact that the algorithms, involving numerical 
integration, propagate errors along the trajectories, and Picard's theorem tells us that near an essential 
singularity the vector field takes on all but at most two values in $\CW$; hence numerical integration breaks 
down rather quickly near the essential singularity. Even though numerical errors are also present in our 
visualization scheme (as can be seen in particular in the case of Figure \ref{CampoExpZ3}), these do not 
propagate along the trajectories, and are due solely to the numerical accuracy of the routines used to evaluate 
the auxiliary function $\rho(z)$. A deeper exploration of these errors is presented in \S\ref{comparison}.

\subsubsection{The case of $X(z)=\e^{z}\del{}{z}$}
In Figure \ref{campoExp} (a), we show the strip flows on the Riemann sphere of the vector field 
$X(z)=\e^{z}\del{}{z}$ in a vicinity of the essential singularity at $\infty$. Notice that the strips cluster together 
and it is difficult to appreciate the behavior of the vector field. On the other hand, by plotting specific trajectories 
we can examine the behavior of the flow near the essential singularity, and since the error does not propagate 
along the trajectory, we can visualize the actual trajectories (see Figure \ref{campoExp} (b)).
\begin{figure*}[htbp]
\centering
\includegraphics[height=0.45\textwidth]{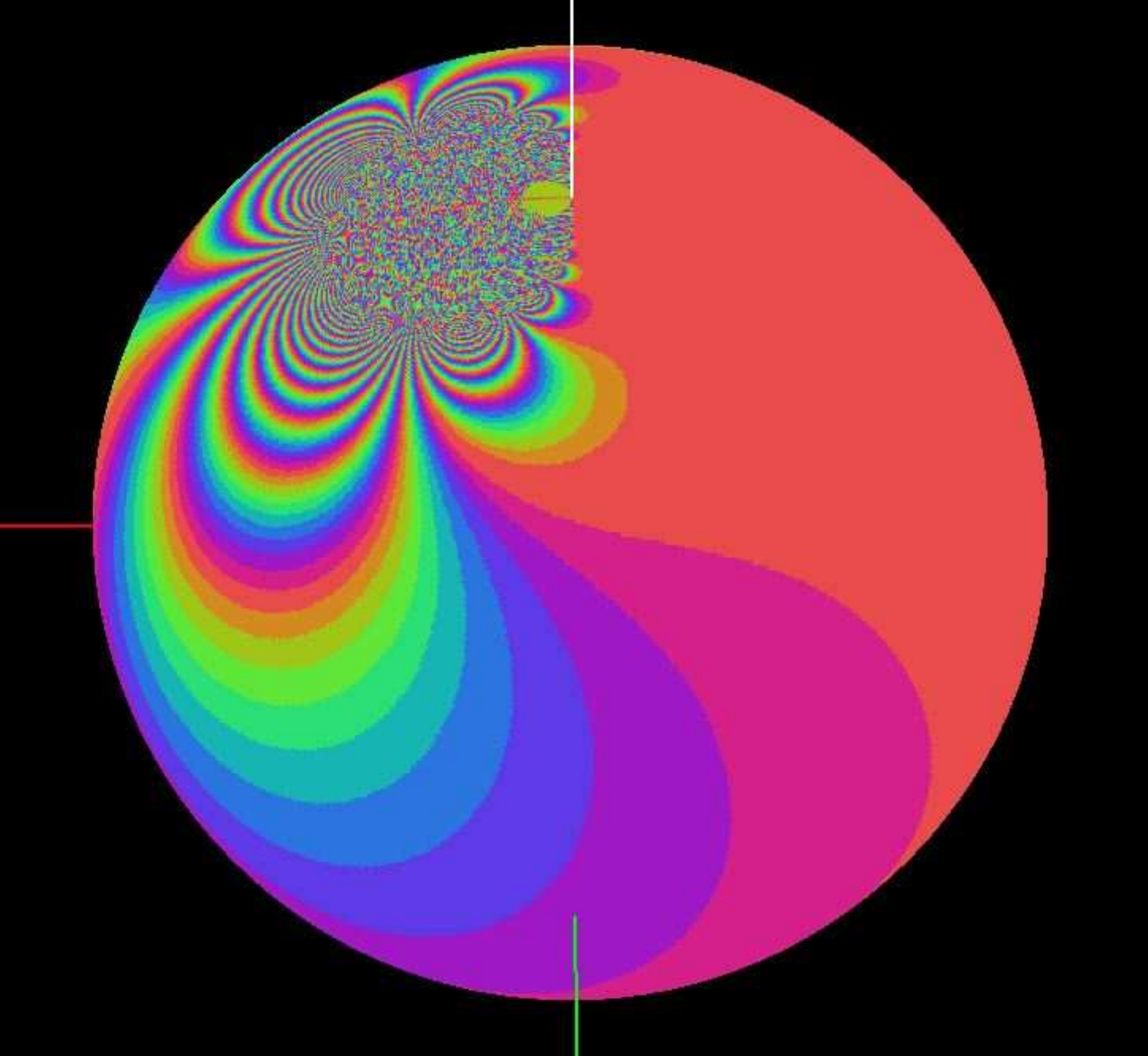}
\includegraphics[height=0.45\textwidth]{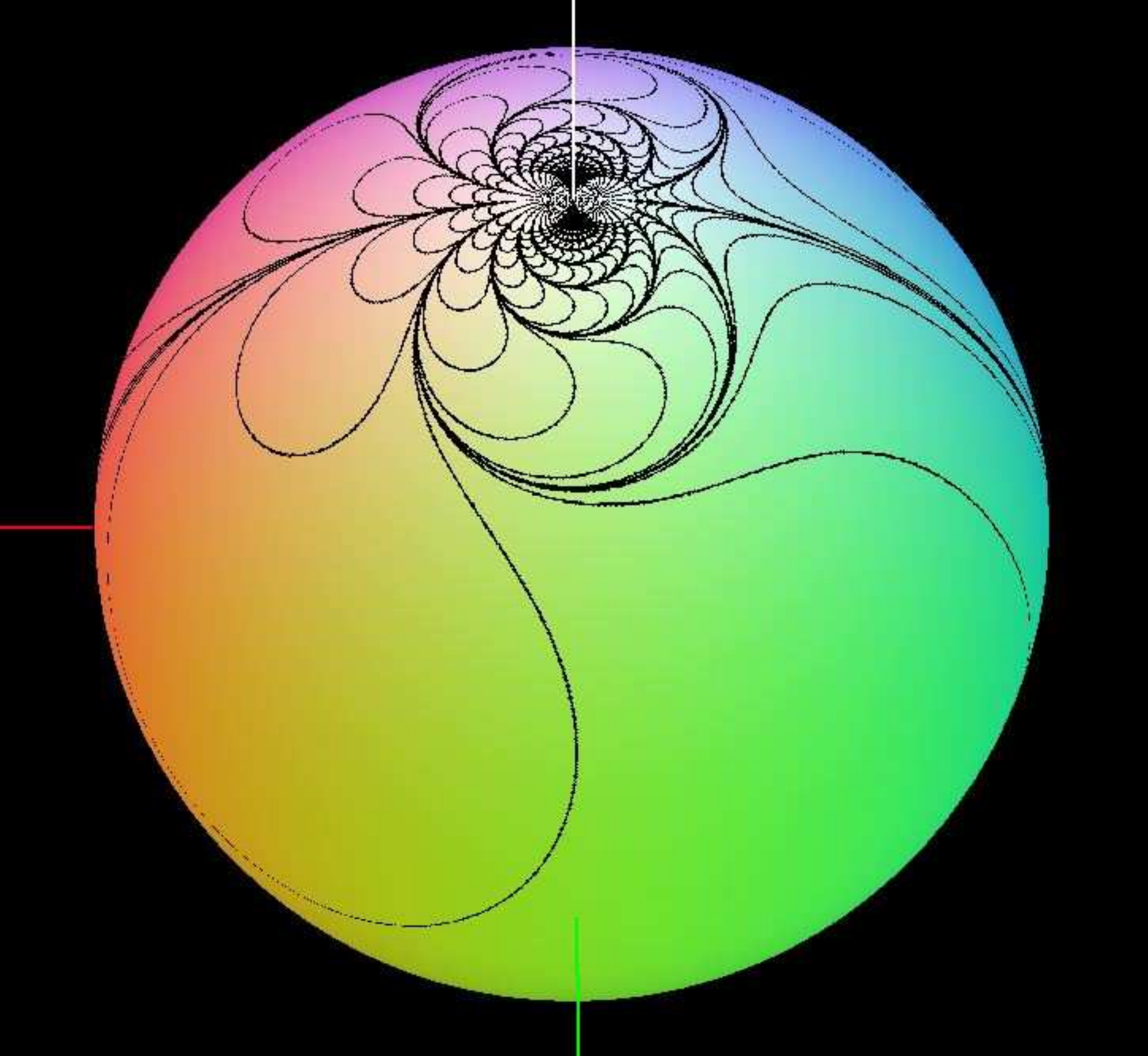}
\caption{Visualization of $X(z)=\e^{z}\del{}{z}$ near the essential singularity at $\infty$ on the Riemann sphere. 
In (a) we have plotted the strip flows. In (b) we have plotted some of the trajectories.}
\label{campoExp}
\end{figure*}

Once again, one can use the strip flows to gather information regarding the parametrization of the flow. For 
instance, one can observe that even though the trajectories appear symmetrical in Figure \ref{campoExp} (b), 
the strip flows in Figure \ref{campoExp} (a) indicate that the trajectories approach $\infty$ from the right 
much faster: in fact in finite time\footnote{
As observed in \cite{AP-MR} p.\,198, the singular analytic vector field $X(z)=\e^{z}\del{}{z}$ 
has two asymptotic values associated to the essential singularity at $\infty\in\CW_z$; 
0 and $\infty$, each with its own exponential tract. 
The trajectories that approach the essential singularity inside the exponential tract associated to the finite 
asymptotic value arrive in finite time (these are the trajectories on the right in Figure \ref{campoExp} (b)), 
while the trajectories that approach the essential singularity inside the exponential tract associated to the 
asymptotic value $\infty$ (trajectories on the left in Figure \ref{campoExp} (b)) 
take infinite time to reach the essential singularity.
}. 
This is a clear advantage over the results reported in \cite{NewtonLofaro} where 
only the trajectories are observed.

\subsubsection{The case of $X(z)=\frac{\e^{z^3}}{3z^3-1}\del{}{z}$}
In Figure \ref{CampoExpZ3} (a) we show the phase portrait of the vector field 
$X(z)=\frac{\e^{z^3}}{3z^3-1}\del{}{z}$ in the plane $\CC$ where we can observe the three first 
order poles at $\frac{1}{\sqrt[3]{3}}$, $\frac{\e^{i 2\pi/3}}{\sqrt[3]{3}}$ and $\frac{\e^{-i2\pi/3}}{\sqrt[3]{3}}$, 
while in Figure \ref{CampoExpZ3} (b) the phase portrait of the same vector field is shown on the sphere 
in a vicinity of the essential singularity at $\infty\in\CW$. 

As mentioned before, some numerical errors are still present (solid color regions near the essential singularity) 
due to the nature of the essential singularity of $f(z)$, but there seems to be a strong suggestion of some pattern 
characteristic to the essential singularities. This last remark is explored further in \cite{AP-MR} and 
\cite{AP-MR-3} from a theoretical viewpoint.
\begin{figure*}[htbp]
\centering
\includegraphics[height=0.37\textwidth]{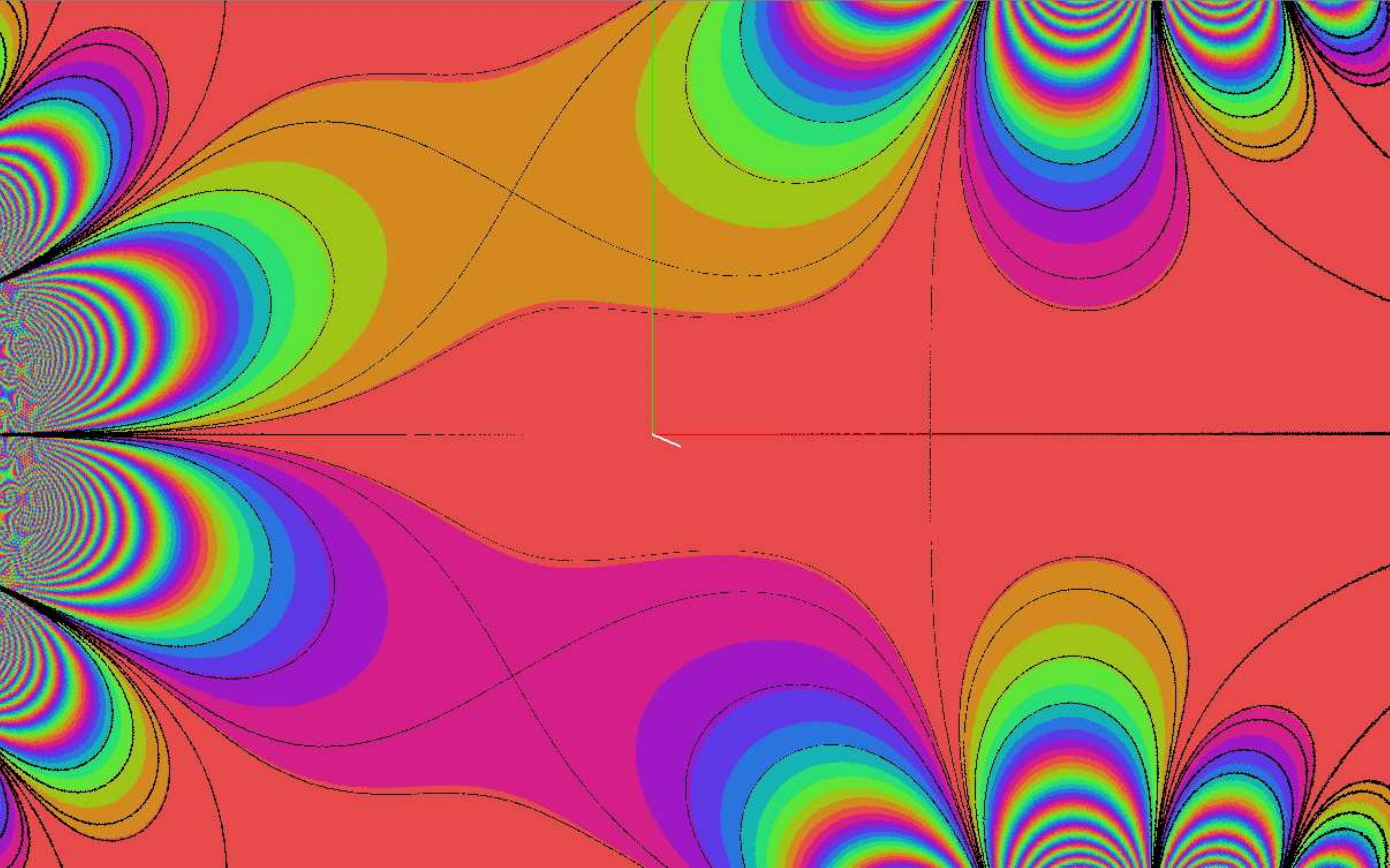}
\includegraphics[height=0.37\textwidth]{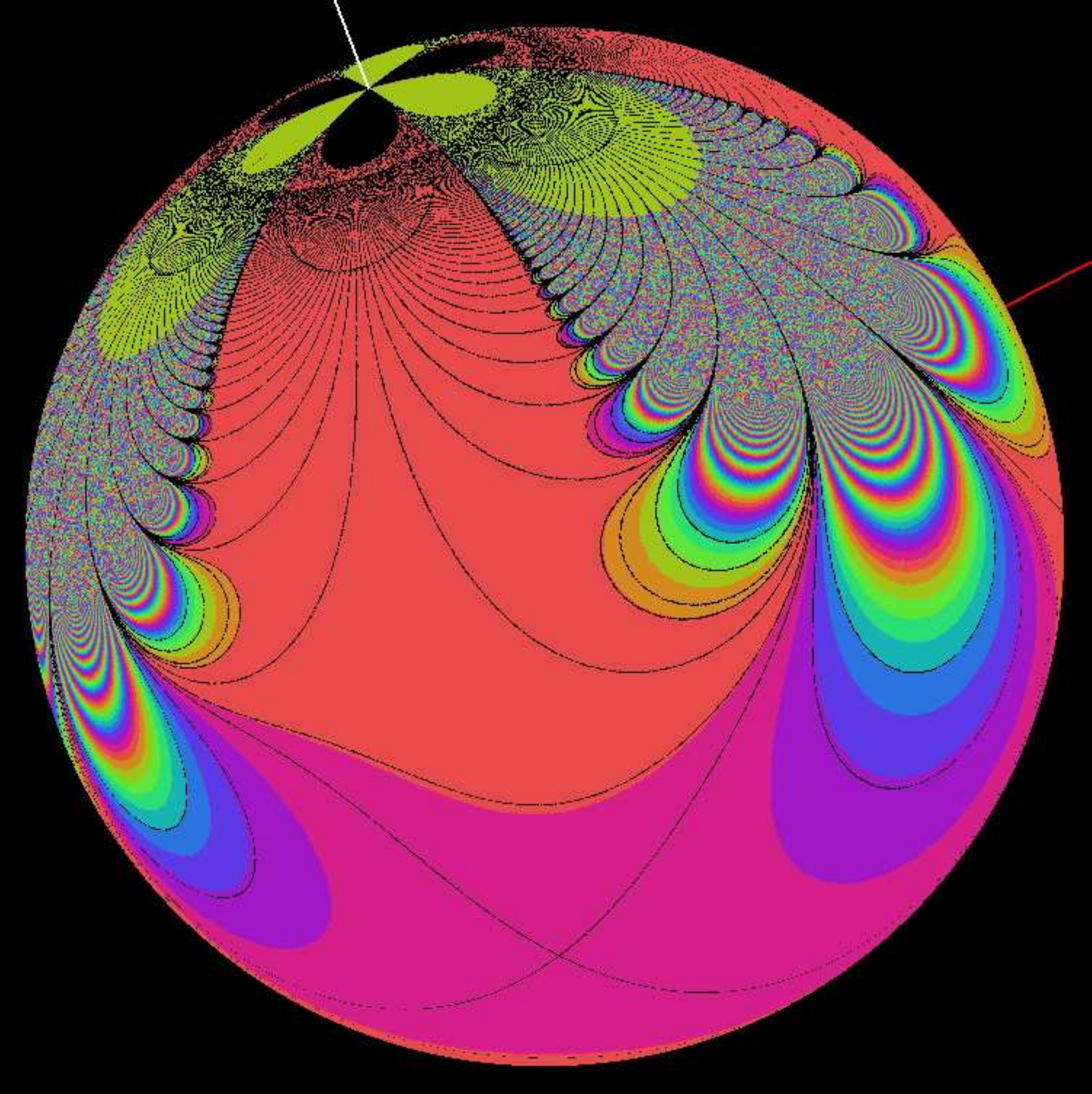}
\caption{Visualization of $X(z)=\frac{\e^{z^3}}{3z^3-1}\del{}{z}
\in \E (0, 3,3)$. 
(a) Shows the field in the plane in a vicinity of the origin where one can observe the three 
simple poles at $\frac{1}{\sqrt[3]{3}}$, $\frac{\e^{i 2\pi/3}}{\sqrt[3]{3}}$ and $\frac{\e^{-i2\pi/3}}{\sqrt[3]{3}}$. 
(b) Shows the field on the Riemann sphere, one can observe the essential singularity at $\infty$ 
and a simple pole at $\frac{\e^{-i2\pi/3}}{\sqrt[3]{3}}$.}
\label{CampoExpZ3}
\end{figure*}

In particular, note that $X\in\E(0, 3,3)$, hence
$\Psi_{X}$ is single valued, 
$\R_{X}$ is an infinitely ramified Riemann surface 
over $\CW_{t}$ and $\pi_{X,1}$ 
provides a global flow of $X$. 
For further details see \cite{AP-MR-3}, 
where the combinatorial concept of 
{\it $(r,d)$--configuration trees} allows for an accurate 
description of the Riemann surfaces $\R_{X}$.
A very rough drawing of a  generic $\R_X$ is provided in 
Figure  \ref{tres-y-tres}.

\begin{figure*}[htbp]
\centering
\includegraphics[height=0.6\textwidth]{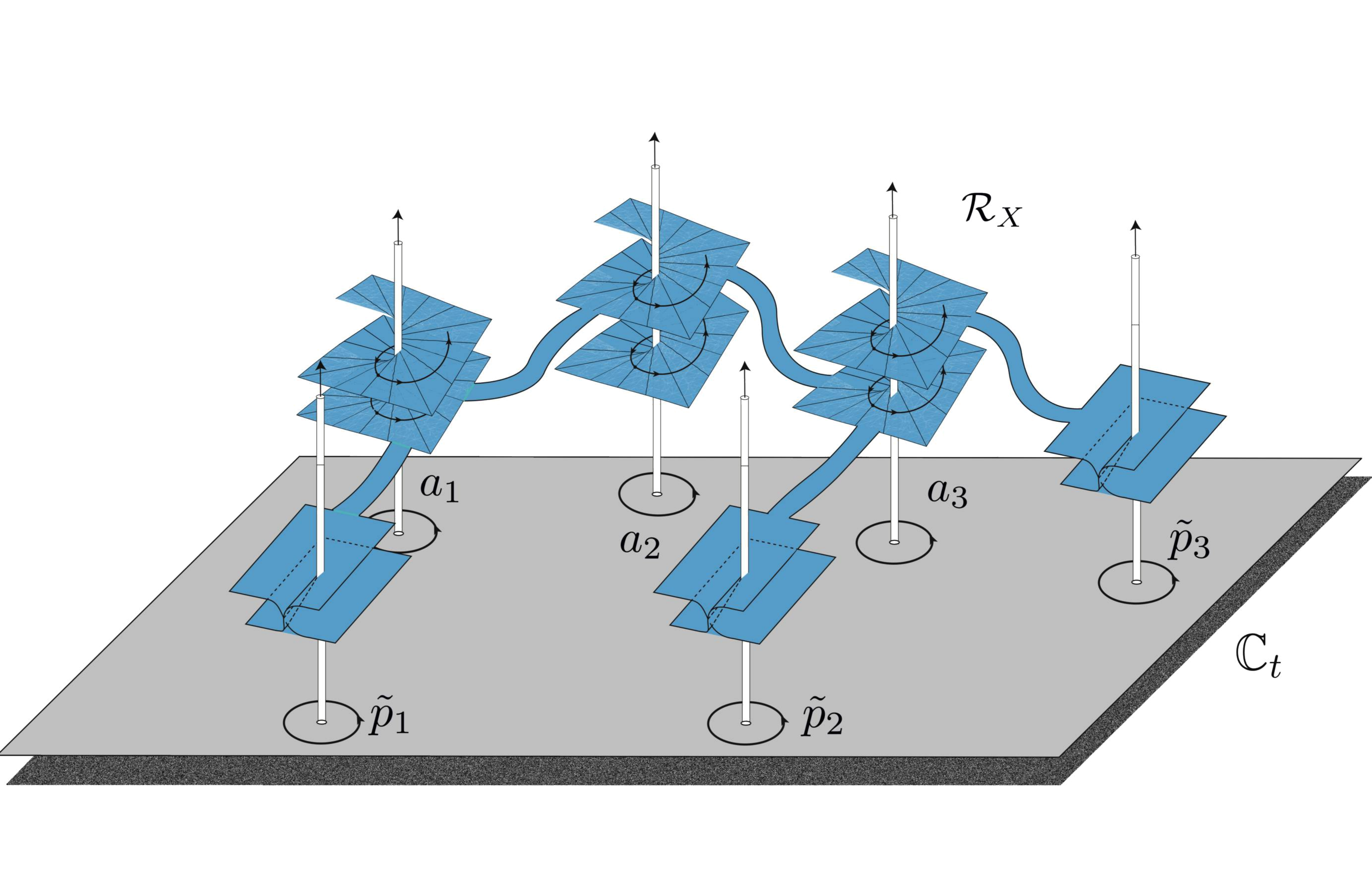}
\caption{A sketch of a Riemann surface $\R_X$ for 
$X(z)=\frac{\lambda}{(z-p_1) (z-p_2) (z-p_3)}
\e^{b_0z^3 + \ldots + b_3  }\del{}{z}
\in \E(0,3,3)$ according to Diagram \eqref{diagramaRX}.
The horizontal plane is $\CC_t$,
generically there are three finite 
asymptotic values $\{a_1, a_2, a_3\}$ and three finite 
critical values 
$\{ \widetilde{p}_1=\Psi_X(p_1), 
\widetilde{p}_2=\Psi_X(p_2),  \widetilde{p}_3=\Psi_X(p_3) \}$.
}
\label{tres-y-tres}
\end{figure*}

\subsection{Singular complex analytic vector fields with an accumulation point at $\infty\in\CW$}
The third case, that in which $\infty\in\CW$ is an accumulation point\footnote{
Note that $\infty\in\CW$ is not an \emph{isolated} essential singularity for $X$, however it is a 
\emph{non--isolated} essential singularity, both in the sense of its Laurent series expansion and 
in the sense that the conclusion of Picard's theorem is still satisfied.
} of zeros or poles of $f(z)$ is also of interest. Here we present two examples:

\subsubsection{The case of $X(z)=-\tan(z)\del{}{z}$} 
As was seen in Example \ref{tanvectorfield}, 
the ramified covering characterizing this vector field is $\Phi(z)=\sin(z)$. In Figure \ref{campoTanz} 
we provide the visualization of the phase portrait of $X$. Clearly we observe a sequence of 
alternating simple poles and zeros along the real axis accumulating to $\infty\in\CW$.

\noindent
By plotting the separatrices associated to the poles we can immediately observe their position. 
It should be noted that in this case the separatrices are also horizontal asymptotic paths associated 
to the (non--isolated) essential singularity at $\infty\in\CW$.
\begin{figure*}[htbp]
\centering
\includegraphics[height=0.4\textwidth]{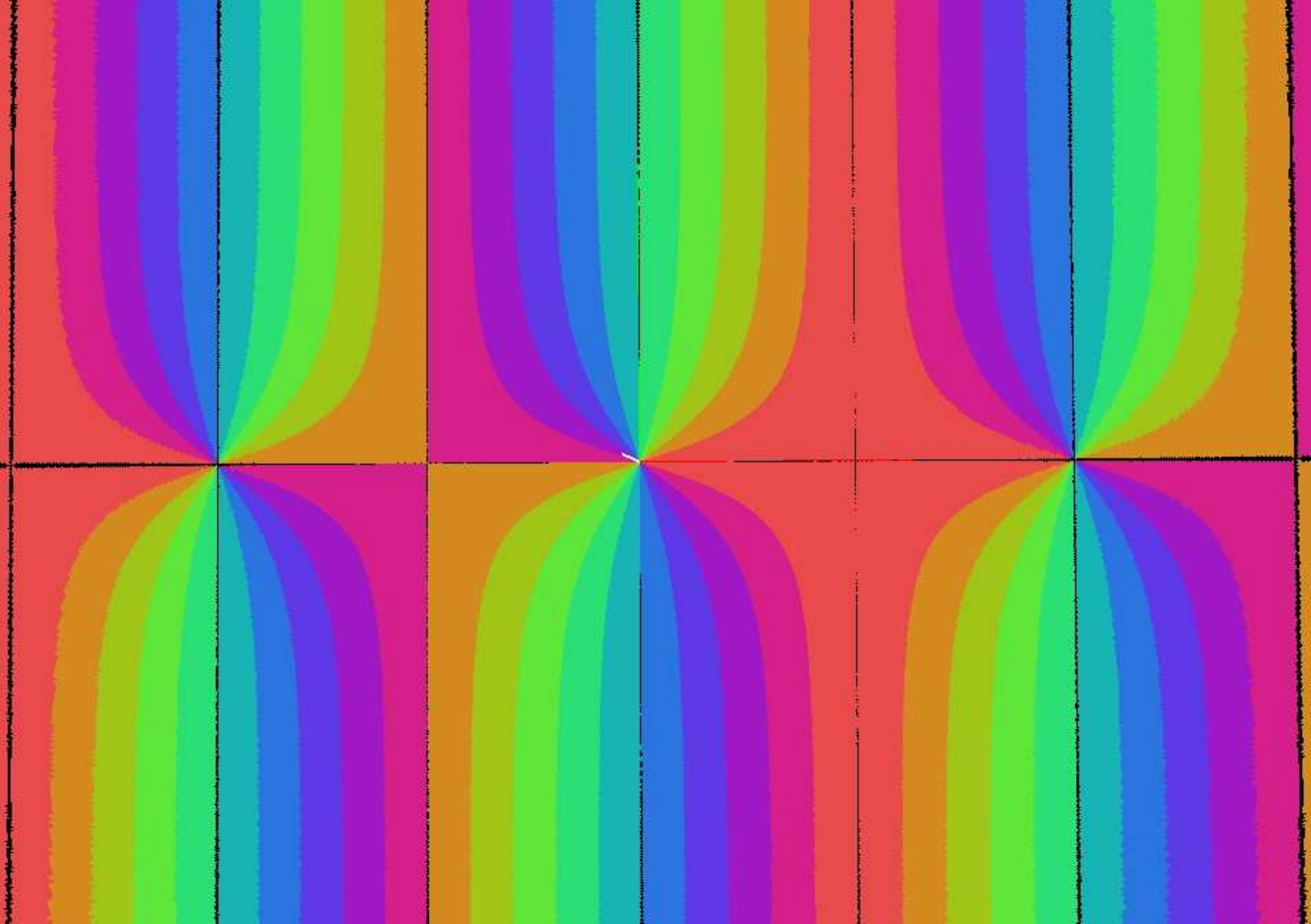}
\includegraphics[height=0.4\textwidth]{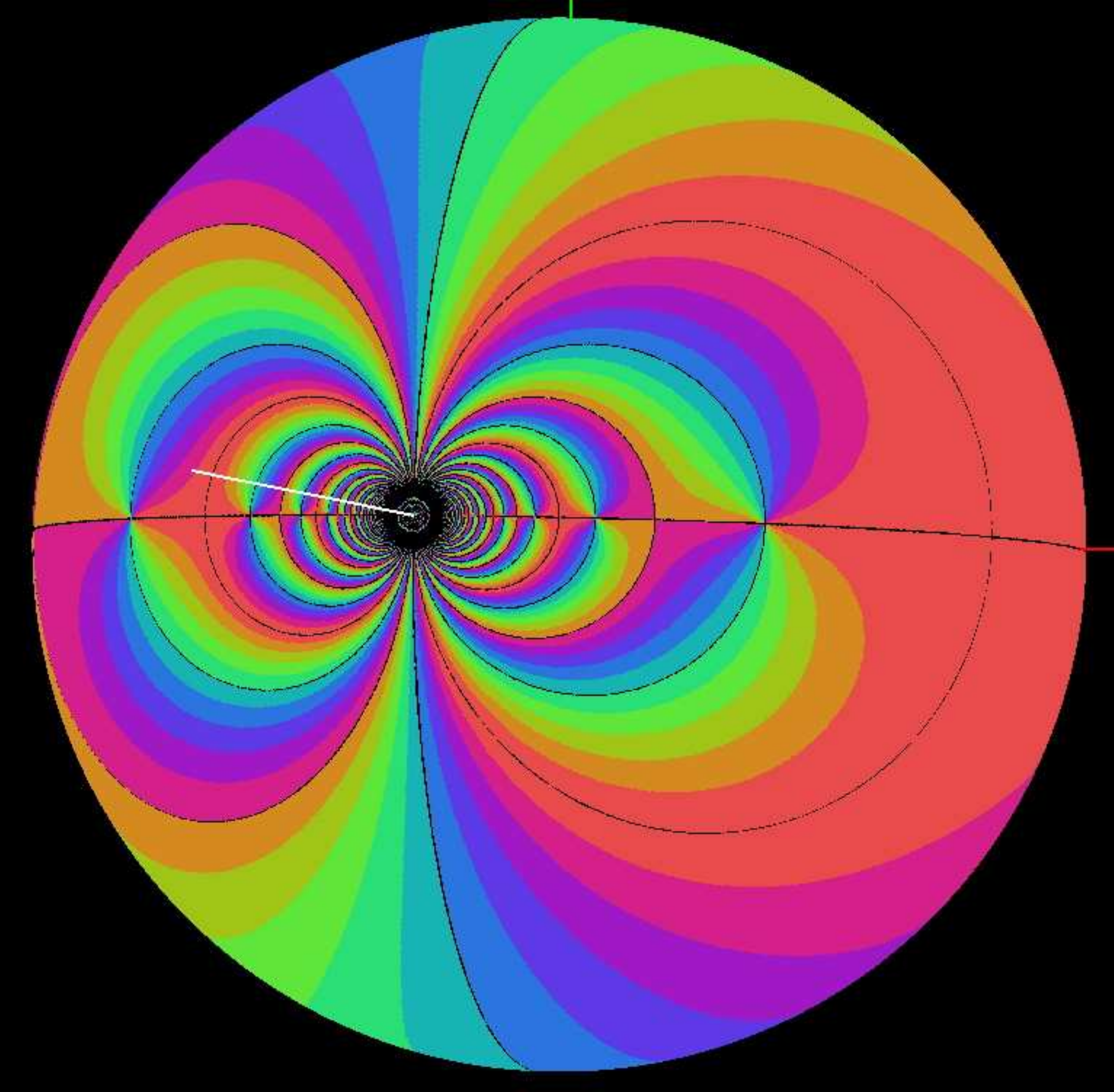}
\caption{The field $X(z)=-\tan(z)\del{}{z}$ visualized in the complex plane and on the Riemann Sphere, 
using the techniques described in the text. One can observe a sequence of alternating simple poles and 
zeros along the real axis accumulating to $\infty\in\CW$.}
\label{campoTanz}
\end{figure*}

\subsubsection{The case of $X(z)=-(\cosh(z)+1)\del{}{z}$} 
The ramified covering that characterizes this vector field is given by 
$\Phi(z)=\e^{\frac{\e^{z}-1}{\e^{z}+1}}$, as was remarked previously in 
Example \ref{examplecoshPlus2}. The visualization of the phase portrait of $X$ 
is provided in Figure \ref{coshPlus2}. 
We observe a sequence of order two zeros along the imaginary axis accumulating to $\infty\in\CW$.

\noindent
Notice that $X$ does not have any poles, hence there are no separatrices. 
However by plotting some specific level curves we can observe some of the horizontal asymptotic 
paths associated to the the (non--isolated) essential singularity at $\infty\in\CW$. 
Thus the behaviour of the flow of $X$ on neighborhoods of $\infty\in\CW$ is better understood.
\begin{figure*}[htbp]
\centering
\includegraphics[height=0.45\textwidth]{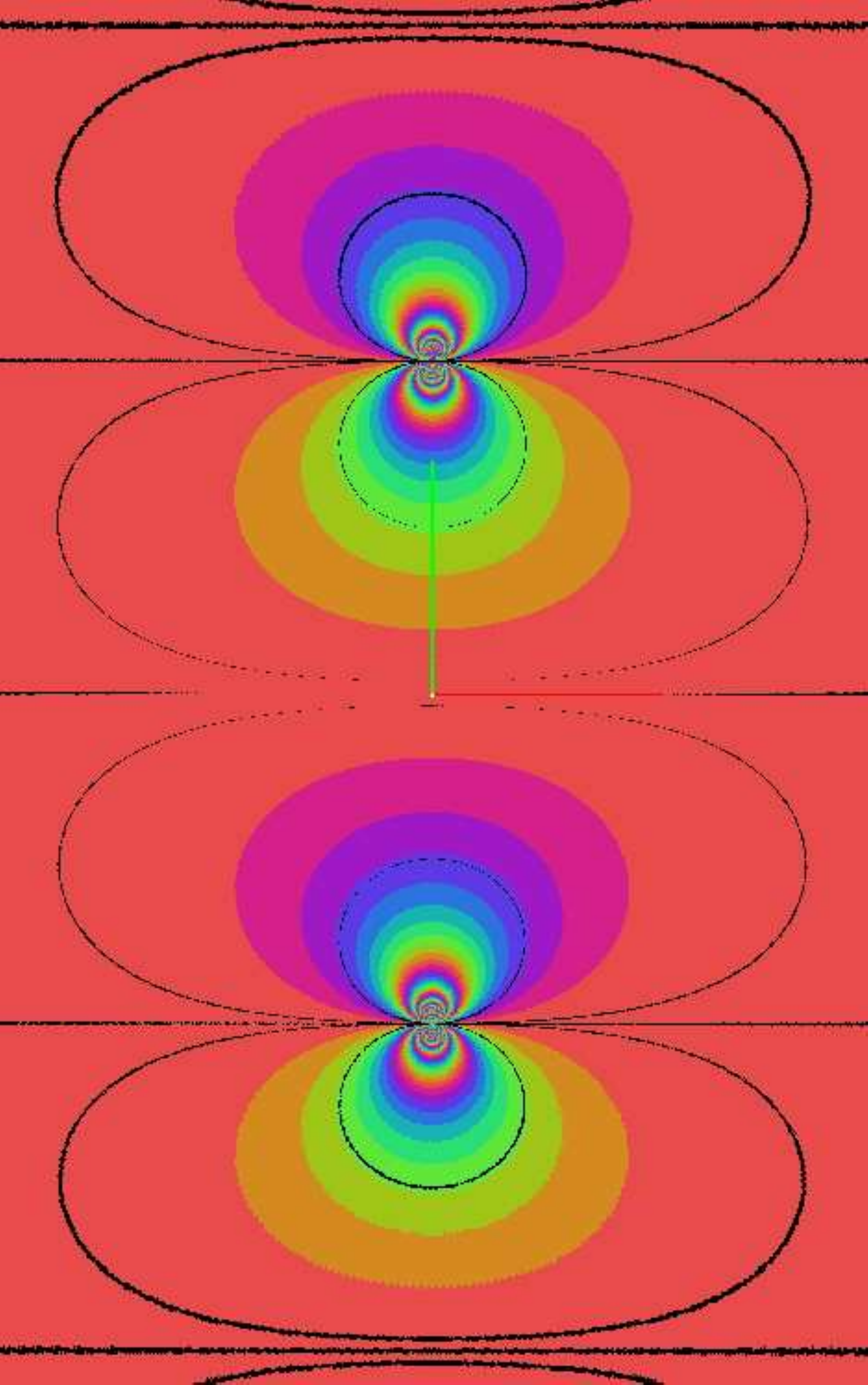}
\includegraphics[height=0.45\textwidth]{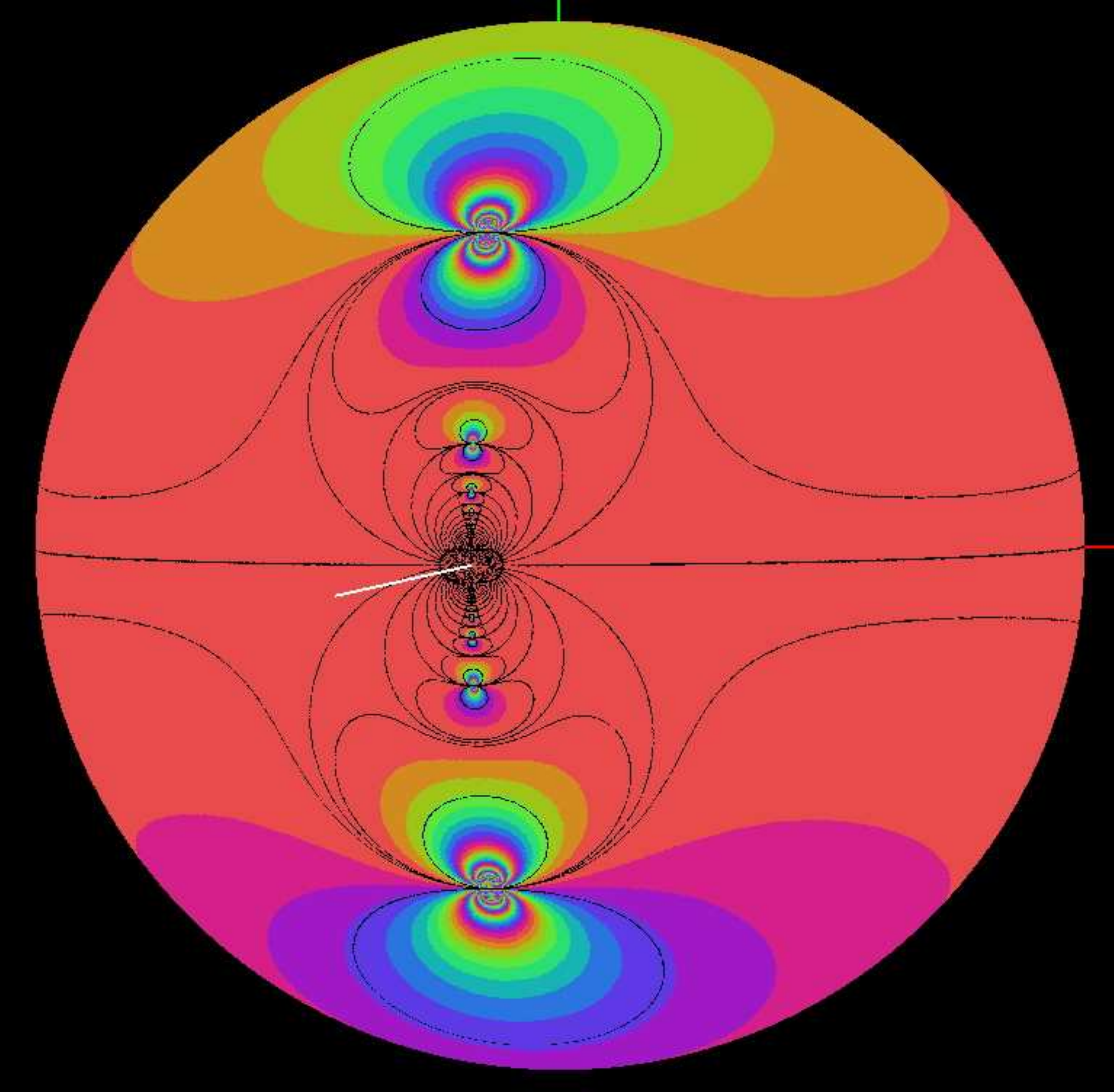}
\caption{The field $X(z)=-(\cosh(z)+1)\del{}{z}$ visualized in the complex plane and on the 
Riemann Sphere, using the techniques described in the text. In this case one observes a 
sequence of zeros of order 2 along the imaginary axis accumulating to $\infty\in\CW$.}
\label{coshPlus2}
\end{figure*}
 
\section{Comparison with usual integration--based algorithms}\label{comparison}
In this section we compare the proposed method with two of the most widely used \emph{integration--based} 
algorithms: namely with the $4^{th}$ order Runge--Kutta (RK4) and the Runge--Kutta--Fehlberg (RKF) 
algorithms. We do not consider Euler's method because it uses a first order approach and its results are 
expected to be worse than those obtained by the Runge--Kutta algorithms.

As seen in \S\ref{normalforms}, given a singular complex analytic vector field $X(z) = f(z)\del{}{z}$ 
the generic behaviour of the flow is different in the neighborhood of:
\begin{enumerate}[label=(\arabic*)]
	\item \emph{non--singular} points of $X(z)$,
	\item \emph{singular} points of $X(z)$, which are further subdivided as
	\begin{enumerate}[label=(\alph*)]
		\item \emph{zeros},
		\item \emph{poles},
		\item \emph{essential singularities} and 
		\item \emph{accumulation points} of the above types.
	\end{enumerate}
\end{enumerate}
We compare the behaviour in cases (1), (2a), (2b) and (2c) above using
\begin{enumerate}[label=(\Alph*)]
	\item two \emph{integration--based} algorithms:
	\begin{enumerate}[label=(\roman*)]
		\item \emph{$4^{th}$ order Runge--Kutta} algorithm (RK4),
		\item \emph{Runge--Kutta--Fehlberg} algorithm (RKF), and
	\end{enumerate}
	\item the \emph{Newton} method proposed in this note.
\end{enumerate}
The \emph{integration--based} algorithms, RK4 and RKF, are usually used to solve first order ODE systems. 
The RK4 is a constant step-size method whose implementation is very simple and well known, however one 
does not have control over the error incurred. The Runge--Kutta--Fehlberg is an adjustable step-size method 
which allows some control on the error\footnote{In the RKF algorithm the error is controlled by decreasing the 
step--size of the recursive algorithm, hence increasing the computational requirements.}. This method is a 
combination of the Runge--Kutta of order four and five, hence is also known as RKF45. For further information 
and explicit implementations of these algorithms consult \cite{Borden}.

Since the \emph{Newton} method proposed in this note provides us with \emph{exact solutions\footnote{
Recall that the solution is exact up to the numerical error incurred in the evaluation of the constants of 
motion $\rho$ and $\theta$.}} 
to the problem of finding trajectories (including parametrization) of the flow of a given complex analytic 
vector field, then it is possible to calculate the (absolute) error involved while using \emph{integration--based} 
algorithms:

\noindent
Let $\tilde{z}_{\tau}=\tilde{z}(\tau)$ denote the trajectory that passes through $z_{0}$ at time $\tau=0$
obtained using an \emph{integration--based} algorithm, and let $z_{\tau}=z(\tau)$ denote the exact solution 
obtained with the \emph{Newton} method. Then the absolute error incurred by the \emph{integration--based} 
algorithm is given by
\begin{equation}\label{error}
\text{AbsErr}(\tau)=\abs{z_{\tau}-\tilde{z}_{\tau}}.
\end{equation}
So, in order to calculate the error at time $t$ one has to:
\begin{enumerate}[label=(\arabic*)]
	\item calculate $\tilde{z}_{\tau}$ using the \emph{integration--based} algorithm,
	\item calculate $z_{\tau}$ as the intersection of $\rho(z)=\rho(z_{0})$ with $\theta(z)=\theta(z_{0})-\tau$,
	\item calculate the error using \eqref{error}.
\end{enumerate}

On the other hand, the error of the exact solution ({\it i.e.} the one obtained with the \emph{Newton} method) 
can be estimated by an indirect method as the  
\emph{relative deviation of the exact solution} given by
$$\text{RelDev}_{h}(\tau)=\frac{\abs{\rho(z_{\tau})-\rho(z_{0})}}{\abs{\rho(z_{0})}}.$$
This measurement can be interpreted as the unit--less distance of the calculated point $z_{\tau}$ from the actual 
trajectory.
Moreover using this indirect method, one can also measure the deviation that the \emph{integration--based} 
algorithms have from the exact solution by calculating the \emph{relative error of the integration--based solution} 
as:
$$\text{RelError}_{h}(\tau)=\frac{\abs{\rho(\tilde{z}_{\tau})-\rho(z_{0})}}{\abs{\rho(z_{0})}}.$$
These last two measurements can be used to compare side by side the solution obtained by the \emph{Newton} 
method and the \emph{integration--based} methods.

\subsection{Results of the comparison}
We compared the associated errors obtained by the usual integration methods {\it vs.} the exact 
solution obtained with the proposed methodology; also the CPU time used by the different 
approaches is reported.

In all cases we restricted our analysis to a rectangular region of the plane, since to visualize the results on the 
Riemann sphere stereographic projection is used independently of which visualization method is chosen, hence 
this restriction does not affect the comparison results. 

This was done in neighborhoods of: a \emph{regular} (R) value of the flow, a \emph{zero} (Z) of $f$, 
a \emph{pole} (P) of $f$, and an \emph{essential singularity} (E) of $f$. 
The different errors $\big(\text{AbsErr}(t)$, $\text{RelDev}_{h}(t)$ and $\text{RelError}_{h}(t)\big)$ were plotted 
as a function of $\tau$ in a vicinity of $\tau=0$ resulting in 
Figures \ref{R} (a), \ref{Z} (a), \ref{P} (a), \ref{E} (a) 
for the 
case of the \emph{$4^{th}$ order Runge--Kutta} algorithm; and in 
Figures \ref{R} (b), \ref{Z} (b), \ref{P} (b), \ref{E} (b) for 
the case of the \emph{Runge--Kutta--Fehlberg} algorithm.
\begin{figure*}[htbp]
\centering
\includegraphics[width=0.48\textwidth]{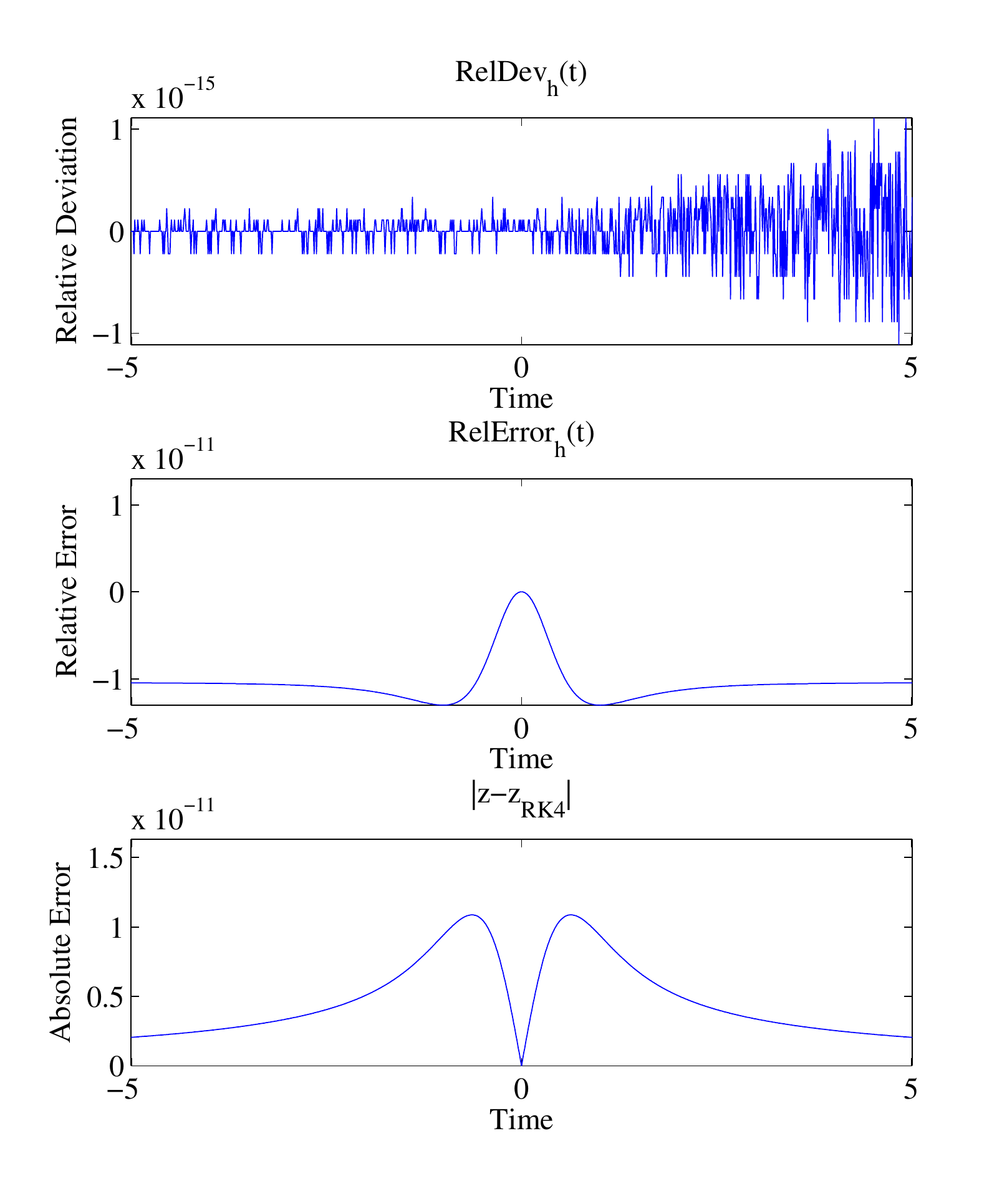}
\includegraphics[width=0.48\textwidth]{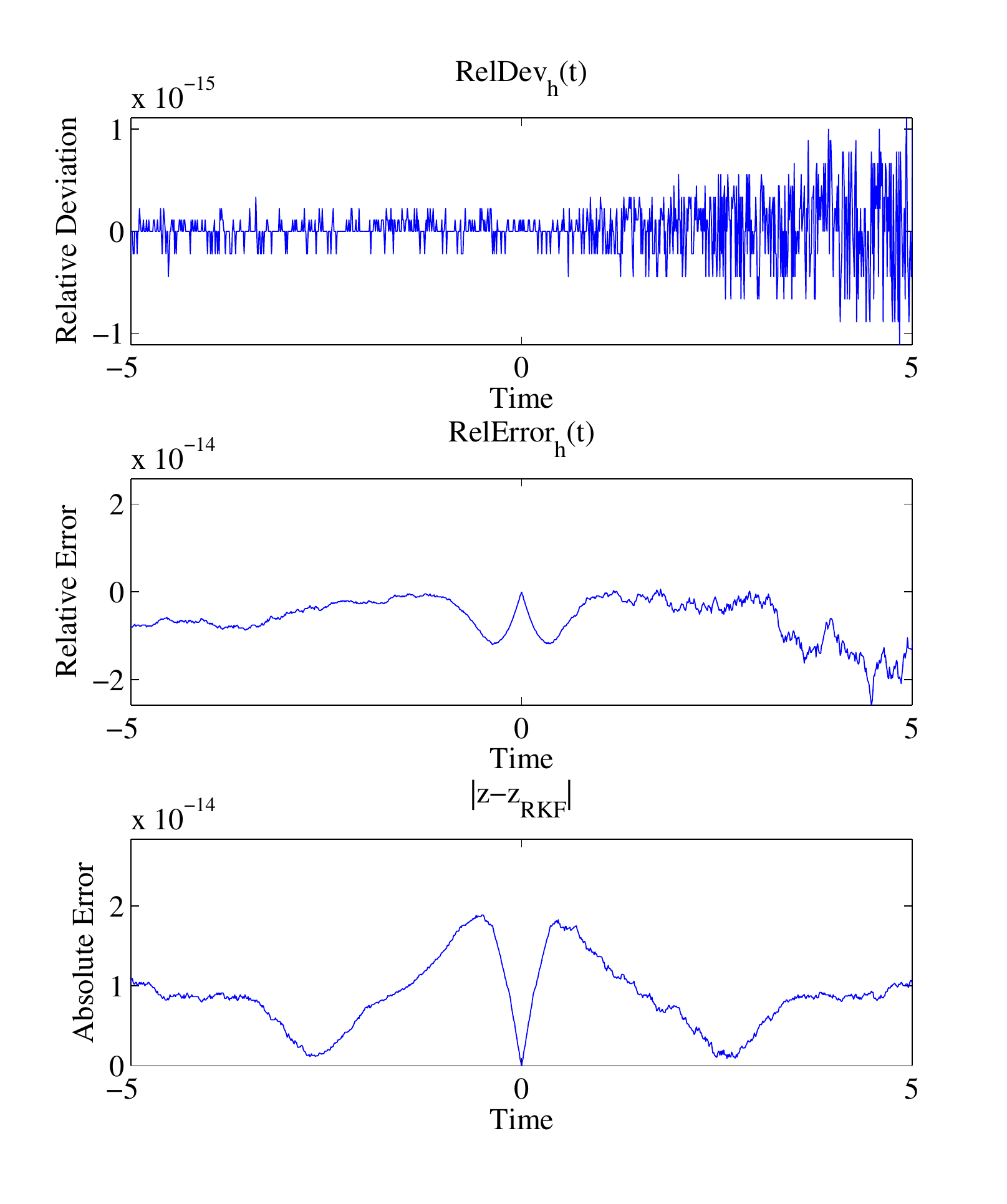}

(a) \hskip 190pt (b)
\caption{Comparison of the errors in a vicinity of a regular point obtained (a) when using the $4^{th}$ order 
Runge--Kutta algorithm, and (b) when using the Runge--Kutta--Fehlberg algorithm. 
The vector field $\exp(z)\del{}{z}$ was used with initial condition $z_{0}=i\frac{\pi}{2}$.}
\label{R}
\end{figure*}
\begin{figure*}[htbp]
\centering
\includegraphics[width=0.48\textwidth]{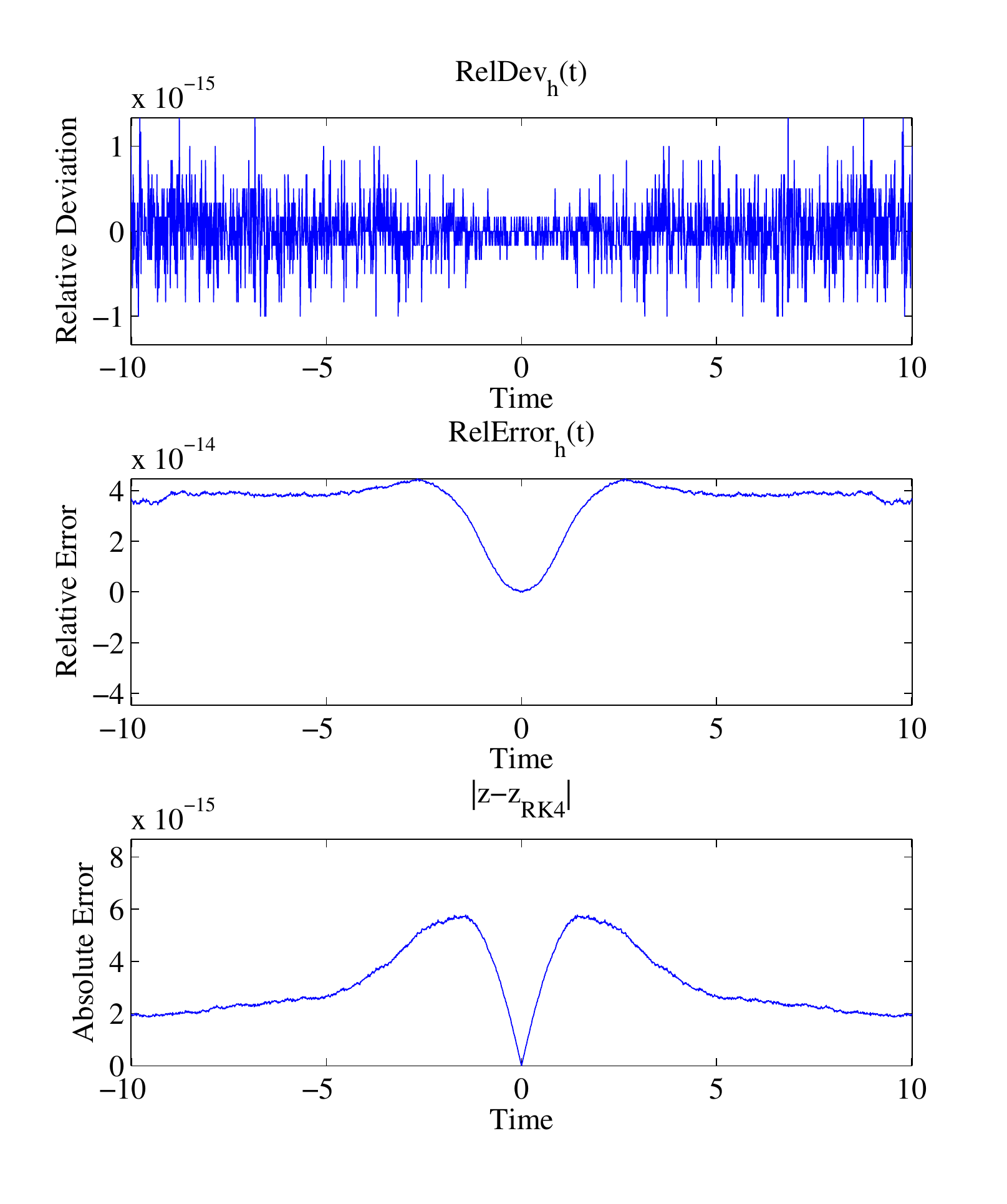}
\includegraphics[width=0.48\textwidth]{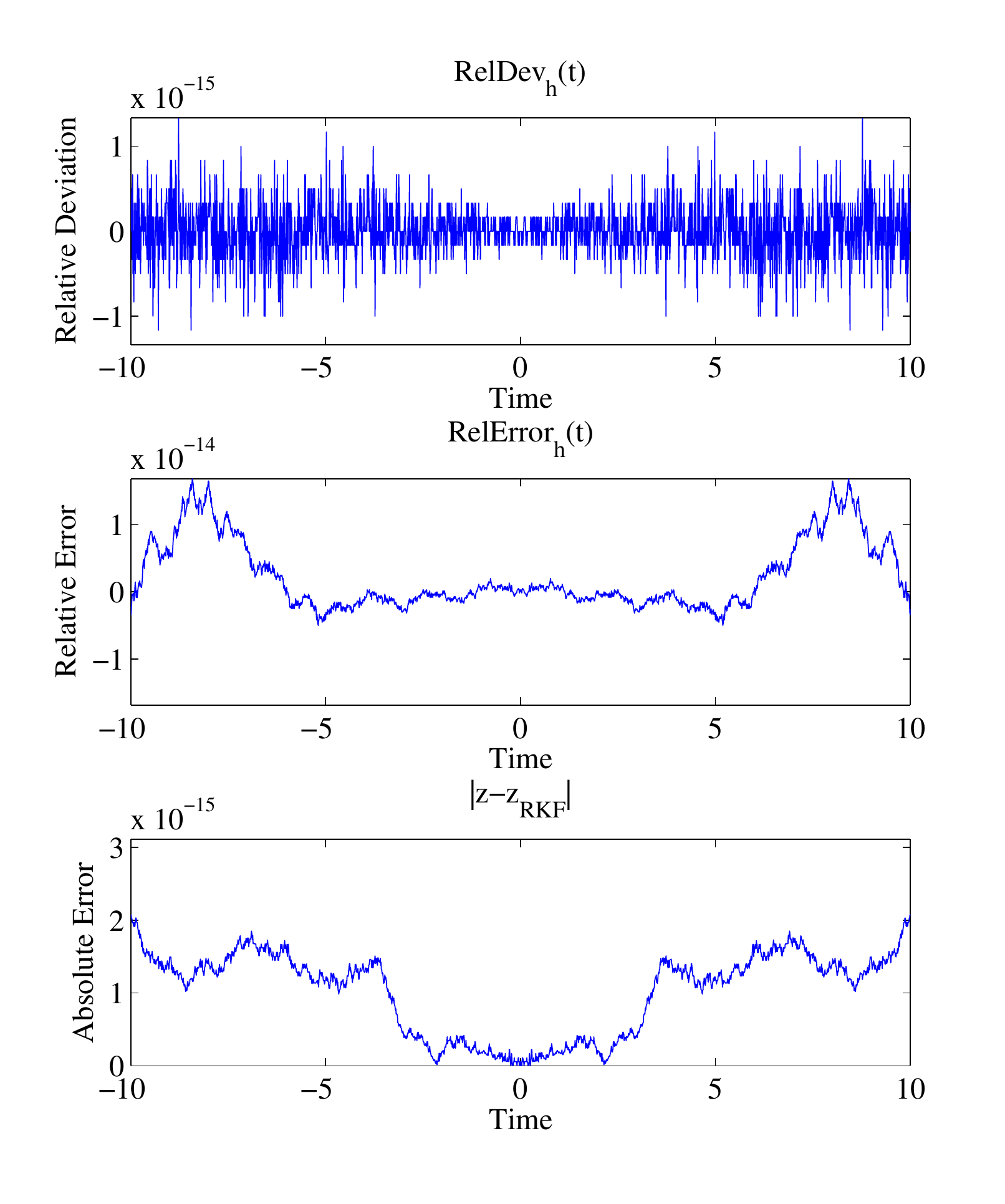}

(a) \hskip 190pt (b)
\caption{Comparison of the errors in a vicinity of a zero obtained (a) when using the $4^{th}$ order Runge--Kutta 
algorithm, and (b) when using the Runge--Kutta--Fehlberg algorithm. The vector field $z^{4}\del{}{z}$ was used 
with initial condition $z_{0}=\frac{i}{2}$.}
	\label{Z}
\end{figure*}
\begin{figure*}[htbp]
\centering
\includegraphics[width=0.48\textwidth]{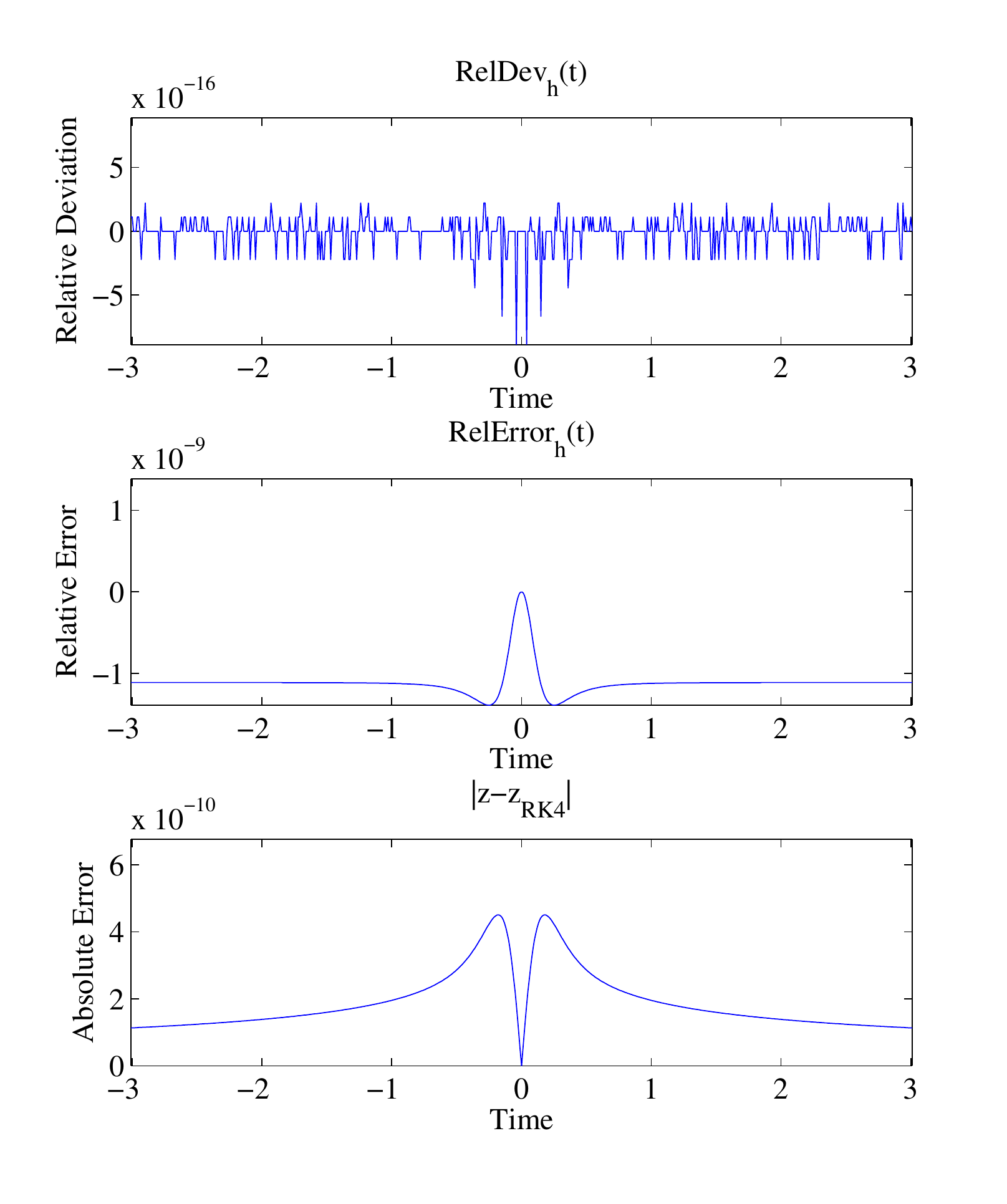}
\includegraphics[width=0.48\textwidth]{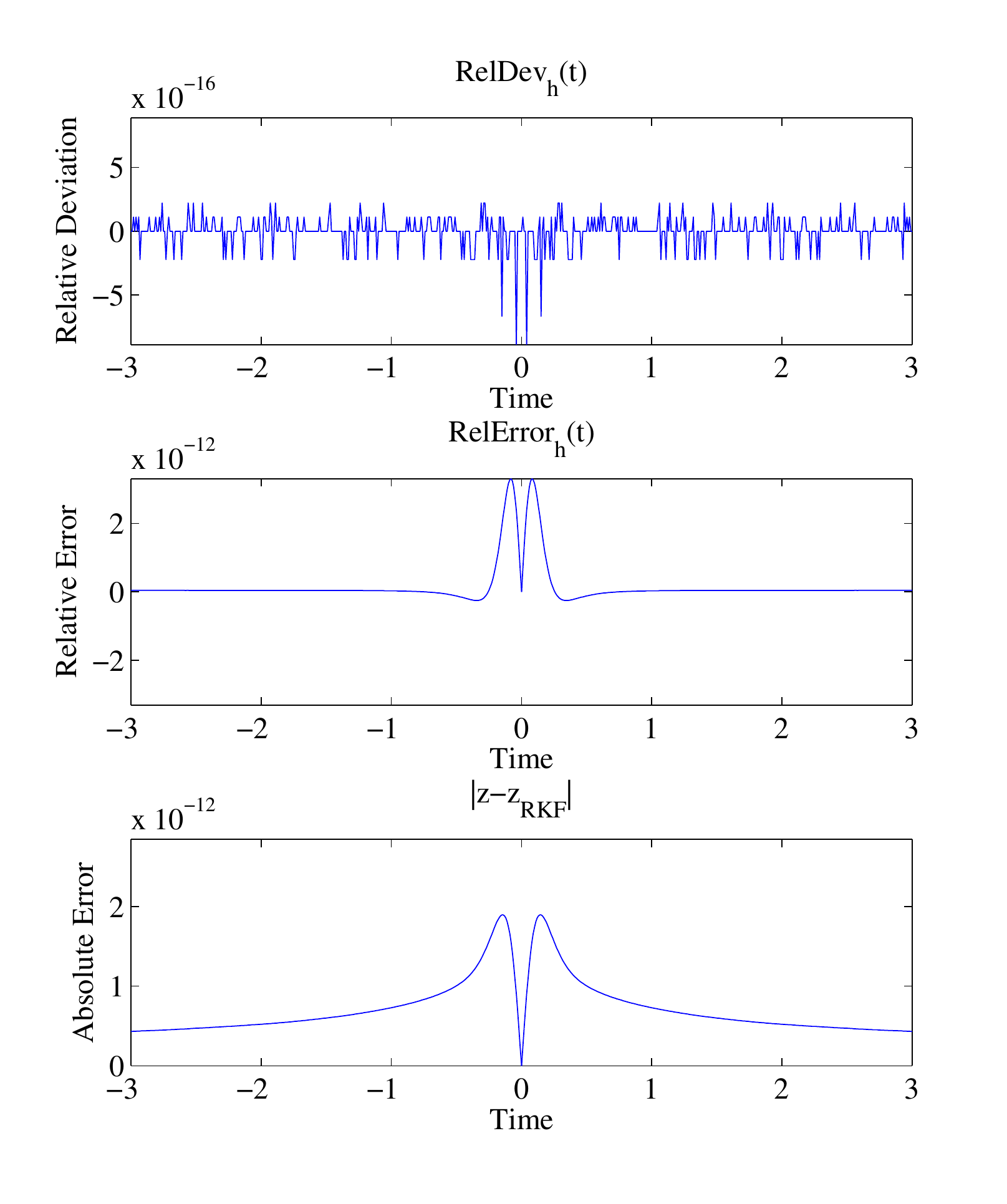}

(a) \hskip 190pt (b)
\caption{Comparison of the errors in a vicinity of a Pole obtained (a) when using the $4^{th}$ order Runge--Kutta 
algorithm, and (b) when using the Runge--Kutta--Fehlberg algorithm. The vector field $\frac{1}{z}\del{}{z}$ was 
used with initial condition $z_{0}=-\frac{1}{2}-i\frac{1}{2}$.}
	\label{P}
\end{figure*}
\begin{figure*}[htbp]
\centering
\includegraphics[width=0.48\textwidth]{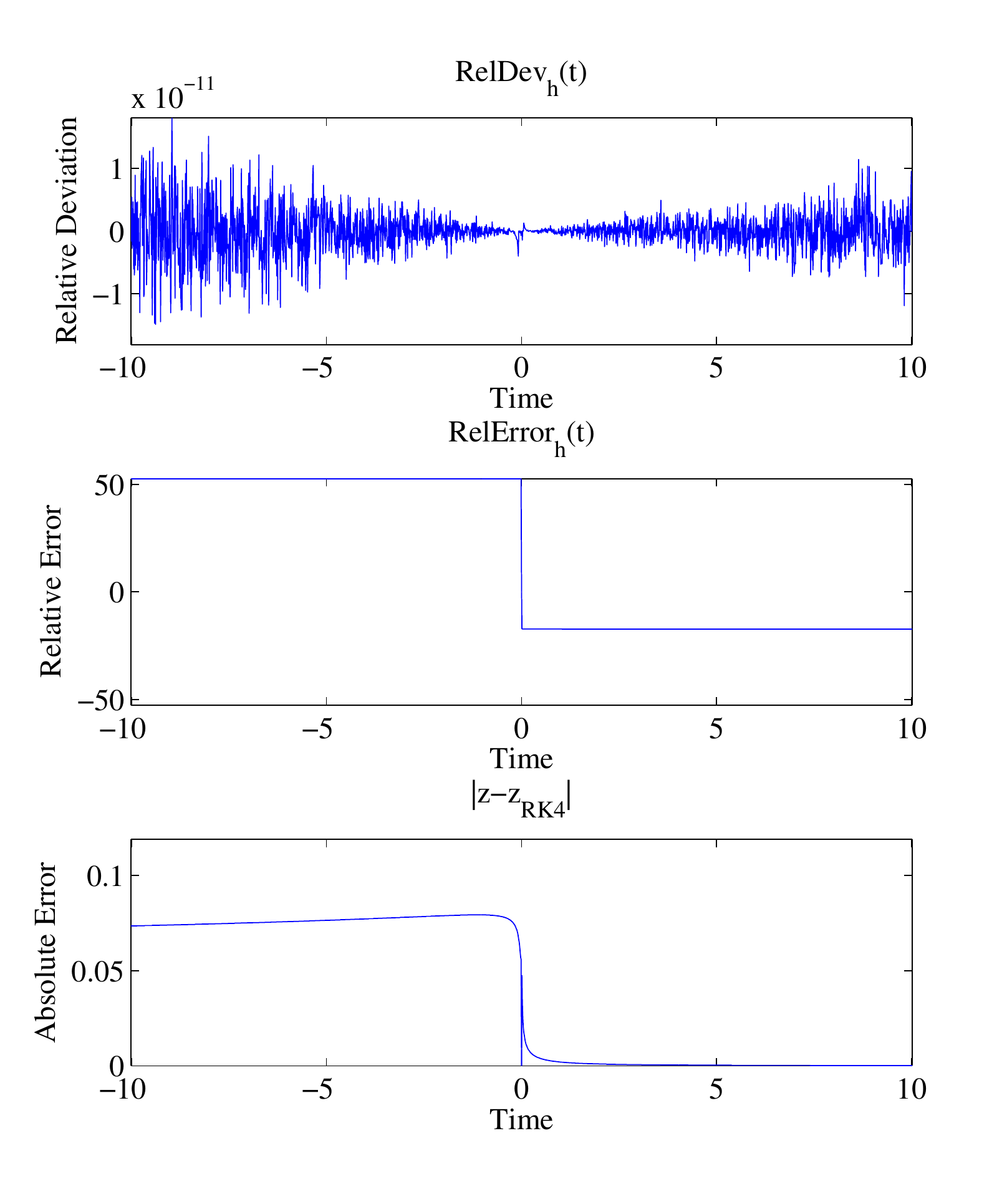}
\includegraphics[width=0.48\textwidth]{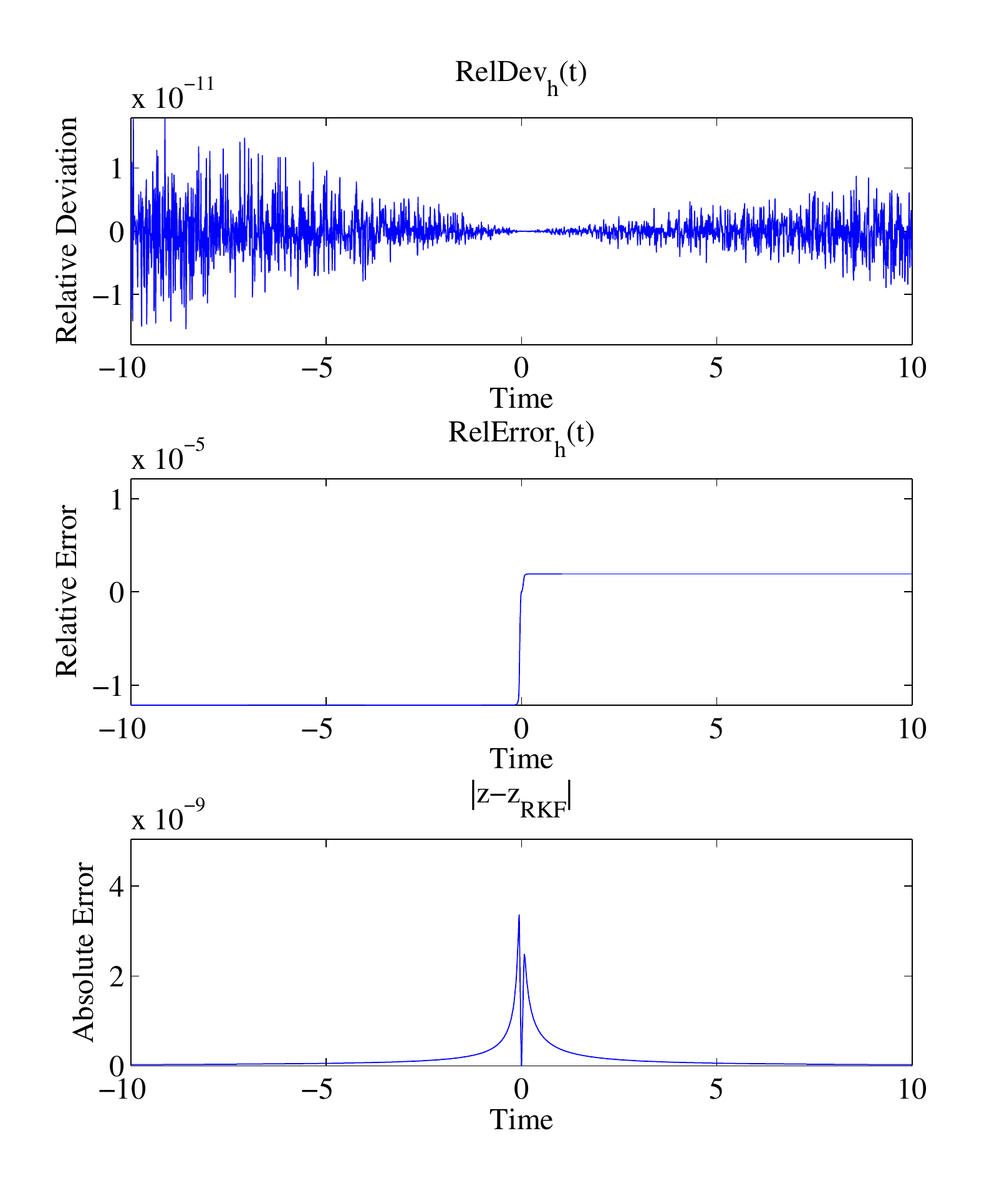}

(a) \hskip 190pt (b)
\caption{Comparison of the errors in a vicinity of an isolated essential singularity obtained (a) when using the 
$4^{th}$ order Runge--Kutta algorithm, and (b) when using the Runge--Kutta--Fehlberg algorithm. 
The vector field $-z^{2}\exp(\frac{1}{z})\del{}{z}$ was used with initial condition $z_{0}=0.1-i0.066$.}
\label{E}
\end{figure*}
As for a measure of the computational resources, we report the CPU time used by the algorithms in calculating 
the trajectories. 
In order to have a more realistic scenario, we measured the time it took to calculate the trajectories starting at 5 
different initial conditions $z_{0}$ in each vicinity and report the average times obtained.
 We report these times for each of the different generic neighborhoods in Table \ref{tablatiempos}.
 
\begin{table}
\renewcommand{\arraystretch}{1.3}
\caption{Average CPU time (measured in seconds) used for the calculation of 5 selected trajectories with the 
distinct algorithms, and for the calculation of the complete (global) field. The integration-time interval chosen was 
500 time units.}
{\tiny
\centering
\begin{tabular}{|c|c|c|c|c|}
\hline
& Time in seconds & Time in seconds & Time in seconds & Time in seconds\\
Type of  & using Newton & using $4^{th}$ order  & using RKF & for the calculation\\
neighborhood & method & RK4 algorithm & algorithm & of the global field \\ 
\hline
regular point (R) & 0.026 & 0.042 & 0.252 & 0.844 \\
\hline
singular point (Z) & 0.591 & 0.892 & 0.312 & 0.884 \\
\hline
singular point (P) & 0.492 & 0.627 & 0.312 & 0.887 \\
\hline	
singular point (E) & 0.420 & 0.652 & 0.358 & 0.830\\
\hline
\end{tabular}
\label{tablatiempos}
}
\end{table}%

\subsection{Discussion of the results of the comparison}
\subsubsection{Error comparison}
As a result of examining Figures \ref{R} thru \ref{E}, one first notices that the \emph{Newton} method proposed 
in this note has a very small error as can be observed on the graphs of the \emph{relative deviation from the 
exact solution} which in all cases but one remains below $10^{-15}$. Even in the case of the vicinity of an 
essential singularity (E) the \emph{relative deviation} remains below $10^{-11}$ which is at least 6 orders of 
magnitude better than the same case with the use of \emph{integration--based} techniques. The reason for this 
difference can be attributed to the fact that in a vicinity of an essential singularity the vector field has a mixture of 
behaviours as is further explained in (4) below.

As for the errors incurred by the \emph{integration--based} algorithms, there is a marked difference in the 
different generic cases:
\begin{enumerate}[label=(\arabic*)]
\item In the neighborhood (R) of a \emph{regular} point of the flow, the errors are very small: the relative 
deviation (that is the relative difference in the calculated value of the constant of motion $\rho(z)$ versus the 
value $\rho(z_{0})$) is less than $10^{-15}$, while the relative error and the absolute error in the RK4 case is of 
the order of $10^{-11}$, and $10^{-14}$ in the RKF case. Hence, even when there is a difference of 3 orders of 
magnitude between the RK4 and the RKF case, there is only one order of magnitude difference between using 
the RKF algorithm and the \emph{Newton} algorithm, in fact in this case the errors observed are mainly due to 
the numerical precision employed in the calculations. 
\item In the neighborhood (Z) of a \emph{zero} of $X(z)$, the errors are in fact smaller than those encountered in 
a vicinity of a regular point: the absolute error is of the order of $10^{-15}$ in both the RK4 and the RKF case, 
while the relative error and deviation differ only by one order of magnitude (even though there is a factor of 2 
between the relative error of the RK4 and the RKF case). This small difference is due to the fact that the 
trajectories are approaching a zero, hence the trajectories tend to converge. \\
In fact, in this case, the \emph{integration--based} algorithms need longer integration times (hence larger 
computational requirements) in order to visualize the trajectories as they approach the zero. The higher the 
order of the zero, the longer the integration times needed to obtain the same ``quality'' in the visualization.
\item In the neighborhood (P) of a \emph{pole} of $X(z)$, the errors behave pretty much as in the case of a 
regular point, except in the vicinity of $\tau=0$, where they increase quite noticeably. This is due to the fact that 
since the initial point $z_{0}$ of the trajectory is the closest point (on the trajectory) to the pole, this is where the 
values of the vector field are largest, and hence near this point is where the \emph{integration--based} 
algorithms may fail.
\item In the neighborhood (E) of an \emph{essential singularity} of $X(z)$, the errors respond in a more 
complicated manner, since we have a mixture of behaviours. This is expected on the following grounds: from an 
analytical viewpoint, one has by Picard's Theorem that the vector field takes on all but possibly one value in 
$\CC$ infinitively often in any neighborhood of the essential singularity, hence one expects to observe regions of 
behaviour similar to a pole, regions with the behaviour of a zero, and regions with behaviour similar to a regular 
value, all intermingled in a continuous (in fact analytical) way. \\
In this case the observed errors are quite big in the case of the RK4 algorithm, mainly because almost 
immediately the calculated trajectory ``jumps'' to another trajectory that is far from the original one. This can be 
seen in the relative error, where one observes that the relative error is constant for most of the backward and 
forward trajectory \emph{but the calculated value of $\rho(z)$ is very different from the original one 
$\rho(z_{0})$}. This same phenomena occurs in the case of the RKF algorithm, but on a much smaller scale. 
On the other hand, as was already observed, the \emph{relative deviation from the exact solution} remains 
below $10^{-11}$, that is the \emph{Newton} technique is quite accurate.
\end{enumerate}

\subsubsection{CPU time}
Due to the very different nature of the \emph{integration--based} algorithms and the \emph{Newton} method 
proposed, it is rather cumbersome to actually compare the computational resources that each algorithm utilizes 
to visualize a given trajectory. This phenomena is due to the fact that the CPU time used when visualizing with 
the \emph{integration--based} algorithms is directly dependent on the integration time, which in turn depends on 
the parametrization (speed) of the trajectory. 
For instance, for a trajectory that approaches a zero (Z) of the vector field, the usual methods need a very long 
\emph{time} interval in order to visualize the phase portrait (because as time advances, the trajectories are 
slower), in the case of a trajectory that approaches a pole (P), the opposite is the case.

On the other hand the \emph{Newton} method does not have this limitation: one of the advantages of the 
\emph{Newton} method, lies in the fact that one visualizes the complete trajectories corresponding to the value 
$\rho(z_{0})$ that lie in the chosen region. When visualizing using the \emph{Newton} method the \emph{time 
interval} does not matter.

In any case, as can be observed on Table \ref{tablatiempos}, the CPU time employed to calculate the trajectories 
behaves differently in the different generic cases: Apparently the usual \emph{integration--based} techniques are 
faster when there are no critical values of the flow, yet as soon as one approaches a critical value of the flow the 
\emph{Newton} method is faster than the \emph{integration--based} methods. Moreover
we can see an increase in CPU time in the case of the RKF method when compared with the RK4 method. It 
should be noticed that even though the CPU time required for the \emph{Newton} method is basically the same 
in all scenarios, this is not the case for the RK4 and or the RKF algorithms.

It should be noticed that the CPU time taken to calculate and visualize the complete field is just 3 to 4 times 
longer than the CPU time required for visualizing a single trajectory.

Also we would like to point out that another disadvantage that the usual \emph{integration--based} methods 
have and that the proposed method does not, is that in a small enough vicinity of an essential singularity the 
usual methods stop working, but our proposed method provides a clear visualization of the phase portrait (see 
Figure \ref{singEsen}). This is mainly because convergence fails, even with the RKF algorithm, near an essential 
singularity.
\begin{figure*}[htbp]
\centering
(a) \includegraphics[width=0.5\textwidth]{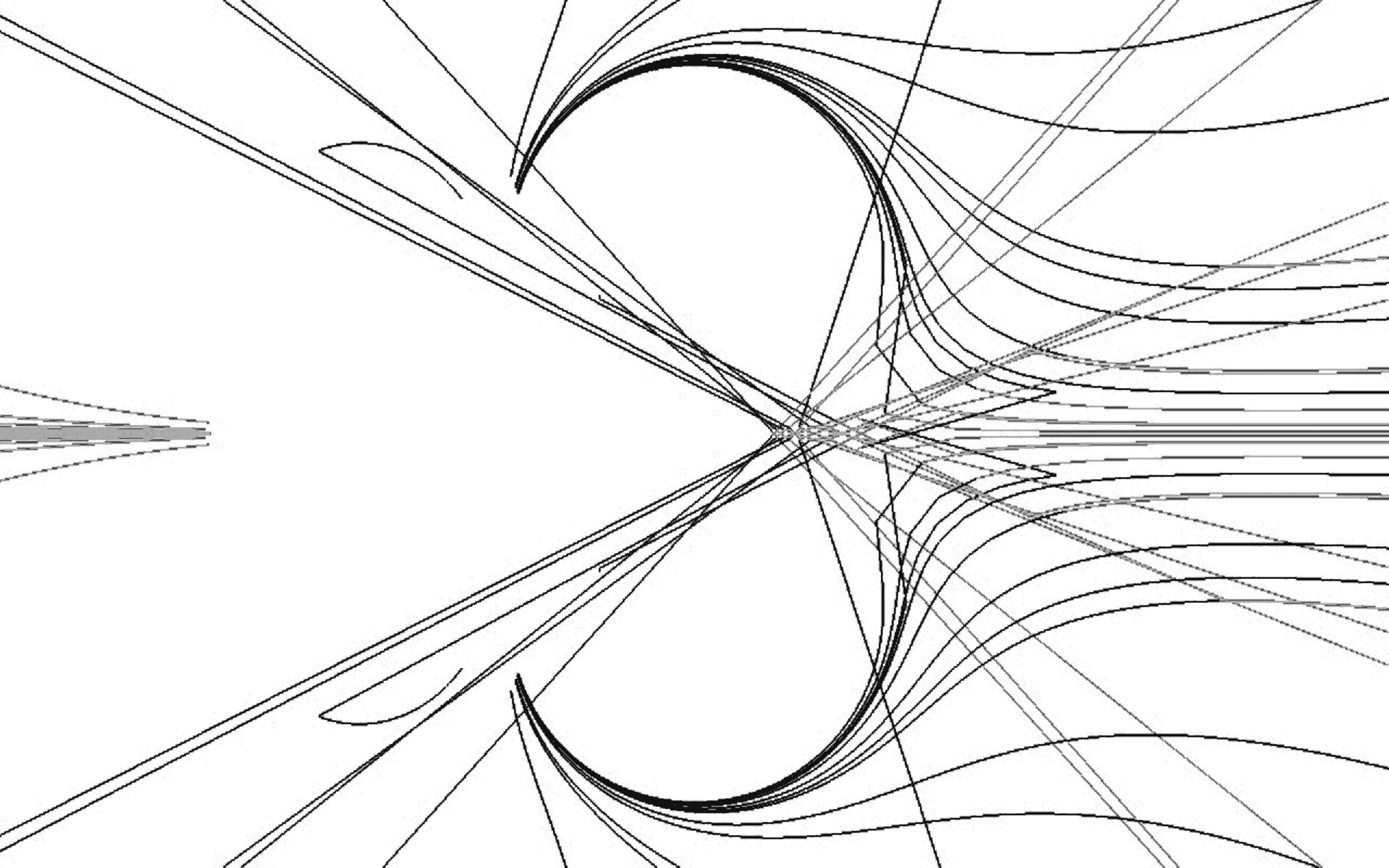}
\\
(b) \includegraphics[width=0.5\textwidth]{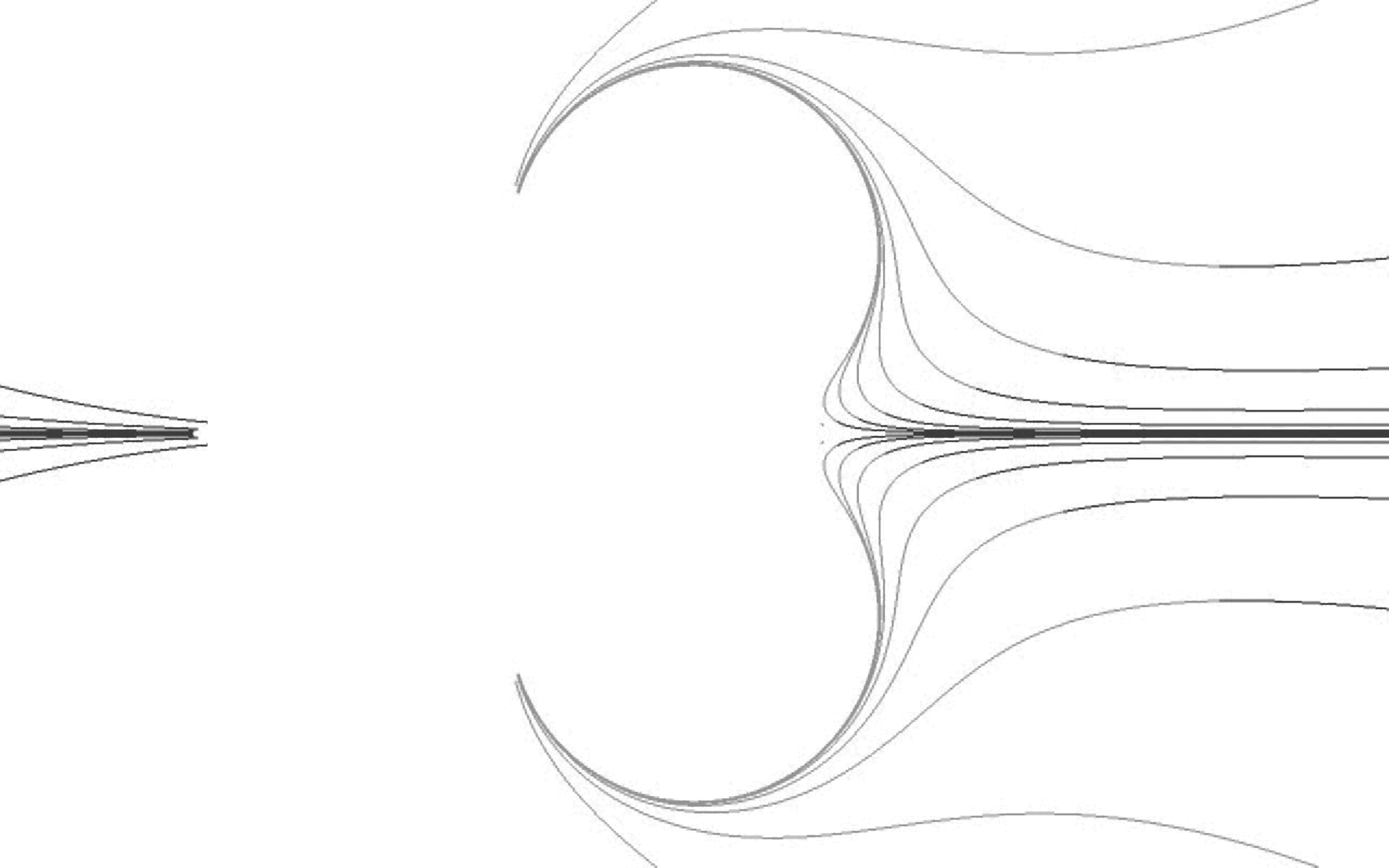}
\\
(c) \includegraphics[width=0.5\textwidth]{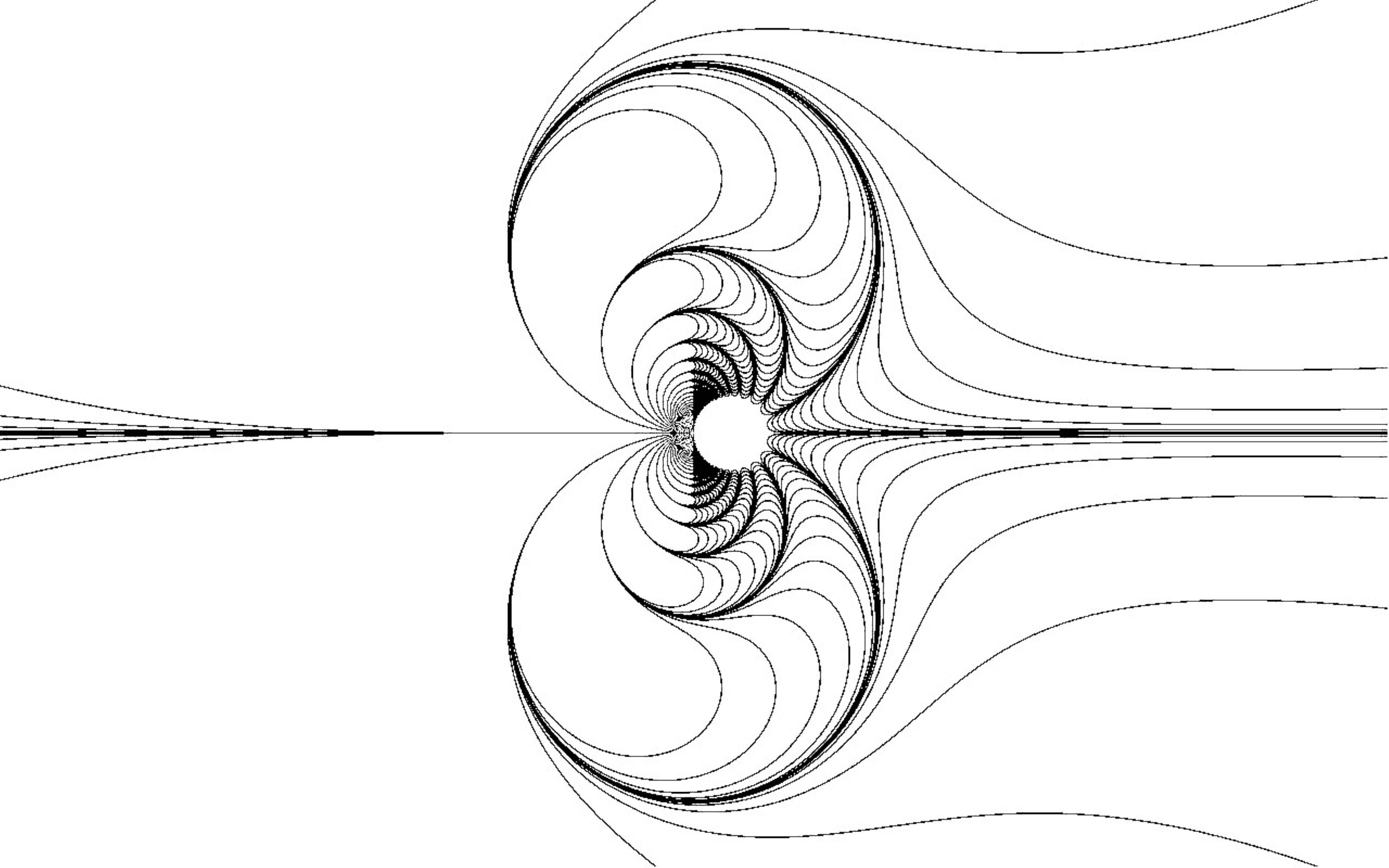}
\caption{In this figure we have plotted the trajectories corresponding to 18 initial conditions for the vector field 
$\-z^{2}\exp(\frac{1}{z})\del{}{z}$ in a vicinity of the essential singularity at 0. In (a) the trajectories were obtained 
using the RK4 algorithm, in (b) using the RKF algorithm, and in (c) with our proposed algorithm. 
In (a) and (b) the trajectories where plotted for forward and backward time.
The initial conditions can be identified in (b) as the place where the trajectories change 
from gray to dark grey.}
\label{singEsen}
\end{figure*}

\subsection{Implementation}
All of the algorithms described (including the visualizations in Figures \ref{campoPolos} thru \ref{coshPlus2} and 
Figure \ref{singEsen}) were implemented using C++ and OPENGL on Mac OS X 10.5 running on a 2.16 Ghz 
Intel Core 2 Duo processor with 2 GB of DDR2 SDRAM. 

\begin{remark}
The algorithms were implemented in C++ on the Mac OS X 10.5 in order to do an accurate comparison between 
the usual integration methods.
However, we have developed a version of the software readily available in JAVA with a function parser that can 
accept either 
\begin{enumerate}[label=(\alph*)]
\item rational vector fields $X(z)=f(z)\del{}{z}$ with $f(z)=\frac{p(z)}{q(z)}$, $p,q\in\CC[z]$, or 
\item a distinguished parameter $\Psi_{X}(z)=\int^{z} \frac{1}{f(\zeta)} d\zeta$, with $f(z)$ 
a singular complex analytic function.
\end{enumerate}
This way we can visualize the singular complex analytic vector field $X(z)=f(z)\del{}{z}$ both on the Riemann 
sphere $\CW$ and on a user specified rectangular region of the plane $\CC$. This was done so that the 
software could be accessible to a wider audience. The latest version can be found at

\noindent
{\small
\url{https://www.dropbox.com/sh/xfuor27nf820mwo/AAACyp0EsGx6Ain49VgUWRFMa?dl=0}
}
\end{remark}

\section{Generalizations and open problems}
\label{generalizaciones}
In this section we present several directions that are natural to follow. We start by generalizing the techniques 
and method to closed Riemann surfaces. 
In particular, we present an example of a vector field on the torus visualized with the 
\emph{Newton} method presented in this note.

We further present some ideas and an overview of how this can work for vector fields in $\RR^{n}$, generalizing 
to differential manifolds of dimension $n$, and finally presenting some open (and natural) questions.
Some of these problems are currently being explored by the authors and will be presented elsewhere.

\subsection{Generalizing to analytic vector fields on Riemann surfaces}
In order to actually implement the visualization of singular complex analytic vector fields on Riemann surfaces 
with the \emph{Newton} method there are two obvious alternatives: using charts of the Riemann surface; and 
using invariant vector fields.

\subsubsection{Visualization using charts}\label{variedadescomplejas}
An immediate generalization of what has been presented is to visualize singular complex analytic vector fields 
on Riemann surfaces by using the charts associated to the Riemann surface. Note that in fact we have already 
done so specifically for the case of the Riemann sphere using Stereographic Projection.

The idea is simple: using charts one can do the visualization on the image in $\CC$ of the chart and then 
return it to the Riemann surface with the inverse map of the chart (as was explained, in the case of the 
Riemann sphere, in \S\ref{implementation}).

Thus to generalize the method described above to Riemann surfaces, it is enough to take the singular complex 
analytic vector field $\widetilde{X}$ defined locally in $M$ and by pushforward with the coordinate of the 
chart $f_{\alpha}$, find the singular complex analytic vector field $X={f_{\alpha}}_{*}\widetilde{X}$ in $\CC$. 
Since this field is a Newton vector field (by Theorem \ref{everythingisnewton}) then we can proceed to 
calculate its trajectories in $\CC$ and finally take them to $M$ using $f_{\alpha}^{-1}$, thereby solving the case 
of Riemann surfaces. 

Note that this approach also works with the usual \emph{integration--based techniques}, and with other vector 
field visualization techniques as well. In fact in \cite{LiEtal} they use this approach to visualize vector fields on 
``arbitrary surfaces'', using a \emph{texture-based} approach. One of the problems that they encounter, and 
deal with effectively, is that visual artifacts appear when passing from one parametrization to another (that is 
when visualizing the vector field in $U_{\alpha}\cap U_{\beta}$ via $f_{\alpha}^{-1}$ or when visualizing the 
vector field using $f_{\beta}^{-1}$). This problem is expected to be present if using \emph{direct--flow 
visualization} and \emph{geometric flow visualization} techniques as well.

\subsubsection{Visualization using invariant vector fields}\label{InvariantVectFields}
In this case the idea is to use the fact that closed Riemann surfaces $M$ can be modeled as the quotient of 
the universal cover $\widetilde{M}$ of $M$ with a subgroup $\Gamma$ of the automorphism group, 
$\text{Aut}(\widetilde{M})$, of $\widetilde{M}$, i.e.
$$
M=\widetilde{M}/\Gamma.
$$

Then a complex vector field on $\widetilde{M}$ of the form 
\begin{equation}\label{invariantVF}
\widetilde{X}(z)=\widetilde{f}(z)\del{}{z},
\end{equation}
with $\widetilde{f}(z)$ invariant under the group $\Gamma^{*}$, where $\Gamma^{*}$ denotes the group action 
of $\Gamma$ on the tangent vector bundle as in Lemma \ref{pullbackformula}, 
descends to a complex vector field $X([z])$ on $M$.
Thus visualizing $\widetilde{X}(z)$ on $\widetilde{M}$ is equivalent to visualizing $X([z])$ on $M$.

By the uniformization theorem of closed Riemann surfaces, $\widetilde{M}$ is either the Riemann sphere 
$\CW$, the complex plane $\CC$, or the hyperbolic plane $\HH=\{z\in\CC : \Im{z}>0\}$. Furthermore, the first 
two cases produce only one more family of orientable Riemann surfaces: \emph{tori} or Riemann surfaces of 
genus $g=1$ (which arise from considering the group $\Gamma$ generated by two non--collinear translations 
on $\CC$). The third case produces a plethora of orientable Riemann surfaces, all of whom have negative 
curvature (Riemann surfaces of genus $g>1$).

In the case of tori, the class of functions which are invariant under the action of the automorphism group of a 
torus are called \emph{elliptic functions} and have been extensively studied. In particular recall 
(see \S\ref{meromorphic}) that the first author has previously shown that all elliptic vector fields are in fact 
Newton vector fields \cite{AP-2}, by showing that an elliptic function $f(z)$ can be expressed as a quotient 
$-\frac{\Phi(z)}{\Phi'(z)}$, for an explicitly constructed $\Phi$ in terms of 
Weierstrass $\sigma$ and $\zeta$ 
functions (and their derivatives). Thus one can apply the techniques introduced in this paper and reduce the 
problem of visualizing the vector field to that of visualizing the level curves of $\rho(z)=\argp{ \Phi(z) }$.

As an example, in Figure \ref{torus} the elliptic vector field
\begin{equation}\label{weirstrass}
\widetilde{X}(z)=-\frac{\wp(z)}{\wp'(z)}\del{}{z},
\end{equation}
is visualized using the techniques described in this note: the strip flows of $\rho(z)=\argp{\wp(z)}$ are plotted 
in Figure \ref{torus} (a),
moreover the corresponding vector field on the torus is visualized in Figure \ref{torus} (b). 
It should be noted that the vector field given by \eqref{weirstrass} was previously studied by G.\,F.\,Helminck
{\it et al.}, 
see \cite{HelminckEtAl} for further details. In particular they showed that up to conjugation the family of 
vector fields of the form \eqref{weirstrass} consist of three classes characterized\footnote{
The three types of behaviour are related to structural stability and the underlying lattice: if the underlying lattice 
is non--rectangular the Newton vector field is structurally stable, otherwise the Newton vector field is not 
structurally stable and there are two options for the underlying lattice, square and rectangular but not square. 
Up to conjugacy these are all the options available.
}
by the form of the parallelogram 
spanned by the parameters $\omega_{1}$, $\omega_{2}$ defining\footnote{
The Weierstrass $\wp$--function is in fact characterized by the lattice $\Omega(\omega_{1},\omega_{2})$ with 
basis $\{\omega_{1},\omega_{2}\}\subset\CC$ such that $\omega_{1}/\omega_{2}\not\in\RR$. It can be proved 
that $\wp$ is doubly periodic with periods precisely $\omega_{1}$ and $\omega_{2}$. See \cite{HelminckEtAl} 
and references therein for further details.
} 
$\wp(z)$. They visualized a couple of trajectories using a $4^{th}$ order Runge-Kutta integration algorithm for 
the case corresponding to $\omega_{1}=1$, $\omega_{2}=\frac{1}{4}+ \frac{5}{4} i$, compare 
Figure \ref{torus} (a) with \cite{HelminckEtAl} figure 8. 
\begin{figure*}[htbp]
\centering
\includegraphics[height=0.45\textwidth]{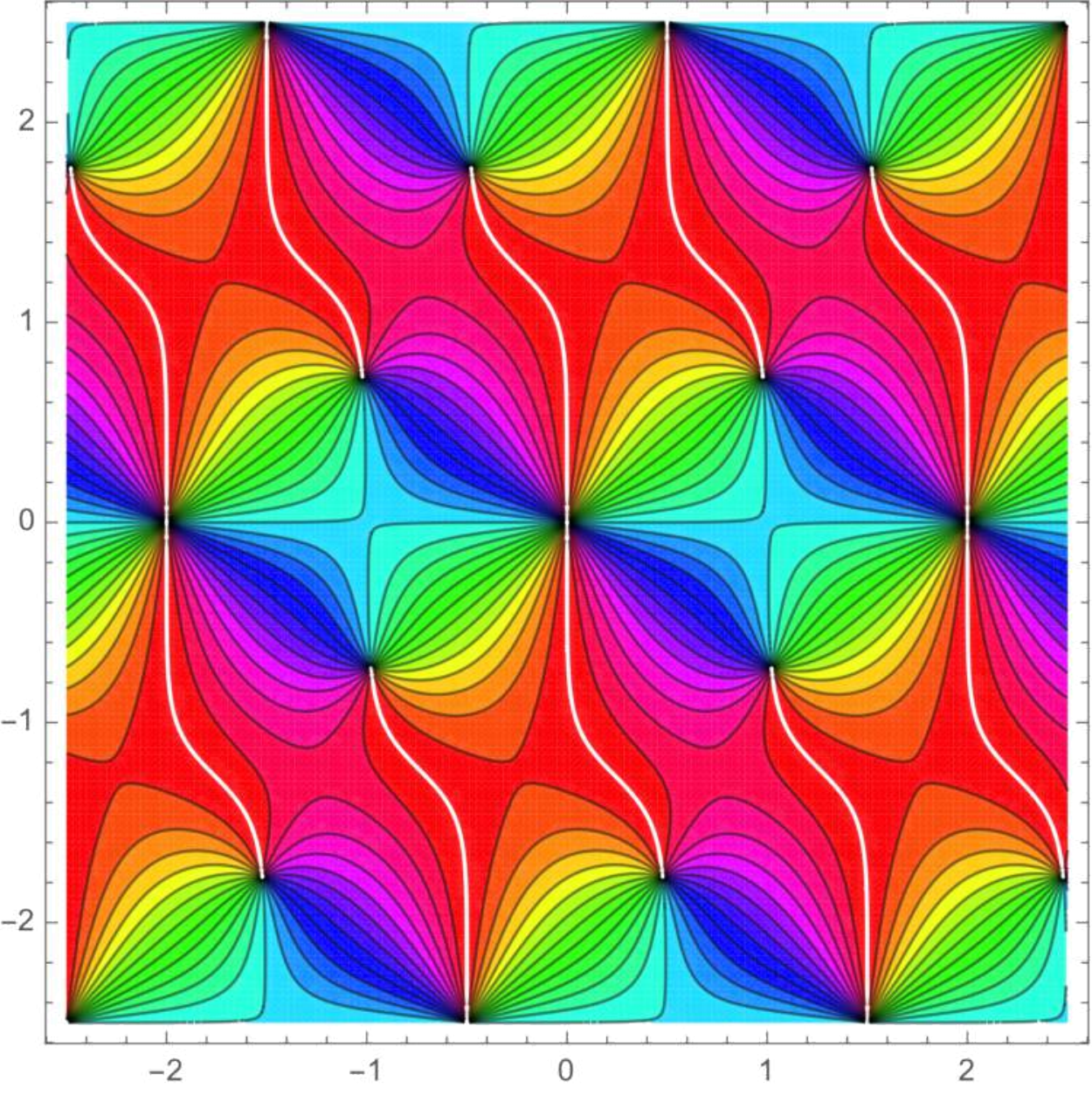}
\includegraphics[height=0.45\textwidth]{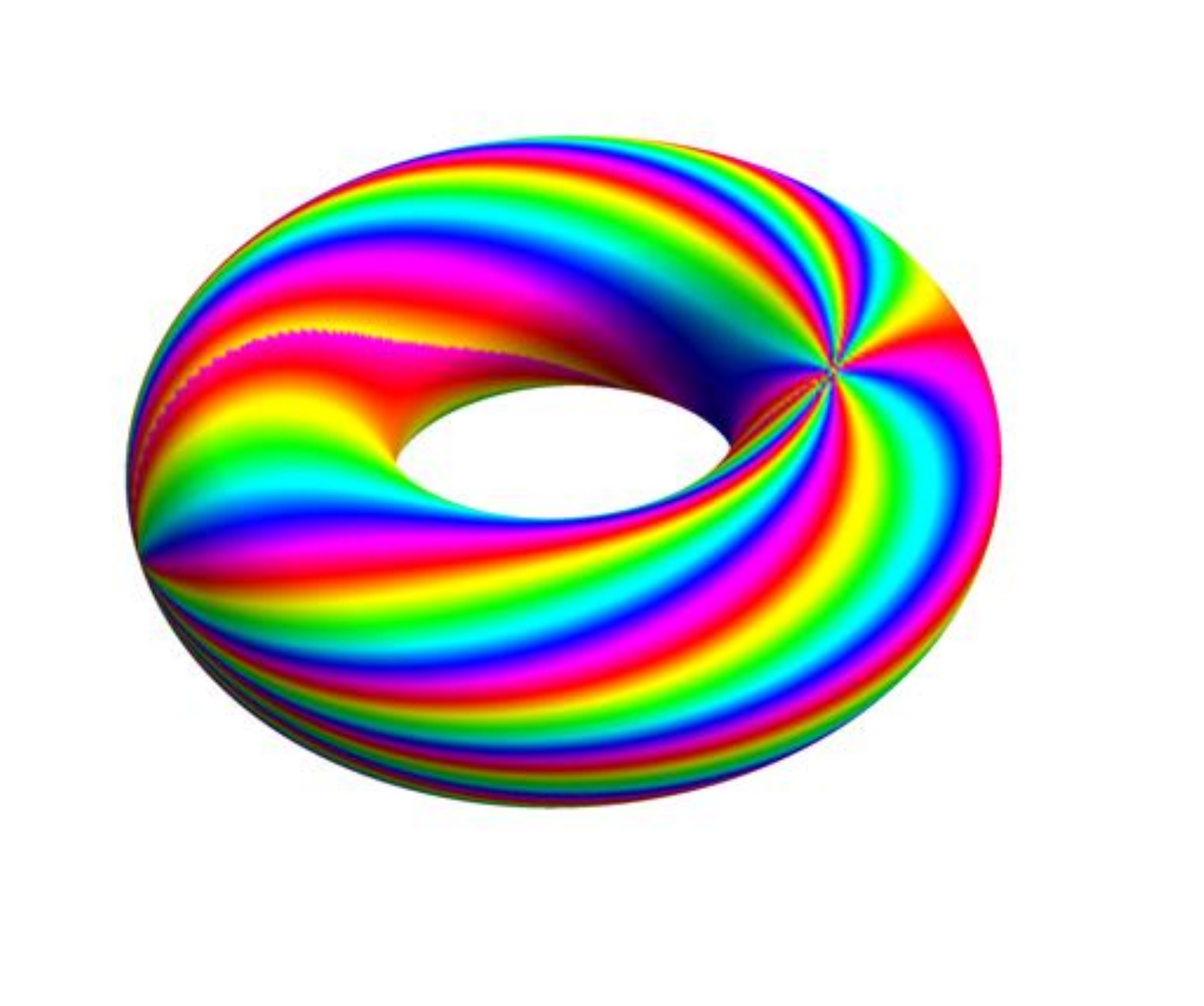}

(a) \hskip 150pt (b)
\caption{Visualization of the elliptic vector field $\widetilde{X}(z)=-\frac{\wp(z)}{\wp'(z)}\del{}{z}$. 
(a) on the plane and (b) the corresponding vector field on the torus. 
The basis $\{\omega_{1},\omega_{2}\}\subset\CC$ for the underlying lattice 
$\Omega(\omega_{1},\omega_{2})$ is $\{\omega_{1}=1, \omega_{2}=\frac{1}{4}+ \frac{5}{4} i\}$.}
\label{torus}
\end{figure*}
In the case of Riemann surfaces of genus $g>1$, a similar scheme will work: the $\Gamma$--invariant vector 
field $\widetilde{X}(z)$ given by \eqref{invariantVF} (which is characterized by the $\Gamma$--invariant 
function $\widetilde{f}(z)$ which is analytic on $\HH\backslash \mathcal{S}$, where $\mathcal{S}$ is a discrete 
set) will be meromorphic in $\CC$, and hence one will be able to explicitly represent them as Newton vector 
fields. Since the characterization of the $\Gamma$--invariant meromorphic functions, for the case of genus 
$g\geq3$, is a whole subject on its own\footnote{The $\Gamma$--invariant functions in this case are known as 
automorphic functions and have very interesting applications to number theory, amongst other things.}, the 
visualization of vector fields on Riemann surfaces of genus $g\geq3$ is left for future work.

\subsection{Generalizing to vector fields in $\RR^{n}$}
In this direction there is already some work reported in the literature: 
in 1988 S.\,A.\,Burns and J.\,I.\,Palmore \cite{BurnsPalmore} presented a generalization of some of the 
techniques they developed in \cite{PalmoreBurnsBenzinger}, to some vector fields in $\RR^{n}$.
In \S\ref{GenRn} we present an overview of these techniques and mention some of its limitations. 

An alternative generalization that is based upon the present work, but uses commutative algebras with unity, 
instead of the complex number field, is outlined in \S\ref{AlgRn}. 

\subsubsection{Vector--valued Newton method in $\RR^{n}$}\label{GenRn}
Since the results presented in this section have been published elsewhere 
(see \cite{PalmoreBurnsBenzinger}, \cite{BurnsPalmore}), we only give a rough sketch of the steps that need 
to be followed, without going into the details.

Given a differentiable function $F:\RR^{n}\to \RR^{n}$, locally one--to--one, and with a differentiable inverse, then $F$ 
defines the following vector field
$$H(x)=-\left[ DF(x)\right]^{-1} F(x),$$
where $DF(x)$ is the Jacobian matrix of partial derivatives. 
This vector field is known as the Newton vector field associated to $F$. 

On the other hand, given a vector field $H(x)$, it is possible to show that in a neighborhood of a regular 
point $x_{0}$ of $H$ with $H(x_{0})\neq0$, there exists $F:\RR^{n}\to \RR^{n}$ such that $H$ is the Newton 
vector field associated to $F$.
Also, $H$ and $F$ satisfy 
\begin{equation}\label{def0}
DF(x) H(x) = -F(x).
\end{equation}

This last equation, as in the complex case, is intimately linked to the solutions of
\begin{equation}\label{def1}
\frac{d x}{d\tau}=H(x),
\end{equation}
since if $x(\tau)$ satisfies
$$F(x(\tau))=\e^{-(\tau-\tau_{0})}F(x(\tau_{0})),$$
then $x(\tau)$ is a trajectory solution of \eqref{def1}. 
Hence, by considering $F=(F_{1}, F_{2},\cdots, F_{n})$ and $H=(H_{1},H_{2},\cdots,H_{n})$
the relation \eqref{def0} can be re-written as
$$H(x)\cdot \nabla F_{i}(x)=-F_{i}(x),\ i=1,2,\cdots,n.$$ 
so by proposing that the $F_{i}$ be of the form $F_{i}=\exp\left[G_{i}\right]$
one has
$$H(x)\cdot\nabla G_{i}(x)=-1,$$
and each difference $G_{ij}=G_{i}-G_{j}$ satisfies 
$$H(x)\cdot\nabla G_{ij}(x)=0,$$
so that the $G_{ij}$ are constant on the trajectories of $\frac{d x}{d\tau}=H(x)$.

Note that in this case, as opposed to the complex case, one still has to solve a system of differential equations 
to find the auxiliary functions $G_{ij}$, but these equations are usually much simpler to solve than the original 
ones that define the trajectories and that are solutions to \eqref{def1}.
\emph{In a couple of cases examined in \cite{PalmoreBurnsBenzinger}, \cite{BurnsPalmore} these auxiliary 
functions $G_{ij}$ can be found, but there is no known technique that works in general.} 

\subsubsection{Generalized analytic functions and the Newton method}\label{AlgRn}
An alternative to the previous method that will work for a large class of vector fields has been presented by 
some of the authors in \cite{AP-FA-LG-YR}. 
It is based upon the notions of \emph{generalized analytic functions}: the basic idea is that under certain 
conditions one may define a commutative algebra with unity of dimension $n$. 
Moreover, the \emph{differentiable} functions of the algebra satisfy certain ``Cauchy--Riemann equations'', in 
an analogous manner as the case of the usual analytic functions over $\CC$ (hence these functions are also 
called \emph{generalized analytic functions}). 

Once this commutative algebra  with unity is defined, one can develop an analog of analytic function theory for 
these \emph{generalized analytic functions}, so that finally the scheme presented in this note can be carried 
over to the algebra, providing a framework where the \emph{generalized analytic vector fields} can be 
visualized by the same techniques.

For instance in the case of dimension 2, the class of vector fields for which this generalization will work, 
includes the real valued vector fields 
$$F(x,y)=u(x,y)\del{}{x}+v(x,y)\del{}{y},$$
whose Jacobian matrix of partial derivatives is of the form 
$A J A^{-1}$, where $A$ is an invertible matrix and $J$ is a matrix in one of the following normal forms 
$$\begin{pmatrix}a&b\\-b&a\end{pmatrix}, \qquad \begin{pmatrix}a&b\\0&a\end{pmatrix}, \qquad 
\begin{pmatrix}a&0\\0&b\end{pmatrix}.$$
Further work related to vector fields in this direction can 
be found in \cite{FriasYLopez}.

\subsection{Generalizing to vector fields on differentiable $n$--dimensional manifolds}
Once the issue of visualizing vector fields in $\RR^{n}$ is solved, an immediate option is to generalize 
this to differentiable $n$-dimensional manifolds, once again using the charts associated to the differentiable 
manifolds, or by using invariant vector fields on the universal cover. The description is completely analogous to 
the one given in \S\ref{variedadescomplejas} and \S\ref{InvariantVectFields}, so we omit it.

\section{Application: Visualizing complex functions}\label{visualizingcomplexfunctions}

As an application of the proposed geometrical method for visualizing singular complex analytic vector fields, we 
look into the problem of visualizing complex functions.

The naive approach of visualization of complex functions by their graph fails on account of a simple dimension 
count: the domain and range of complex functions require two real dimensions each, thus the graph is simply a 
(real) two dimensional surface embedded in (real) four dimensional space. This is not easy to visualize because 
of our natural limitation to visualize (real) three dimensional space.
Thus it is not surprising that complex analysis has made huge advances with a symbolic/algebraic approach.

\noindent
However, with the advent of computers and the ease of use of them as tools for visualizing mathematical 
objects, there has been a dramatic increase in the geometrical aspects of complex analysis. 
In particular, a related topic with beautiful images is the iteration of complex functions.
As far as we know, it was B. Mandelbrot who first considered images of iterations, introducing what is now 
known as the \emph{Mandelbrot set} \cite{Mandelbrot}; later on the work of R.\,L.\,Devaney 
(see \cite{Alexander-Devaney} \S4, and \cite{Devaney}), P.\,Blanchard \cite{Blanchard}, J.\,Milnor \cite{Milnor} 
and many others, made iteration of complex functions known to a much wider audience.
Nowadays this is an area of intense and very productive research.

\noindent
For the visualization of complex functions, a starting point which includes diverse articles, course materials and 
applets is ``Websites related to Visual Complex Analysis" \cite{websites}. 

In this section we do a quick review of some of the more common visualization techniques, emphasizing what 
exactly each technique can or can not do.
One wishes to be able to distinguish zeros and poles (with their respective order/multiplicity), critical points and 
other singularities, in particular essential singularities.

\noindent
Understanding essential singularities is much more involved than the case of zeros and poles; a reasonable first 
step can be found in \cite{AP-MR}, 
where the particular case of isolated essential singularities arising from logarithmic branch points over a finite 
asymptotic value $a\in\CC$ is studied. 
In the cited work, angular sectors associated to the germ of a singular analytic vector field about an isolated 
essential singularity are introduced; very roughly speaking each of these new \emph{entire} angular sectors 
consists of an infinite collection of hyperbolic and elliptic angular sectors, see Figures 7 and 8 of this work and 
figures 1, 2, 3 and 5 of \cite{AP-MR}. 

\noindent
\textbf{Warning:} Our discussion related to essential singularities in this section is restricted to isolated 
essential singularities arising from logarithmic branch points over 
finite asymptotic values $\{ a \}\subset \CC$. Many 
other families of essential singularities exist, for example $\infty\in\CW$ for trigonometric functions, functions 
that have non conformal punctures 
etc., see \cite{Guillot1} and \cite{Guillot3}.

\subsection{Image of regions under $f(z)$}
A classical approach investigates images of specific curves (and regions) under the mapping $f(z)$. 
Even though this is part of any introductory course in complex analysis and is essential for gaining an intuitive 
grasp of elementary functions, it is hard to implement with more general complex functions. For some readily 
available free software see \cite{Akers}, \cite{MathewsFreeSoftware} and for some commercial software see for 
instance \cite{LascauxGraphics}.

As an example of this technique, in Figure \ref{imageMap} we visualize the rational functions 
$f_{1}(z)=\frac{z^{3}-1}{z^{2}}$ and $f_{2}(z)=\frac{z^2}{z^3-1}$ using the image of a square region of $\CC$.
\begin{figure}[htbp]
\begin{center}
\includegraphics[width=0.75\textwidth]{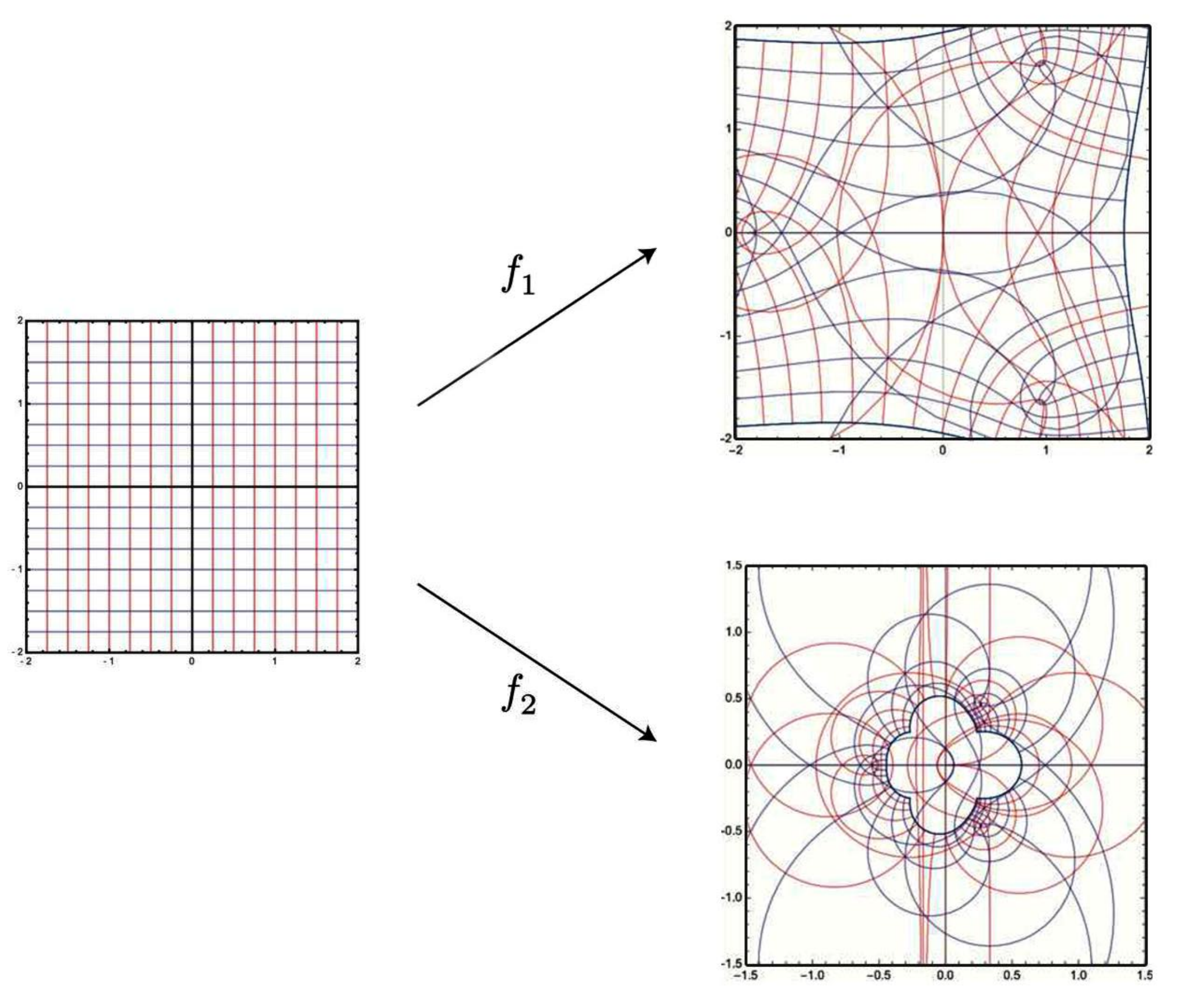}
\caption{Example of the rational functions $f_{1}(z)=\frac{z^{3}-1}{z^{2}}$ and $f_{2}(z)=\frac{z^2}{z^3-1}$ 
visualized using the image of the square region $[-2,2]\times[-2,2]\subset\CC$.}
\label{imageMap}
\end{center}
\end{figure}
As it is clear from this example, by pushing forward the image, the result is a multivalued map which makes it 
difficult to ``read--off'' the information regarding the map. This technique works best with small domains.

\smallskip
Advantages: easy to understand and present, even in elementary courses.

Disadvantages: hard to implement for general complex functions, moreover it is not easy to ``read'' the 
information regarding the function.

\subsection{Tilings {\it a la Klein}}
To avoid the previously encountered difficulty, it is possible to use the pullback.
In this direction, a technique pioneered, as far as we know, by F.\,Klein in his ``Protokolle'', see \cite{KleinClay} 
and \cite{Chislenko-KleinProtocols}, is the following. 

\noindent
Let $f:\CW_{z}\longrightarrow\CW_{w}$ be a complex analytic function, and let 

\centerline{
$R_{f}=\{w_{0}, w_{1}, \ldots, w_{s} \}\subset\CW_{w}$}

\noindent
be its set of ramification values, and 

\centerline{
$C_{f}=\{c_{0}, c_{1}, \ldots, c_{\ell} \}\subset\CW_{z}$}

\noindent
its critical set.
The simplest case is when $f$ is a rational function 
(moreover, the method applies for more general classes of functions).

\noindent
Secondly, let $\gamma\subset\CW_{w}$ be an oriented Jordan path running through $R_{f}$. 
Then $\CW_{w}\backslash \gamma$ is the union of two open simply connected domains. 

\smallskip
\noindent
Recognizing $\CW_{w}\backslash \gamma$ as a two color tiling of the sphere, 
\emph{the pullback $f^{*}\gamma$ determines a second tiling $\CW_{z}\backslash f^{*}\gamma$.}
That is 
$$
(f,\gamma)\longrightarrow f^{*}\gamma.
$$

\smallskip
\noindent
For example if $f$ is a rational function of degree $d$, the number of tiles in $\CW_{z}\backslash f^{*}\gamma$ 
is $2d$.
In Figure \ref{figMosaicos} we visualize as an example the rational functions $f_{1}(z)=\frac{z^{3}-1}{z^{2}}$ 
and $f_{2}(z)=\frac{z^2}{z^3-1}$.
\begin{figure}[htbp]
\begin{center}
\includegraphics[width=0.75\textwidth]{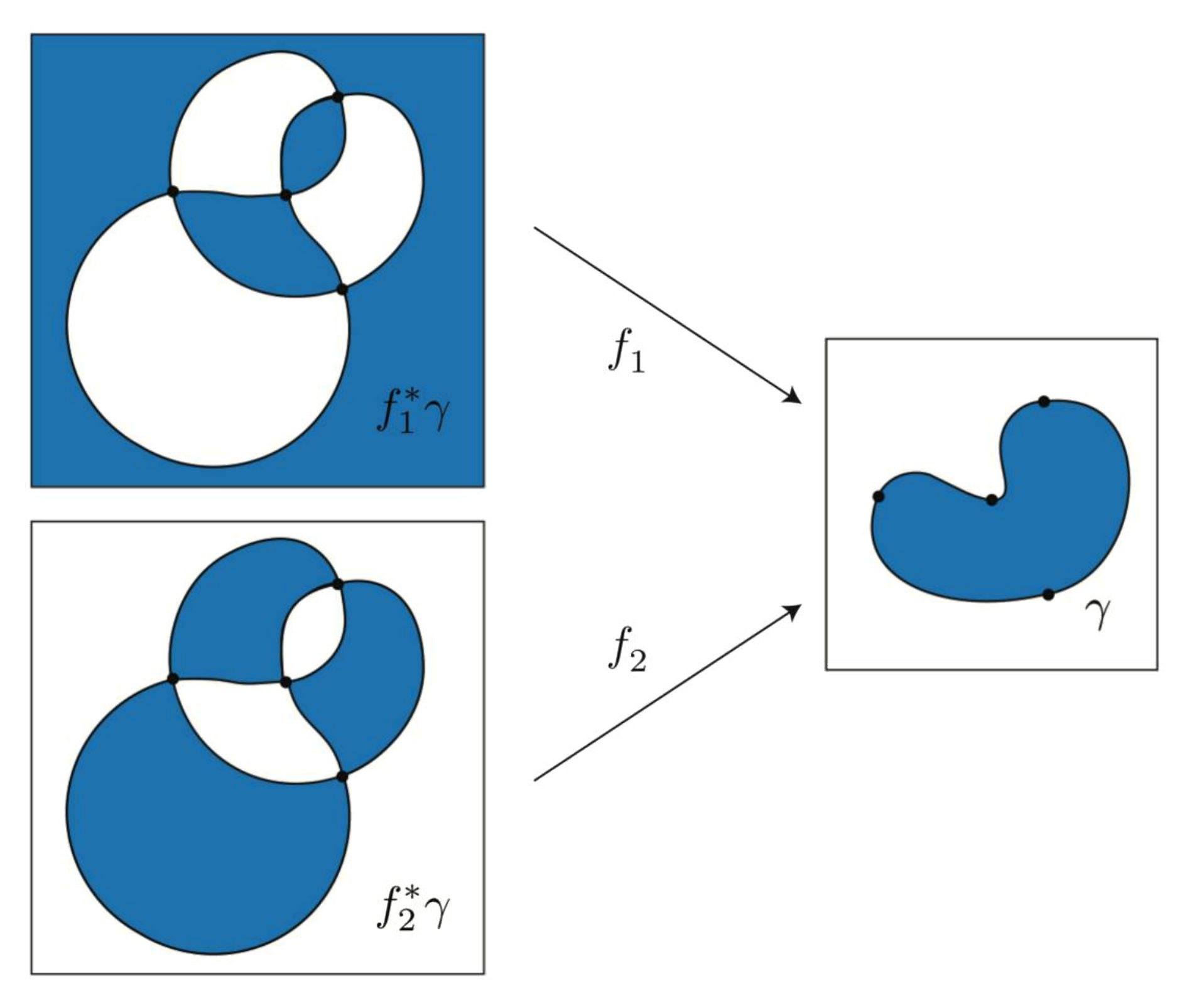}
\caption{Example of the rational functions $f_{1}(z)=\frac{z^{3}-1}{z^{2}}$ and $f_{2}(z)=\frac{z^2}{z^3-1}$ 
visualized with tilings a la Klein. In both cases there are four critical points and four critical values.}
\label{figMosaicos}
\end{center}
\end{figure}

It is to be noted that unless the poles and zeros of the function $f$ are critical points, the poles and zeros will be 
undistinguishable by this technique.

\begin{table}[htp]
\caption{Interpretation of tilings of $f^{*}\gamma$}
\begin{center}
\begin{tabular}{|c|c|}
\hline
Function $f$ & Tiling of $f^{*}\gamma$ \\
\hline
\hline
$z_{0}$ is a simple & \\
zero of $f$ & \\
\hline
$z_{0}$ is a simple & \\
pole of $f$ & \\
\hline
$z_{0}$ is an isolated essential & $z_{0}$ is a vertex of the tiling \\
singularity of $f$ & with infinite tiles bordering it \\
\hline
$z_{0}$ is a critical point & $z_{0}$ is a vertex of the tiling \\
of order $k$ & with $2k$ tiles bordering it \\
\hline
\end{tabular}
\end{center}
\label{tilingsalaKlein}
\end{table}%

Advantages: the procedure can be done by hand: for rational functions it is simple. However, it also works for 
infinite ramified coverings $f:M\longrightarrow\CW$.

Disadvantages: considers the critical points 
(that can be regular points) of $f$, not the zeros or poles. 
Depends strongly on the choice of $\gamma$, in fact the method ``visualizes'' pairs $(f,\gamma)$: 
changing $\gamma$ for fixed $f$ determines very different tilings.

\subsection{Analytical landscapes}
Another traditional concept for visualizing complex functions is the so called \emph{analytical landscape}, 
apparently introduced by E.\,Maillet \cite{Maillet} in 1903, which basically is a graph of the absolute 
value $\abs{f(z)}$ of a complex function $f(z)$,
$$f(z)\longrightarrow \{(z,\abs{f(z)})\ \vert\ z\in\CC\}.$$
Of course, even though useful, not all the information of the function $f(z)$ could be conveyed: 
the argument of $f(z)$ was lost. However by drawing also the lines of constant argument this could be solved. 
Even better, by use of colormaps it is possible to draw \emph{colored analytic landscapes} where isochromatic 
lines correspond to lines of constant argument. See Figure \ref{landscape}.
\begin{figure}[htbp]
\begin{center}
\includegraphics[width=0.65\textwidth]{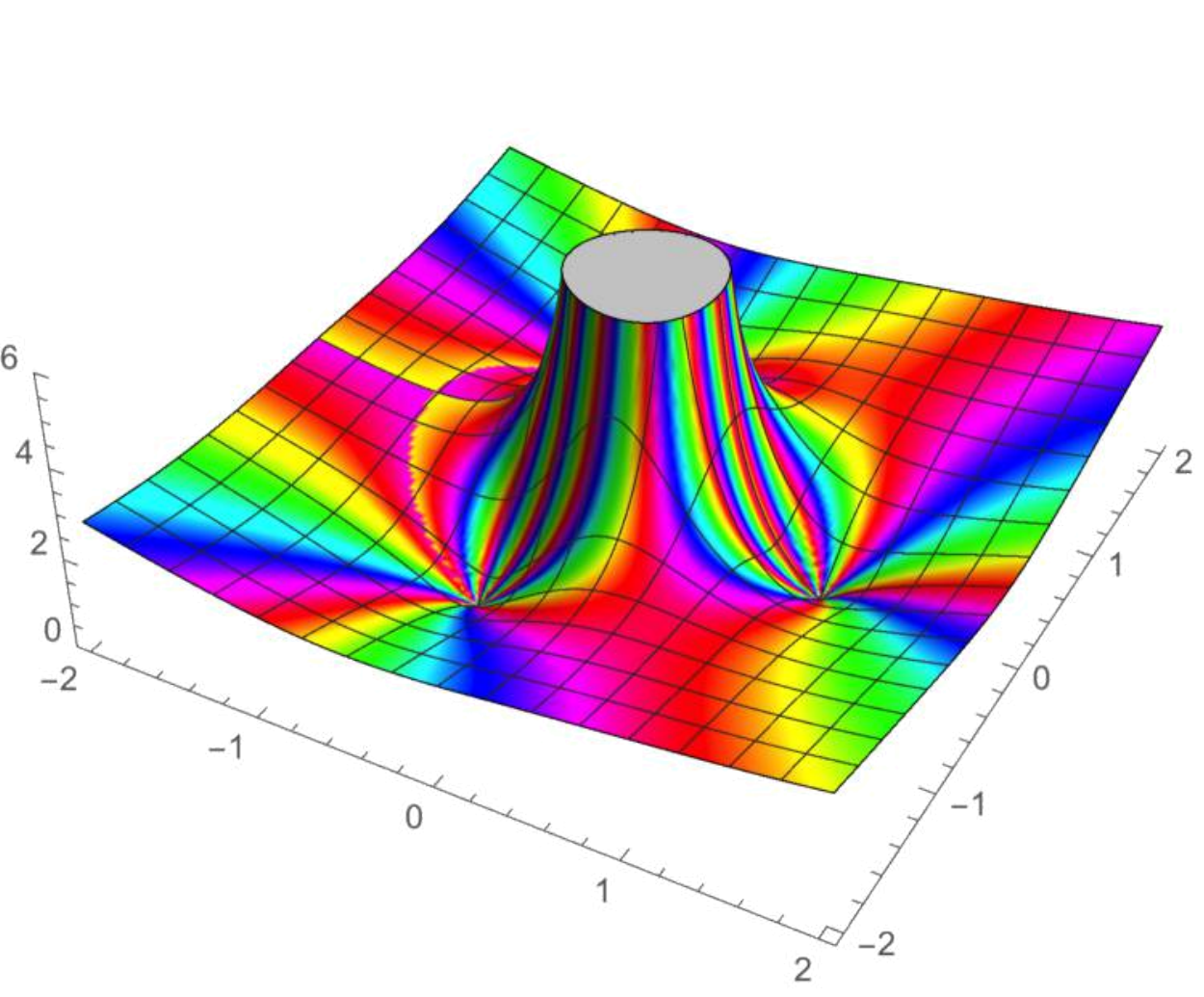}
\includegraphics[width=0.65\textwidth]{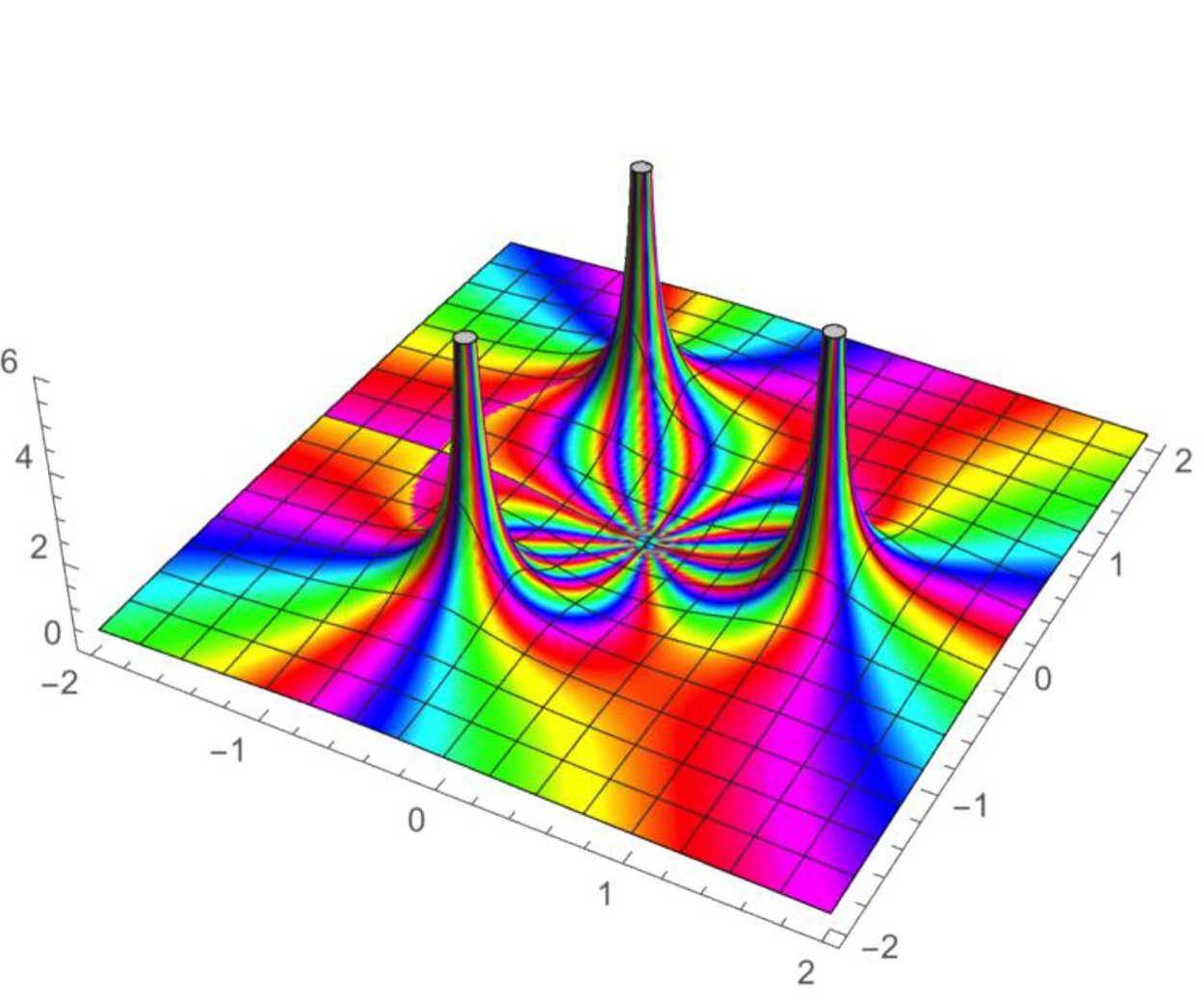}
\caption{Example of the rational functions $f_{1}(z)=\frac{z^{3}-1}{z^{2}}$ and $f_{2}(z)=\frac{z^2}{z^3-1}$ 
visualized with the colored analytic landscape technique.}
\label{landscape}
\end{center}
\end{figure}

However, there is an inherent difficulty of reading off the information related to the function $f(z)$ when some of 
the essential parts might be hidden in the valleys behind the mountain rims or covered by towers of poles.

Advantages: useful, particularly the colored analytical landscapes since it contains complete information of the 
function $f(z)$.

Disadvantages: inherent difficulty reading off the information 
related to the function, since it is a three dimensional image 
projected onto two dimensions.

In these situations, the best one can do is to view the colored analytic landscape straight from the top. 
The result is a flat color image sometimes called the \emph{phase portrait} 
of $f(z)$, representing the color coded phase or argument of $f(z)$,
$$f(z)\longrightarrow \argp{f(z)}.$$
According to E.\,Wegert \cite{Wegert}, the phase is better suited than the modulus to understand a function and 
to reconstruct the properties of $f(z)$. 
This alternative approach using colormaps for coloring the domain of $f(z)$ is called \emph{domain colorings}.

\subsubsection{Domain colorings}
The main idea is to pullback the color, {\it i.e.} assign a color to each point of the range of $f(z)$ and then color 
each point $z$ of the domain of the function according to the color of its image $f(z)$. 
Apparently these were first used in the WWW by F. Farris \cite{Farris}.

\noindent
A very common colormap is to use the polar form of complex numbers to assign a color to each point on the 
range $\CW$; the magnitude is assigned an intensity or brightness (0 is assigned black, $\infty$ is assigned 
white, or vice versa), while the argument is mapped to a ``rainbow'' color wheel
using the polar form of complex numbers. 
\emph{It is clear that phase portraits are a special form of domain colorings where the modulus of $f(z)$ is 
ignored.}

Let us look a little more in detail on how phase portraits of functions convey the information related to the 
function $f(z)$. 
An order $s$ zero of $f(z)$ is seen as a rainbow color wheel repeated exactly $s$ times around the placement 
of the zero, with the color wheel following the same arrangement as in the range about the origin. On the other 
hand, an order $-\kappa\leq-1$ pole of $f(z)$ is seen as a rainbow color wheel repeated exactly $\kappa$ times 
around the placement of the pole, with the color wheel following the \emph{opposite} arrangement as in the 
range about the origin. 
In other words, if we do not know the color map associated to the range it is impossible to distinguish between a 
pole and a zero; this same idiosyncrasy appears also when we have a gray colored image. 
See Figure \ref{domaincoloring} for an example of a phase portrait for the two functions 
$f_{1}(z)=\frac{z^{3}-1}{z^{2}}$ and $f_{2}(z)=\frac{z^2}{z^3-1}$.
\begin{figure}[htbp]
\begin{center}
\includegraphics[width=0.45\textwidth]{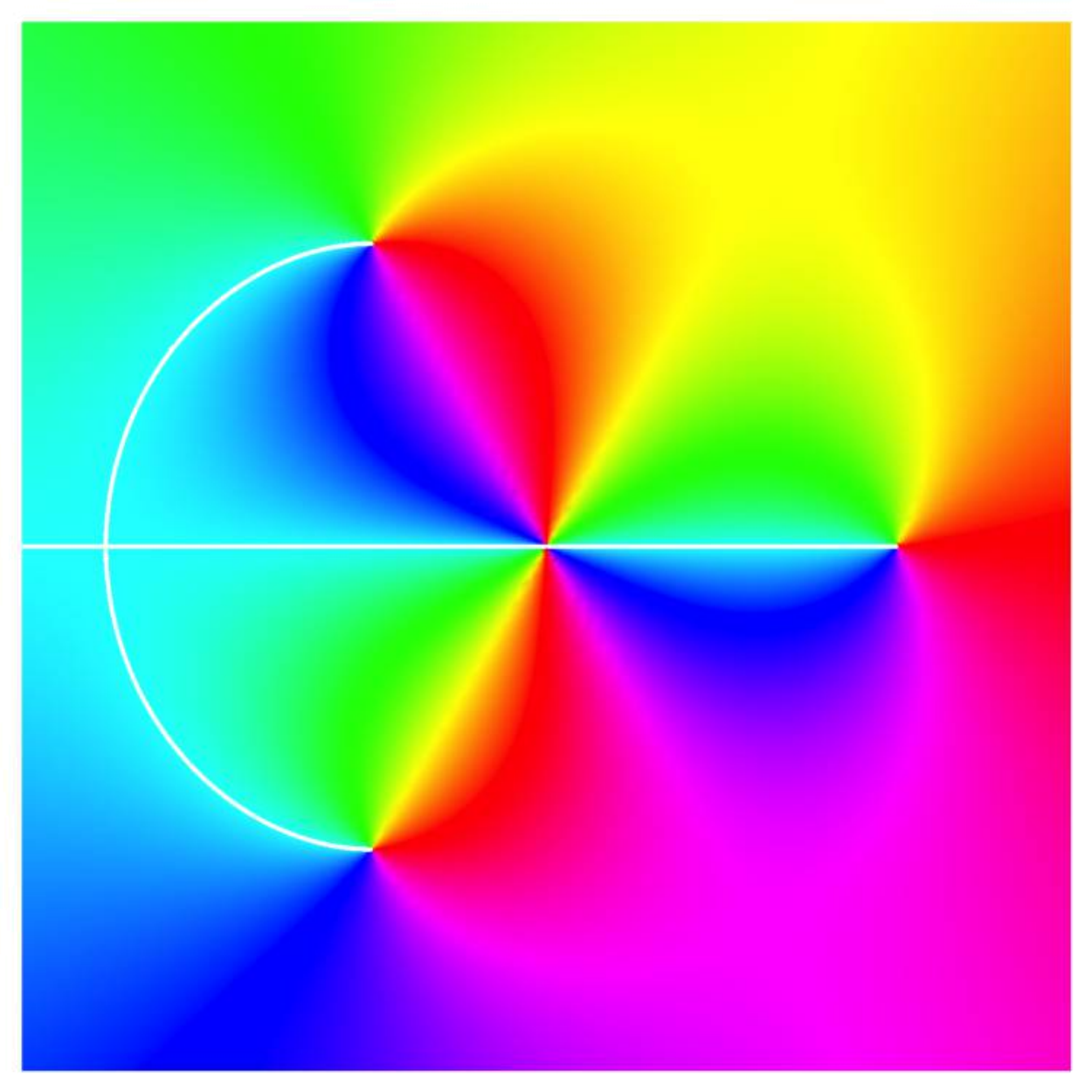}
\hskip 15pt
\includegraphics[width=0.45\textwidth]{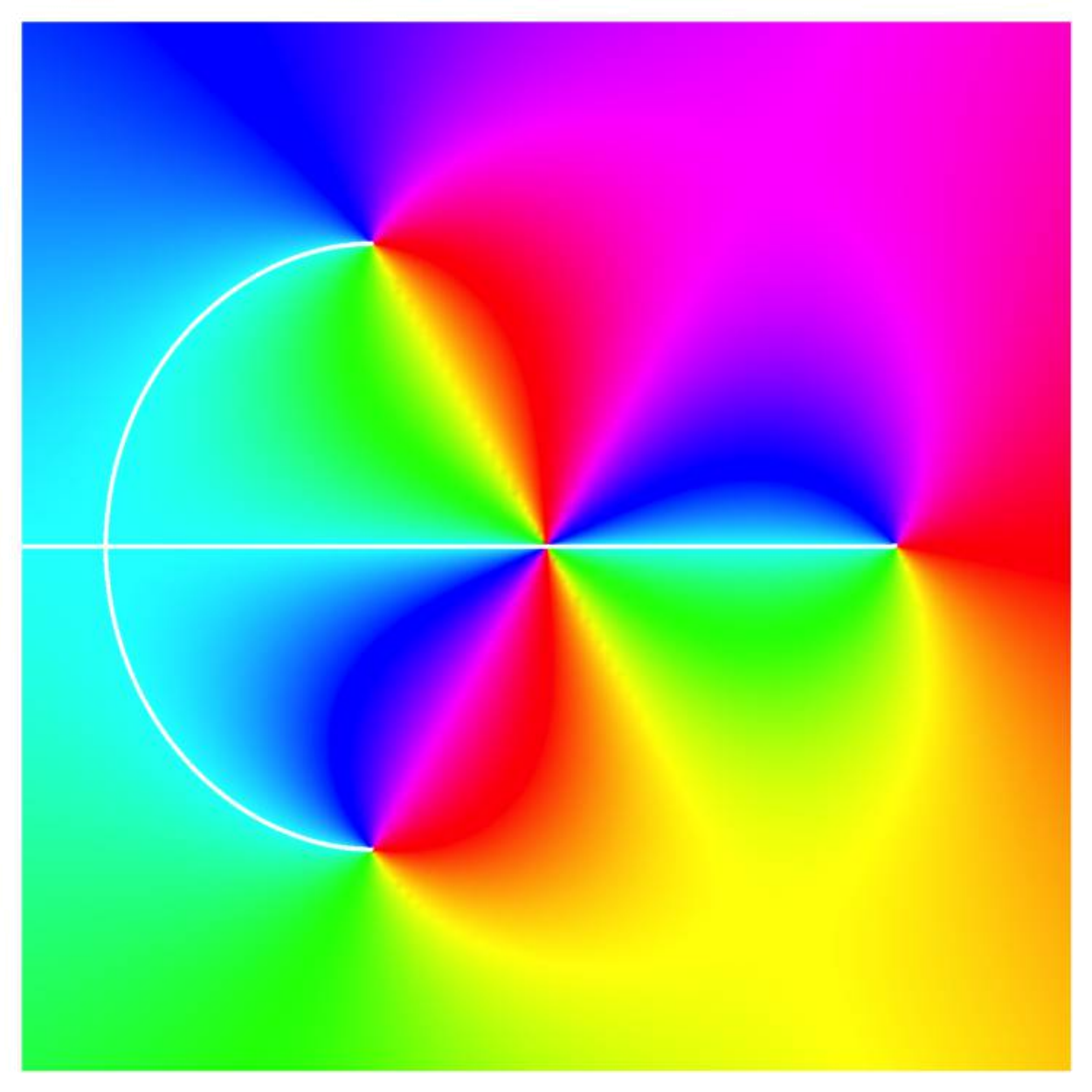}
\caption{Example of the rational functions $f_{1}(z)=\frac{z^{3}-1}{z^{2}}$ and $f_{2}(z)=\frac{z^2}{z^3-1}$ 
visualized by their phase portrait. 
It is impossible to distinguish a pole from a zero unless we know beforehand the colormap used. Note that the 
order of the pole or zero is distinguished by the number of repeated color wheels around the pole or zero.}
\label{domaincoloring}
\end{center}
\end{figure}
              
\noindent
Identifying isolated essential singularities and/or accumulation points of poles, zeros or essential singularities is 
easy: they are characterized by the fact that any neighborhood of the singularity intersects infinitely many 
isochromatic lines of the same color, see \cite{Wegert} p.\,181.

\noindent
On the other hand, since critical points are precisely where conformality is lost, critical points of $f$ are located 
where isochromatic lines form saddle points.

\noindent
In Table \ref{domaincoloringtable} we summarize the above information.

\begin{table}[htp]
\caption{Domain coloring: Interpretation of phase portrait of $f$}
\begin{center}
\begin{tabular}{|c|c|}
\hline
Function $f$ & Phase portrait of $f$ \\
\hline
\hline
$z_{0}$ is an order $s\geq1$ & rainbow color wheel \\
zero of $f$ & repeated exactly $s$ times around $z_{0}$ \\
\hline
$z_{0}$ is an order $-\kappa\leq-1$ & opposite rainbow color wheel \\
pole of $f$ & repeated exactly $\kappa$ times around $z_{0}$ \\
\hline
$z_{0}$ is an isolated essential & any neighborhood of the singularity \\
singularity of $f$ & intersects infinitely many isochromatic lines \\
&  of the same color \\
\hline
$z_{0}$ is a critical point & the isochromatic lines form a saddle\\
\hline
\end{tabular}
\end{center}
\label{domaincoloringtable}
\end{table}%

Advantages: easy to implement, essential singularities are easily distinguished.

Disadvantages: impossible to distinguish between a pole and a zero if the color map associated to the range is 
unknown.

\subsection{Visualizing complex functions via the phase portrait of vector fields}
The following methods use the phase portraits of certain vector fields to visualize complex functions. 
In order to plot the phase portrait we use the methodology developed in the preceding sections, 
particularly \S\ref{metodo}. A useful interpretation is expressed by the following diagram between complex 
analytic sections of trivial, cotangent and tangent holomorphic line bundles over $M$
\begin{equation}\label{lo-de-siempre-con-F}
F(z) \longrightarrow d F = F'(z)\,dz \longrightarrow \frac{1}{F'(z)}\del{}{z}.
\end{equation}
Thus \emph{the real trajectories of $\frac{1}{F'(z)}\del{}{z}$ are paths
 whose image under $F(z)$ are horizontal lines; in other words 
the horizontal trajectories of the quadratic differential $(F'(z))^2\,dz^{2}$}.

Note that because of \eqref{dosmaneras}, when using the 
techniques developed in this work for visualizing singular complex 
analytic vector fields $X(z)=f(z)\del{}{z}$, we have the option of 
using $\Psi(z)$ or $\Phi(z)$.

\subsubsection{Visualizing $f(z)$ on $\CC$ via $X_{f}(z)=f(z)\del{}{z}$ or 
$\widetilde{P_f}(z)=\frac{1}{f(z)}\del{}{z}$}

In their classic work ``Complex Variables'' G.\,Polya and G.\,Latta propose the use of vector fields to 
visualize complex functions $f(z)$ on the complex plane $\CC$, see \cite{Polya} p.\,61. 
Specifically, they propose the use of what is now known as the Polya vector field 

\centerline{
$f(z)\longrightarrow P_{f}(z)=\bar{f(z)}\del{}{z}$.}

\noindent 
They use the Polya vector field as opposed to the more immediate vector field 

\centerline{
$f(z)\longrightarrow X_{f}(z)=f(z)\del{}{z}$, } 

\noindent
because Polya's vector field has the following physical interpretation: \emph{a complex function $f(z)$ is analytic 
in a region $D\subset\CC$ if its Polya vector field is differentiable, divergence free and curl free throughout the 
region $D$}. 
A further advantage is that Polya's approach can also be used to visualize and estimate complex integrals. 
In this direction recently, B.\,Braden contributes to the Polya vector field interpretation of complex integrals, 
see \cite{Braden2}.

\noindent
However, it is to be noted that the Polya vector field $P_{f}(z)$ is not a holomorphic vector field 
on $D\subset\CC$; it is anti--holomorphic. This can be circumvented by recalling first that 
$\bar{f(z)}=\frac{\abs{f(z)}^2}{f(z)}$ and secondly that multiplying a vector field by a non--vanishing scalar 
factor does not alter the phase portrait, it just changes the parametrization. Hence we introduce the following.
\begin{definition}
Given a complex valued function $f(z)$ on $D\subset\CC$, a priori $C^0$, 
considering the operator 
\begin{equation}
f(z)\longrightarrow\widetilde{P_{f}}(z)=\frac{1}{f(z)}\del{}{z}, 
\end{equation}
the image is the \emph{normalized\footnote{This 
parametrization is precisely the one that makes complex 
time have norm one in the metric provided by the 
pullback of the flat metric $(\CC,\delta)$ under $\Psi(z)=\int^{z} f(\zeta) \, d\zeta$.
} 
Polya vector field of $f$}.
\end{definition}
\noindent
Note that, 
if $f(z)$ is a complex singular analytic function 
then $\widetilde{P_f}(z)$ is a complex singular analytic vector field with the same phase portrait (but a different 
parametrization) as the usual Polya vector field $P_{f}(z)$.

In 1996, T.\,Newton and T.\,Lofaro \cite{NewtonLofaro}, use $X_{f}(z)=f(z)\del{}{z}$ to visualize functions of a 
complex variable $f(z)$; including $f(z)=\e^{z}$ on the Riemann sphere.
They use Runge--Kutta--Fehlberg RKF4(5) integration based techniques to plot the flow or phase portrait of 
$X_{f}(z)$.
Also, T.\,Needham \cite{Needham}, champions the use of vector fields for visualizing complex functions; 
however he does not propose any particular implementation and/or discuss visualization techniques for vector 
fields.

\smallskip
Visualizing complex functions $f(z)$ on $\CC$ by plotting the phase portrait of $X_{f}(z)$ or 
$\widetilde{P_f}(z)$ provides the following mayor advantage.
Because of the normal forms for meromorphic vector fields, 
see Proposition \ref{prop:normalforms} and Figure \ref{forma-normal}, 
\emph{poles and zeros of vector fields are unequivocally recognized (including their order) 
via the topology\footnote{
Associated to each pole of order $-\kappa\leq-1$ of a meromorphic vector field $X$, there are exactly 
$2(\kappa+1)$ \emph{hyperbolic} angular sectors, similarly zeros of order $s\geq 2$ have exactly 
$2(s-1)$ \emph{elliptic} angular sectors; simple zeros have sectors that depend on the residue of the 
associated 1--form $\omega_{X}$. See Figure \ref{forma-normal}.
} 
of the phase portrait}, even without color plots. 
For examples see Figures \ref{ratfunc}, \ref{campoPolos} and \ref{vfplots}. 
\begin{figure}[htbp]
\begin{center}
\includegraphics[width=0.45\textwidth]{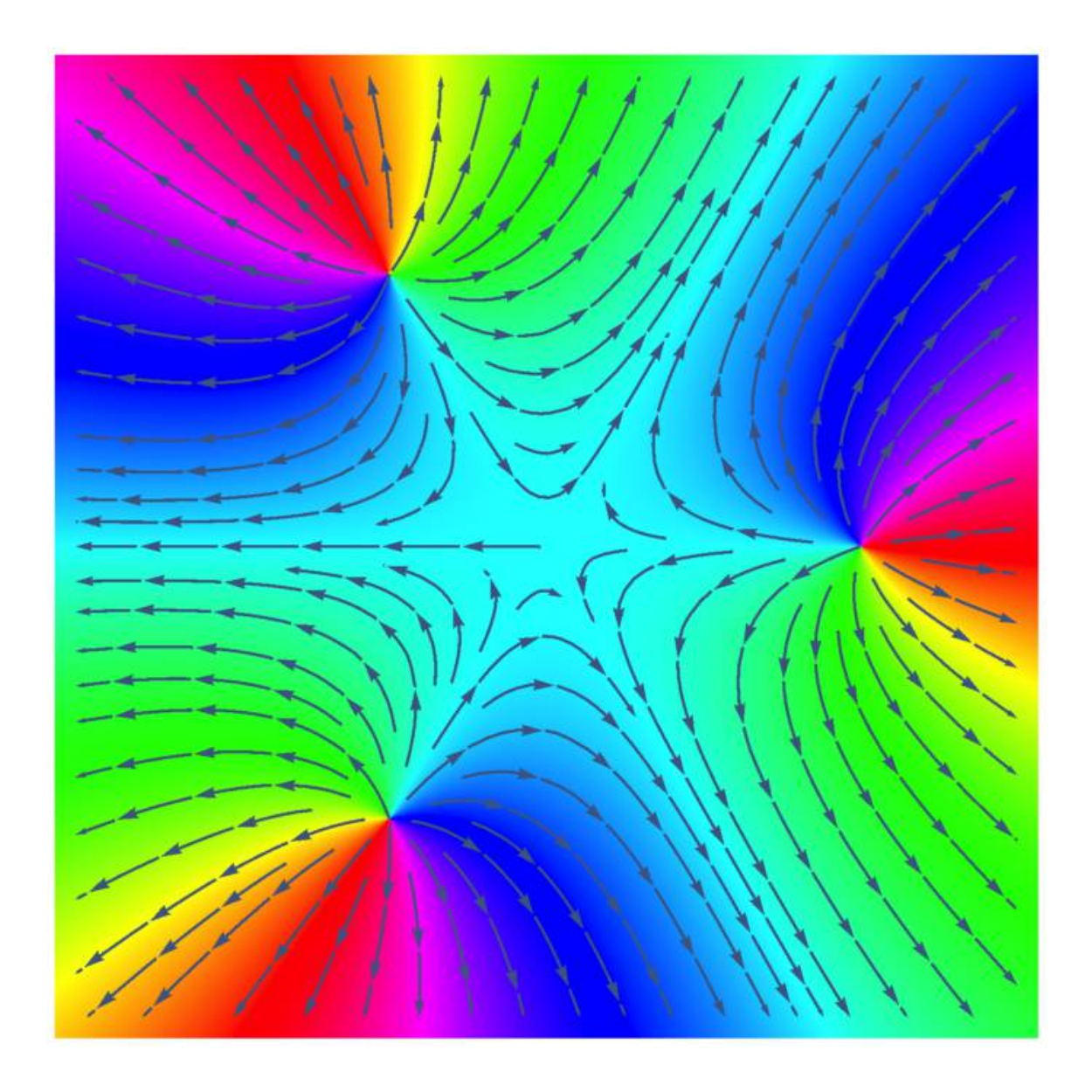}
\hskip 5pt
\includegraphics[width=0.45\textwidth]{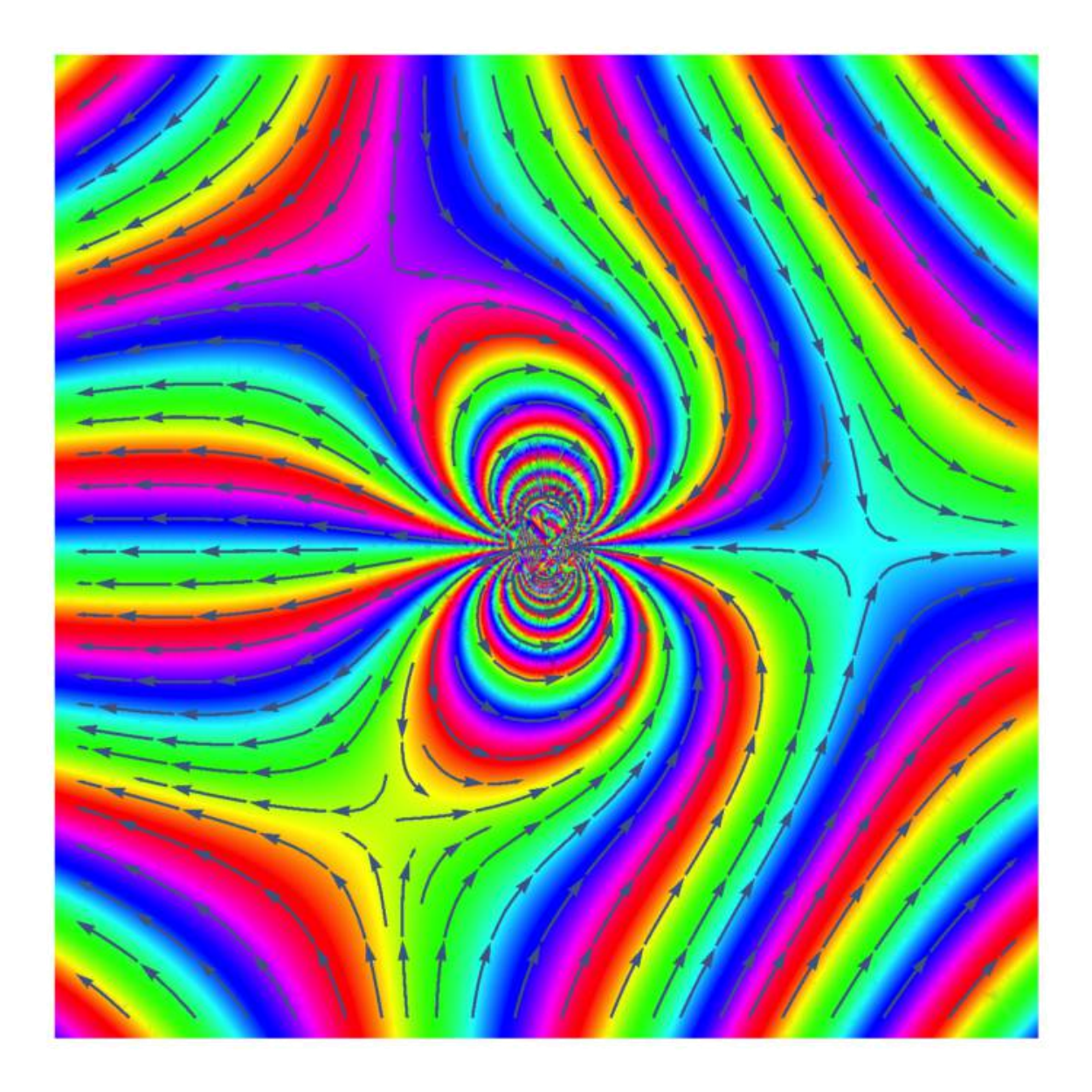}
\caption{Example of the rational functions $f_{1}(z)=\frac{z^{3}-1}{z^{2}}$ and $f_{2}(z)=\frac{z^2}{z^3-1}$ 
visualized as $X_{f_{1}}(z)=f_{1}(z)\del{}{z}$ and $X_{f_{2}}(z)=f_{2}(z)\del{}{z}$ respectively. 
In this case there is no problem distinguishing between the two functions and all the information can be read 
from the plots.
Note that the phase portrait of $X_{f_{1}}(z)$ and $P_{f_{2}}(z)$ are the same (with a different parametrization), 
similarly $X_{f_{2}}(z)$ and $P_{f_{1}}(z)$ have the same phase portrait.
}
\label{vfplots}
\end{center}
\end{figure}

\noindent
The phase portraits of singular analytic vector fields with isolated essential singularities 
arising from logarithmic branch points over finite asymptotic values 
$\{ a\}\subset \CC$, can with distinguished by the presence of \emph{entire} angular sectors, 
see \cite{AP-MR} \S5.2 for the appropriate definitions. Examples can be found in 
Figures \ref{campoExp}, \ref{CampoExpZ3} and more examples in \cite{AP-MR} and \cite{AP-MR-2}.

\smallskip
However, a mayor flaw related to visualizing a function $f(z)$ on an arbitrary Riemann surface $M$ via the 
normalized 
Polya vector field $\widetilde{P_f}(z)=\bar{f(z)}\del{}{z}$, or the vector field $X_{f}(z)=f(z)\del{}{z}$, is that 
considered as 
tensors $f(z)$ and $\widetilde{P_f}(z)$ (or $X_{f}(z)$) are quite different: {\it i.e.} when acted upon by $Aut(M)$ 
they do not transform in the same way. 
This is made evidently clear in the following example.
\begin{example}
Consider $f(z)=\frac{z^2}{(z-1)^2}$ on the Riemann sphere $\CW$. 
Of course $\infty\in\CW$ is a regular point. 
However when considering 

\centerline{
$X_{f}(z)=\frac{z^2}{(z-1)^2}\del{}{z}$ \quad or \quad
$\widetilde{P_f}(z)=\frac{(z-1)^2}{z^2}\del{}{z}$}

\noindent
on $\CW$, each has a double zero at $\infty\in\CW$.
Considering  rational functions $f$
and vector fields $X$ on any 
compact Riemann surface $M$ of genus $g$, 
topological invariants are
the Chern class of the trivial and tangent holomorphic 
line bundles as follow
$$
\# \hbox{zeros}(f) - \# \hbox{poles}(f) = 0, 
\ \ \ 
\#  \hbox{zeros}(X) - \# \hbox{poles}(X) = 2-2g.
$$

\end{example}

\noindent
A summary of the information that can be observed with this visualization technique is presented in 
Table \ref{phaseportraitXtable}.
{\small
\begin{table}[htp]
\caption{Visualization of $f$ via phase portrait of $X_{f}(z)=f(z)\del{}{z}$ and 
$\widetilde{P_f}(z)=\frac{1}{f(z)}\del{}{z}$}
\begin{center}
\begin{tabular}{|c|c|c|}
\hline
Function $f$ & Phase portrait of & Phase portrait of \\
& $X_{f}(z)=f(z)\del{}{z}$ & $\widetilde{P_f}(z)=\frac{1}{f(z)}\del{}{z}$ \\[4pt]
\hline
\hline
$z_{0}$ is a simple zero of $f$ & angular sector corresponding & 4 hyperbolic angular sectors \\
& to a center & around $z_{0}$ \\
$z_{0}$ is an order $s\geq2$ & $2(s-1)$ elliptic angular & $2(s+1)$ hyperbolic angular \\
zero of $f$ & sectors around $z_{0}$ & sectors around $z_{0}$ \\
\hline
$z_{0}$ is a simple pole of $f$ & 4 hyperbolic angular sectors & angular sector corresponding \\
& around $z_{0}$ & to a center \\
$z_{0}$ is an order $-\kappa\leq-2$ & $2(\kappa+1)$ hyperbolic angular & $2(\kappa-1)$ elliptic angular \\
pole of $f$ & sectors around $z_{0}$ & sectors around $z_{0}$ \\
\hline
$z_{0}$ is an isolated essential & infinitely many elliptic & infinitely many elliptic \\
singularity of $f$ & and hyperbolic sectors & and hyperbolic sectors \\
& around $z_{0}$ & around $z_{0}$ \\
\hline
$z_{0}$ is a critical point & & \\
\hline
& & \\[-8pt]
& trajectory of $X_{f}$ & trajectory of $\widetilde{P_f}$ \\[2pt]
\hline
\end{tabular}
\end{center}
\label{phaseportraitXtable}
\end{table}%
}

Advantages: easily distinguishes key features of the function (zeros, poles, isolated essential singularities and 
accumulation points of any of the above), even without color plots.

Disadvantages: it is not global in nature: because of the different tensor type between $f$ and $X_{f}$ or 
$\widetilde{P_f}$, this technique will work on charts, but not necessarily on the whole Riemann surface $M$ 
where $f$ is defined. Moreover, the critical points of $f(z)$ are not immediately appreciated.

\subsubsection{Visualizing the functions $\Psi_X(z)$ on $M$ via the vector fields 
$X(z)=\frac{1}{\Psi_X^{\prime}(z)} \del{}{z}$ or $\widetilde{P}_{X}(z)=\Psi^{\prime}_X (z) \del{}{z}$ (via the 
singular analytic dictionary)}
An alternative that avoids the above mentioned problems is to use (1) of Theorem \ref{visualizationPsiPhi}.

\noindent
From the correspondence given by the singular complex analytic dictionary 
(Proposition \ref{basic-correspondence}) and 
Diagram \ref{diagrama-basico}, use 

\centerline{
\emph{$X(z)=\frac{1}{\Psi'_{X}(z)}\del{}{z}$ as a way of visualizing the complex functions $\Psi_{X}(z)$}.}

\noindent
It should be noted that this is in fact the visualization of the complex integral of the 1--form $\omega_{X}$ 
associated to the vector field $X(z)$.

\noindent
More precisely considering \eqref{lo-de-siempre-con-F} 
with $\Psi_{X}(z)$,  we observe that
\begin{equation}
\Psi_{X}(z) \longrightarrow X(z)
=\frac{1}{\Psi'_{X}(z)}\del{}{z}.
\end{equation}
Of course we can also consider the normalized Polya vector field 
\begin{equation}
\Psi_{X}(z) \longrightarrow \widetilde{P_X}(z)
=\Psi'_{X}(z)\del{}{z}. 
\end{equation}

Note that simple zeros of $\Psi_{X}(z)$ can not be distinguished since they correspond to regular points for the 
vector fields, however the critical points of $\Psi_{X}$ appear as poles of $X$ and zeros of $\widetilde{P_X}$, 
hence critical points of $\Psi_{X}$ can not be distinguished from multiple zeros of $\Psi_{X}$. 
See Figure \ref{Phiplots} and Table \ref{phaseportraitPsitable}.
\begin{figure}[htbp]
\begin{center}
\includegraphics[width=0.45\textwidth]{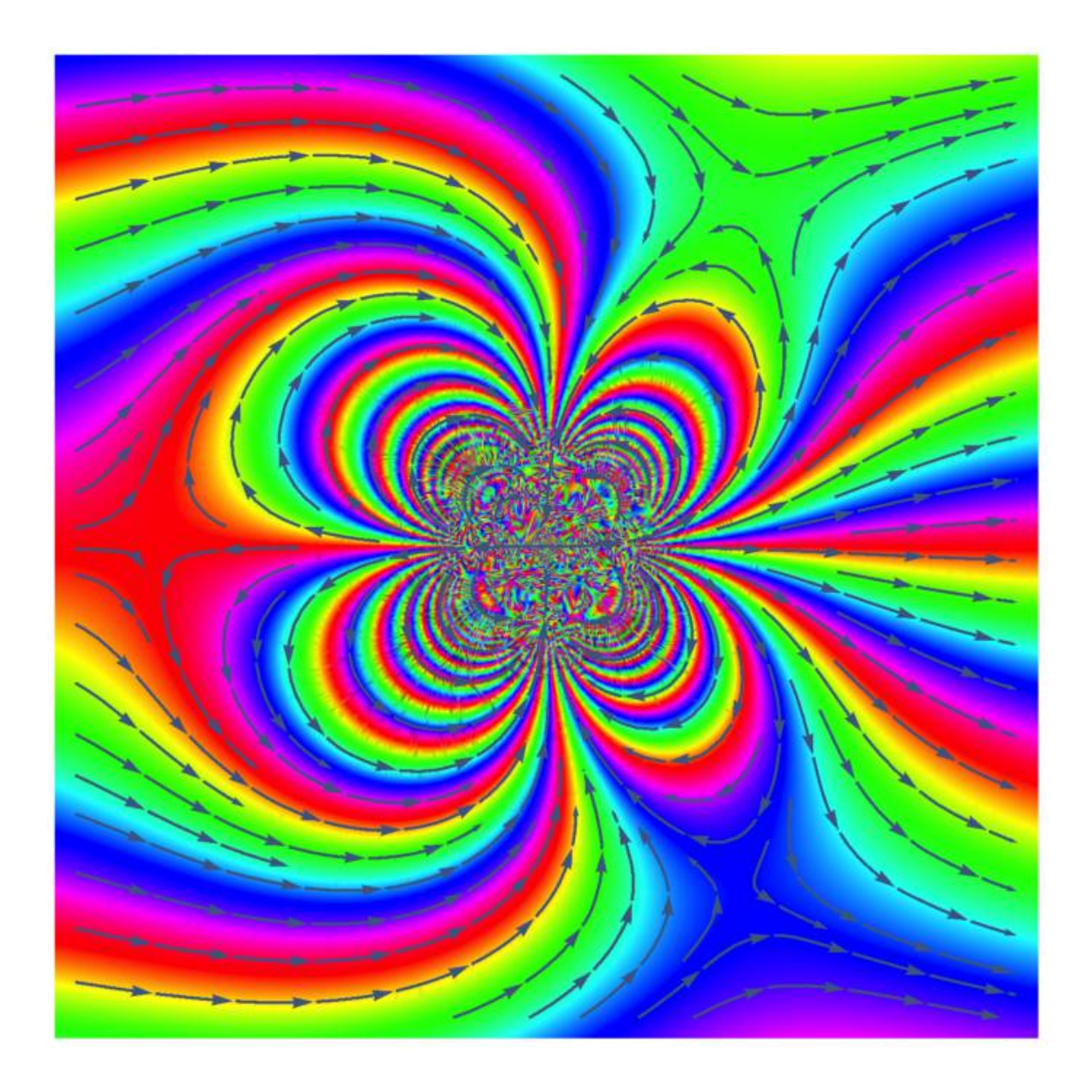}
\hskip 5pt
\includegraphics[width=0.45\textwidth]{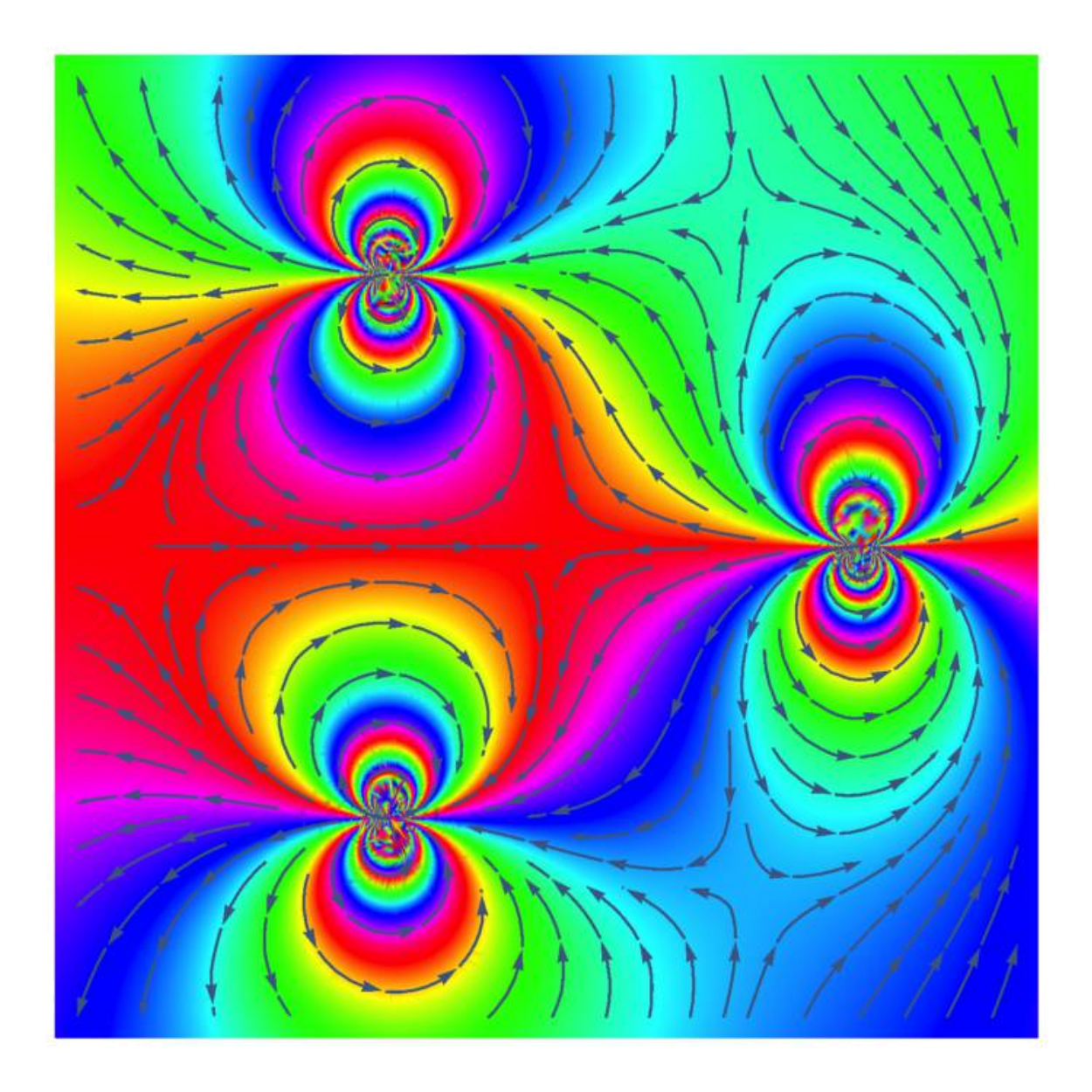}
$X_{1} (z)=\frac{1}{\Psi'_{X_1} (z)} \del{}{z}$
\qquad\qquad\qquad\qquad 
$X_{2} (z)=\frac{1}{\Psi'_{X_2} (z)} \del{}{z}$

\smallskip
\includegraphics[width=0.45\textwidth]{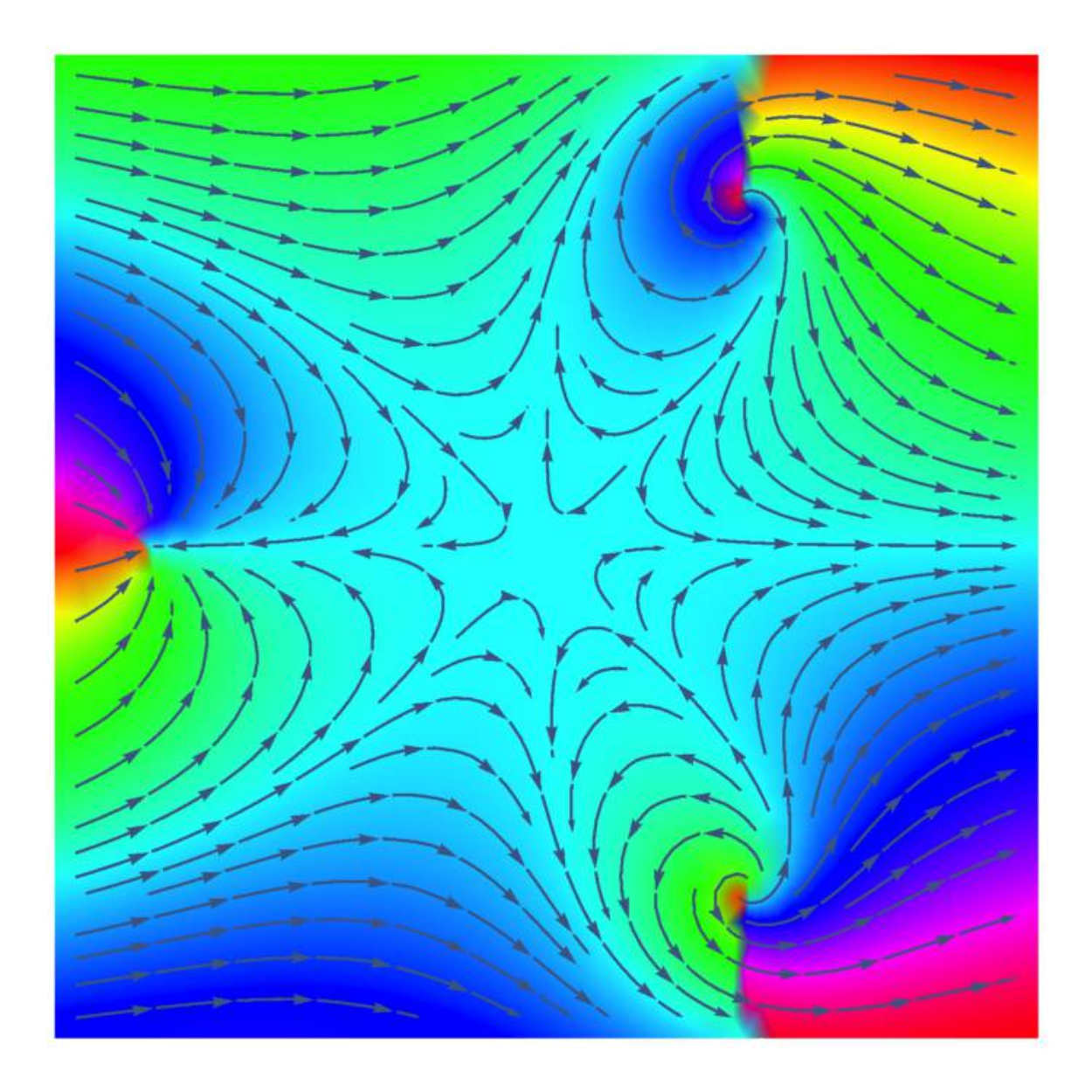}
\hskip 5pt
\includegraphics[width=0.45\textwidth]{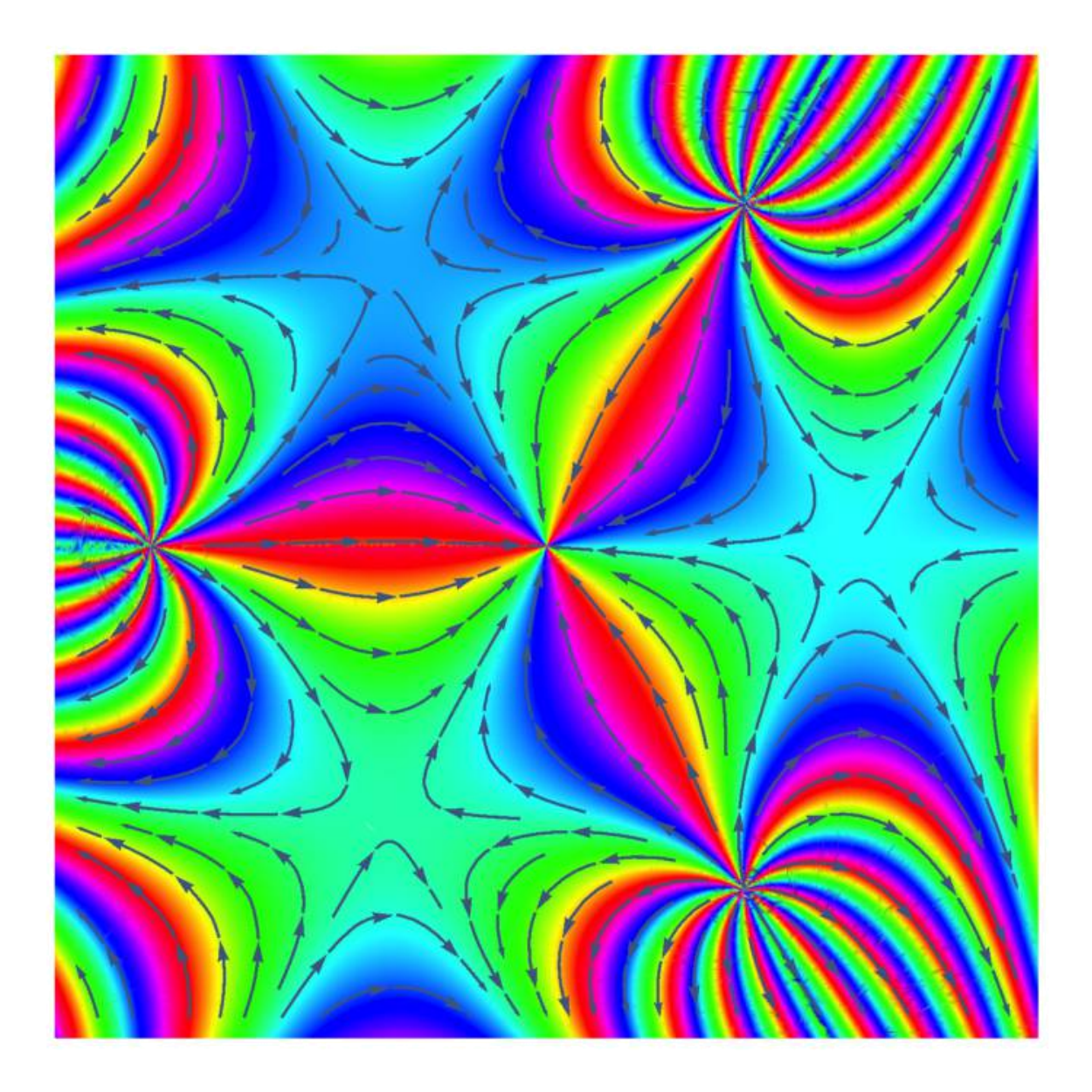}
$\widetilde{P_{X_1}} (z)=\Psi'_{X_1}(z) \del{}{z}$
 \qquad\qquad\qquad\qquad 
$\widetilde{P_{X_2}} (z)=\Psi'_{X_2}(z) \del{}{z}$
\caption{Visualization of the rational functions 
$\Psi_{X_1} (z)=\frac{z^3 -1}{z^2}$ and $\Psi_{X_2}(z)=\frac{z^2}{z^3 -1}$.
The phase portrait of the singular complex analytic vector fields
$X_{1} (z)=\frac{1}{\Psi'_{X_1} (z)} \del{}{z}$ and 
$X_{2} (z)=\frac{1}{\Psi'_{X_2} (z)} \del{}{z}$ is presented on the top row, and 
on the bottom row the phase portrait of the singular complex analytic vector fields
$\widetilde{P_{X_1}} (z)=\Psi'_{X_1}(z) \del{}{z}$ and 
$\widetilde{P_{X_2}} (z)=\Psi'_{X_2}(z) \del{}{z}$. 
In this case the information of the simple zeros of $\Psi_{X_1} (z)$ is lost and instead information on the location 
of the critical points of both $\Psi_{X_1} (z)$ and $\Psi_{X_2} (z)$ appears.}
\label{Phiplots}
\end{center}
\end{figure}
{\small
\begin{table}[htp]
\caption{Visualization of $\Psi_{X}$ via phase portrait of $X(z)=\frac{1}{\Psi'_{X}(z)}\del{}{z}$ and 
$\widetilde{P_X}(z)=\Psi'_{X}(z)\del{}{z}$.}
\begin{center}
\begin{tabular}{|c|c|c|}
\hline
& & \\[-6pt]
Function $\Psi_{X}$ & Phase portrait of & Phase portrait of \\
& $X(z)=\frac{1}{\Psi'_{X}(z)}\del{}{z}$ & $\widetilde{P_X}(z)=\Psi'_{X}(z)\del{}{z}$ \\
\hline
\hline
& & \\[-8pt]
$z_{0}$ is a simple zero of $\Psi_{X}$ & $z_{0}$ is a regular point of $X$ & $z_{0}$ is a regular point of 
$\widetilde{P_X}$ \\[2pt]
\hline
& & \\[-8pt]
$z_{0}$ is an order 2 zero of $\Psi_{X}$ & $z_{0}$ is a simple pole of $X$ & $z_{0}$ is a simple zero of 
$\widetilde{P_X}$ \\
& 4 hyperbolic angular & angular sector \\
& sectors around $z_{0}$ & corresponding to a center \\
$z_{0}$ is an order $s\geq3$ & $z_{0}$ is an order $-(s-1)\leq-2$ & $z_{0}$ is an order $s-1\geq2$ \\
zero of $\Psi_{X}$ & pole of $X$ & zero of $\widetilde{P_X}$ \\
& $2s$ hyperbolic angular & $2(s-2)$ elliptic angular \\
& sectors around $z_{0}$ & sectors around $z_{0}$ \\
\hline
$z_{0}$ is an order $-\kappa\leq-2$ & $z_{0}$ is an order $\kappa+1\geq2$ & $z_{0}$ is an order 
$-(\kappa+1)\leq-2$ \\
pole of $\Psi_{X}$ & zero of $X$ & pole of $\widetilde{P_X}$ \\
& $2\kappa$ elliptic angular & $2(\kappa-2)$ hyperbolic angular\\
& sectors around $z_{0}$ & sectors around $z_{0}$ \\
\hline
$z_{0}$ is an isolated essential & infinitely many elliptic and & infinitely many elliptic and \\
singularity of $\Psi$ & hyperbolic sectors & hyperbolic sectors \\
& around $z_{0}$ & around $z_{0}$ \\
\hline
& & \\[-8pt]
$z_{0}$ is a critical point of $\Psi_{X}$ of & $z_{0}$ is a pole of $X$ & $z_{0}$ is a zero of 
$\widetilde{P_X}$ \\[2pt]
ramification index $\mu+1\geq2$ & or order $-\mu\leq-1$ & f order $\mu\geq1$ \\
\hline
& & \\[-8pt]
& trajectory of $X(z)$ & trajectory of $\widetilde{P_X}(z)$ \\[2pt]
\hline
$z_{0}$ is a logarithmic & $z_{0}$ is a simple zero & $z_{0}$ is a simple pole \\
singularity of $\Psi_{X}$ & of $\Psi_{X}$ & of $\Psi_{X}$ \\
$\lambda \log(z-z_{0})$ & $\frac{1}{\lambda}(z-z_{0})\del{}{z}$ & $\lambda \frac{1}{z-z_{0}} \del{}{z}$ \\[4pt]
& angular sector & 4 hyperbolic angular \\
& corresponding to a center & sectors around $z_{0}$ \\
\hline
\end{tabular}
\end{center}
\label{phaseportraitPsitable}
\end{table}%
}

Advantages: has the correct tensor type behaviour, hence can be used globally on Riemann surfaces $M$.
Critical points of the function $\Psi$ appear as zeros of $X_{\Psi}$ and poles of $\widetilde{P_\Psi}$. 
Logarithmic singularities of $\Psi$ appear as simple poles of $X_{\Psi}$ and simple zeros of $\widetilde{P_\Psi}$

Disadvantages: simple zeros of $\Psi$ are invisible, furthermore there is no way to distinguish critical 
points of $\Psi$ from zeros of $\Psi$ (necessarily of order $\geq2$).

\section{Complex flows}\label{subsec:Flows}

Recall the following elementary fact. 

\begin{example}[Complete vector fields]\label{camposcompletos}
A singular complex analytic vector field on a Riemann surface 
$M$ is \emph{complete} if its flow is defined for all time
and for all initial condition. 
A pair $(M,X)$ determine a complete vector field on a Riemann 
surface if and only if they appear in the following list:
\begin{enumerate}[label=\arabic*)]
\item $\big(\CC,X=P(z)\del{}{z}\big)$ with $P$ a polynomial of degree at most 1,
\item $\big(\CW,X=P(z)\del{}{z}\big)$ with $P$ a polynomial of degree at most 2, 
\item $\big(\CC^{*}, X=\lambda z \del{}{z}\big)$, with $\lambda\in\CC^{*}$,
\item $\big(M=\CC/\Gamma, X=\lambda\del{}{z}\big)$, where $M$ is a complex torus.
\end{enumerate}
For a proof see \cite{LopezMucino}.
\end{example}

The singular complex analytic dictionary 
(Proposition \ref{basic-correspondence})
allows the description of 
the maximal domain for the flow even
in presence of poles, as we show in the following.   

\begin{example}[Example \ref{ejemplo-con-psi-racional}
revisited]

Let $\big(\CW,X=\frac{1}{R^\prime(z)}\del{}{z}\big)$ be a 
rational vector field, for $R(z)$ a rational function of degree at least two.
Then 

\centerline{$\Psi_X (z) =R(z) : \CW_z \longrightarrow \CC_t $}

\noindent 
is single valued and in accordance with Diagram \eqref{diagramaRX}.
In this case

\centerline{
$
\R_X = \Omega_X = \big\{ \big(z, R(z) \big) \ \vert \ z \in \CW \big\}. 
$}

\noindent 
The assertion, the global flow
$ \Psi^{-1}(t): \Omega_X \longrightarrow \CC_z$ of $X$
starting at an initial condition $z_0$, 
makes precise sense, since
$$
\Psi^{-1}:= \pi_{X, 1}: \R_X \longrightarrow \CC_z.
$$ 
\end{example}

More generally,
considering the maximal analytic continuation of the local flows,
a structure for the maximal analytic continuation can be  
recognized as follows.

\noindent 
Given $z_0 \in M^* $ (not a pole, essential singularity, or an 
accumulation point of poles, zeros or isolated
essential singularities of $X$),
the \emph{local flow} of $X$ at $z_0$ is a holomorphic map
$$
\begin{array}{rcl}
\varphi_{\tt j} (z,t) : \Omega(z_0,{\tt j}) \subset  M^* \times \CC_{t}
& \longrightarrow & M^* \\
 (z,t)& \longmapsto &
\left\{
\begin{array}{ll}
z_0 & \text{if } z = z_0 \text{ is a zero of } X, \\
& \vspace{-.3cm}  \\
\Psi_{\tt j }^{-1}(t) \ \ \ & \text{if }z \text{ is a non singular point of } X, 
\end{array}
\right.
\end{array}
$$
the second case is 
described as $z_{1} = \Psi_{\tt j } ^{-1} (t)$, where 
$$
\Psi_{\tt j}:
z \longmapsto \int_{z }^{z_1} \frac{d \zeta }{f_{\tt j} (\zeta) }  = t .
$$

\noindent
The integral 
is computed in a simply connected neighborhood
$V_{\tt j} \subset M^*$ of $z_0$,
thus $1/f_{\tt j}(\zeta)$ is holomorphic on $V_{\tt j}$,
hence the map 
$\{  \Psi_{\tt j} : z \longmapsto t \} $ is single valued.
However, the value of the integral
a priori depends on the choice of $V_{\tt j}$. Hence, we must fix $V_{\tt j}$ in order to
construct the local flow $\varphi _{\tt j} (z,t)$.
Making all these precisions, 
$\Omega(z_0,{\tt j}) \subset V_{\tt j} \times \CC_{t} $ is
an open set.

\theoremstyle{plain}
\newtheorem*{theo3}{Theorem \ref{FlujoMaximal}}

\begin{theo3}[Maximal domain for the flow]
Let $X$ be a singular complex analytic vector field on a 
Riemann surface $M$, and let $z_0 \in M \backslash Sing(X)$
be an initial condition. 
\begin{enumerate}[label=\arabic*)]
\item 
The maximal analytic continuation 
of the local flow 

\centerline{$\varphi_{\tt j} (z_0,t) 
: \{ z_0 \} \times ( \CC_t, 0)  \longrightarrow  M^*$}

\noindent 
is univalued on the 
Riemann surface $\R_{X} \subset M \times \CC_t$, which is the 
graph of 
$$
\Psi_{X} (z) = \int_{z_0}^z \omega_X : M^* \longrightarrow \CC_t .
$$ 

\item 
The Riemann surface $\R_X$ is a leaf of the  foliation 
$\mathcal{F}$ defined by the
complex analytic vector field
$$ 
f_{\tt j} (z) \del{}{z} + \del{}{t} \ \ \
 \hbox{ on }  M^* \times \CC_{t}
$$ 
and the changes of the initial conditions $z_0$ 
determine $t$--translations of $\R_X$.
\end{enumerate}
\end{theo3}

\begin{proof}
From the point of view of complex differential equations,
$\varphi_{\tt j} (z_0,t)= \Psi_{\tt j}^{-1}(t) 
:  \{ z_0 \} \times ( \mathbb{C}_t, 0)  \longrightarrow  M^*$ 
for $z_0 \in M^0$, is a local complex solution.
Hence
 the map 
$$\pi_{X,1}: \R_X \longrightarrow M,$$
together with an initial condition, $z_0 \in M^{0}$,
provides the maximal domain for the complex trajectory solution
of $X$ as an ordinary differential equation determined on $M$.
The Riemann surface $\R_X$ is a leaf of the 
complex analytic vector field
$$ 
f_{\tt j} (z) \del{}{z} + \del{}{t} \ \ \
 \hbox{ on }  M^* \times \CC_{t}.
$$ 
The singular complex
analytic foliation in the whole two dimensional
complex
manifold has as leaves copies
of $\R_X$ under translations
in the $\CC_t$ factor.
The maximal domain of the maximal analytic continuations
of  the flow for all $z_0 \in M^0$ is 
\begin{equation}\label{foliacion-de-t-translaciones}
\Omega_{X}= \bigcup_{t \ \hbox{translations}} \R_X
\end{equation}

\noindent 
that is the domain for the assertion (3). 
It follows that, 
the complex flow $\varphi (z,t) : \Omega_{X} \subset M^* \times \CC_{t} \to M^*$ is completely determined 
by copies of one trajectory by $\pi_{X,1}$, on each connected component of $M$.
\end{proof}

\begin{corollary}[From Riemann surfaces to vector fields]\label{DeRaX}
Consider $\CC_{t}$ provided with the vector field $\del{}{t}$.
The following statements are equivalent.
\begin{enumerate}[label=\arabic*)]
\item An arbitrary Riemann surface $\R \subset M \times \CC_{t}$
determines a singular complex analytic vector field $X$ 
on $M$, following Diagram \eqref{diagramaRX}.
\item $\R$ is the graph of an additively automorphic map $\Psi$ as in Diagram \eqref{diagramaRX}. 

\item 
The decomposition

\centerline{
$\Omega_{X}=\bigcup\limits_{t \ \hbox{translations}} \R \subset M^{*} \times \CC_{t}$
}

\smallskip
\noindent
determines a holomorphic foliation $\mathcal{F}$ of $\Omega_{X}$, where the 
leaves of $\mathcal{F}$
do not contain plaques of the form 
$D(z_0,r)\times \{t_0\}$, for $r>0$.
\end{enumerate}
\end{corollary}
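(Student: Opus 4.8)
The plan is to prove the three conditions equivalent by running the cycle $(1)\Rightarrow(2)\Rightarrow(3)\Rightarrow(1)$, using Diagram~\eqref{diagramaRX}, the dictionary of Proposition~\ref{basic-correspondence}, Theorem~\ref{FlujoMaximal}, and the criterion (Definition~\ref{automorfa-aditiva} and the remark after it) that a possibly multivalued $\Psi$ is additively automorphic exactly when $d\Psi$ is single valued on $M$. For $(1)\Rightarrow(2)$ I would start from the hypothesis that $\R$ yields a genuine field $X=(\pi_{X,1})_*\bigl(\pi_{X,2}^*(\del{}{t})\bigr)$ on $M$ through the two projections of the diagram. Reading $\pi_{X,2}^*(\del{}{t})$ in a local coordinate $z$ on $\R$ gives $\frac{1}{\Psi'(z)}\del{}{z}$ for the local branch $\Psi=\pi_{X,2}\circ\pi_{X,1}^{-1}$; the fact that the several sheets of the branched covering $\pi_{X,1}$ push forward to one and the same field $X$ on $M$ means all branches of $\Psi$ share a single derivative $\Psi'$. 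By Definition~\ref{automorfa-aditiva} this is exactly additive automorphy of $\Psi$, which is $(2)$.

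For $(2)\Rightarrow(3)$: if $\R$ is the graph of an additively automorphic $\Psi$, then $\omega_X=d\Psi$ is single valued and determines, via the dictionary, the field $X=\frac{1}{\Psi'}\del{}{z}=f\del{}{z}$. Theorem~\ref{FlujoMaximal}(2) then realizes $\R=\R_X$ as a leaf of the foliation $\mathcal F$ of $M^*\times\CC_t$ cut out by $f(z)\del{}{z}+\del{}{t}$ and displays $\Omega_X=\bigcup_{t\ \hbox{translations}}\R$ as the union of the $t$-translated leaves. Since the $\del{}{t}$-component of $f(z)\del{}{z}+\del{}{t}$ is identically $1$, no leaf is tangent to the horizontal directions, so none can contain a plaque $D(z_0,r)\times\{t_0\}$; this is $(3)$.

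The interesting direction is $(3)\Rightarrow(1)$. Here the key observation is that invariance of $\mathcal F$ under the translations $(z,t)\mapsto(z,t+c)$ makes its tangent line field independent of $t$: translation carries leaves to leaves and acts as the identity on tangent vectors, so $T_{(z,t)}\mathcal F=T_{(z,t+c)}\mathcal F$. Thus the line field is $[\,a(z):b(z)\,]$ inside the span of $\del{}{z}$ and $\del{}{t}$; the no-plaque hypothesis rules out $b\equiv0$ (the horizontal foliation), so after normalizing I obtain the defining field $f(z)\del{}{z}+\del{}{t}$ with $f=a/b$ depending on $z$ alone, single valued because it is read off a foliation of $M^*\times\CC_t$. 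Setting $X=f(z)\del{}{z}$ and recovering $\R$ as the graph of $\Psi=\int dz/f=\int\omega_X$ returns exactly the data of Diagram~\eqref{diagramaRX}, which is $(1)$ (and simultaneously re-derives $(2)$).

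I expect the main obstacle to be the careful handling of the reconstruction in $(3)\Rightarrow(1)$. Two points need attention. First, one must exclude the degenerate vertical possibility $f\equiv0$: this is forced by the requirement in $(3)$ that the $t$-translates of $\R$ sweep out the two-dimensional open set $\Omega_X$, since the translates of a vertical leaf $\{z_0\}\times\CC_t$ coincide and cannot fill a two-dimensional region. Second, one must control the behaviour across the ramification locus of $\pi_{X,1}$, that is the zeros of $f$ (the poles of $\omega_X$) where leaves become vertical and $\Psi$ acquires pole or logarithmic growth, and confirm that the $f$ so obtained descends to a single-valued coefficient on all of $M$ with the prescribed singular set. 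Once $f$ is known to be a globally defined singular complex analytic coefficient on $M$, the remaining identifications are formal consequences of the dictionary and of Theorem~\ref{FlujoMaximal}.
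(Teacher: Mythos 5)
Your proposal is correct, and it rests on the same circle of ideas as the paper's own proof: additive automorphy of $\Psi$ is exactly the condition that the branches of its graph differ by $t$--translations, horizontal plaques correspond to $X$ being identically $\infty$, and vertical plaques to zeros of $X$. The difference is one of organization and explicitness. The paper's proof is very terse: it asserts that $\Psi$ is additively automorphic if and only if its graph satisfies the translation decomposition \eqref{foliacion-de-t-translaciones} (your $(2)\Leftrightarrow(3)$), concludes that Diagram \eqref{diagramaRX} then holds for an appropriate $X$, and disposes of the degenerate cases by a plaque trichotomy --- vertical plaques give $X(z_0)=0$, the forbidden horizontal plaques would give $X\equiv\infty$, and any other plaque yields a locally non--constant $X$. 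You instead run the cycle $(1)\Rightarrow(2)\Rightarrow(3)\Rightarrow(1)$ and supply two arguments the paper leaves implicit: in $(1)\Rightarrow(2)$, that well--definedness of the pushforward along the several sheets of $\pi_{X,1}$ forces all branches of $\Psi$ to share a single derivative, hence to differ by additive constants; and in $(3)\Rightarrow(1)$, that $t$--translation invariance of $\mathcal{F}$ makes its tangent line field independent of $t$, so it has the form $[\,a(z):b(z)\,]$ and normalizes to $f(z)\del{}{z}+\del{}{t}$ once the no--plaque hypothesis excludes $b\equiv0$; your exclusion of the vertical degeneracy $a\equiv 0$ (i.e.\ $X\equiv 0$) via the two--dimensionality of $\Omega_X$ is likewise sound and matches what the paper silently assumes. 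One small point of care: at poles of $X$ (zeros of $\omega_X$) the leaves \emph{are} tangent to the horizontal direction at isolated points, so your phrase ``no leaf is tangent to the horizontal directions'' should be read as ``no leaf contains a horizontal plaque,'' which is what your argument actually establishes. What your version buys is a logically complete proof of the three--way equivalence, including an implication out of $(1)$ that the paper never writes down; what the paper's version buys is brevity, by centering everything on the single equivalence between additive automorphy and the translation decomposition.
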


\begin{proof}
Note that, $\Psi$ is 
an additively automorphic map if and only if
its graph $\R$ satisfies
Equation \eqref{foliacion-de-t-translaciones}.
Hence it satisfies Diagram \ref{diagramaRX} for an appropriate $X$.

\noindent
Allowing plaques of the form 
$\{z_0\} \times D(t_0,r)$, for $r>0$, in the leaves of $\mathcal{F}$
gives rise to $X(z_0)= 0$ on $M$.
Not allowing plaques of the form $D(z_0,r) \times \{t_0\}$ 
ensures that $X$ is not identically $\infty$.
A plaque different to any of the above two cases in a leaf 
of $\mathcal{F}$
determines locally a non identically constant $X$ on $M$.
\end{proof}

\bibliographystyle{spmpsci}      


\end{document}